\documentclass{amsart}
\setlength{\headheight}{23pt}

\usepackage{amsmath}
\usepackage{amsfonts}
\usepackage{amsthm}
\usepackage{amsbsy}
\usepackage{amssymb}
\usepackage{amsthm}
\usepackage{enumerate}
\usepackage[margin=1in]{geometry}
\usepackage{hyperref}
\usepackage{mathrsfs}
\usepackage{mathtools}
\usepackage{microtype}
\usepackage[nobysame]{amsrefs}

\newtheorem{thm}{Theorem}[section]
\newtheorem{lem}[thm]{Lemma}
\newtheorem{prop}[thm]{Proposition}
\newtheorem{cor}[thm]{Corollary}

\theoremstyle{definition}
\newtheorem{defn}[thm]{Definition}

\newtheorem{rem}[thm]{Remark}
\newtheorem{exam}[thm]{Example}

\newcommand{\bC}{{\mathbb{C}}}

\newcommand{\bR}{{\mathbb{R}}}

\newcommand{\A}{{\mathcal{A}}}
\newcommand{\B}{{\mathcal{B}}}
\newcommand{\C}{{\mathcal{C}}}
\newcommand{\D}{{\mathcal{D}}}

\newcommand{\F}{{\mathcal{F}}}

\newcommand{\K}{{\mathcal{K}}}
\renewcommand{\L}{{\mathcal{L}}}

\newcommand{\N}{{\mathcal{N}}}

\newcommand{\R}{{\mathcal{R}}}

\newcommand{\X}{{\mathcal{X}}}


\newcommand{\rc}{{\mathrm{c}}}



\newcommand{\fB}{{\mathfrak{B}}}

\newcommand{\qand}{\quad\text{and}\quad}

\newcommand{\lat}{\mathrm{lat}}
\newcommand{\capp}{\mathrm{cap}}

\newcommand{\BNC}{{\mathcal{BNC}}}
\newcommand{\NC}{{\mathcal{NC}}}

\usepackage{tikz}
\usetikzlibrary{shapes,snakes,calc,arrows}
\usetikzlibrary{decorations.pathreplacing,shapes.geometric}
\usetikzlibrary{calc,positioning}
\tikzset{Box/.style={very thick, rounded corners}}
\tikzset{marked/.style={star, star point height = .75mm, star points =5, fill=black,minimum size=2mm, inner sep=0mm} }
\tikzset{verythickline/.style = {line width=7pt}}
\tikzset{thickline/.style = {line width=5pt}}
\tikzset{medthick/.style = {line width=3pt}}
\tikzset{med/.style = {line width=2pt}}
\tikzset{count/.style = {fill=white,circle,draw,thin, inner sep=2pt}}
\tikzset{rcount/.style = {fill=white,rectangle,draw,thin,inner sep=2pt, rounded corners}}
\tikzset{cpr/.style = {draw,fill=white,rectangle,thin, rounded corners}}

\definecolor{ggreen}{HTML}{00BB33}

\begin{document}

\nocite{*}

\title[Conditionally bi-free independence for pairs of algebras]{Conditionally bi-free independence for pairs of algebras}

\author{Yinzheng Gu and Paul Skoufranis}

\address{Department of Mathematics and Statistics, Queen's University, Jeffery Hall, Kingston, Ontario, K7L 3N6, Canada}
\email{gu.y@queensu.ca}

\address{Department of Mathematics and Statistics, York University, 4700 Keele Street, Toronto, Ontario, M3J 1P3, Canada}
\email{pskoufra@yorku.ca}

\date{\today}
\subjclass[2010]{Primary 46L54; Secondary 46L53, 60E07, 60F05.}
\keywords{conditionally bi-free independence, conditional $(\ell, r)$-cumulants, infinite divisibility, limit theorems.}
\thanks{The work of Yinzheng Gu was partially supported by CIMI (Centre International de Math\'{e}matiques et d'Informatique) Excellence Program while visiting the Institute of Mathematics of Toulouse. He would like to thank the institute for the generous hospitality and Serban Belinschi for his constant support and valuable advice when this research was conducted.}

\begin{abstract}
In this paper, the notion of conditionally bi-free independence for pairs of algebras is introduced. The notion of conditional $(\ell, r)$-cumulants are introduced and it is demonstrated that conditionally bi-free independence is equivalent to mixed cumulants. Furthermore, limit theorems for the additive conditionally bi-free convolution are studied using both combinatorial and analytic techniques. In particular, a conditionally bi-free partial $\mathcal{R}$-transform is constructed and a conditionally bi-free analogue of the L\'{e}vy-Hin\v{c}in formula for planar Borel probability measures is derived.
\end{abstract}

\maketitle

\section{Introduction}

The basic framework in non-commutative probability theory is a pair $(\A, \psi)$, called a non-commutative probability space, where $\A$ is a (complex) unital algebra and $\psi$ is a unital linear functional on $\A$. Subalgebras of $\A$ are said have a certain independence with respect to $\psi$ if there is a specific rule of calculating the joint distributions.  There are several important notions of independence in the literature. According to \cites{M2002, M2003} there are exactly five notions of universal/natural independence: classical, free, Boolean, monotone, and anti-monotone. These notions of independence have very similar theories such as the combinatorics of cumulants and the analytic aspects of convolutions on probability measures. On the other hand, Bo\.{z}ejko, Leinert, and Speicher \cite{BLS1996} introduced conditionally free independence as a notion of independence with respect to a pair of unital linear functionals $(\varphi, \psi)$ on a unital algebra $\A$. Although mainly intended as a generalization of free independence, it turned out (see \cites{BLS1996, F2006}) that Boolean and monotone independences, especially their relative convolutions, can also be unified in terms of conditionally free independence.

Free probability for pairs of faces, or bi-free probability for short, is a generalization of free probability introduced by Voiculescu \cite{V2014} in order to study the non-commutative left and right actions of algebras on a reduced free product space simultaneously. Again, the basic framework is a non-commutative probability space $(\A, \psi)$, but the corresponding independence, called bi-free independence, is defined for pairs of subalgebras of $\A$ instead. Since its inception, bi-free probability has received a lot of attention as many old results from free probability have been extended to the bi-free setting and new results have been developed. In particular, it was noticed in \cite{V2014} that both classical and free independences can be viewed as specific cases of bi-free independence and it was noticed in \cite{S2016} that Boolean and monotone independences also occur in bi-free probability.  Thus bi-free probability is in a certain sense another unifying theory. It is then natural to combine the two mentioned unifying theories together and develop a notion of conditionally bi-free independence, which is the main focus of this paper.

This paper contains six sections, including this introduction, which are structured as follows. In Section \ref{sec:prelims}, basic notions and results from bi-free and conditionally free probability theories are recalled, with an emphasis on the combinatorial aspects.

In Section \ref{sec:c-bi-free-defns}, two notions of conditionally bi-free independence are provided.  The first arises naturally by combining the constructions of bi-free and conditionally free independences.  The second is defined as the vanishing of certain cumulants. More precisely, as bi-free independence can be characterized by the vanishing of mixed $(\ell, r)$-cumulants, and as conditionally free independence can be characterized by the vanishing of mixed free and c-free cumulants, we introduce the family of c-$(\ell, r)$-cumulants and define combinatorially c-bi-free independence as the vanishing of mixed $(\ell, r)$- and c-$(\ell, r)$-cumulants. 

In Section \ref{sec:equivalence-of-defn}, it is demonstrated that a collection of pairs of algebras is conditionally bi-free independent if and only if it is combinatorially conditionally bi-free independent.  To achieve this result, moment formulae for joint distributions for conditionally bi-free independent pairs are obtained.  These formulae are along similar lines to those obtained in \cite{CNS2015-1} and make use of a diagrammatic approach.  It is then shown that combinatorially c-bi-free independence implies a priori different moment formulae for joint distributions which are then shown to coincide with those for conditionally bi-free independence.

In Section \ref{sec:R-transform}, a conditionally bi-free partial $\R$-transform is constructed which, along with Voiculescu's bi-free partial $\R$-transform \cite{V2016}, linearize the additive c-bi-free convolution. A functional equation is also derived relating said $\R$-transform with the Cauchy transform.

Finally, Section \ref{sec:additive-limit-theorems} studies various limit theorems using both combinatorial (which relies heavily on the relations between moments and cumulants) and analytic (which uses complex analysis methods to deal with measures without any assumption on finite moments) techniques. In particular, infinite divisibility with respect to the additive c-bi-free convolution is defined and studied, and a conditionally bi-free L\'{e}vy-Hin\v{c}in formula is presented.

\section{Preliminaries}
\label{sec:prelims}

In this section, we briefly review bi-free independence and conditionally free independence, and develop notation that will be used throughout the paper.

\subsection{Bi-free independence}

Recall a \emph{pair of algebras} in a non-commutative probability space $(\A, \psi)$ is an ordered pair $(\A_\ell, \A_r)$ of unital subalgebras of $\A$. We call $\mathcal{A}_\ell$ the \emph{left algebra} and $\A_r$ the \emph{right algebra}.  A family of pairs of algebras is said to be \emph{bi-freely independent} with respect to $\psi$ if the joint distributions can be realized using non-commutative left and right actions of the algebras on reduced free product spaces (see \cite{V2014}*{Section 2} for more precision).  Below we recall the combinatorial theory of bi-free probability developed in \cites{MN2015, CNS2015-1, CNS2015-2}.

\begin{defn}
For $n \geq 1$, denote by $[n]$ the set $\{1, \dots, n\}$. Given a map $\chi: [n] \to \{\ell, r\}$ with 
\[
\chi^{-1}(\{\ell\}) = \{i_1 < \cdots < i_p\} \qand \chi^{-1}(\{r\}) = \{i_{p + 1} > \cdots > i_n\},
\]
define a permutation $s_\chi$ on $[n]$ via $s_\chi(k) = i_k$ and a total order $\prec_\chi$ on $[n]$ by
\[
i_1 \prec_\chi \cdots \prec_\chi i_p \prec_\chi i_{p + 1} \prec_\chi \cdots \prec_\chi i_n.
\]
Equivalently, for $a, b \in [n]$, $a \prec_\chi b$ if and only if $s_\chi^{-1}(a) < s_\chi^{-1}(b)$.

A partition $\pi$ of $[n]$ is said to be \emph{bi-non-crossing} with respect to $\chi$ if $s_\chi^{-1} \cdot \pi \in \NC(n)$ (the set of non-crossing partitions of $[n]$); that is, $\pi$ is a non-crossing partition on $[n]$ under the $\prec_\chi$-ordering. The set of all bi-non-crossing partitions with respect to $\chi$ is denoted by $\BNC(\chi)$ and the minimum and maximum (with respect to the order that $\pi \leq \sigma$ if $\pi$ is a refinement of $\sigma$) elements of $\BNC(\chi)$ are denoted by $0_\chi$ and $1_\chi$ respectively.
\end{defn}

Bi-non-crossing partitions corresponding to a given $\chi: [n] \to \{\ell, r\}$ can be represented diagrammatically by placing $n$ nodes labelled $1$ to $n$ on two parallel vertical transparent lines from top to bottom in increasing order with node $k$ on the left or right depending on whether $\chi(k) = \ell$ or $\chi(k) = r$, and drawing the partition in a non-crossing way between the vertical lines on the $n$ nodes. Moreover, given a bi-non-crossing partition $\pi \in \BNC(\chi)$, it is possible to draw the diagram of $\pi$ using only horizontal and vertical lines.  The vertical segment of a block $V \in \pi$ will be referred to as the \emph{spine} of $V$ and the horizontal segments connecting the nodes to the spine of $V$ will be referred to as the \emph{ribs} of $V$. We refer to \cite{CNS2015-1}*{Section 2} for more details.

\begin{defn}[\cite{MN2015}]
Let $(\A, \psi)$ be a non-commutative probability space. The family of \emph{$(\ell, r)$-cumulants} with respect to $\psi$ is the family of multilinear functionals
\[
\{\kappa_\chi: \A^n \to \mathbb{C}\}_{n \geq 1, \chi: [n] \to \{\ell, r\}}
\]
uniquely determined by the requirement that
\[
\psi(a_1\cdots a_n) = \sum_{\pi \in \BNC(\chi)}\left(\prod_{V \in \pi}\kappa_{\chi|_V}((a_1, \dots, a_n)|_V)\right)
\]
for all $n \geq 1$, $\chi: [n] \to \{\ell, r\}$, and $a_1, \dots, a_n \in \A$.
\end{defn}

Since $\BNC(\chi)$ inherits a special lattice structure from the set of all partitions of $[n]$, an equivalent formulation of the above moment-cumulant formula is given for $\pi \in \BNC(\chi)$ by
\[
\kappa_\pi(a_1, \dots, a_n) = \sum_{\substack{\sigma \in \BNC(\chi)\\\sigma \leq \pi}}\psi_\sigma(a_1, \dots, a_n)\mu_{\BNC}(\sigma, \pi)
\]
where $\psi_\sigma(a_1, \dots, a_n) = \prod_{V \in \pi} \psi\left(\prod_{i \in V} a_i\right)$ (the product in increasing order of the elements of $V$) and $\mu_{\BNC}$ is the bi-non-crossing M\"{o}bius function on the lattice of bi-non-crossing partitions. Due to the similar lattice structures, one obtains $\mu_{\BNC}(\sigma, \pi) = \mu_{\NC}(s^{-1}_\chi \cdot \sigma, s^{-1}_\chi \cdot \pi)$. See \cite{CNS2015-1}*{Section 3} for more details.

Inspired by the characterization of free independence in terms of the vanishing of mixed free cumulants, Mastnak and Nica defined a family $\{(\A_{k, \ell}, \A_{k, r})\}_{k \in K}$ of pairs of algebras in a non-commutative probability space $(\A, \psi)$ to be \emph{combinatorially bi-free} with respect to $\psi$ if for all $n \geq 2$, $\chi: [n] \to \{\ell, r\}$, $\omega: [n] \to K$, and $a_1, \dots, a_n \in \A$ with $a_i \in \A_{\omega(i), \chi(i)}$ for $1 \leq i \leq n$,
\[
\kappa_\chi(a_1, \dots, a_n) = 0
\]
whenever $\omega$ is not constant.

It was proved by Charlesworth, Nelson, and Skoufranis in \cite{CNS2015-1} that the two notions of bi-free independence are equivalent via a diagrammatic argument. Since the diagrams constructed in \cite{CNS2015-1} will be used again later, we briefly review their results.

\begin{defn}
Given a set $K$, assign a shade (or colour) to each $k \in K$. For $n \geq 1$, $\chi: [n] \to \{\ell, r\}$, and $\omega: [n] \to K$, the set $\L\R(\chi, \omega)$ of \emph{shaded $\L\R$-diagrams} is recursively constructed as follows.
\begin{enumerate}[$\qquad(1)$]
\item For $n = 1$, $\L\R(\chi, \omega)$ consists of two parallel vertical transparent lines with a single node shaded $\omega(1)$ on the left or right depending on whether $\chi(1) = \ell$ or $\chi(1) = r$. Then either this node remains isolated or a rib and spine shaded $\omega(1)$ are drawn connecting to the top of the diagram.

\item For $n \geq 2$, let $\chi_0 = \chi|_{\{2, \dots, n\}}$ and $\omega_0 = \omega|_{\{2, \dots, n\}}$. Each diagram $D \in \L\R(\chi_0, \omega_0)$ extends to two diagrams in $\L\R(\chi, \omega)$ via the following process: Add to the top of $D$ a node shaded $\omega(1)$ on the side corresponding to $\chi(1)$ and extend all spines of $D$ to the top. If at least one spine was extended and the one nearest the new node is shaded $\omega(1)$, then connect it to the node with a rib and choose to either extend the spine to the top or not. Otherwise leave the new node isolated, or connect the new node with a rib to a new spine shaded $\omega(1)$ to the top.
\end{enumerate}
For $0 \leq t \leq n$, let $\L\R_t(\chi, \omega)$ denote the subset of $\L\R(\chi, \omega)$ with exactly $t$ spines reaching the top.
\end{defn}

\begin{defn}
Given $\chi: [n] \to \{\ell, r\}$ and $\pi, \sigma \in \BNC(\chi)$ such that $\pi \leq \sigma$, the partition $\pi$ is said to be a \emph{lateral refinement} of $\sigma$, denoted $\pi \leq_\lat \sigma$, if the bi-non-crossing diagram of $\pi$ can be obtained from that of $\sigma$ by making lateral cuts along the spines of blocks of $\sigma$ between their ribs; that is, by removing some portion of the vertical lines and then any horizontal lines that are no longer attached to a vertical line in the diagram of $\sigma$.
\end{defn}

For a bi-non-crossing partition $\pi$, let $|\pi|$ denote the number of blocks of $\pi$.  Given $\omega: [n] \to K$, we may view $\omega$ as a partition of $[n]$ with blocks $\{\omega^{-1}(\{k\})\}_{k \in K}$. Thus $\sigma \leq \omega$ denotes $\sigma$ is a refinement of the partition induced by $\omega$.

The following combinatorial result and moment type characterization were established in \cite{CNS2015-1}*{Section 4} as a crucial step in connecting bi-free independence with $(\ell, r)$-cumulants.

\begin{prop}
For $n \geq 1$, $\chi: [n] \to \{\ell, r\}$, $\omega: [n] \to K$, and $\pi \in \BNC(\chi)$ such that $\pi \leq \omega$,  
\[
\sum_{\substack{\sigma \in \L\R_0(\chi, \omega)\\\sigma \geq_\lat \pi}}(-1)^{|\pi| - |\sigma|} = \sum_{\substack{\sigma \in \BNC(\chi)\\\pi \leq \sigma \leq \omega}}\mu_{\BNC}(\pi, \sigma).
\]
\end{prop}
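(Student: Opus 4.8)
The plan is to turn the asserted identity into a purely lattice-theoretic statement about $\BNC(\chi)$ and then prove it by a sign-reversing involution on chains. First I would record the correspondence, implicit in the construction of $\L\R(\chi,\omega)$, that $\L\R_0(\chi,\omega)$ is in bijection with the set $\{\sigma \in \BNC(\chi) : \sigma \le \omega\}$ of bi-non-crossing partitions whose blocks are \emph{monochromatic} with respect to $\omega$: a diagram with no spine reaching the top is determined by its block structure, and the rule governing when a new node may attach to the nearest spine is exactly the requirement that blocks be non-crossing and monochromatic. Under this identification the left-hand side becomes $\sum (-1)^{|\pi|-|\sigma|}$ over $\sigma \in \BNC(\chi)$ with $\sigma \le \omega$ and $\pi \le_\lat \sigma$, so the whole statement reads as an identity internal to the lattice $\BNC(\chi)$ relating lateral refinement to the Möbius function, restricted to the order ideal $\{\sigma \le \omega\}$.

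Next I would rewrite the right-hand side using Philip Hall's theorem: $\mu_{\BNC}(\pi,\sigma) = \sum_k (-1)^k c_k(\pi,\sigma)$, where $c_k(\pi,\sigma)$ counts strictly increasing chains $\pi = \tau_0 < \cdots < \tau_k = \sigma$ in $\BNC(\chi)$. Summing over all $\sigma \le \omega$ and noting that a chain lies below $\omega$ as soon as its top does, the right-hand side becomes the signed count
\[
\sum_{\substack{\pi = \tau_0 < \cdots < \tau_k \text{ in } \BNC(\chi)\\ \tau_k \le \omega}} (-1)^{k}
\]
over all strictly increasing chains starting at $\pi$ whose top satisfies $\tau_k \le \omega$. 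The advantage of this reformulation is that the constraint $\le \omega$ becomes harmless: every term of such a chain refines its top, and a refinement of a monochromatic partition is again monochromatic, so membership in the order ideal is automatically inherited and never obstructs the manipulations below.

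The core of the argument is then a sign-reversing involution on the set of these chains. Fixing once and for all a canonical linear order on the elementary lateral merges available to a bi-non-crossing partition (say, innermost-then-leftmost along each spine), I would scan a chain for the first step that is either non-lateral (a coarsening not realizable by healing cuts between ribs of a single spine), or that performs a lateral merge out of canonical order, or that bundles several merges at once; at that location I either split the offending step into two or amalgamate it with its neighbour, toggling the length by one and hence reversing the sign. The fixed points are precisely the saturated lateral chains performing their merges in canonical order, and these are in bijection with the lateral coarsenings $\sigma \ge_\lat \pi$ with $\sigma \le \omega$, each such chain having length $|\pi| - |\sigma|$ and therefore contributing $(-1)^{|\pi|-|\sigma|}$. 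This reproduces the left-hand side and completes the proof.

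The main obstacle is the precise construction of the involution and the verification that it is well defined and genuinely an involution: I must check that splitting or amalgamating at the first offending location always yields a valid chain of bi-non-crossing partitions, that the intermediate partitions created by a split remain bi-non-crossing (this is where the non-crossing constraint and the geometry of spines and ribs enter), and that the toggle returns the distinguished location to the same place so that the map squares to the identity. A secondary subtlety is the tie-breaking: without a fixed canonical order for the lateral merges, a single lateral coarsening $\sigma$ would be reached by many saturated lateral chains of equal sign and would be overcounted, so the involution must also cancel all non-canonical orderings of lateral merges, leaving exactly one chain per $\sigma$. An alternative route, avoiding the explicit involution, is induction on $n$ driven by the recursive construction of $\L\R(\chi,\omega)$: one strengthens the statement to the diagrams $\L\R_t(\chi,\omega)$ with $t$ spines reaching the top, peels off the last-added node, and matches the resulting case split against the first-block recursion for $\mu_{\BNC}$, the delicate point being the bookkeeping of the parameter $t$ through the recursion.
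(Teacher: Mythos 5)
Your argument collapses at its very first step, and the failure is fatal to everything that follows. The claimed bijection between $\L\R_0(\chi,\omega)$ and $\{\sigma \in \BNC(\chi) : \sigma \leq \omega\}$ is false. The map sending a diagram in $\L\R_0(\chi,\omega)$ to its induced partition is indeed injective and lands in that set, but it is not surjective: the recursive construction of $\L\R$-diagrams contains a \emph{forcing} rule (if at least one spine is active and the spine nearest the newly added node carries that node's shade, the node \emph{must} attach to it), and this rule excludes some bi-non-crossing partitions with monochromatic blocks. Concretely, take $n = 3$, $\chi \equiv \ell$, $\omega \equiv k$ constant, and $\pi = \{\{1,3\},\{2\}\}$. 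This $\pi$ is non-crossing and satisfies $\pi \leq \omega$, yet it is not realizable as an $\L\R$-diagram: to place $1$ and $3$ in one block, node $3$ must start a spine that is still active when node $2$ is added, and since that spine is shaded $\omega(2) = k$, node $2$ is forced to join it. Hence $\L\R_0(\chi,\omega)$ has only four elements (the diagrams inducing $0_\chi$, $\{\{1,2\},\{3\}\}$, $\{\{1\},\{2,3\}\}$, and $1_\chi$), while there are five bi-non-crossing partitions below $\omega$. The rule is strictly stronger than ``non-crossing and monochromatic,'' and that discrepancy is the entire content of the proposition, not a bookkeeping convenience.

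Worse, the lattice-theoretic statement you reduce to is itself false, so no sign-reversing involution on chains can rescue the plan. With the same data and $\pi = \{\{1,3\},\{2\}\}$, your reformulated left-hand side $\sum (-1)^{|\pi|-|\sigma|}$, taken over $\sigma \in \BNC(\chi)$ with $\sigma \leq \omega$ and $\sigma \geq_\lat \pi$, equals $1$: the only partitions above $\pi$ are $\pi$ and $1_\chi$, and $1_\chi$ does not laterally refine to $\pi$, since cutting the single spine of $1_\chi$ between its ribs can never separate the middle node $2$ from both $1$ and $3$ while keeping $1$ and $3$ together. The right-hand side is $\mu_{\BNC}(\pi,\pi) + \mu_{\BNC}(\pi,1_\chi) = 1 - 1 = 0$. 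The actual proposition gives $0 = 0$ here precisely because $\pi \notin \L\R_0(\chi,\omega)$; replacing diagrams by partitions changes the answer. Note also that this paper does not prove the statement at all — it quotes it from \cite{CNS2015-1}, where the argument is an induction riding on the recursive construction of the diagrams (together with the reduction to the all-left case via the changing and swapping operations). That is essentially the ``alternative route'' you mention in your final sentence, but as written it is only a one-sentence sketch; to have a proof you would need to develop it, including the bookkeeping of the number $t$ of spines reaching the top through the recursion.
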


\begin{thm}\label{BFMomentChar}
A family $\{(\A_{k, \ell}, \A_{k, r})\}_{k \in K}$ of pairs of algebras in a non-commutative probability space $(\A, \psi)$ is bi-free with respect to $\psi$ if and only if for all $n \geq 1$, $\chi: [n] \to \{\ell, r\}$, $\omega: [n] \to K$, and $a_1, \dots, a_n \in \A$ with $a_i \in \A_{\omega(i), \chi(i)}$ for $1 \leq i \leq n$,
\[
\psi(a_1\cdots a_n) = \sum_{\pi \in \BNC(\chi)}\left[\sum_{\substack{\sigma \in \BNC(\chi)\\\pi \leq \sigma \leq \omega}}\mu_{\BNC}(\pi, \sigma)\right]\psi_\pi(a_1, \dots, a_n).
\]

Equivalently, the family $\{(\A_{k, \ell}, \A_{k, r})\}_{k \in K}$ is bi-free with respect to $\psi$ if and only if it is combinatorially bi-free with respect to $\psi$.
\end{thm}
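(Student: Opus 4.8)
The statement packages three conditions: operatorial bi-freeness (realizability of the joint distribution via left and right actions on a reduced free product), the displayed moment formula, and combinatorial bi-freeness. The plan is to prove the equivalence of the moment formula and combinatorial bi-freeness by a direct M\"{o}bius inversion, and then to tie either of them to the operatorial definition through the diagrammatic analysis of the reduced free product, where the preceding Proposition supplies the decisive identity.

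First I would show that combinatorial bi-freeness is equivalent to the moment formula. Assuming the mixed $(\ell, r)$-cumulants vanish, in the defining expansion $\psi(a_1\cdots a_n) = \sum_{\pi \in \BNC(\chi)}\prod_{V \in \pi}\kappa_{\chi|_V}((a_1, \dots, a_n)|_V)$ every partition possessing a block on which $\omega$ is non-constant contributes a zero factor, so only those $\pi$ with $\pi \leq \omega$ survive. Substituting the M\"{o}bius expression $\prod_{V \in \pi}\kappa_{\chi|_V}((a_1, \dots, a_n)|_V) = \sum_{\sigma \in \BNC(\chi),\, \sigma \leq \pi}\psi_\sigma(a_1, \dots, a_n)\mu_{\BNC}(\sigma, \pi)$, interchanging the two summations, and relabelling the indices reproduces the displayed formula, the outer index becoming $\pi$ and the coefficient of $\psi_\pi$ becoming $\sum_{\sigma \in \BNC(\chi),\, \pi \leq \sigma \leq \omega}\mu_{\BNC}(\pi, \sigma)$. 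For the converse I would argue by induction on $n$: isolating the top term $\kappa_\chi(a_1, \dots, a_n)$ in the moment-cumulant expansion, using the inductive vanishing of the lower mixed cumulants to retain only blocks with $\pi \leq \omega$, and comparing with the hypothesis (which, via the formal identity just used, equals $\sum_{\pi \leq \omega}\prod_V \kappa_{\chi|_V}$) forces $\kappa_\chi(a_1, \dots, a_n) = 0$ whenever $\omega$ is non-constant.

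Next I would connect the operatorial definition to the moment formula. For an operatorially bi-free family I would compute $\psi(a_1\cdots a_n)$ by letting the alternating word of left and right operators act on the vacuum of the reduced free product and tracking the resulting configurations. After centering the operators, the vacuum state is supported exactly on the shaded $\L\R$-diagrams with no spine reaching the top, that is, on $\L\R_0(\chi, \omega)$, and each such diagram $D$ contributes, up to a sign arising from the inclusion-exclusion of the centering, the product $\psi_{\pi(D)}(a_1, \dots, a_n)$ indexed by its block partition $\pi(D)$. Collecting these contributions according to the underlying partition $\pi$ turns the coefficient of $\psi_\pi$ into the signed diagram count $\sum_{\sigma \in \L\R_0(\chi, \omega),\, \sigma \geq_{\lat} \pi}(-1)^{|\pi| - |\sigma|}$, which by the Proposition equals $\sum_{\sigma \in \BNC(\chi),\, \pi \leq \sigma \leq \omega}\mu_{\BNC}(\pi, \sigma)$; this is precisely the displayed formula. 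For the reverse implication I would invoke universality of joint distributions: since the displayed formula (equivalently combinatorial bi-freeness, where only monochromatic blocks survive) expresses all joint moments in terms of the distributions of the individual pairs, and since any prescribed family of pair-distributions can be realized by an operatorially bi-free family on a reduced free product, the given family and its bi-free realization share the same joint moments and hence the same joint distribution; as operatorial bi-freeness is a property of the joint distribution, the given family is bi-free.

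The two M\"{o}bius steps are routine bookkeeping and the universality argument is standard once the forward direction is in hand. The main obstacle is the diagrammatic moment computation for operatorially bi-free families: one must verify that only the $\L\R_0$ diagrams survive in the vacuum state, that the centering produces exactly the sign $(-1)^{|\pi| - |\sigma|}$, and that the lateral-refinement relation $\geq_{\lat}$ is the correct indexing so that the signed diagram count matches the hypothesis of the Proposition. This careful analysis of the reduced free product, carried out in \cite{CNS2015-1}, is where essentially all of the work resides.
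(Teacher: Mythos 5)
Your proposal is correct and takes essentially the same route as the proof this paper relies on: the theorem is recalled from \cite{CNS2015-1} rather than reproved, and your three steps --- the M\"{o}bius-inversion equivalence between the displayed moment formula and vanishing of mixed $(\ell,r)$-cumulants, the vacuum computation in the reduced free product whose contributions are indexed by laterally refined $\L\R_0$-diagrams with signs converted into M\"{o}bius sums by the stated Proposition, and the universality/realization argument for the converse --- are precisely the structure of that proof (mirrored by the paper's own c-bi-free analogue in Section \ref{sec:equivalence-of-defn}). The only caveat, which you yourself flag, is that the substantive work is the diagrammatic identity for words acting on the vacuum, carried out in \cite{CNS2015-1}.
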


\subsection{Conditionally free independence}\label{CfreeConst}

The notion of conditionally free independence was introduced in \cite{BLS1996}.  Given a family of unital $*$-algebras $\{\A_k\}_{k \in K}$ such that each $\A_k$ is equipped with a pair of states $(\varphi_k, \psi_k)$ and $\A_k$ is decomposes as $\A_k = \mathbb{C}1 \oplus \A_k^\circ$ with $\A_k^\circ = \ker(\psi_k)$, consider the algebraic free product $\A = *_{k \in K}\A_k$, which can be identified as a vector space with $\mathbb{C}1 \oplus \A^\circ$ where
\[
\A^\circ = \bigoplus_{n \geq 1}\left(\bigoplus_{k_1 \neq \cdots \neq k_n}\A_{k_1}^\circ \otimes \cdots \otimes \A_{k_n}^\circ\right).
\]
The \emph{conditionally free product} (or \emph{c-free product} for short) of the pairs of states $\{(\varphi_k, \psi_k)\}_{k \in K}$ is defined as $(\varphi, \psi) = *_{k \in K}(\varphi_k, \psi_k)$ where $\psi = *_{k \in K}\psi_k$ is the free product state of the states $\{\psi_k\}_{k \in K}$ and $\varphi = *_{k \in K}\{\varphi_k, \psi_k\}$ is the linear functional on $\A$ such that $\varphi(1) = 1$ and
\[
\varphi(a_1 \otimes \cdots \otimes a_n) = \varphi_{k_1}(a_1)\cdots\varphi_{k_n}(a_n)
\]
for $a_1 \otimes \cdots \otimes a_n \in \A_{k_1}^\circ \otimes \cdots \otimes \A_{k_n}^\circ$ with $k_1 \neq \cdots \neq k_n$. The triple $(\A, \varphi, \psi)$ is called the \emph{c-free product} of the family $\{(\A_k, \varphi_k, \psi_k)\}_{k \in K}$. As shown in \cite{BLS1996}*{Theorem 2.2}, the unital linear functional $\varphi$ is also a state on $\A$.  Thus $(\A, \varphi, \psi)$ is referred to as a \emph{two-state non-commutative probability space}. 

In the general case where each $\A_k$ is simply a unital algebra and $\varphi_k$, $\psi_k$ are unital linear functionals on $\A_k$, we can still construct the c-free product $(\A, \varphi, \psi) = *_{k \in K}(\A_k, \varphi_k, \psi_k)$ except now $\varphi$ and $\psi$ are just unital linear functionals on $\A$. By an abuse of terminology, any triple $(\A, \varphi, \psi)$ such that $\A$ is a unital algebra and $\varphi$, $\psi$ are unital linear functionals on $\A$ will be referred to as a two-state non-commutative probability space in the sequel. 

\begin{defn}
Let $(\A, \varphi, \psi)$ be a two-state non-commutative probability space. A family $\{\A_k\}_{k \in K}$ of unital subalgebras of $\A$ is said to be \emph{conditionally free} (or \emph{c-free} for short) with respect to $(\varphi, \psi)$ if
\begin{enumerate}[$\qquad(1)$]
\item $\psi(a_1\cdots a_n) = 0$,

\item $\varphi(a_1\cdots a_n) = \varphi(a_1)\cdots\varphi(a_n)$
\end{enumerate}
whenever $a_i \in \A_{k_i}$, $k_i \in K$, $k_1 \neq \cdots \neq k_n$, and $\psi(a_i) = 0$ for all $1 \leq i \leq n$.
\end{defn}

It is well-known that a family is c-free if and only if it can be represented using left regular representations on a conditionally free product space.  Observe also that c-free independence implies free independence and the two notions of independence coincide when $\varphi = \psi$.  Furthermore, if $\psi|_{\A^\circ_k} = 0$ for all $k$, then c-free independence with respect to $(\varphi, \psi)$ is equivalent to Boolean independence (a notion of independence introduced by Speicher and Woroudi in \cite{SW1997}) with respect to $\varphi$.

On the level of cumulants, it is clear that c-free independence implies the vanishing of mixed free cumulants, but these two assertions are not equivalent because the free cumulants, which are defined using $\psi$ only, provide no information about the moments with respect to $\varphi$. Thus for a full characterization of c-free independence another family of cumulants is required (see \cite{BLS1996}*{Section 3}). These cumulants use non-crossing partitions that are divided into two types.

\begin{defn}
Given $\pi \in \NC(n)$, a block $V$ of $\pi$ is said to be \emph{inner} if there exists another block $W$ of $\pi$ and $a, b \in W$ such that $a < v < b$ for some (hence all) $v \in V$. A block of $\pi$ is said to be \emph{outer} if it is not inner.
\end{defn}

\begin{defn}
Let $(\A, \varphi, \psi)$ be a two-state non-commutative probability space. The family of \emph{conditionally free cumulants} (or \emph{c-free cumulants} for short) with respect to $(\varphi, \psi)$ is the family of multilinear functionals
\[
\{\K_n: \A^n \to \mathbb{C}\}_{n \geq 1}
\]
uniquely determined by the requirement that
\[
\varphi(a_1\cdots a_n) = \sum_{\pi \in \NC(n)}\left(\prod_{\substack{V \in \pi\\V\,\mathrm{inner}}}\kappa_{|V|}((a_1, \dots, a_n)|_V)\right)\left(\prod_{\substack{V \in \pi\\V\,\mathrm{outer}}}\K_{|V|}((a_1, \dots, a_n)|_V)\right)
\]
for all $n \geq 1$ and $a_1, \dots, a_n \in \A$ where $\left\{\kappa_n: \A^n \to \mathbb{C}\right\}_{n \geq 1}$ denotes the family of free cumulants with respect to $\psi$.
\end{defn}

\begin{thm}[\cite{BLS1996}*{Theorem 3.1}]
A family $\{\A_k\}_{k \in K}$ of unital subalgebras in a two-state non-commutative probability space $(\A, \varphi, \psi)$ is c-free with respect to $(\varphi, \psi)$ if and only if
\[
\kappa_n(a_1, \dots, a_n) = \K_n(a_1, \dots, a_n) = 0
\]
whenever $n \geq 2$, $a_i \in \A_{k_i}$, $k_i \in K$, and there exist $i$ and $j$ such that $k_i \neq k_j$.
\end{thm}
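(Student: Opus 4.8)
The statement involves two families of cumulants, and the cleanest plan is to treat the free cumulants $\kappa_n$ and the c-free cumulants $\K_n$ separately. For the free cumulants, note that condition (1) in the definition of c-freeness is exactly the assertion that $\{\A_k\}_{k\in K}$ is freely independent with respect to $\psi$; since free independence is already known to be equivalent to the vanishing of mixed free cumulants, ``$\kappa_n(a_1,\dots,a_n)=0$ for non-constant colours'' holds precisely when (1) holds. Thus I would reduce the theorem to two implications about $\K_n$ and condition (2), in each of which the mixed $\kappa_n$ may be assumed to vanish: (a) vanishing of the mixed $\K_n$ yields the factorization (2), and (b) c-freeness forces the mixed $\K_n$ to vanish.

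For (a), take $a_i\in\A_{k_i}$ with $k_1\neq k_2\neq\cdots\neq k_n$ and $\psi(a_i)=0$, and expand $\varphi(a_1\cdots a_n)$ using the defining moment--cumulant formula for $\K_n$. As every mixed $\kappa$ and mixed $\K$ vanishes, only partitions $\pi\in\NC(n)$ with all blocks monochromatic survive; and since $\kappa_1(a_i)=\psi(a_i)=0$, any $\pi$ with an inner singleton also drops out. I would then run an extremal (minimal-gap) argument: if $\pi$ had a block $V$ of size $\geq 2$, its two smallest elements $v_1<v_2$ would be non-adjacent (consecutive colours differ), so some index lies strictly between them and, by non-crossing, its block lies strictly inside $(v_1,v_2)$; that block is again monochromatic of size $\geq 2$ (an enclosed singleton would be inner), so choosing $V$ with $v_2-v_1$ minimal gives a contradiction. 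Hence the only surviving partition is the one into singletons, all of which are outer, contributing $\prod_i\K_1(a_i)=\prod_i\varphi(a_i)$, which is exactly (2).

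For (b), I would first reduce to $\psi$-centered arguments: using multilinearity together with the standard fact that $\kappa_n$ and $\K_n$ vanish once an entry equals $1$ (for $n\geq 2$), replacing each $a_i$ by $a_i-\psi(a_i)1$ leaves all relevant cumulants unchanged, so we may assume $\psi(a_i)=0$. I would then induct on $n$, isolating $\K_n(a_1,\dots,a_n)$ as the single-block term of the moment--cumulant formula for $\varphi(a_1\cdots a_n)$; by the induction hypothesis (and freeness for the free cumulants) every proper mixed block and every inner singleton contributes $0$, leaving
\[
\varphi(a_1\cdots a_n)=\K_n(a_1,\dots,a_n)+\sum_{\pi}\Big(\prod_{\substack{V\in\pi\\V\,\mathrm{inner}}}\kappa_{|V|}\Big)\Big(\prod_{\substack{V\in\pi\\V\,\mathrm{outer}}}\K_{|V|}\Big),
\]
where the sum runs over all $\pi\in\NC(n)$ other than the single-block partition whose blocks are monochromatic and which have no inner singleton. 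When the colours are alternating, the extremal argument of (a) collapses this sum to $\prod_i\varphi(a_i)$, while c-freeness gives $\varphi(a_1\cdots a_n)=\prod_i\varphi(a_i)$, whence $\K_n=0$.

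The main obstacle is the non-alternating case, in which some $k_i=k_{i+1}$ and c-freeness does not directly factor $\varphi(a_1\cdots a_n)$. Here I would instead establish, by a separate induction on $n$, the full c-free moment formula for an arbitrary word: that for a c-free family $\varphi(a_1\cdots a_n)$ equals the sum over all $\pi\in\NC(n)$ with monochromatic blocks of the associated products of inner free and outer c-free cumulants of the individual algebras. The proof uses the center-and-reduce recursion --- absorbing adjacent equal-coloured letters into a single element of their common algebra and splitting off its $\psi$-expectation --- and matches this recursion term by term against the corresponding decomposition of the non-crossing partition sum; the delicate point, and the place I expect the real work to lie, is the bookkeeping of which blocks become inner versus outer under these reductions. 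Once the moment formula is established, the moment--cumulant correspondence is a bijection, so the cumulants reproducing these moments must be exactly the monochromatic ones; this forces every mixed $\K_n$ (and $\kappa_n$) to vanish, simultaneously recovering the alternating case and completing (b).
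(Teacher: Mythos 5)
The paper never proves this statement --- it is quoted from \cite{BLS1996}*{Theorem 3.1} as background --- so there is no internal proof to compare against; the closest analogue in the paper is Theorem \ref{EquivCBF}, which is proved through explicit moment formulas. Judged on its own terms, most of your proposal is correct: identifying condition $(1)$ with freeness with respect to $\psi$ and invoking Speicher's theorem for the $\kappa_n$'s, the minimal-gap argument in (a) (a monochromatic non-crossing block of size at least two must nest a strictly smaller one, and inner singletons die because $\kappa_1(a_i)=\psi(a_i)=0$), the centering reduction, and the alternating case of (b) are all sound. The genuine gap is the non-alternating case of (b), which is the crux of the forward implication: you invoke the full c-free moment formula for arbitrary words --- that $\varphi(a_1\cdots a_n)$ equals the sum over monochromatic $\pi\in\NC(n)$ of products of inner $\kappa$'s and outer $\K$'s --- but offer only a description of a ``center-and-reduce'' induction whose decisive step (the inner/outer bookkeeping) you explicitly do not carry out. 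That formula is essentially equivalent to the theorem itself (once mixed cumulants vanish it is immediate), so assuming it leaves the hardest part of the proof unestablished; your concluding uniqueness argument is fine, but only after the formula is in hand.

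A concrete and cleaner way to close the gap --- mirroring what the paper does for c-$(\ell,r)$-cumulants in Section \ref{sec:equivalence-of-defn} --- is to first prove the product formula, i.e.\ the all-left specialization of Lemma \ref{lem:multiplication-in-cumulants-weak}: for $\pi \in \NC(n-1)$,
\[
\K_{\pi}(a_1, \dots, a_{q-1}, a_qa_{q+1}, a_{q+2}, \dots, a_n) = \sum_{\substack{\sigma \in \NC(n) \\ \sigma|_{q = q+1} = \pi}} \K_\sigma(a_1, \dots, a_n),
\]
a short self-contained induction requiring no independence assumption. Then, if $k_i = k_{i+1}$ and the word is mixed, expand $\varphi(a_1\cdots a_n)$ twice: once as a word of length $n$, and once as a word of length $n-1$ with $a_ia_{i+1}$ as a single letter of $\A_{k_i}$. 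By the induction hypothesis (mixed $\kappa$ and $\K$ of order $<n$ vanish) the first expansion equals $\K_n(a_1,\dots,a_n) + \sum_{\pi \leq \omega} \K_\pi(a_1,\dots,a_n)$, where $\omega$ denotes the colour partition; in the second expansion the one-block term is a mixed cumulant of order $n-1<n$ and vanishes, leaving $\sum_{\rho \leq \omega'} \K_\rho$ over monochromatic partitions of the shorter word. Applying the product formula to each $\K_\rho$, and noting that $\sigma|_{i=i+1}$ is monochromatic if and only if $\sigma$ is (because $k_i = k_{i+1}$), the second expansion becomes $\sum_{\sigma \leq \omega}\K_\sigma(a_1,\dots,a_n)$; subtracting the two expansions gives $\K_n(a_1,\dots,a_n) = 0$, since the one-block partition of $[n]$ is not $\leq \omega$. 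This replaces your unproven moment formula with a lemma that is provable in a few lines and is already established by the paper in greater generality.
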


\section{Definitions of conditionally bi-free independence}
\label{sec:c-bi-free-defns}

In this section,  the notion of conditionally bi-free independence for pairs of algebras in a two-state non-commutative probability space and conditional $(\ell, r)$-cumulants are introduced. 

\subsection{Free products of two-state vector spaces}  We being with a modification of Voiculescu's construction for bi-free independence in terms of actions on reduced free product spaces.

\begin{defn}
A \emph{two-state vector space with a specified state-vector} is a quadruple $(\X, \X^\circ, \xi, \varphi)$ where $\X$ is a vector space, $\X^\circ \subset \X$ is a subspace of co-dimension $1$, $0 \neq \xi \in \X$ is a vector such that $\X = \mathbb{C}\xi \oplus \X^\circ$ , and $\varphi: \X \to \mathbb{C}$ is a linear functional such that $\varphi(\xi) = 1$.
\end{defn}

If $(\X, \X^\circ, \xi, \varphi)$ is a two-state vector space with a specified state-vector, then the triple $(\X, \X^\circ, \xi)$ consisting of the first three elements is referred to as a \emph{vector space with a specified state-vector}. For such a triple, there is another linear functional $\psi: \X \to \mathbb{C}$ defined by $\psi(x)\xi = \mathfrak{p}(x)$ for every $x \in \X$ where $\mathfrak{p}: \X \to \X$ is the projection such that $\mathfrak{p}(\xi) = \xi$ and $\ker(\mathfrak{p}) = \X^\circ$. Note that $\psi(\xi) = 1$, $\ker(\psi) = \X^\circ$, and it is possible that $\ker(\varphi) \neq \X^\circ$. 

Given a vector space $\X$, we denote by $\L(\X)$ the algebra of linear operators on $\X$. In the case of a two-state vector space with a specified state-vector $(\X, \X^\circ, \xi, \varphi)$, there are two states $\varphi_\xi, \psi_\xi: \L(\X) \to \mathbb{C}$ defined by $\varphi_\xi(T) = \varphi(T\xi)$ and $\psi_\xi(T) = \psi(T\xi)$ for all $T \in \L(\X)$. Since $\varphi_\xi(I) = \psi_\xi(I) = 1$ for the identity operator $I$ on $\X$, the triple $(\L(\X), \varphi_\xi, \psi_\xi)$ is a two-state non-commutative probability space.

Given a family of vector spaces with specified state-vectors $\{(\X_k, \X^{\circ}_k, \xi_k)\}_{k \in K}$, the \emph{free product} $(\X, \X^\circ, \xi) = *_{k \in K}(\X_k, \X^{\circ}_k, \xi_k)$ is defined by
\[
\X = \mathbb{C}\xi \oplus \X^\circ\quad\mathrm{with}\quad\X^\circ = \bigoplus_{n \geq 1}\left(\bigoplus_{k_1 \neq \cdots \neq k_n}\X^{\circ}_{k_1} \otimes \cdots \otimes \X^{\circ}_{k_n}\right).
\]
In the case of a family of two-state vector spaces with specified state-vectors $\{(\X_k, \X^\circ_k, \xi_k, \varphi_k)\}_{k \in K}$, we define their \emph{c-free product} as in Subsection \ref{CfreeConst} by $(\X, \X^\circ, \xi, \varphi) = *_{k \in K}(\X_k, \X^\circ_k, \xi_k, \varphi_k)$ where $(\X, \X^\circ, \xi)$ is same as above and $\varphi: \X \to \bC$ is the linear functional uniquely determined by the requirement that $\varphi(\xi) = 1$ and $\varphi(x_1 \otimes \cdots \otimes x_n) = \varphi_{k_1}(x_1)\cdots\varphi_{k_n}(x_n)$ for $x_1 \otimes \cdots \otimes x_n \in \X_{k_1}^\circ \otimes \cdots \otimes \X_{k_n}^\circ$ with $k_1 \neq \cdots \neq k_n$. 

For every $k \in K$, let
\[
\X(\ell, k) = \mathbb{C}\xi \oplus \bigoplus_{n \geq 1}\left(\bigoplus_{\substack{k_1 \neq \cdots \neq k_n\\k_1 \neq k}}\X^{\circ}_{k_1} \otimes \cdots \otimes \X^{\circ}_{k_n}\right)\quad\mathrm{and}\quad\X(r, k) = \mathbb{C}\xi \oplus \bigoplus_{n \geq 1}\left(\bigoplus_{\substack{k_1 \neq \cdots \neq k_n\\k_n \neq k}}\X^{\circ}_{k_1} \otimes \cdots \otimes \X^{\circ}_{k_n}\right).
\]
There are natural identifications $V_k: \X_k \otimes \X(\ell, k) \to \X$ and $W_k: \X(r, k) \otimes \X_k \to \X$. Consequently, the algebra $\L(\X_k)$ has a left representation $\lambda_k$ and a right representation $\rho_k$ on $\L(\X)$ given by 
\[
\lambda_k(T) = V_k(T \otimes I_{\X(\ell, k)})V_k^{-1}\quad\mathrm{and}\quad\rho_k(T) = W_k(I_{\X(r, k)} \otimes T)W_k^{-1}
\]
for every $T \in \L(\X_k)$.

\subsection{Conditionally bi-free independence}

Like Voiculescu's definition of bi-free independence, conditionally bi-free independence is defined via an equality of joint distributions.

\begin{defn}
If $\Gamma = \{(\A_{k, \ell}, \A_{k, r})\}_{k \in K}$ is a family of pairs of algebras in a two-state non-commutative probability space $(\A, \varphi, \psi)$, then its \emph{joint pair-distribution} $(\mu_\Gamma, \nu_\Gamma)$ consists of the unital linear functionals
\[
\mu_\Gamma, \nu_\Gamma: *_{k \in K}(\A_{k, \ell} * \A_{k, r}) \to \mathbb{C}
\]
defined by $\mu_\Gamma = \varphi \circ \tau$ and $\nu_\Gamma = \psi \circ \tau$ where $\tau: *_{k \in K}(\A_{k, \ell} * \A_{k, r}) \to \A$ is the unital homomorphism such that $\tau|_{\A_{k, \ell}}(x) = x$ and $\tau|_{\A_{k, r}}(y) = y$ for all $k \in K$, $x \in \A_{k, \ell}$, and $y \in \A_{k, r}$.
\end{defn}

\begin{defn}
If $\hat{a} = ((a_i)_{i \in I}, (a_j)_{j \in J})$ is a two-faced family in a two-state non-commutative probability space $(\A, \varphi, \psi)$, then its \emph{pair-distribution} $(\mu_{\hat{a}}, \nu_{\hat{a}})$ consists of the unital linear functionals
\[
\mu_{\hat{a}}, \nu_{\hat{a}}: \mathbb{C}\langle X_k : k \in I \sqcup J\rangle \to \mathbb{C}
\]
defined by $\mu_{\hat{a}} = \varphi \circ \tau$ and $\nu_{\hat{a}} = \psi \circ \tau$ where $\tau: \mathbb{C}\langle X_k : k \in I \sqcup J\rangle \to \A$ is the unital homomorphism such that $\tau(X_k) = a_k$ for every $k \in I \sqcup J$.
\end{defn}

\begin{defn}\label{CBFDefn}
A family $\Gamma = \{(\A_{k, \ell}, \A_{k, r})\}_{k \in K}$ of pairs of algebras in a two-state non-commutative probability space $(\A, \varphi, \psi)$ is said to be \emph{conditionally bi-freely independent} (or \emph{c-bi-free} for short) with respect to $(\varphi, \psi)$ if there is a family of two-state vector spaces with specified state-vectors $\{(\X_k, \X^\circ_k, \xi_k, \varphi_k)\}_{k \in K}$ and unital homomorphisms
\[
\ell_k: \A_{k, \ell} \to \L(\X_k) \qand r_k: \A_{k, r} \to \L(\X_k)
\]
such that the joint pair-distribution $(\mu_\Gamma, \nu_\Gamma)$ of $\Gamma$ is equal to the joint pair-distribution of the family
\[
\widetilde{\Gamma} = \{(\lambda_k \circ \ell_k(\A_{k, \ell}), \rho_k \circ r_k(\A_{k, r}))\}_{k \in K}
\]
in $(\L(\X), \varphi_\xi, \psi_\xi)$ where $(\X, \X^\circ, \xi, \varphi) = *_{k \in K}(\X_k, \X^\circ_k, \xi_k, \varphi_k)$.
\end{defn}

A priori the definition of c-bi-free independence may not be well-defined as it must be demonstrated that the joint pair-distribution does not depend on the particular choice of $\{(\X_k, \X^\circ_k, \xi_k, \varphi_k)\}_{k \in K}$. One direct way to achieve this is to use morphisms of conditionally reduced free product spaces along the lines used to show bi-free independence is well-defined in \cite{V2014}.  Instead, the fact that Definition \ref{CBFDefn} is well-defined follows directly from Theorem \ref{thm:distributions-c-bi-free} which explicitly computes the joint pair-distributions independent of which representations are used.

\begin{exam}
For the same motivation as in \cite{BLS1996}, let $\{G_k\}_{k \in K}$ be a family of discrete groups, $G = *_{k \in K}G_k$ be their free product, and $\{\mathbb{C}[G_k]\}_{k \in K}$ and $\mathbb{C}[G]$ be the group algebras of $\{G_k\}_{k \in K}$ and $G$ respectively. Suppose each $\mathbb{C}[G_k]$ is endowed with a pair of states $(\varphi_k, \psi_k)$ such that $\varphi_k(e_k) = 1$ and $\psi_k$ is the von Neumann trace on $\mathbb{C}[G_k]$; that is,
\[
\psi_k\left(\sum_{g \in G_k}\alpha_g\cdot g\right) = \alpha_{e_k}
\]
where $e_k$ denotes the identity element of $G_k$. Let $(\varphi, \psi) = *_{k \in K}(\varphi_k, \psi_k)$ be the c-free product of $\{(\varphi_k, \psi_k)\}_{k \in K}$ on $\mathbb{C}[G]$ and define $\varphi_e, \psi_e: \L(\mathbb{C}[G]) \to \mathbb{C}$ by $\varphi_e(T) = \varphi(Te)$ and $\psi_e(T) = \psi(Te)$ for $T \in \L(\mathbb{C}[G])$ where $e$ denotes the identity element of $G$. Let further $L: \mathbb{C}[G] \to \L(\mathbb{C}[G])$ and $R: \mathbb{C}[G]^{\mathrm{op}} \to \L(\mathbb{C}[G])$ be the left and right regular representations of $\mathbb{C}[G]$ and $\mathbb{C}[G]^{\mathrm{op}}$ into $\L(\mathbb{C}[G])$ respectively, and $L_k = L|_{\mathbb{C}[G_k]}$, $R_k = R|_{\mathbb{C}[G_k]^{\mathrm{op}}}$ for $k \in K$. The family $\{(L_k(\mathbb{C}[G_k]), R_k(\mathbb{C}[G_k]^{\mathrm{op}}))\}_{k \in K}$ is c-bi-free in $(\L(\mathbb{C}[G]), \varphi_e, \psi_e)$ with respect to $(\varphi_e, \psi_e)$. Indeed, for $k \in K$, we can choose $\X_k = \mathbb{C}[G_k]$, $\X_k^\circ = \ker(\psi_k)$, $\xi_k = e_k$, and the rest of the arguments are exactly the same as those presented in \cite{V2014}*{Example 6.1}.
\end{exam}

Since Theorem \ref{thm:distributions-c-bi-free} shows the joint pair-distribution of a c-bi-free family is completely determined by the pair-distributions of the individual pairs, it is possible to define the following.

\begin{defn}
If $(\hat{a}, \hat{b}) = ((a_i)_{i \in I}, (b_j)_{j \in J})$ and $(\hat{c}, \hat{d}) = ((c_i)_{i \in I}, (d_j)_{j \in J})$ are c-bi-free two-faced families in a two-state non-commutative probability space $(\A, \varphi, \psi)$ with pair-distributions $(\mu_{(\hat{a}, \hat{b})}, \nu_{(\hat{a}, \hat{b})})$ and $(\mu_{(\hat{c}, \hat{d})}, \nu_{(\hat{c}, \hat{d})})$ respectively, then the \emph{additive}, \emph{multiplicative}, and \emph{additive-multiplicative c-bi-free convolutions} of $(\mu_{(\hat{a}, \hat{b})}, \nu_{(\hat{a}, \hat{b})})$ and $(\mu_{(\hat{c}, \hat{d})}, \nu_{(\hat{c}, \hat{d})})$ are defined to be the pair-distributions of the two-faced families
\begin{align*}
(\hat{a} + \hat{c}, \hat{b} + \hat{d}) & = ((a_i + c_i)_{i \in I}, (b_j + d_j)_{j \in J}), \\
(\hat{a}\hat{c}, \hat{b}\hat{d}) & = ((a_ic_i)_{i \in I}, (b_jd_j)_{j \in J}), \text{ and} \\
(\hat{a} + \hat{c}, \hat{b}\hat{d}) &= ((a_i + c_i)_{i \in I}, (b_jd_j)_{j \in J})
\end{align*}
respectively. These operations are denoted $\boxplus\boxplus_{\rc}$, $\boxtimes\boxtimes_{\rc}$, and $\boxplus\boxtimes_{\rc}$ respectively.
\end{defn}

The additive c-bi-free convolution will be studied extensively in Section \ref{sec:additive-limit-theorems}.

\subsection{Combinatorial c-bi-free independence and c-$(\ell, r)$-cumulants}

Due to the combinatorial structures for the c-free cumulants and $(\ell, r)$-cumulants, it is natural to hypothesize that the desired cumulants should be given by summing over bi-non-crossing partitions with a distinction on the blocks. To this end, we need the following analogues of inner and outer blocks.

\begin{defn}
Given a $\chi : [n] \to \{\ell, r\}$ and a $\pi \in \BNC(\chi)$, a block $V$ of $\pi$ is said to be \emph{interior} if there exists another block $W$ of $\pi$ such that
\[
\min_{\prec_\chi}(W) \prec_\chi \min_{\prec_\chi}(V)\quad \mathrm{and}\quad \max_{\prec_\chi}(V) \prec_\chi \max_{\prec_\chi}(W),
\]
where $\min_{\prec_\chi}$ and $\max_{\prec_\chi}$ denote the minimum and maximum elements with respect to $\prec_\chi$ respectively. A block of $\pi$ is said to be \emph{exterior} if it is not interior.
\end{defn}

Note that if $\chi$ is constant, then every bi-non-crossing partition is a non-crossing partition on $[n]$ and interior and exterior blocks are inner and outer blocks respectively. The c-$(\ell, r)$-cumulants can now be recursively defined as follows using both $\varphi$ and $\psi$.

\begin{prop}
Let $(\A, \varphi, \psi)$ be a two-state non-commutative probability space. There exists a family of multilinear functionals
\[
\left\{\K_\chi: \A^n \to \mathbb{C}\right\}_{n \geq 1, \chi: [n] \to \{\ell, r\}}
\]
uniquely determined by the requirement that
\begin{equation}\label{CBiFreeMomentCumulant}
\varphi(a_1\cdots a_n) = \sum_{\pi \in \BNC(\chi)}\left(\prod_{\substack{V \in \pi\\V\,\mathrm{interior}}}\kappa_{\chi|_V}((a_1, \dots, a_n)|_V)\right)\left(\prod_{\substack{V \in \pi\\V\,\mathrm{exterior}}}\K_{\chi|_V}((a_1, \dots, a_n)|_V)\right)
\end{equation}
for all $n \geq 1$, $\chi: [n] \to \{\ell, r\}$, and $a_1, \dots, a_n \in \A$ where $\left\{\kappa_\chi: \A^n \to \mathbb{C}\right\}_{n \geq 1, \chi: [n] \to \{\ell, r\}}$ denotes the family of $(\ell, r)$-cumulants with respect to $\psi$.
\end{prop}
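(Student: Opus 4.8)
The plan is to define the functionals $\K_\chi$ by induction on $n$ and to solve \eqref{CBiFreeMomentCumulant} for $\K_\chi$ one order at a time, in the spirit of the classical Möbius-inversion argument that moments determine cumulants. The crucial structural observation is that among all $\pi \in \BNC(\chi)$, the maximal partition $1_\chi$ consists of the single block $[n]$, and this block is exterior: no other block can contain it. Hence the top-order functional $\K_\chi(a_1, \dots, a_n)$ (with index set $[n]$) appears on the right-hand side of \eqref{CBiFreeMomentCumulant} exactly once, in the $\pi = 1_\chi$ term, and with coefficient $1$ (there the empty product over interior blocks contributes $1$). Every other $\pi \in \BNC(\chi)$ has at least two blocks, so each of its blocks $V$ satisfies $|V| < n$, and the factors $\kappa_{\chi|_V}$ and $\K_{\chi|_V}$ attached to such a $\pi$ involve only strictly lower-order data.

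First I would treat the base case $n = 1$: here $\BNC(\chi)$ has a single element whose unique block is exterior, so \eqref{CBiFreeMomentCumulant} reads $\varphi(a_1) = \K_\chi(a_1)$, forcing $\K_\chi(a_1) = \varphi(a_1)$. For the inductive step, suppose $\K_{\chi'}$ has been defined (and shown unique and multilinear) for every $\chi'\colon [m] \to \{\ell, r\}$ with $m < n$. Isolating the $\pi = 1_\chi$ term in \eqref{CBiFreeMomentCumulant} and transposing the remainder yields the explicit recursion
\[
\K_\chi(a_1, \dots, a_n) = \varphi(a_1\cdots a_n) - \sum_{\substack{\pi \in \BNC(\chi)\\ \pi \neq 1_\chi}}\left(\prod_{\substack{V \in \pi\\ V\,\mathrm{interior}}}\kappa_{\chi|_V}((a_1, \dots, a_n)|_V)\right)\left(\prod_{\substack{V \in \pi\\ V\,\mathrm{exterior}}}\K_{\chi|_V}((a_1, \dots, a_n)|_V)\right).
\]
On the right-hand side, the $\kappa_{\chi|_V}$ are the already-recalled Mastnak--Nica $(\ell, r)$-cumulants with respect to $\psi$, and each $\K_{\chi|_V}$ has $|V| < n$ and so is supplied by the induction hypothesis. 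This display simultaneously establishes existence and uniqueness, since it pins down the value of $\K_\chi(a_1, \dots, a_n)$ with no remaining freedom.

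To finish I would verify multilinearity. Because the blocks of any $\pi$ partition $[n]$, each argument $a_i$ occurs in exactly one factor of each product above, so every summand is multilinear in $(a_1, \dots, a_n)$; combined with the linearity of $\varphi$, the multilinearity of the $\kappa_{\chi|_V}$, and the inductively assumed multilinearity of the $\K_{\chi|_V}$, the recursion exhibits $\K_\chi$ as a multilinear functional on $\A^n$.

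The main obstacle---really the only point requiring care---is the structural claim that the single block of $1_\chi$ is exterior and that no partition other than $1_\chi$ reproduces the top-order functional; this is precisely what makes the recursion non-circular and well-founded, reducing the proposition to a triangular inversion. A secondary point worth checking is that \emph{interior} and \emph{exterior}, being defined through the order $\prec_\chi$ rather than the natural order on $[n]$, behave correctly under restriction to a block $V$, so that each $\K_{\chi|_V}$ appearing in \eqref{CBiFreeMomentCumulant} is genuinely the lower-order cumulant associated with $\chi|_V$.
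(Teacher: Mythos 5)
Your proposal is correct and follows essentially the same route as the paper's proof: isolate the $\pi = 1_\chi$ term (whose single block is exterior), set $\K_{\chi}=\varphi$ in the base case $n=1$, and define $\K_\chi$ recursively by transposing the sum over $\pi \neq 1_\chi$, which yields existence and uniqueness simultaneously. The additional checks you note (multilinearity and the good behaviour of interior/exterior under restriction to blocks) are left implicit in the paper but are consistent with its argument.
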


\begin{proof}
For every $n \geq 1$ and $\chi: [n] \to \{\ell, r\}$, the partition $1_\chi \in \BNC(\chi)$ contains only one block, which is exterior. For $\chi_\ell: [1] \to \{\ell\}$ and $\chi_r: [1] \to \{r\}$ define $\K_{\chi_\ell} = \K_{\chi_r} = \varphi$, and recursively define
\[
\K_\chi(a_1, \dots, a_n) = \varphi(a_1\cdots a_n) - \sum_{\substack{\pi \in \BNC(\chi)\\\pi \neq 1_\chi}}\left(\prod_{\substack{V \in \pi\\V\,\mathrm{interior}}}\kappa_{\chi|_V}((a_1, \dots, a_n)|_V)\right)\left(\prod_{\substack{V \in \pi\\V\,\mathrm{exterior}}}\K_{\chi|_V}((a_1, \dots, a_n)|_V)\right)
\]
for all $n \geq 2$, $\chi: [n] \to \{\ell, r\}$, and $a_1, \dots, a_n \in \A$.
\end{proof}

\begin{defn}
The functionals from the family $\left\{\K_\chi: \A^n \to \mathbb{C}\right\}_{n \geq 1, \chi: [n] \to \{\ell, r\}}$ defined above will be referred to as the \emph{conditional $(\ell, r)$-cumulants} (or \emph{c-$(\ell, r)$-cumulants} for short) with respect to $(\varphi, \psi)$. For notational simplicity, for $\pi \in \BNC(\chi)$ define
\[
\K_{\pi}(a_1, \dots, a_n) = \left(\prod_{\substack{V \in \pi\\V\,\mathrm{interior}}}\kappa_{\chi|_V}((a_1, \dots, a_n)|_V)\right)\left(\prod_{\substack{V \in \pi\\V\,\mathrm{exterior}}}\K_{\chi|_V}((a_1, \dots, a_n)|_V)\right)
\]
so that equation \eqref{CBiFreeMomentCumulant} becomes
\[
\varphi(a_1\cdots a_n) = \sum_{\pi \in \BNC(\chi)}\K_{\pi}(a_1, \dots, a_n).
\]
\end{defn}

\begin{defn}
A family $\Gamma = \{(\A_{k, \ell}, \A_{k, r})\}_{k \in K}$ of pairs of algebras in a two-state non-commutative probability space $(\A, \varphi, \psi)$ is said to be \emph{combinatorially c-bi-free} with respect to $(\varphi, \psi)$ if for all $n \geq 2$, $\chi: [n] \to \{\ell, r\}$, $\omega: [n] \to K$, and $a_1, \dots, a_n \in \A$ with $a_i \in \A_{\omega(i), \chi(i)}$ for $1 \leq i \leq n$, we have
\[
\kappa_\chi(a_1, \dots, a_n) = \K_\chi(a_1, \dots, a_n) = 0
\]
whenever $\omega$ is not constant.
\end{defn}

\section{Equivalence of c-bi-free and combinatorial c-bi-free independence}
\label{sec:equivalence-of-defn}

The main goal of this section is to prove the following.
\begin{thm}\label{EquivCBF}
A family $\Gamma = \{(\A_{k, \ell}, \A_{k, r})\}_{k \in K}$ of pairs of algebras in a two-state non-commutative probability space $(\A, \varphi, \psi)$ is c-bi-free with respect to $(\varphi, \psi)$ if and only if it is combinatorially c-bi-free with respect to $(\varphi, \psi)$.
\end{thm}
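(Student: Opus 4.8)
The plan is to prove the equivalence by computing both joint pair-distributions explicitly and matching the resulting moment formulae, following the strategy used for the purely bi-free equivalence in \cite{CNS2015-1}. Throughout, fix $n \geq 1$, $\chi : [n] \to \{\ell, r\}$, $\omega : [n] \to K$, and elements $a_i \in \A_{\omega(i), \chi(i)}$, and abbreviate the representation family as $\widetilde{\Gamma}$ with joint pair-distribution $(\mu_{\widetilde{\Gamma}}, \nu_{\widetilde{\Gamma}})$.

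First I would dispose of the $\psi$-side. The functional $\nu_{\widetilde{\Gamma}} = \psi_\xi \circ \widetilde{\tau}$ depends only on the vector-space free product $(\X, \X^\circ, \xi) = *_{k \in K}(\X_k, \X^\circ_k, \xi_k)$ together with the state-vector projection defining $\psi_\xi$, which is exactly Voiculescu's reduced free product construction for bi-free independence. Hence the $\psi$-marginal of a c-bi-free family is the joint distribution of a bi-free family with respect to $\psi$, and by Theorem \ref{BFMomentChar} this is equivalent to the vanishing of all mixed $(\ell, r)$-cumulants $\kappa_\chi$. This settles one of the two cumulant conditions in the definition of combinatorial c-bi-freeness and simultaneously pins down $\nu_{\widetilde{\Gamma}}$.

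The substance is therefore the $\varphi$-side: the claim that matching $\mu_\Gamma = \mu_{\widetilde{\Gamma}} = \varphi_\xi \circ \widetilde{\tau}$ is equivalent to the vanishing of the mixed c-$(\ell, r)$-cumulants $\K_\chi$. My approach is to compute $\mu_{\widetilde{\Gamma}}$ diagrammatically. Decomposing each operator $\lambda_k(\ell_k(a_i))$ or $\rho_k(r_k(a_i))$ into a creation part and a diagonal part and tracking the action of their product on the state-vector $\xi$ through the tensor structure of $\X$, the nonzero contributions to $\varphi_\xi$ organize themselves along the shaded $\L\R$-diagrams and hence over $\{\pi \in \BNC(\chi) : \pi \leq \omega\}$. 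The essential new feature relative to \cite{CNS2015-1} is the role of the two functionals: a block whose spine is absorbed inside the diagram, namely an \emph{interior} block, contributes a scalar through $\psi_k$, whereas a block whose spine survives to be read off by the state-vector at the top, namely an \emph{exterior} block, contributes through $\varphi_k$. This is precisely the interior/exterior dichotomy built into \eqref{CBiFreeMomentCumulant}, so the computation yields the explicit moment formula of Theorem \ref{thm:distributions-c-bi-free}, of the shape
\[
\mu_{\widetilde{\Gamma}}(a_1 \cdots a_n) = \sum_{\substack{\pi \in \BNC(\chi)\\ \pi \leq \omega}} c_\pi \, \Phi_\pi(a_1, \dots, a_n),
\]
where $\Phi_\pi$ applies $\varphi$ to exterior blocks and $\psi$ to interior blocks and the $c_\pi$ are M\"{o}bius-type coefficients.

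Finally I would show the combinatorial definition reproduces the same formula. Assuming all mixed $\kappa_\chi$ and $\K_\chi$ vanish and substituting into the moment-cumulant relation \eqref{CBiFreeMomentCumulant} restricts the sum to $\pi \leq \omega$; re-expanding the surviving interior and exterior cumulants back into block moments of $\psi$ and $\varphi$ then produces an a priori different moment formula. Identifying it with the one above is a coefficient comparison carried out via the lateral-refinement/M\"{o}bius identity of the Proposition preceding Theorem \ref{BFMomentChar}, applied separately to the exterior spines (governed by $\varphi$, hence handled by the c-free outer-block bookkeeping of \cite{BLS1996}) and to the nested interior blocks (governed by $\psi$). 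Once the two formulae coincide, both implications follow: a c-bi-free family satisfies the formula, so inverting the moment-cumulant relations forces its mixed cumulants to vanish; conversely a combinatorially c-bi-free family satisfies the same formula for both $\mu$ and $\nu$, so its pair-distribution is realized by the representations. I expect the main obstacle to be exactly this coefficient comparison on the $\varphi$-side: correctly accounting for which spines reach the top forces one to track the interior/exterior status of every block throughout the $\L\R$-diagram recursion, and the presence of two functionals breaks the single-state symmetry that kept the pure bi-free bookkeeping uniform.
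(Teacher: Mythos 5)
Your outline reproduces the paper's high-level strategy---disposing of the $\psi$-side via Theorem \ref{BFMomentChar}, computing the $\varphi$-moments of a c-bi-free family diagrammatically, and then matching coefficients against the combinatorial expansion---but the two steps that carry the actual weight are respectively wrong and missing. The central error is your identification of ``block whose spine survives to the top'' with ``exterior block.'' In the correct moment formula \eqref{CBFPhiMoment} of Theorem \ref{thm:distributions-c-bi-free}, whether a block contributes through $\varphi$ or $\psi$ is \emph{diagram} data, not a function of the partition: the sum runs over pairs $(\pi,\iota)$ induced by diagrams in $\L\R^{\lat\capp}(\chi,\omega)$ as in Remark \ref{rem:pi-coming-from-D}, and exterior blocks carrying $\psi$ occur with nonzero coefficient. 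Concretely, take $n=3$, $\chi\equiv\ell$, $\omega=(k_1,k_2,k_1)$ with $k_1\neq k_2$; then c-bi-freeness (here just c-freeness) forces
\[
\varphi(a_1a_2a_3) = \psi(a_2)\varphi(a_1a_3) + \varphi(a_1)\varphi(a_2)\varphi(a_3) - \varphi(a_1)\psi(a_2)\varphi(a_3),
\]
and in the last term the singleton $\{2\}$ is an \emph{exterior} block of $0_\chi$ yet contributes through $\psi$. No formula of your proposed shape $\sum_{\pi\leq\omega}c_\pi\Phi_\pi$, with the functional on each block dictated by the interior/exterior dichotomy, can produce this term, since the only products such a formula admits here are $\psi(a_2)\varphi(a_1a_3)$ and $\varphi(a_1)\varphi(a_2)\varphi(a_3)$. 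The reason the heuristic fails is that applying $\varphi$ to the free-product expansion \eqref{FreeProdMoment} does not read $\varphi$ off the surviving spines: each tensor factor has the form $(1-\mathfrak{p})a\cdots\xi$, so every spine reaching the top contributes the \emph{mixture} $\varphi_\xi(\cdot)-\psi_\xi(\cdot)$. Untangling these mixtures is precisely what the paper's capping operation, the set $\L\R^{\lat\capp}(\chi,\omega)$, and the recursive coefficients $C'_D$ of Lemma \ref{lem:recursive-moment-formula} (with closed form in the lemma following it) are for; none of this machinery, which is the genuinely new content beyond \cite{CNS2015-1}, appears in your proposal.

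The second gap is the coefficient comparison itself. Because labelings such as $(e,i,e)$ above occur, the comparison cannot be run ``separately on exterior spines and nested interior blocks'': the $\varphi$-blocks and $\psi$-blocks are coupled through the diagram geometry. The paper's mechanism is different: Lemma \ref{ChangeCoeff} shows the coefficients $c(\chi,\omega;\pi,\iota)$ depend only on lattice data, so one verifies the identification with the diagram coefficients in the all-left case---where it \emph{is} the c-free theorem of \cite{BLS1996}---and then propagates it to arbitrary $\chi$ by induction along the changing and swapping operations of Definition \ref{defn:transition-operations}; the swapping step requires the case analysis on piled, tangled, and separated blocks that occupies most of the proof of Lemma \ref{lem:c-c-b-free-implies-c-b-free}, and it is here that one also handles pairs $(\pi,\iota)$ corresponding to no diagram (whose coefficients must be shown to vanish). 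Finally, your one-line treatment of the direction ``c-bi-free implies combinatorially c-bi-free'' (``inverting the moment--cumulant relations'') hides a step as well: the paper first builds an auxiliary two-state probability space of non-commutative polynomials whose mixed $(\ell,r)$- and c-$(\ell,r)$-cumulants vanish by construction and whose single-pair distributions match those of $\Gamma$, shows both families satisfy the same moment formulae, and only then concludes equality of all cumulants; without some such argument the vanishing of mixed c-$(\ell,r)$-cumulants does not follow formally from the moment formula alone.
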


\subsection{A moment formula for c-bi-free independence}
Our first goal is to explicitly describe the joint pair-distributions of c-bi-free families.  To do so, we note \cite{CNS2015-2} generalized bi-free probability theory to an amalgamated setting over a unital algebra $\B$ from which we will make use of the following definitions and results for the special case $\B = \bC$.

\begin{defn}
Let $n \geq 1$, $\chi: [n] \to \{\ell, r\}$, and $\omega: [n] \to K$. For $0 \leq t \leq n$, let $\L\R_t^\lat(\chi, \omega)$ denote all diagrams that can be obtained from $\L\R_t(\chi, \omega)$ under lateral refinement (i.e., cutting spines that do not reach the top). Note every diagram in $\L\R_t^\lat(\chi, \omega)$ still has $t$ spines reaching the top. For $D \in \L\R_t^\lat(\chi, \omega)$ and $D' \in \L\R_t(\chi, \omega)$, write $D' \geq_\lat D$ if $D$ can be obtained by laterally refining $D'$. Moreover, let
\[
\L\R^\lat(\chi, \omega) = \bigcup_{t = 0}^n\L\R_t^\lat(\chi, \omega).
\]
\end{defn}

\begin{defn}\label{DefnED}
Let $\{(\X_k, \X^\circ_k, \xi_k, \varphi_k)\}_{k \in K}$ be a family of two-state vector spaces with specified state-vectors, let $(\X, \X^\circ, \xi, \varphi) = *_{k \in K}(\X_k, \X^\circ_k, \xi_k, \varphi_k)$, and let $\lambda_k$ and $\rho_k$ be the left and right representations of $\L(\X_k)$ on $\L(\X)$ respectively. Moreover, let $n \geq 1$, $\chi: [n] \to \{\ell, r\}$, $\omega: [n] \to K$, and $a_i \in \L(\X_{\omega(i)})$ for $1 \leq i \leq n$. Define $\mu_i(a_i) = \lambda_{\omega(i)}(a_i)$ if $\chi(i) = \ell$ and $\mu_i(a_i) = \rho_{\omega(i)}(a_i)$ if $\chi(i) = r$.

For each $D \in \L\R^\lat(\chi, \omega)$, define $E_D(\mu_1(a_1), \dots, \mu_n(a_n))$ as follows. View $D$ as a partition of $[n]$ with blocks $V_1, \dots, V_p, W_1, \dots, W_q$ where $V_1, \dots, V_p$ are blocks with spines that do not reach the top and $W_1, \dots, W_q$ are blocks with spines that reach the top ordered from left to right. Then $E_D(\mu_1(a_1), \dots, \mu_n(a_n))$ will be a product of scalar terms with one vector term; one scalar from each $V_i$ and the vector from all of the $W_j$. For each $V_i$ write $V_i = \{s_{1, i} < s_{2, i} < \cdots < s_{r_i, i}\}$.  Then $V_i$ contributes the scalar
\[
\psi_\xi(\mu_{s_{1,i}}(a_{s_{1, i}})\cdots \mu_{s_{r_i,i}}(a_{s_{r_i, i}})) = \psi(a_{s_{1, i}}\cdots a_{s_{r_i, i}})
\]
to the product (as all elements of $V_i$ share the same colour). On the other hand, write each $W_j$ as $W_j = \{s_{1, j} < s_{2, j} < \cdots < s_{r_j, j}\}$.  Then $W_1, \dots, W_q$ contribute the vector
\[
[(1 - \mathfrak{p}_{\omega(s_{1, 1})})a_{s_{1, 1}}\cdots a_{s_{r_1, 1}}\xi_{\omega(s_{1, 1})}] \otimes \cdots \otimes [(1 - \mathfrak{p}_{\omega(s_{1, q})})a_{s_{1, q}}\cdots a_{s_{r_q, q}}\xi_{\omega(s_{1, q})}]
\]
to the product. If $q = 0$ (that is, $D \in \L\R_0^\lat(\chi, \omega)$), multiply the product by $\xi$.
\end{defn}

Under the above assumptions and notation, it was demonstrated in \cite{CNS2015-2}*{Lemma 7.1.3} that
\begin{equation}\label{FreeProdMoment}
\mu_1(a_1)\cdots\mu_n(a_n)\xi = \sum_{t = 0}^n\sum_{D \in \L\R_t^\lat(\chi, \omega)}\left[\sum_{\substack{D' \in \L\R_t(\chi, \omega)\\D' \geq_\lat D}}(-1)^{|D| - |D'|}\right]E_D(\mu_1(a_1), \dots, \mu_n(a_n)),
\end{equation}
where $|D|$ is the number of blocks in the partition induced by $D$. For later purposes, note that we can re-define $|D|$ as
\[
|D| = (\text{number of blocks of }D) + t
\]
where $t$ denotes the number of spines of $D$ that reach the top.

In \cite{CNS2015-2} only the terms with $D \in \L\R_0^\lat(\chi, \omega)$ in equation \eqref{FreeProdMoment} mattered as the focus was on $\psi$.  To obtain the necessary information for $\varphi$, further diagrams and notation will be required.

\begin{defn}
Let $n \geq 1$, $\chi: [n] \to \{\ell, r\}$, $\omega: [n] \to K$, $0 \leq t \leq n$, and $D \in \L\R^\mathrm{lat}_t(\chi, \omega)$. A diagram $D'$ is said to be a \emph{capping} of $D$, denoted $D \geq_\mathrm{cap} D'$, if $D' = D$ or $D'$ can be obtained from $D$ by removing spines from $D$ that reach the top. Let $\L\R^{\mathrm{latcap}}_m(\chi, \omega)$ denote the set of all diagrams with $m$ spines reaching the top that can be obtained by capping some $D \in \L\R^\mathrm{lat}_t(\chi, \omega)$ with $t \geq m$.
\end{defn}

Note that capping elements of $\L\R(\chi, \omega)$ need not produce elements of $\L\R(\chi, \omega)$ nor $\L\R^\lat(\chi, \omega)$. However, if we denote
\[
\L\R^{\lat\capp}(\chi, \omega) = \bigcup_{t = 0}^n\L\R^{\lat\capp}_t(\chi, \omega),
\]
then $\L\R^{\lat\capp}(\chi, \omega)$ is closed under both lateral refinement and capping. Consequently, we can extend the partial orders $\geq_{\lat}$ and $\geq_{\capp}$ to this set.
 
\begin{defn}
Let $t \geq m$, $D \in \L\R_t(\chi, \omega)$, and $D' \in \L\R^{\lat\capp}_m(\chi, \omega)$. We say that $D$ \emph{laterally caps to} $D'$, denoted $D \geq_{\lat\capp} D'$ if there exists $D'' \in \L\R^\lat_t(\chi, \omega)$ such that $D \geq_\lat D''$ and $D'' \geq_\capp D'$.
\end{defn}

Note that an alternative approach to the above definitions is to permit lateral refinements to cutting spines that reach the top.  We will use the above approach as our techniques will involve first laterally refining, and then capping.

For $D \in \L\R^{\lat\capp}_t(\chi, \omega)$ let
\[
|D| = (\text{number of blocks of }D) + t
\]
and define $E_D(\mu_1(a_1), \dots, \mu_n(a_n))$ via Definition \ref{DefnED}. Note that, unlike $E_{D'}(\mu_1(a_1), \dots, \mu_n(a_n))$ for $D' \in \L\R^{\lat}_t(\chi, \omega)$, it is not necessarily true that $E_D(\mu_1(a_1), \dots, \mu_n(a_n)) \in \X$ as such diagrams may have spines reaching the top which do not alternate in colour.  Furthermore, if $E_D(\mu_1(a_1), \dots, \mu_n(a_n)) = X_1 \otimes \cdots \otimes X_q$, let
\[
\varphi(E_D(\mu_1(a_1), \dots, \mu_n(a_n))) = \varphi(X_1)\cdots\varphi(X_q).
\]
Observe that although it is possible $X_1 \otimes \cdots \otimes X_q \notin \X^\circ$, it is still true that every $X_j$ belongs to some $\X_{k_j}^\circ$, and thus the above expression makes sense.

Finally for $D \in \L\R^{\lat\capp}(\chi, \omega)$, define $\varphi_D(a_1, \dots, a_n)$ as follows.   View $D$ as a partition of $[n]$ with blocks $V_1, \dots, V_p, W_1, \dots, W_q$ where $V_1, \dots, V_p$ are blocks with spines that do not reach the top and $W_1, \dots, W_q$ are blocks with spines that reach the top ordered from left to right. Then $\varphi_D(a_1, \dots, a_n)$ is a product of scalar terms; one from each $V_i$ and $W_j$. Each $V_i = \{s_{1, i} < s_{2, i} < \cdots < s_{r_i, i}\}$ contributes the scalar
\[
\psi_\xi(\mu_{s_{1,i}}(a_{s_{1, i}})\cdots \mu_{s_{r_i,i}}(a_{s_{r_i, i}})) = \psi(a_{s_{1, i}}\cdots a_{s_{r_i, i}})
\]
to the product and each $W_j = \{s_{1, j} < s_{2, j} < \cdots < s_{r_j, j}\}$ contributes the scalar
\[
\varphi_\xi(\mu_{s_{1,i}}(a_{s_{1, i}})\cdots \mu_{s_{r_i,i}}(a_{s_{r_i, i}})) = \varphi(a_{s_{1, j}}\cdots a_{s_{r_j, j}})
\]
(as all elements of the same block share the same colour).

In that which follows, the elements of a c-bi-free family will be identified as operators acting on the appropriate spaces via some fixed representation in Definition \ref{CBFDefn}.

\begin{lem}
\label{lem:recursive-moment-formula}
Let $\Gamma = \{(\A_{k, \ell}, \A_{k, r})\}_{k \in K}$ be a c-bi-free family of pairs of algebras in a two-state non-commutative probability space $(\A, \varphi, \psi)$. If $n \geq 1$, $\chi : [n] \to \{\ell, r\}$, $\omega : [n] \to K$, and $a_i \in \A_{\omega(i), \chi(i)}$ for all $1 \leq i \leq n$, then
\[
\varphi(a_1\cdots a_n) = \sum^n_{t = 0} \sum_{D \in \L\R^{\lat\capp}_t(\chi, \omega)}C'_D\varphi_D(a_1, \dots, a_n),
\]
where $C'_D$ is an integer-valued coefficient recursively defined as follows: For $D \in \L\R^{\lat\capp}_t(\chi, \omega)$, define
\[
C_D = \begin{cases}
\displaystyle\sum_{\substack{D' \in \L\R_t(\chi, \omega)\\D' \geq_\lat D}}(-1)^{|D| - |D'|} &\text{if } D \in \L\R^{\lat}_t(\chi, \omega)\\
0 &\text{otherwise }
\end{cases}.
\]
Recursively, starting with $t = n$, define
\[
C'_D = C_D - \sum^n_{m = t + 1}\sum_{\substack{D' \in \L\R^{\lat\capp}_m(\chi, \omega)\\D' \geq_\capp D}}C'_{D'}.
\]
\end{lem}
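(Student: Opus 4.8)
The plan is to realize the left-hand side through the fixed representation supplied by c-bi-freeness and then apply the functional $\varphi$ to the free-product expansion \eqref{FreeProdMoment}. Since $\Gamma$ is c-bi-free, Definition \ref{CBFDefn} permits us to identify $a_i$ with $\mu_i(a_i) \in \L(\X)$, so that $\varphi(a_1\cdots a_n) = \varphi_\xi(\mu_1(a_1)\cdots\mu_n(a_n)) = \varphi(\mu_1(a_1)\cdots\mu_n(a_n)\xi)$. Applying the linear functional $\varphi$ to both sides of \eqref{FreeProdMoment} and distributing over the sum yields
\[
\varphi(a_1\cdots a_n) = \sum_{t=0}^n\sum_{D\in\L\R_t^\lat(\chi,\omega)}C_D\,\varphi\big(E_D(\mu_1(a_1),\dots,\mu_n(a_n))\big),
\]
where $C_D$ is precisely the bracketed coefficient of \eqref{FreeProdMoment} and vanishes off $\L\R^\lat$. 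For $D\in\L\R_t^\lat$ the vector $E_D$ lies in $\X$ and its top-reaching tensor factors alternate in colour, so $\varphi(E_D)$ splits as a product of local functionals.

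The key computation is to evaluate $\varphi(E_D)$ block by block; write $a_V$ for the product of the (represented) entries of a block $V$ taken in increasing order. Each non-top-reaching block $V_i$ already contributes the scalar $\psi(a_{V_i})$, exactly as in Definition \ref{DefnED}. Each top-reaching block $W_j$ of colour $k_j$ contributes $\varphi_{k_j}(X_j)$ with $X_j = (1-\mathfrak{p}_{k_j})a_{W_j}\xi_{k_j}$. Writing $a_{W_j}\xi_{k_j} = X_j + \psi(a_{W_j})\xi_{k_j}$ and using $\varphi_{k_j}(\xi_{k_j})=1$ together with the single-colour identity $\varphi_{k_j}(a_{W_j}\xi_{k_j}) = \varphi(a_{W_j})$ (the same reasoning that identifies the $V_i$-scalars in Definition \ref{DefnED}) gives $\varphi_{k_j}(X_j) = \varphi(a_{W_j}) - \psi(a_{W_j})$. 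Hence
\[
\varphi(E_D) = \prod_{i}\psi(a_{V_i})\prod_{j=1}^{t}\big(\varphi(a_{W_j}) - \psi(a_{W_j})\big).
\]
Expanding the product over the $t$ top-reaching blocks, choosing the term $-\psi(a_{W_j})$ for a block $W_j$ corresponds precisely to capping that spine, and the resulting summand equals $\varphi_{D'}(a_1,\dots,a_n)$ for the capping $D'$ of $D$ that retains exactly the selected spines. This produces $\varphi(E_D) = \sum_{D'\,:\,D\geq_\capp D'}(-1)^{t-m}\varphi_{D'}(a_1,\dots,a_n)$, where $m$ is the number of spines of $D'$ reaching the top.

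Substituting this into the displayed moment formula and collecting terms by the capped diagram $D'$ gives $\varphi(a_1\cdots a_n) = \sum_{D'\in\L\R^{\lat\capp}(\chi,\omega)}\widetilde{C}_{D'}\varphi_{D'}$ with $\widetilde{C}_{D'} = \sum_{D\in\L\R^\lat,\,D\geq_\capp D'}(-1)^{t(D)-m}C_D$, writing $t(D)$ for the number of spines of $D$ reaching the top. The remaining, and most delicate, step is to identify $\widetilde{C}_{D'}$ with the recursively defined $C'_{D'}$. Rearranging the defining recursion produces the zeta-type relation $C_D = \sum_{D''\geq_\capp D}C'_{D''}$, the $m'=t$ term being $C'_D$ itself since capping cannot increase the number of top spines. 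Because distinct top-reaching spines can be capped independently of one another, every interval of the capping poset is Boolean, whence its Möbius function is $(-1)^{t(D'')-t(D)}$. Möbius inversion of the zeta relation then yields $C'_D = \sum_{D''\geq_\capp D}(-1)^{t(D'')-t(D)}C_{D''} = \widetilde{C}_D$, using that $C_{D''}$ vanishes off $\L\R^\lat$. This establishes $\widetilde{C}_{D'} = C'_{D'}$ and hence the asserted formula. I expect the principal obstacle to be this last identification: one must keep the vector/scalar split and the colours straight in the evaluation of $\varphi(E_D)$, and one must verify that the capping intervals are genuinely Boolean so that the Möbius computation applies.
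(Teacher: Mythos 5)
Your proof is correct, and it reorganizes the argument in a way that differs noticeably from the paper's. The paper starts from the same point (realize the $a_i$ via the representation, apply $\varphi$ to equation \eqref{FreeProdMoment}), but its key identity runs in the opposite direction from yours: for $D \in \L\R^{\lat}_t(\chi,\omega)$ it shows $\sum_{D' \leq_\capp D}\varphi(E_{D'}) = \varphi_D$, and then performs an iterative replacement procedure from $t = n$ downward --- each $\varphi(E_D)$ is swapped for $\varphi_D$ at the cost of subtracting $C'_D$ from the coefficients of all proper cappings of $D$ --- so that the recursion defining $C'_D$ is exactly the bookkeeping of that procedure, and no M\"{o}bius inversion is ever invoked. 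You instead expand $\varphi(E_D) = \prod_i \psi(a_{V_i})\prod_j\bigl(\varphi(a_{W_j}) - \psi(a_{W_j})\bigr)$ into the inclusion--exclusion identity $\varphi(E_D) = \sum_{D' \leq_\capp D}(-1)^{t(D)-t(D')}\varphi_{D'}$, collect coefficients, and then match the resulting explicit coefficient with the recursively defined $C'_{D'}$ by inverting the zeta relation $C_D = \sum_{D'' \geq_\capp D} C'_{D''}$ over the capping poset; the Boolean-interval claim you flag does hold, precisely because $\L\R^{\lat\capp}(\chi,\omega)$ is closed under capping and distinct subsets of removed spines give distinct diagrams. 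Both routes rest on the same two facts (the single-colour computation $\varphi_k\bigl((1-\mathfrak{p}_k)a_W\xi_k\bigr) = \varphi(a_W) - \psi(a_W)$, and the independence of cappings of distinct spines); what yours buys is that it avoids the paper's somewhat procedural ``modify and continue'' bookkeeping and produces, as a byproduct, the closed-form expression $C'_D = \sum_{E \in \L\R^{\lat}(\chi,\omega),\, E \geq_\capp D}(-1)^{t(E)-t(D)}C_E$, which is essentially the content of the paper's \emph{next} lemma (proved there by a separate induction on the number of top-reaching spines); what the paper's route buys is that the recursive definition of $C'_D$ is transparent rather than needing to be recovered by inversion.
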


\begin{proof}
Note that each $C'_D$ is a well-defined integer. Since $\Gamma$ is a c-bi-free family,
\[
\varphi(a_1\cdots a_n) = \varphi_\xi(\mu_1(a_1)\cdots\mu_n(a_n)) = \varphi(\mu_1(a_1)\cdots\mu_n(a_n)\xi),
\]
where the first $\varphi$ represents the unital linear functional on $\A$ and the last $\varphi$ represents the c-free product state on $\X$. Note that $\varphi(\mu_1(a_1)\cdots\mu_n(a_n)\xi)$ is obtained by applying $\varphi$ to equation \eqref{FreeProdMoment}, thus to complete the lemma, we only show that we can correctly modify the right-hand side of equation \eqref{FreeProdMoment} after applying $\varphi$ to it.

First notice if $D \in \L\R^{\lat}_t(\chi, \omega)$ then
\[
\sum_{\substack{D' \in \L\R^{\lat\capp}(\chi, \omega)\\D' \leq_{\capp} D}} \varphi(E_{D'}(\mu_1(a_1), \dots, \mu_n(a_n))) = \varphi_D(a_1, \dots, a_n). 
\]
Indeed for each spine of $D$ that reaches the top, in half of the cappings of $D$ a factor of $\psi_\xi(\bullet)$ will appear while in the other half a factor of $\varphi((1 - \mathfrak{p})\bullet\xi) = \varphi_\xi(\bullet) - \psi_\xi(\bullet)$ will appear. Adding up these terms produces the product of all necessary $\varphi_\xi(\bullet)$ yielding $\varphi_D$.  Consequently, if $C_D\varphi(E_D(\mu_1(a_1), \dots, \mu_n(a_n)))$ occurs in the sum, then we can replace it with $C_D\varphi_D(a_1, \dots, a_n)$ provided we subtract
\[
C_D \sum_{\substack{D' \in \L\R^{\lat\capp}(\chi, \omega)\\D' \leq_{\capp} D\\D' \neq D}} \varphi(E_{D'}(\mu_1(a_1), \dots, \mu_n(a_n)))
\]
from the current expression. Note that all of the $D'$ in the above sum have fewer spines that reach the top.

To change the right-hand side of equation \eqref{FreeProdMoment} after applying $\varphi$ to the expression in this lemma, modify all of the $t$ terms starting with $t = n$ and working downwards. For $t = n$ and $D \in \L\R^{\lat\capp}_t(\chi, \omega)$ (provided such a diagram exists), the coefficient of $\varphi(E_{D}(\mu_1(a_1), \dots, \mu_n(a_n)))$ is $C_D = C'_D$.  Thus to change $\varphi(E_{D}(\mu_1(a_1), \dots, \mu_n(a_n)))$ to $\varphi_D(a_1, \ldots, a_n)$, subtract $C'_D$ from the coefficient of $\varphi(E_{D'}(\mu_1(a_1), \dots, \mu_n(a_n)))$ for all $D' \in \L\R^{\lat\capp}_m(\chi, \omega)$ with $m < t$ and $D' \leq_\capp D$.   For $t = n - 1$ and $D \in \L\R^{\lat\capp}_{t}(\chi, \omega)$  (provided such a diagram exists), the coefficient of $\varphi(E_{D}(\mu_1(a_1), \dots, \mu_n(a_n)))$ is now $C'_D$.  Thus to change $\varphi(E_{D}(\mu_1(a_1), \dots, \mu_n(a_n)))$ to $\varphi_D(a_1, \ldots, a_n)$, subtract $C'_D$ from the coefficient of $\varphi(E_{D'}(\mu_1(a_1), \dots, \mu_n(a_n)))$ for all $D' \in \L\R^{\lat\capp}_m(\chi, \omega)$ with $m < t$ and $D' \leq_\capp D$, and continue. Repeating this process yields the claimed expression by noting that if $D \in \L\R^{\lat\capp}_0(\chi, \omega)$, then $\varphi(E_{D}(\mu_1(a_1), \dots, \mu_n(a_n))) = \psi_D(a_1, \dots, a_n) = \varphi_D(a_1, \dots, a_n)$.
\end{proof}

Fortunately there is a nicer expression for $C'_D$.

\begin{lem}
Under the assumptions of Lemma \ref{lem:recursive-moment-formula}, for $D \in \L\R^{\lat \capp}_t(\chi, \omega)$ 
\[
C'_D = \sum_{\substack{ D' \in \L\R(\chi, \omega)\\D' \geq_{\lat\capp} D}}(-1)^{|D| - |D'|} = \sum^n_{r = t}\sum_{\substack{D' \in \L\R_r(\chi, \omega)\\D' \geq_{\lat\capp} D}}(-1)^{|D| - |D'|}.
\]
\end{lem}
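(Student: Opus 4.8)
The plan is to show that the explicit alternating sum on the right-hand side satisfies the same recursion that defines $C'_D$ in Lemma \ref{lem:recursive-moment-formula}, and then to invoke the uniqueness of the recursively defined coefficients. Write $\widetilde{C}_D = \sum_{D' \in \L\R(\chi,\omega),\, D' \geq_{\lat\capp} D}(-1)^{|D| - |D'|}$ for the proposed value. The second equality in the statement is immediate: since capping only removes spines reaching the top, any $D'$ with $D' \geq_{\lat\capp} D$ has at least as many top-reaching spines as $D$, so $D' \in \L\R_r(\chi,\omega)$ forces $r \geq t$; partitioning $\L\R(\chi,\omega)$ by the number of top-reaching spines then gives the claimed decomposition. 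Thus it remains to prove $C'_D = \widetilde{C}_D$.

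I would argue by downward induction on $t$, the number of spines of $D$ reaching the top, from $t = n$ to $t = 0$. Using the induction hypothesis $C'_{D'} = \widetilde{C}_{D'}$ for all $D'$ with strictly more top-reaching spines, and noting that $D$ is the only diagram in $\L\R^{\lat\capp}_t(\chi,\omega)$ that caps to itself, the recursion of Lemma \ref{lem:recursive-moment-formula} rearranges to the single combinatorial identity
\[
C_D = \sum_{m = t}^{n}\ \sum_{\substack{D' \in \L\R^{\lat\capp}_m(\chi, \omega)\\ D' \geq_\capp D}}\widetilde{C}_{D'}.
\]
This is the heart of the matter. I would establish it by expanding each $\widetilde{C}_{D'}$ and reorganizing the resulting double sum according to the top-level diagram $D'' \in \L\R(\chi, \omega)$ indexing it, so that the right-hand side becomes $\sum_{D'' \geq_{\lat\capp} D}(-1)^{-|D''|}\sum_{D'}(-1)^{|D'|}$, where the inner sum runs over all $D'$ with $D \leq_\capp D' \leq_{\lat\capp} D''$.

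The structural observation that drives the cancellation is that lateral refinement acts only on spines not reaching the top, whereas capping acts only on spines reaching the top, so the two operations are independent. Concretely, fixing a top-level $D''$ with $D'' \geq_{\lat\capp} D$, every intermediate $D'$ must carry exactly the lateral refinement of $D$ (since $D \leq_\capp D'$ permits no further lateral cuts), while its capping is an arbitrary subset $T$ of the set $C_0$ of $c := r - t$ spines of $D''$ capped on the way to $D$. Tracking the grading $|D| = (\text{number of blocks}) + (\text{number of top-reaching spines})$ shows each lateral cut raises $|\cdot|$ by one and each capping lowers it by one, so the inner sum factors as $(-1)^{a}\sum_{T \subseteq C_0}(-1)^{|T|} = (-1)^a (1-1)^{c}$, where $a$ is the number of lateral cuts producing $D$ from $D''$. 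By the binomial identity this vanishes unless $c = 0$, that is, unless no capping is needed, forcing $D'' \in \L\R_t(\chi,\omega)$ and $D'' \geq_\lat D$; the surviving terms reassemble exactly into $C_D$ (and into $0 = C_D$ when $D \notin \L\R^\lat_t(\chi,\omega)$). This proves the displayed identity, closes the induction, and yields the first equality.

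The main obstacle I anticipate is the bookkeeping needed to justify the decoupling rigorously: one must verify that for a fixed pair $(D, D'')$ the intermediate diagrams $D'$ are in exact bijection with the subsets of $C_0$ (in particular that capping leaves the block count unchanged while removing one top-reaching spine, so that the sign behaves as claimed), and that no intermediate $D'$ is produced in more than one way. Once this is in place, the binomial cancellation is routine.
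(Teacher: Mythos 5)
Your proof is correct and follows essentially the same route as the paper's: a downward induction on the number of top-reaching spines, an interchange of summation so that the top-level diagram $D''$ is fixed, and then the binomial cancellation $(1-1)^{c}$ coming from the decoupling of lateral refinements and cappings (the paper phrases this via the unique intermediate diagram $D'''$ with $D \leq_{\capp} D''' \leq_{\lat} D''$, which is exactly your bijection between intermediate diagrams and subsets of capped spines). The only cosmetic difference is that you verify that the explicit alternating sum satisfies the defining recursion, whereas the paper substitutes the induction hypothesis into the recursion and solves for $C'_D$; these are equivalent.
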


\begin{proof}
Note the two sums in the assertion are trivially equal.  We proceed by induction on the number of spines of $D$ that reach the top, starting with $n$ spines where the result is trivial as if $D \in \L\R^{\lat \capp}_n(\chi, \omega)$ then $C'_D = C_D$.

To proceed, suppose $D \in \L\R^{\lat\capp}_t(\chi, \omega)$ and the formula holds for all $D' \in \L\R^{\lat\capp}_m(\chi, \omega)$ with $m > t$.  Then
\begin{align*}
C'_D &= C_D - \sum^n_{m = t + 1}\sum_{\substack{D' \in \L\R^{\lat\capp}_m(\chi, \omega)\\D' \geq_\capp D}}C'_{D'}\\
&= C_D - \sum^n_{m = t + 1}\sum_{\substack{D' \in \L\R^{\lat\capp}_m(\chi, \omega)\\D' \geq_\capp D}}\left[\sum^n_{q = m}\sum_{\substack{D'' \in \L\R_q(\chi, \omega)\\D'' \geq_{\lat\capp} D'}}(-1)^{|D'| - |D''|}\right]\\
&= C_D - \sum^n_{q = t + 1}\sum_{\substack{D'' \in \L\R_q(\chi, \omega)\\D'' \geq_{\lat\capp} D}}\left[\sum^q_{m = t + 1}\sum_{\substack{D' \in \L\R^{\lat\capp}_m(\chi, \omega)\\D \leq_{\capp} D' \leq_{\lat\capp} D''}}(-1)^{|D'| - |D''|}\right].
\end{align*}

Notice that the $C_D$ term in this expression gives the $r = t$ term in the assertion of the lemma since if $D' \in \L\R_t(\chi, \omega)$, then the only way that $D' \geq_{\lat\capp} D$ is if $D' \geq_{\lat} D$. Therefore if we have a fixed $D'' \in \L\R_q(\chi, \omega)$ with $q \geq t + 1$ and $D'' \geq_{\lat\capp} D$, and if we can show that
\[
\sum^q_{m = t}\sum_{\substack{D' \in \L\R^{\lat\capp}_m(\chi, \omega)\\D \leq_{\capp} D' \leq_{\lat\capp} D''}}(-1)^{|D'| - |D''|} = 0,
\]
then the proof will be complete as we can replace the sum
\[
-\sum^q_{m = t + 1}\sum_{\substack{D' \in \L\R^{\lat\capp}_m(\chi, \omega)\\D \leq_{\capp} D' \leq_{\lat\capp} D''}}(-1)^{|D'| - |D''|}
\]
with $(-1)^{|D| - |D''|}$ in the expression (i.e., the only $D' \in \L\R^{\lat\capp}_t(\chi, \omega)$ with $D \leq_{\capp} D' \leq_{\lat\capp} D''$ is $D' = D$ since $D$ is a capping of $D'$ yet $D$ and $D'$ have the same number of spines that reach the top).

Note that the desired sum is clearly zero if the sum is empty.  Hence assume the sum is not empty.  Thus there exists a $D''' \in \L\R^{\lat}_{q}(\chi, \omega)$ such that $D''' \leq_{\lat} D''$ and $D \leq_{\capp} D'''$. Then for all $D'$ such that $D \leq_{\capp} D' \leq_{\lat\capp} D''$, we must have that $D \leq_{\capp} D' \leq_{\capp} D''' \leq_{\lat} D''$ (i.e., $D$ and $D''$ determine which spines not reaching the top are cut and then the only options for $D'$ are which spines reaching the top to cap). Hence
\[
\sum^q_{m = t}\sum_{\substack{D' \in \L\R^{\lat\capp}_m(\chi, \omega)\\D \leq_{\capp} D' \leq_{\lat\capp} D''}}(-1)^{|D'| - |D''|} = (-1)^{|D'''| - |D''|}\sum^q_{m = t}\sum_{\substack{D' \in \L\R^{\lat\capp}_m(\chi, \omega)\\D \leq_{\capp} D' \leq_{\capp} D'''}}(-1)^{|D'| - |D'''|}.
\]
However, the sum on the right is clearly zero as it is the binomial expansion of 
\[
(1 + (-1))^{\text{number of spines to cap to make } D \text{ from }D'''}
\]
(i.e., the complete set of options for $D'$ is to cap or not cap each spine that reaches the top in $D'''$ but not in $D$; each spine that is capped corresponds to a $(-1)$ in the product and if one caps $b$ spines, then there are $\binom{s}{b}$ ways to do this where $s$ denotes the number of spines to cap to make $D$ from $D'''$).
\end{proof}

Combining these results, we have the following moment type characterization of c-bi-free independence.  Note that due to the nature of $D \in \L\R^{\mathrm{latcap}}(\chi, \omega)$ and $\varphi_D$, the right-hand side of equation (\ref{CBFPhiMoment}) only involves $\varphi$ applied to elements of $\mathrm{alg}(\A_{k, \ell}, \A_{k, r})$ for exactly one $k$ at a time.   Hence, as the following is proved independent of the choice representation, Definition \ref{CBFDefn} is well-defined.

\begin{thm}\label{thm:distributions-c-bi-free}
A family $\Gamma = \{(\A_{k, \ell}, \A_{k, r})\}_{k \in K}$ of pairs of algebras in a two-state non-commutative probability space $(\A, \varphi, \psi)$ is c-bi-free with respect to $(\varphi, \psi)$ if and only if for all $n \geq 1$, $\chi: [n] \to \{\ell, r\}$, $\omega: [n] \to K$, and $a_1, \dots, a_n \in \A$ with $a_i \in \A_{\omega(i), \chi(i)}$ for all $1\leq i \leq n$,
\begin{equation}\label{CBFPsiMoment}
\psi(a_1\cdots a_n) = \sum_{\pi \in \BNC(\chi)}\left[\sum_{\substack{\sigma \in \BNC(\chi, \omega)\\\pi \leq \sigma \leq \omega}}\mu_{\BNC}(\pi, \sigma)\right]\psi_\pi(a_1, \dots, a_n)
\end{equation}
and
\begin{equation}\label{CBFPhiMoment}
\varphi(a_1\cdots a_n) = \sum_{D \in \L\R^{\mathrm{latcap}}(\chi, \omega)}\left[\sum_{\substack{D' \in \L\R(\chi, \omega)\\D' \geq_{\mathrm{latcap}} D}}(-1)^{|D| - |D'|}\right]\varphi_D(a_1, \dots, a_n).
\end{equation}
\end{thm}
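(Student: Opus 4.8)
The plan is to establish both implications, handling the two displayed moment formulas separately in the forward direction and then combining them for the converse. Throughout I fix a representation as in Definition~\ref{CBFDefn}, so that c-bi-freeness of $\Gamma$ gives $\psi(a_1\cdots a_n) = \psi_\xi(\mu_1(a_1)\cdots\mu_n(a_n))$ and $\varphi(a_1\cdots a_n) = \varphi_\xi(\mu_1(a_1)\cdots\mu_n(a_n))$ for every word.

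For the forward implication, equation~\eqref{CBFPsiMoment} requires only the $\psi$-theory. The functional $\psi_\xi$ depends solely on the underlying vector-space-with-specified-state-vector data $(\X_k,\X^\circ_k,\xi_k)$, and $\lambda_k,\rho_k$ are precisely Voiculescu's left and right representations on the free product $(\X,\X^\circ,\xi) = *_{k\in K}(\X_k,\X^\circ_k,\xi_k)$. Hence $\widetilde{\Gamma}$ is bi-free with respect to $\psi_\xi$ in the usual sense, so \eqref{CBFPsiMoment} is exactly the bi-free moment formula of Theorem~\ref{BFMomentChar} applied to $\widetilde{\Gamma}$ and transported back to $\Gamma$ via the equality of joint pair-distributions. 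Equation~\eqref{CBFPhiMoment} is then immediate from the two preceding lemmas: substituting the closed-form expression $C'_D = \sum_{D'\in\L\R(\chi,\omega),\,D'\geq_{\lat\capp}D}(-1)^{|D|-|D'|}$ into the recursion of Lemma~\ref{lem:recursive-moment-formula} and using $\bigcup_{t=0}^n\L\R^{\lat\capp}_t(\chi,\omega) = \L\R^{\lat\capp}(\chi,\omega)$ to merge the sum over $t$ into a single sum over $\L\R^{\lat\capp}(\chi,\omega)$ reproduces \eqref{CBFPhiMoment} verbatim.

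For the converse, the crucial observation is that both right-hand sides are determined entirely by the individual pair-distributions $\{(\mu_{\hat{a}_k},\nu_{\hat{a}_k})\}_{k\in K}$. In \eqref{CBFPsiMoment} the constraint $\pi\leq\sigma\leq\omega$ forces every block of $\pi$ to be monochromatic, so each factor of $\psi_\pi(a_1,\dots,a_n)$ is a value of $\psi$ on a product of elements from a single pair; likewise every block of a diagram $D\in\L\R^{\lat\capp}(\chi,\omega)$ is monochromatic, so each factor of $\varphi_D(a_1,\dots,a_n)$, whether a value of $\psi$ or of $\varphi$, again involves elements from only one pair. Consequently, if $\Gamma$ satisfies both formulas, then $(\mu_\Gamma,\nu_\Gamma)$ is uniquely prescribed by the individual pair-distributions. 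To conclude, construct a genuinely c-bi-free family $\Gamma'$ sharing those same individual pair-distributions: for each $k$ set $\A_k := \alg(\A_{k,\ell},\A_{k,r})$, take $\X_k = \A_k$ with $\xi_k = 1$, $\X^\circ_k = \ker(\psi|_{\A_k})$, $\varphi_k = \varphi|_{\A_k}$, and realize $\ell_k,r_k$ by regular representations, then form the c-free product. By the forward implication just proved, $\Gamma'$ satisfies \eqref{CBFPsiMoment} and \eqref{CBFPhiMoment}, and since its right-hand sides coincide with those for $\Gamma$, we obtain $(\mu_{\Gamma'},\nu_{\Gamma'}) = (\mu_\Gamma,\nu_\Gamma)$; thus $\Gamma$ has the joint pair-distribution of a c-bi-free family and is therefore c-bi-free.

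I expect the main obstacle to lie not in the forward bookkeeping, which is essentially discharged by the two preceding lemmas and Theorem~\ref{BFMomentChar}, but in the converse: precisely, in verifying that the chosen $(\X_k,\X^\circ_k,\xi_k,\varphi_k)$ together with $\ell_k,r_k$ genuinely recover the prescribed single-pair distribution $(\mu_{\hat{a}_k},\nu_{\hat{a}_k})$. This is the two-state analogue of the corresponding representation step in Voiculescu's bi-free construction, the new point being that $\varphi_k$ must be taken as $\varphi|_{\A_k}$ so that $\varphi_\xi$ recovers the $\varphi$-moments while $\psi_\xi$ simultaneously recovers the $\psi$-moments. Once this existence step is in place, the equality of joint pair-distributions follows formally from the uniqueness argument above, and this same representation-independence of the right-hand sides is exactly what makes Definition~\ref{CBFDefn} well-defined.
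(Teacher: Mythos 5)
Your proof is correct and takes essentially the same route as the paper: the forward direction combines bi-freeness of the $\psi$-part (Theorem \ref{BFMomentChar}) with the two lemmas preceding the theorem, and the converse invokes a universal regular representation, applies the forward direction to it, and uses the fact that both right-hand sides depend only on the single-pair distributions to conclude the joint pair-distributions agree. The only cosmetic difference is that you take $\X_k = \alg(\A_{k,\ell},\A_{k,r}) \subseteq \A$ while the paper uses the abstract free product $\A_{k,\ell} * \A_{k,r}$; both choices recover the individual pair-distributions and the argument is otherwise identical.
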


\begin{proof}
The fact that $\Gamma$ is bi-free with respect to $\psi$ if and only if equation \eqref{CBFPsiMoment} holds was obtained in \cite{CNS2015-1}*{Section 4}. On the other hand, if $\Gamma$ is c-bi-free with respect to $(\varphi, \psi)$, then equation \eqref{CBFPhiMoment} follows immediately from the previously two lemmata.

Conversely, suppose equations \eqref{CBFPsiMoment} and \eqref{CBFPhiMoment} hold.  Consider the universal representations of $\Gamma$; that is, for every $k \in K$, let $\X_k = \A_{k, \ell} * \A_{k, r}$, $\X_k^\circ = \ker(\psi|_{\X_k})$, $\xi_k = 1$, $\varphi_k = \varphi|_{\X_k}$, and define $\ell_k: \A_{k, \ell} \to \L(\X_k)$ and $r_k: \A_{k, r} \to \L(\X_k)$ by the left actions of $\A_{k, \ell}$ and $\A_{k, r}$ on $\X_k$ respectively.  Consequently, by the above work, the joint pair-distribution of $\{(\lambda_k \circ \ell_k(\A_{k, \ell}), \rho_k \circ r_k(\A_{k, r}))\}_{k \in K}$ satisfy equations \eqref{CBFPsiMoment} and \eqref{CBFPhiMoment} and thus agree with the joint pair-distribution of $\Gamma$.  Hence $\Gamma$ is a c-bi-free family by definition.
\end{proof}

\subsection{Equivalence with combinatorial c-bi-freeness}

Suppose $\Gamma = \{(\A_{k, \ell}, \A_{k, r})\}_{k \in K}$ is a family of pairs of algebras in a two-state non-commutative probability space $(\A, \varphi, \psi)$, $n \geq 1$, $\chi: [n] \to \{\ell, r\}$, $\omega: [n] \to K$, and $a_1, \dots, a_n \in \A$ with $a_i \in \A_{\omega(i), \chi(i)}$ for $1 \leq i \leq n$.  Using equation \eqref{CBiFreeMomentCumulant}, we obtain that
\[
\K_\chi(a_1, \ldots, a_n) = \varphi(a_1 \cdots a_n) - \sum_{\substack{\pi \in \BNC(\chi) \\ \pi \neq 1_\chi}} \K_\pi(a_1, \ldots, a_n).
\]
Since every $\kappa_{\pi|_V}((a_1, \dots, a_n)|_V)$ can be written as a sum involving products of $\psi$-moment expressions indexed by bi-non-crossing partitions with respect to $\chi|_V$, and every $\K_{\pi|_V}((a_1, \dots, a_n)|_V)$ can be written as a sum involving products of both $\psi$-moment and $\varphi$-moment expressions indexed by bi-non-crossing partitions with respect to $\chi|_V$, an expression for $\K_\chi(a_1, \dots, a_n)$ can be written (independent of the choice of $a_1, \ldots, a_n$) as a sum involving products of both $\psi$-moment and $\varphi$-moment expressions indexed by bi-non-crossing partitions with respect to $\chi$. However, for each bi-non-crossing partition $\pi \in \BNC(\chi)$ with $V$ being a block of $\pi$, it is possible that both $\psi_{\pi|_V}((a_1, \dots, a_n)|_V)$ and $\varphi_{\pi|_V}((a_1, \dots, a_n)|_V)$ appear in different products in the sum. 

In order to write the final sum in a unified way, we introduce the following notation. Let $\BNC(\chi, ie)$ denote the set of all pairs $(\pi, \iota)$ where $\pi \in \BNC(\chi)$ is a bi-non-crossing partition and $\iota: \pi \to \{i,e\}$ is a function on the blocks of $\pi$.  Then, independent of $\Gamma$ and the choice of $a_1, \ldots, a_n$, there exist integer coefficients $d(\chi; \pi, \iota)$ such that
\begin{equation}
\K_\chi(a_1, \dots, a_n) = \sum_{\substack{(\pi, \iota) \in \BNC(\chi, ie)}} d(\chi; \pi, \iota)\phi_{(\pi, \iota)}(a_1, \dots, a_n) \label{eqn:mob-inversion}
\end{equation}
where
\[
\phi_{(\pi, \iota)}(a_1, \dots, a_n) = \left(\prod_{\substack{V \in \pi\\\iota(V) = i}}\psi_{\pi|_V}((a_1, \dots, a_n)|_V)\right)\left(\prod_{\substack{V \in \pi\\\iota(V) = e}}\varphi_{\pi|_V}((a_1, \dots, a_n)|_V)\right).
\]

\begin{rem}\label{rem:pi-coming-from-D}
Notice that $\phi_{(\pi, \iota)}(a_1, \dots, a_n)$ and $\varphi_D(a_1,\dots, a_n)$ agree for certain $(\pi, \iota) \in \BNC(\chi, ie)$ and $D \in \L\R^{\lat \capp}(\chi, \omega)$.    Indeed, given $D \in \L\R^{\lat\capp}(\chi, \omega)$, defining $\pi$ via the blocks of $D$ and $\iota$ via $\iota(V) = e$ if the spine of $V$ reaches the top and $\iota(V) = i$ otherwise will produce such an equality.
\end{rem}

Note that the coefficients $d(\chi; \pi, \iota)$ play a similar role to that of the bi-non-crossing M\"{o}bius function, but less is known about their structure and properties.  However, since the expansion of the above formulae depended only on the lattice structure of $\BNC(\chi)$, we obtain that
\[
d(\chi; \pi, \iota) = d(\chi_\ell; s^{-1}_\chi \cdot \pi, \iota \circ s_\chi)
\]
where $\chi_\ell : [n] \to \{\ell\}$ is the constant map (that is, the tuple $(\chi_\ell; s^{-1}_\chi \cdot \pi, \iota \circ s_\chi)$ which corresponds to the non-crossing partition with the same selection of $\{i,e\}$ on left nodes obtained by using the $\prec_\chi$-ordering on $\pi$ must produce the same coefficient).

Consequently, if $\Gamma$ is combinatorially c-bi-free with respect to $(\varphi, \psi)$, then
\[
\varphi(a_1\cdots a_n) = \sum_{ \pi \in \BNC(\chi) }\K_\pi(a_1, \dots, a_n) = \sum_{\substack{\pi \in \BNC(\chi)\\\pi \leq \omega}}\K_\pi(a_1, \dots, a_n).
\]
Hence, by equation \eqref{eqn:mob-inversion}, we obtain that
\begin{equation}\label{CombCBF}
\varphi(a_1\cdots a_n) = \sum_{\substack{(\pi, \iota) \in \BNC(\chi, ie) \\\pi \leq \omega}}c(\chi, \omega; \pi, \iota)\phi_{(\pi, \iota)}(a_1, \dots, a_n)
\end{equation}
where $c(\chi, \omega; \pi, \iota)$ is an integer-valued coefficient.   As only the lattice structure affects the expansions of the above formulae, we obtain the following.

\begin{lem}\label{ChangeCoeff}
Let $n \geq 1$, $\chi: [n] \to \{\ell, r\}$, $\omega: [n] \to K$, and $(\pi, \iota) \in \BNC(\chi, ie)$. If $\chi_\ell: [n] \to \{\ell\}$ is the constant map, then 
\[
c(\chi, \omega; \pi, \iota) = c(\chi_\ell, \omega \circ s_\chi; s_\chi^{-1}\cdot\pi, \iota \circ s_\chi).
\]
\end{lem}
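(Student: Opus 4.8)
The plan is to establish the coefficient identity by tracing the construction of the $c(\chi, \omega; \pi, \iota)$ coefficients back to their origins and invoking the invariance of each constituent piece under the relabelling induced by $s_\chi$. The key observation is that the coefficients $c(\chi, \omega; \pi, \iota)$ appearing in equation \eqref{CombCBF} are assembled entirely from two sources: the coefficients $d(\chi; \pi, \iota)$ from the M\"{o}bius-type inversion in equation \eqref{eqn:mob-inversion}, and the combinatorial restriction that only $\pi \leq \omega$ terms survive under combinatorial c-bi-freeness. Since both of these ingredients depend only on the lattice structure of $\BNC(\chi)$ under the $\prec_\chi$-ordering, and since the map $s_\chi$ furnishes precisely the order-isomorphism $\BNC(\chi) \cong \NC(n) = \BNC(\chi_\ell)$ identified in the preliminaries (recall $\mu_{\BNC}(\sigma, \pi) = \mu_{\NC}(s_\chi^{-1}\cdot\sigma, s_\chi^{-1}\cdot\pi)$), the whole computation is transported faithfully.

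First I would make precise the claim already flagged in the text, namely that $d(\chi; \pi, \iota) = d(\chi_\ell; s_\chi^{-1}\cdot\pi, \iota\circ s_\chi)$. This follows because the recursive definition of $\K_\chi$ expands each $\kappa_{\chi|_V}$ and $\K_{\chi|_V}$ through moment-cumulant formulae indexed by $\BNC(\chi|_V)$, and every such sub-lattice, together with its interior/exterior distinction of blocks, is carried isomorphically to the corresponding sub-lattice of $\NC$ by $s_\chi^{-1}$; crucially, the notion of interior versus exterior block in $\BNC(\chi)$ is defined purely via the $\prec_\chi$-order on minima and maxima, hence coincides with inner versus outer in $\NC(n)$ after applying $s_\chi^{-1}$. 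Thus the entire inversion producing $d$ is insensitive to $\chi$ beyond the order it induces.

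Next I would analyze how passing from $d(\chi; \pi, \iota)$ to $c(\chi, \omega; \pi, \iota)$ introduces only the constraint $\pi \leq \omega$. Under the relabelling $s_\chi$, a bi-non-crossing partition $\pi$ satisfies $\pi \leq \omega$ if and only if $s_\chi^{-1}\cdot\pi \leq \omega\circ s_\chi$, because $s_\chi$ acts simultaneously on the blocks of $\pi$ and on the colour-partition $\omega$, so refinement is preserved. Combining the invariance of $d$ with this compatible transport of the refinement condition, and noting that the decomposition of $\varphi(a_1\cdots a_n)$ into $\phi_{(\pi,\iota)}$ terms is uniquely determined (the $\phi_{(\pi,\iota)}$ being linearly independent over generic choices), the coefficients must match term by term, yielding $c(\chi, \omega; \pi, \iota) = c(\chi_\ell, \omega\circ s_\chi; s_\chi^{-1}\cdot\pi, \iota\circ s_\chi)$.

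The main obstacle I anticipate is not any single computation but rather justifying rigorously that the coefficients $c$ are \emph{well-defined}, i.e.\ that the expression of $\varphi(a_1\cdots a_n)$ as a combination of the $\phi_{(\pi,\iota)}$ is unique so that comparing coefficients is legitimate. One must argue that, ranging over all two-state non-commutative probability spaces and all admissible $a_1, \dots, a_n$, the functionals $\phi_{(\pi,\iota)}$ are linearly independent, so that identifying coefficients is meaningful; this is what underlies the phrase ``independent of $\Gamma$ and the choice of $a_1, \ldots, a_n$'' in the derivation of \eqref{eqn:mob-inversion}. Once this universality is in hand, the identity is a formal consequence of the order-isomorphism $s_\chi$, and the proof reduces to remarking that every step in constructing $c$ used only the $\prec_\chi$-lattice structure, which $s_\chi$ identifies with the standard linear order underlying $\chi_\ell$.
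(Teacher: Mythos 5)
Your proof is correct and is essentially the paper's own argument: the paper states Lemma \ref{ChangeCoeff} with no separate proof, treating it as an immediate consequence of the observation that every ingredient in the construction of $c(\chi, \omega; \pi, \iota)$ — the inversion coefficients $d$ from equation \eqref{eqn:mob-inversion}, the interior/exterior distinction, and the refinement constraint $\pi \leq \omega$ — depends only on the $\prec_\chi$-lattice structure, which $s_\chi$ identifies with the lattice structure underlying the constant map $\chi_\ell$. Your supplementary point about linear independence of the $\phi_{(\pi, \iota)}$ (needed only if one characterizes $c$ by the moment formula rather than by the canonical recursive expansion, which is deterministic and hence already well-defined) goes beyond what the paper records, but it is sound and mirrors the polynomial-algebra argument the paper itself deploys later in the proof of Lemma \ref{lem:c-c-b-free-implies-c-b-free}.
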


As combinatorial c-bi-free independence implies equation \eqref{CombCBF}, to show combinatorial c-bi-free independence implies c-bi-free independence, our goal is to show that equation \eqref{CombCBF} is equation \eqref{CBFPhiMoment}.    Due to Lemma \ref{ChangeCoeff}, we will follow an idea of \cite{CNS2015-1} and try to `reduce to the case that every node is on the left' via the following two operations:
\begin{defn}
\label{defn:transition-operations}
Let $\chi: [n] \to \{\ell, r\}$, $\omega: [n] \to K$, and $(\pi, \iota) \in \BNC(\chi, ie)$ be such that $\pi \leq \omega$.
\begin{enumerate}[$\qquad(1)$]
\item Suppose $\chi(n) = \ell$.  Define $\hat \chi: [n] \to \{\ell, r\}$ by
\[
\hat \chi(i) = \begin{cases}
r &\text{if } i = n\\
\chi(i) &\text{otherwise }
\end{cases},
\]
and let $\hat \pi \in \BNC(\hat \chi)$ be the unique bi-non-crossing partition with the same blocks as $\pi$ and $\hat \iota: \hat \pi \to \{i, e\}$. The operation of changing $(\pi, \iota, \omega)$ to $(\hat \pi, \hat \iota, \omega)$ is called a \emph{changing} (from left to right).

\item Suppose $\chi(i_0) = \ell$ and $\chi(i_0 + 1) = r$ for some $i_0 \in [n - 1]$.  Define $\hat \chi: [n] \to \{\ell, r\}$ and $\hat \omega: [n] \to K$ by
\[
\hat \chi(i) = \begin{cases}
r &\text{if } i = i_0\\
\ell &\text{if } i = i_0 + 1\\
\chi(i) &\text{otherwise }
\end{cases}
\qand
\hat \omega(i) = \begin{cases}
\omega(i_0 + 1) &\text{if } i = i_0\\
\omega(i_0) &\text{if } i = i_0 + 1\\
\omega(i) &\text{otherwise }
\end{cases},
\]
and let $\hat \pi \in \BNC(\hat \chi)$ be the unique bi-non-crossing partition and $\hat \iota: \hat \pi \to \{i, e\}$ be the unique function obtained by swapping $i_0$ and $i_0 + 1$. The operation of changing $(\pi, \iota, \omega)$ to $(\hat \pi, \hat \iota, \hat \omega)$ is called a \emph{swapping} (a left and a right).
\end{enumerate}
Note these same operations may be applied to elements of $\L\R^{\lat \capp}(\chi, \omega)$, but may produce diagrams outside of $\L\R^{\lat \capp}(\chi, \omega)$.
\end{defn}

To implement these operations on the $\L\R$-diagrams, we will require some terminology from \cite{CNS2015-1}.  
   
\begin{defn}
Two blocks $V$ and $W$ of the induced partition of some element $D$ from $\L\R^{\lat \capp}(\chi, \omega)$ are said to be \emph{piled} if $\max\left(\min(V), \min(W)\right) \leq \min\left(\max(V), \max(W)\right)$.  In terms of the diagram of $D$, there is some horizontal level at which both the spines of $V$ and $W$ are present.

Given blocks $V$ and $W$, a third block $U$ \emph{separates} $V$ from $W$ if it is piled with both, and its spine lies in-between the spines of $V$ and $W$. Note that $V$ and $W$ need not be piled with each other to have a separator and given any three piled blocks, one always separates the other two.

Finally, piled blocks $V$ and $W$ are said to be \emph{tangled} if there is no block which separates them.
\end{defn}

\begin{lem}
\label{lem:c-c-b-free-implies-c-b-free}
Let $\Gamma = \{(\A_{k, \ell}, \A_{k, r})\}_{k \in K}$ be a family of pairs of algebras in a two-state non-commutative probability space $(\A, \varphi, \psi)$. If $\Gamma$ is combinatorially c-bi-free with respect to $(\varphi, \psi)$, then $\Gamma$ is c-bi-free with respect to $(\varphi, \psi)$.
\end{lem}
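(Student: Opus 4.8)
By Theorem~\ref{thm:distributions-c-bi-free}, $\Gamma$ is c-bi-free as soon as the two moment identities \eqref{CBFPsiMoment} and \eqref{CBFPhiMoment} hold, so the plan is to deduce both from combinatorial c-bi-freeness. The $\psi$-identity \eqref{CBFPsiMoment} is free of charge: the vanishing of the mixed $(\ell, r)$-cumulants is exactly combinatorial bi-freeness with respect to $\psi$, whence Theorem~\ref{BFMomentChar} gives \eqref{CBFPsiMoment} at once. Everything therefore rests on the $\varphi$-identity, and since combinatorial c-bi-freeness already yields \eqref{CombCBF} (via the vanishing of the mixed c-$(\ell, r)$-cumulants together with the M\"obius-type expansion \eqref{eqn:mob-inversion}), it remains only to prove that \eqref{CombCBF} and \eqref{CBFPhiMoment} are the same identity.

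\textbf{Reduction to a coefficient identity.} To compare them I would place both over the common index set $\BNC(\chi, ie)$. By Remark~\ref{rem:pi-coming-from-D} every $D \in \L\R^{\lat\capp}(\chi, \omega)$ determines a pair $(\pi, \iota)$ with $\pi \leq \omega$ for which $\varphi_D = \phi_{(\pi, \iota)}$, and by the lemma immediately preceding Theorem~\ref{thm:distributions-c-bi-free} the coefficient attached to $D$ in \eqref{CBFPhiMoment} is $\sum_{D' \geq_{\lat\capp} D}(-1)^{|D| - |D'|}$. Collecting the diagrams that realise a given $(\pi, \iota)$ therefore rewrites \eqref{CBFPhiMoment} as $\varphi(a_1\cdots a_n) = \sum_{(\pi, \iota)}\tilde c(\chi, \omega; \pi, \iota)\,\phi_{(\pi, \iota)}(a_1, \dots, a_n)$ for explicit integers $\tilde c$ (with $\tilde c = 0$ on pairs realised by no diagram). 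Because neither $c$ nor $\tilde c$ depends on $\Gamma$ or on $a_1, \dots, a_n$, the claim that \eqref{CombCBF} and \eqref{CBFPhiMoment} coincide is equivalent to the purely combinatorial statement that $c(\chi, \omega; \pi, \iota) = \tilde c(\chi, \omega; \pi, \iota)$ for every $(\pi, \iota) \in \BNC(\chi, ie)$.

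\textbf{Reduction to the all-left configuration.} The engine for proving this coefficient identity is a reduction to the all-left case $\chi_\ell$. Lemma~\ref{ChangeCoeff} already supplies the invariance $c(\chi, \omega; \pi, \iota) = c(\chi_\ell, \omega \circ s_\chi; s_\chi^{-1}\cdot\pi, \iota \circ s_\chi)$, so I would prove the corresponding invariance for $\tilde c$ by analysing the two elementary moves of Definition~\ref{defn:transition-operations}. For a changing and for a swapping I would exhibit a bijection on the relevant $\L\R^{\lat\capp}$-diagrams, and hence on the pairs $(D, D')$ with $D' \geq_{\lat\capp} D$, that preserves $|D| - |D'|$ and carries $(\pi, \iota)$ to the relabelled pair dictated by the move. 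Controlling these bijections is where the piled/tangled/separating vocabulary enters: one must show that whether a spine reaches the top, and whether one block separates two others, transforms compatibly when a node crosses sides, so that admissible lateral refinements and cappings are carried to admissible ones. Since any $\chi$ is joined to $\chi_\ell$ by finitely many such moves, invariance of $\tilde c$ reduces $c = \tilde c$ to the single case $\chi = \chi_\ell$.

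\textbf{Base case and the main obstacle.} In that base case the diagrams of $\L\R^{\lat\capp}(\chi_\ell, \omega)$ degenerate to the one-sided non-crossing diagrams of the free-product construction of \cite{BLS1996}, a spine reaching the top corresponding to an exterior (outer) block and one not reaching the top to an interior (inner) block; \eqref{CBFPhiMoment} then becomes the moment expansion of $\varphi$ for a c-free family, while \eqref{CombCBF} is the combinatorial c-free moment formula coming from the defining c-free moment--cumulant relation. Matching the two expansions --- including the vanishing of $c$ on the $(\pi, \iota)$ realised by no diagram --- gives $c = \tilde c$ for $\chi_\ell$ and closes the induction. I expect the genuine difficulty to lie entirely in the diagram bookkeeping of the previous paragraph, specifically in verifying that the signed sum defining $\tilde c$ is invariant under a swapping: a swap can tangle or untangle blocks and so, a priori, alter which spines may be capped and which may be laterally cut, and one must check that these changes cancel in the alternating sum.
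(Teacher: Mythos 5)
Your proposal reproduces the paper's own proof essentially step for step: reduction via Theorem \ref{thm:distributions-c-bi-free} to the $\varphi$-identity, recasting \eqref{CBFPhiMoment} as a coefficient identity $c = \tilde c$ over $\BNC(\chi, ie)$ (legitimate because neither coefficient depends on the family, so individual $\phi_{(\pi,\iota)}$ terms can be isolated using polynomial algebras), the all-left base case settled through c-freeness, and propagation to arbitrary $\chi$ by the changing and swapping moves together with Lemma \ref{ChangeCoeff}. The one place your sketch is over-optimistic is the claim of a bijection of diagrams for every swapping: when $\omega(q) = \omega(q+1)$ and $q$, $q+1$ lie in different blocks no such bijection exists, and the paper instead shows the subsum over diagrams in which $q$, $q+1$ are not separated cancels to zero by a pairing argument (and that both coefficients vanish when the swapped diagram falls outside $\L\R^{\lat\capp}$) --- exactly the cancellation you flag at the end as the main difficulty.
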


\begin{proof}
As $\Gamma$ is combinatorially c-bi-free with respect to $(\varphi, \psi)$, $\Gamma$ is bi-free with respect to $\psi$ so equation \eqref{CBFPsiMoment} holds.  Furthermore, equation \eqref{CombCBF}  holds.  To conclude $\Gamma$ is c-bi-free with respect to $(\varphi, \psi)$, it suffices by Theorem \ref{thm:distributions-c-bi-free} to show that equation (\ref{CBFPhiMoment}) holds for any choice of $n \geq 1$, $\chi : [n] \to \{\ell, r\}$, $\omega : [n] \to K$, and $a_1, \ldots, a_n \in\A$ with $a_i \in \A_{\omega(i), \chi(i)}$ for $1 \leq i \leq n$.

Suppose first that $\chi = \chi_\ell: [n] \to \{\ell\}$ is the constant map. Since all random variables are from left algebras, the c-$(\ell, r)$-cumulants are the c-free cumulants.  Hence the vanishing of mixed c-$(\ell, r)$-cumulants implies that $\{\A_{k,\ell}\}_{k\in K}$ are c-free with respect to $(\varphi, \psi)$.  Consequently the conclusions of Theorem \ref{thm:distributions-c-bi-free} must hold for these particular $\chi$ and $\omega$ by the same arguments and thus equation (\ref{CBFPhiMoment}) holds in this setting.  Therefore, combining equations \eqref{CBFPhiMoment} and \eqref{CombCBF} produces
\[
\sum_{\substack{(\pi, \iota) \in \BNC(\chi, ie)\\\pi \leq \omega}}c(\chi, \omega; \pi, \iota)\phi_{(\pi, \iota)}(a_1, \dots, a_n) = \sum_{D \in \L\R^{\lat\capp}(\chi, \omega)}\left[\sum_{\substack{D' \in \L\R(\chi, \omega)\\D' \geq_{\lat\capp} D}}(-1)^{|D| - |D'|}\right]\varphi_D(a_1, \dots, a_n)
\]
in this setting.  However, as the above must hold for any selection of $\{\A_{k,\ell}\}_{k\in K}$ (independent of $\A_{k,r}$), it is possible using non-commutative polynomials in $n$ determinates to force at most one $\phi_{(\pi, \iota)}(a_1, \dots, a_n) $ to be non-zero at a time.  Consequently, the equality of the above sums, for the case that $\chi = \chi_\ell: [n] \to \{\ell\}$ is the constant map, implies that the only $(\pi, \iota) \in \BNC(\chi, ie)$ with $\pi \leq \omega$ and $c(\chi, \omega; \pi, \iota) \neq 0$ corresponds to some $D \in \L\R^{\lat\capp}(\chi, \omega)$ as in Remark \ref{rem:pi-coming-from-D} and, in this case, 
\begin{equation}\label{Coeff}
c(\chi, \omega; \pi, \iota) =  \sum_{\substack{D' \in \L\R(\chi, \omega)\\D' \geq_{\lat\capp} D}}(-1)^{|D| - |D'|}.
\end{equation}

To complete the proof, it suffices to verify that the previous sentence holds for arbitrary $\chi$.  Consequently, as any such tuple $(\chi, \omega; \pi, \iota)$ can be obtained using the operations of changing from left to right and swapping a left and a right, the proof will be complete provided this sentence is preserved under these operations.

As observed in Lemma \ref{ChangeCoeff}, the coefficients $c(\chi, \omega; \pi, \iota)$ are invariant under the two operations of changing and swapping. On the other hand, the property that $(\pi, \iota)$ corresponds to a $D \in \L\R^{\lat\capp}(\chi, \omega)$ and the value of the sum in equation \eqref{Coeff} are invariant under the operation of changing since lateral refinements and cappings are not affected by this operation.

For the swapping operation, suppose for a fixed $\chi$ for which there exists a $q \in [n-1]$ with $\chi(q) = \ell$ and $\chi(q + 1) = r$ that the only $(\pi, \iota) \in \BNC(\chi, ie)$ with $\pi \leq \omega$ and $c(\chi, \omega; \pi, \iota) \neq 0$ corresponds to some $D \in \L\R^{\lat\capp}(\chi, \omega)$ and equation \eqref{Coeff} holds for such $(\pi, \iota)$.  We desire to show the analogous statement for $\hat \chi$ (as in Definition \ref{defn:transition-operations}) holds.   The proof will be divided into several cases and follow along the lines of the proof of \cite{CNS2015-1}*{Lemma 4.2.4} (which has pretty pictures).

First suppose $\omega(q) \neq \omega(q+1)$.  In this case, the swapping operation is a bijection which preserves lateral refinements followed by cappings. Consequently, this swapping is a bijection from $\L\R^{\lat\capp}(\chi, \omega)$ to $\L\R^{\lat\capp}(\hat \chi, \hat \omega)$. Therefore the property that $(\pi, \iota)$ corresponds to a $D \in \L\R^{\lat\capp}(\chi, \omega)$ and the value of sum in equation \eqref{Coeff} are invariant in this case.

Suppose $D \in \L\R^{\lat\capp}(\chi, \omega)$ has the properties that $\omega(q) = \omega(q+1)$ and that $q$ and $q+1$ are in the same block of $D$.  In this case, the swapping operation is a bijection which preserves lateral refinements followed by cappings. Therefore the property that $(\pi, \iota)$ corresponds to a $D \in \L\R^{\lat\capp}(\chi, \omega)$ and the value of sum in equation \eqref{Coeff} are invariant in this case.

Suppose $D \in \L\R^{\lat\capp}(\chi, \omega)$ has the properties that $\omega(q) = \omega(q+1)$ and that $q$ and $q+1$ are not in the same block of $D$.  We require some observations about the sum in equation \eqref{Coeff} in this case.   Let $V_1$ and $V_2$ be the blocks in $D$ of $q$ and $q+1$ respectively.  Note that $V_1$ contains a left node and $V_2$ contains a right node and the sum in equation \eqref{Coeff} becomes
\[
\sum_{\substack{D' \in \L\R(\chi, \omega)\\D' \geq_{\lat\capp} D \\ q, q+1 \text{ in separated blocks of } D'}}(-1)^{|D| - |D'|}  + \sum_{\substack{D' \in \L\R(\chi, \omega)\\D' \geq_{\lat\capp} D \\ q, q+1 \text{ not in separated blocks of } D'}}(-1)^{|D| - |D'|}.
\]
We claim that
\begin{equation}\label{eqn:zero-sum-technical}
\sum_{\substack{D' \in \L\R(\chi, \omega)\\D' \geq_{\lat\capp} D \\ q, q+1 \text{ not in separated blocks of } D'}}(-1)^{|D| - |D'|}   = 0.
\end{equation}
To see this, the discussion will be divided into two cases: when $V_1$ and $V_2$ are piled and when they are not.  
 
If $V_1$ and $V_2$ are piled it is easy to see that any $D' \in \L\R(\chi, \omega)$ such that $D' \geq_{\lat\capp} D$ and $q$ and $q+1$ are not in separated blocks of $D'$ must be such that $V_1$ and $V_2$ are contained in the same block of $D'$.  This implies that $D'$ cannot produce $D$ via a lateral refinement followed by a capping as joining piled blocks cannot be undone by a lateral refinement.  Hence the sum is zero in this case.

Otherwise $V_1$ and $V_2$ are not piled.   This implies $q$ is the lowest element of $V_1$ in $D$ and $q+1$ is the highest element of $V_2$.  If $D' \in \L\R(\chi, \omega)$ is such that  $D' \geq_{\lat\capp} D$ and $q$ and $q+1$ are not in separated blocks of $D'$, then if $q$ and $q+1$ are in the same block of $D'$ it must be the case that the diagram $D''$ obtained by 	cutting the spine between $q$ and $q+1$ is an element of $\L\R(\chi, \omega)$ with $D'' \geq_{\lat\capp} D$.  Similarly if $D' \in \L\R(\chi, \omega)$ is such that  $D' \geq_{\lat\capp} D$ and $q$ and $q+1$ are not in separated blocks of $D'$, then if $q$ and $q+1$ are not in the same block of $D'$ it must be the case that the diagram $D''$ obtained by 	drawing a spine between $q$ and $q+1$ is an element of $\L\R(\chi, \omega)$ with $D'' \geq_{\lat\capp} D$.  In either case, combining the $D'$ and $D''$ terms yields $(-1)^{|D| - |D'|} + (-1)^{|D| - |D''|} = 0$.  As we may pair up diagrams in this fashion, the sum in equation \eqref{eqn:zero-sum-technical} is zero.

Let $\hat{D}$ be the diagram obtained from $D$ by swapping nodes $q$ and $q+1$.  A moment's thought shows such a diagram exists, but may not be an element of $\L\R^{\lat \capp}(\hat \chi, \hat \omega)$.  First, suppose $\hat{D}$ is an element of $\L\R^{\lat \capp}(\hat \chi, \hat \omega)$.  Then $\hat{D}$ also has the properties that  $q$ and $q+1$ are in different blocks of $\hat{D}$ and $\hat\omega(q) = \hat \omega(q+1)$.  Hence repeating the same argument above yields
\[
\sum_{\substack{\hat{D'} \in \L\R(\hat\chi, \hat\omega)\\ \hat{D'} \geq_{\lat\capp} \hat{D}}}(-1)^{|\hat{D}| - |\hat{D'}|} = \sum_{\substack{\hat{D'} \in \L\R(\hat\chi, \hat\omega)\\ \hat{D'} \geq_{\lat\capp} \hat{D} \\ q, q+1 \text{ in separated blocks of } \hat{D'}}}(-1)^{|\hat{D}| - |\hat{D'}|}.
\]
As the map taking $D' \in \L\R(\chi, \omega)$ with $q$ and $q+1$ in separated blocks of $D'$ and $D' \geq_{\lat\capp} D$ to $\hat{D'} \in \L\R(\hat\chi, \hat\omega)$ with $q$ and $q+1$ in separated blocks of $\hat{D'}$ and $\hat{D'} \geq_{\lat\capp} \hat{D}$ is a bijection, the value of the sum in equation \eqref{eqn:zero-sum-technical} is preserved in this case.

Otherwise $\hat{D}$ is not an element of $\L\R^{\lat \capp}(\hat \chi, \hat \omega)$ so there cannot exist a $D' \in \L\R(\chi, \omega)$ such that $q$ and $q+1$ are in separated blocks of $D'$ and $D' \geq_{\lat\capp} D$ for otherwise $\hat{D'}$ would be an element of $\L\R(\hat\chi, \hat \omega)$ that can be laterally refined and capped to $\hat{D}$.  Consequently, we obtain that the sum in equation \eqref{Coeff} is zero in this case.

To complete the proof, suppose $(\pi, \iota) \in \BNC(\chi, ie)$ with $\pi \leq \omega$ corresponds to some $D \in \L\R^{\lat\capp}(\chi, \omega)$ as in Remark \ref{rem:pi-coming-from-D}.  If $(\hat \pi, \hat \iota) \in \BNC(\hat \chi, ie)$ corresponds to some element of $\L\R^{\lat\capp}(\hat \chi, \hat\omega)$ (which then must be $\hat{D}$), the above work implies the sum in equation \eqref{Coeff} is preserved under the operation of swapping in this situation as desired.  If $(\hat \pi, \hat \iota) \in \BNC(\hat \chi, ie)$ does not correspond to some element of $\L\R^{\lat\capp}(\hat \chi, \hat\omega)$, then $\hat{D}$ is not an element of $\L\R^{\lat\capp}(\chi, \omega)$.  Hence it must be the case that $q$ and $q+1$ are in different blocks of $D$ and that $\omega(q) = \omega(q+1)$.  The above work demonstrates that the sum in equation \eqref{Coeff} is zero for $D$.  Therefore as we are assuming the result for $\chi$, equation \eqref{Coeff} and Lemma \ref{ChangeCoeff} yield $0 = c(\chi, \omega; \pi, \iota) = c(\hat\chi, \hat\omega; \hat\pi, \hat\iota)$, which was the desired value for $c(\hat\chi, \hat\omega; \hat\pi, \hat\iota)$.  Finally, suppose there exists a $(\pi, \iota) \in \BNC(\chi, ie)$ that does not correspond to some element of $\L\R^{\lat\capp}(\chi, \omega)$.  Hence $c(\chi, \omega; \pi, \iota) = 0$ by assumption and thus $c(\hat\chi, \hat\omega; \hat\pi, \hat\iota) = 0$ by  Lemma \ref{ChangeCoeff}.  If $(\hat\pi, \hat\iota)$ corresponds to some $\hat{D} \in \L\R^{\lat\capp}(\hat \chi, \hat\omega)$, then, by reversing the above proofs, it must be the case that $q$ and $q+1$ are in different blocks of $\hat{D}$ and that $\hat{\omega}(q) = \hat{\omega}(q+1)$ as if $D$ is the diagram obtained from $\hat{D}$ by swapping $q$ and $q+1$, then $D$ is not an element of $\L\R^{\lat \capp}(\hat \chi, \hat \omega)$ for otherwise it would correspond to $(\pi, \iota)$.  Applying a mirror to the above work then implies
\[
\sum_{\substack{\hat{D'} \in \L\R(\hat \chi, \hat \omega)\\  \hat{D'} \geq_{\lat\capp} \hat{D}}}(-1)^{|\hat{D}| - |\hat{D'}|}  = 0 = c(\hat\chi, \hat\omega; \hat\pi, \hat\iota) 
\]
as desired. 
\end{proof}

\begin{proof}[Proof of Theorem \ref{EquivCBF}]
If $\Gamma$ is combinatorially c-bi-free with respect to $(\varphi, \psi)$, then $\Gamma$ is c-bi-free with respect to $(\varphi, \psi)$ by Lemma \ref{lem:c-c-b-free-implies-c-b-free}.

Suppose $\Gamma$ is c-bi-free with respect to $(\varphi, \psi)$.  Thus equations \eqref{CBFPsiMoment} and \eqref{CBFPhiMoment} hold by Theorem \ref{thm:distributions-c-bi-free}. As shown in \cite{CNS2015-1}*{Theorem 4.3.1}, equation \eqref{CBFPsiMoment} is equivalent to the vanishing of mixed $(\ell, r)$-cumulants.  Thus we need only show that mixed c-$(\ell, r)$-cumulants vanish.

For fixed $a_1, \dots, a_n \in \A$ with $a_i \in \A_{\omega(i), \chi(i)}$ for $1 \leq i \leq n$, construct a two-state non-commutative probability space $(\A', \varphi', \psi')$, pairs of algebras $\{(\A'_{k, \ell}, \A'_{k, r})\}_{k \in K}$, and elements $a'_i \in \A'_{\omega(i), \chi(i)}$ for $1 \leq i \leq n$ such that 
\begin{itemize}
\item for each $i \in [n]$, $\{a'_j \, \mid \, \omega(j) = \omega(i), \chi(j) = \chi(i)\}$ generated $\A_{\omega(i), \chi(i)}$,
\item any joint c-$(\ell, r)$-cumulant involving $a'_1, \ldots, a'_n$ containing a pair $a'_i, a'_j$ with $\omega(i) \neq \omega(j)$ is zero, and
\item for each $i \in [n]$, the joint distribution of $\{a'_j \, \mid \, \omega(j) = \omega(i)\}$ with respect to $(\varphi', \psi')$ equals the joint distribution of $\{a_j \, \mid \, \omega(j) = \omega(i)\}$ with respect to $(\varphi, \psi)$.
\end{itemize}
The above is possible by using an algebra of non-commutative polynomials in $n$ determinates and defining $\varphi'$ and $\psi'$ using the moment-cumulant formulae. 

By the second part of the construction, the proof of Lemma \ref{lem:c-c-b-free-implies-c-b-free} implies $a'_1, \ldots, a'_n$ satisfy equations \eqref{CBFPsiMoment} and \eqref{CBFPhiMoment}.   However, since for each $i \in [n]$ the joint distribution of $\{a'_j \, \mid \, \omega(j) = \omega(i)\}$ with respect to $(\varphi', \psi')$ equals the joint distribution of $\{a_j \, \mid \, \omega(j) = \omega(i)\}$ with respect to $(\varphi, \psi)$, equations \eqref{CBFPsiMoment} and \eqref{CBFPhiMoment} imply the joint distribution of $a_1, \dots, a_n$ with respect to $(\varphi, \psi)$ equals the joint distribution of $a'_1, \dots, a'_n$ with respect to $(\varphi', \psi')$. Hence, the moment-cumulant formulae imply that $a_1, \dots, a_n$ and $a'_1, \dots, a'_n$ have the same $(\ell, r)$- and c-$(\ell, r)$-cumulants. Consequently, $\Gamma$ is combinatorially c-bi-free with respect to $(\varphi, \psi)$.
\end{proof}

\subsection{Additional properties}

There are several additional properties of the c-bi-free independence and c-$(\ell, r)$-cumulants, some of which will be used later when studying limit theorems and infinite divisibility. 

\begin{defn}[\cite{V2014}*{Proposition 2.16}]
Let $(\A, \phi)$ be a non-commutative probability space.  Two unital subalgebras $\B$ and $\C$ of $\A$ are said to be \emph{classically independent} with respect $\phi$ if $\phi(b_1c_1\cdots b_nc_n) = \phi(b_1\cdots b_n)\phi(c_1\cdots c_n)$ for all $n \geq 1$, $b_1, \dots, b_n \in \B$, and $c_1, \dots, c_n \in \C$.
\end{defn}

\begin{prop}
Let $\Gamma = \{(\A_{k, \ell}, \A_{k, r})\}_{k \in K}$ be a family of pairs of algebras in a two-state non-commutative probability space $(\A, \varphi, \psi)$. If $\Gamma$ is c-bi-free with respect to $(\varphi, \psi)$, then $\A_{k_1, \ell}$ and $\A_{k_2, r}$ are classically independent with respect to both $\varphi$ and $\psi$ for all $k_1, k_2 \in K$ such that $k_1 \neq k_2$.
\end{prop}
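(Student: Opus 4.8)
The plan is to invoke the combinatorial characterization of the hypothesis. By Theorem~\ref{EquivCBF}, $\Gamma$ is combinatorially c-bi-free, so all mixed $(\ell,r)$- and c-$(\ell,r)$-cumulants vanish. I would fix $n \geq 1$, elements $b_1, \dots, b_n \in \A_{k_1, \ell}$ and $c_1, \dots, c_n \in \A_{k_2, r}$ with $k_1 \neq k_2$, and read the word $b_1 c_1 \cdots b_n c_n$ of length $2n$ via the maps $\chi \colon [2n] \to \{\ell, r\}$ and $\omega \colon [2n] \to K$ with $\chi(2i-1) = \ell$, $\chi(2i) = r$, $\omega(2i-1) = k_1$, and $\omega(2i) = k_2$. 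Computing $\varphi(b_1 c_1 \cdots b_n c_n)$ through the moment--cumulant formula \eqref{CBiFreeMomentCumulant}, the vanishing of mixed cumulants leaves only those $\pi \in \BNC(\chi)$ with $\pi \le \omega$; since $\omega$ takes only the two values $k_1$ (on odd nodes) and $k_2$ (on even nodes), this condition says precisely that no block contains both a left and a right node, so every surviving $\pi$ splits as a partition $\pi_L$ of the left nodes together with a partition $\pi_R$ of the right nodes.

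The decisive observation is geometric. For this $\chi$ the order reads $1 \prec_\chi 3 \prec_\chi \cdots \prec_\chi (2n-1) \prec_\chi 2n \prec_\chi \cdots \prec_\chi 2$, so the left nodes form an initial $\prec_\chi$-segment and the right nodes a terminal one. Hence a left block and a right block occupy disjoint $\prec_\chi$-intervals and can neither cross nor nest. I would use this to establish two facts: (i) $\pi \mapsto (\pi_L, \pi_R)$ is a bijection from $\{\pi \in \BNC(\chi) : \pi \le \omega\}$ onto the product of the non-crossing partitions of the left nodes and of the right nodes, each taken with respect to the linear order induced by $\prec_\chi$; and (ii) a left block is interior in $\pi$ exactly when it is inner in $\pi_L$, and symmetrically on the right, so that the interior/exterior classification is computed separately on each side.

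With these two facts the sum factors. Each $\kappa_{\chi|_V}$ or $\K_{\chi|_V}$ attached to a left block involves only the $b$'s and each attached to a right block only the $c$'s, so the weight $\K_\pi$ factors as a left weight times a right weight, and summing over the product structure yields $\varphi(b_1 c_1 \cdots b_n c_n) = \bigl(\sum_{\pi_L} \K_{\pi_L}(b)\bigr)\bigl(\sum_{\pi_R} \K_{\pi_R}(c)\bigr)$. The left factor is exactly the c-free moment--cumulant expansion of $\varphi(b_1 \cdots b_n)$, since a constant $\chi$ turns the c-$(\ell,r)$-cumulants into c-free cumulants and interior/exterior blocks into inner/outer blocks. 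For the right factor one must check that, under the relabelling $2i \mapsto i$, the order $\prec_\chi$ on the even nodes is precisely the reversed order used in the expansion of $\varphi(c_1 \cdots c_n)$; granting this, the right factor equals $\varphi(c_1 \cdots c_n)$, which gives classical independence with respect to $\varphi$. The identical argument with the plain $(\ell,r)$-cumulants (dropping the interior/exterior split) yields the statement for $\psi$, which in any case already follows from bi-freeness with respect to $\psi$ via \cite{V2014}*{Proposition 2.16}.

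The main obstacle will be the bookkeeping in step (ii): verifying rigorously that cross-side nesting is impossible and that interior/exterior status is local to each side, and then matching the reversed $\prec_\chi$-order on the right nodes with the order appearing in the expansion of $\varphi(c_1 \cdots c_n)$, so that the right-hand cumulants are exactly the intended ones and the right factor really reproduces $\varphi(c_1 \cdots c_n)$ rather than a reflected variant.
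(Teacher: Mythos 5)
Your proposal is correct and follows essentially the same route as the paper: the paper likewise expands $\varphi(b_1c_1\cdots b_nc_n)$ over $\BNC(\chi_{\mathrm{alt},2n},\omega_{\mathrm{alt},2n})$ using vanishing of mixed $(\ell,r)$- and c-$(\ell,r)$-cumulants, observes that every surviving partition is a union of a non-crossing partition on the odd (left) nodes and one on the even (right) nodes, and factors the sum into $\varphi(b_1\cdots b_n)\varphi(c_1\cdots c_n)$, citing \cite{V2014}*{Proposition 2.16} for the $\psi$ statement. The bookkeeping you flag as the main obstacle does go through: left nodes form an initial $\prec_\chi$-segment and right nodes a terminal one, so interior/exterior status is determined within each side, and for constant $\chi$ the c-$(\ell,r)$-cumulants reduce to c-free cumulants with interior/exterior matching inner/outer (a fact the paper records right after defining interior and exterior blocks, and which is insensitive to the order reversal on the right nodes).
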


\begin{proof}
Let $n \geq 1$, $b_1, \dots, b_n \in \A_{k_1, \ell}$, and $c_1, \dots, c_n \in \A_{k_2, r}$ for some $k_1, k_2 \in K$ with $k_1 \neq k_2$. The fact that $\A_{k_1, \ell}$ and $\A_{k_2, r}$ are classically independent with respect to $\psi$ was shown in \cite{V2014}*{Proposition 2.16}. On the other hand, since $\Gamma$ has vanishing mixed $(\ell, r)$- and c-$(\ell, r)$-cumulants,
\[
\varphi(b_1c_1\cdots b_nc_n) = \sum_{\pi \in \BNC(\chi_{\mathrm{alt}, 2n}, \omega_{\mathrm{alt}, 2n})}\K_\pi(b_1, c_1, \dots, b_n, c_n),
\]
where $\chi_{\mathrm{alt}, 2n}: [2n] \to \{\ell, r\}$ and $\omega_{\mathrm{alt}, 2n}: [2n] \to \{k_1, k_2\}$ are such that $\chi_{\mathrm{alt}, 2n}^{-1}(\{\ell\}) = \omega_{\mathrm{alt}, 2n}^{-1}(\{k_1\}) = \{2m - 1\}_{m = 1}^n$. Since every partition $\pi \in \BNC(\chi_{\mathrm{alt}, 2n}, \omega_{\mathrm{alt}, 2n})$ has the property that if $V$ is a block of $\pi$, then either $V \subset \chi_{\mathrm{alt}, 2n}^{-1}(\{\ell\})$ or $V \subset \chi_{\mathrm{alt}, 2n}^{-1}(\{r\})$, and since any pair $(\pi_1, \pi_2)$ of non-crossing partitions on the odd and even numbers respectively produces a $\pi \in \BNC(\chi_{\mathrm{alt}, 2n}, \omega_{\mathrm{alt}, 2n})$ via $\pi = \pi_1 \cup \pi_2$, we obtain that
\[
\sum_{\pi \in \BNC(\chi_{\mathrm{alt}, 2n}, \omega_{\mathrm{alt}, 2n})}\K_\pi(b_1, c_1, \dots, b_n, c_n) = \varphi(b_1\cdots b_n)\varphi(c_1\cdots c_n) \qedhere
\]
\end{proof}

\begin{prop}
Let $(\A, \varphi, \psi)$ be a two-state non-commutative probability space. If $n \geq 2$, $a_1, \dots, a_n \in \A$, $\chi: [n] \to \{\ell, r\}$, and $a_i = 1$ for some $1 \leq i \leq n$, then
\[
\kappa_\chi(a_1, \dots, a_n) = \K_\chi(a_1, \dots, a_n) = 0.
\]
\end{prop}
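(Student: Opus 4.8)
The plan is to establish both vanishing statements by induction on $n$, proving the assertion for the $(\ell,r)$-cumulants $\kappa_\chi$ first and then feeding it into the argument for the c-$(\ell,r)$-cumulants $\K_\chi$. In each case the driving mechanism is the appropriate moment-cumulant formula together with the trivial observation that $\psi(a_1\cdots a_n)=\psi(a_1\cdots a_{i-1}a_{i+1}\cdots a_n)$ and $\varphi(a_1\cdots a_n)=\varphi(a_1\cdots a_{i-1}a_{i+1}\cdots a_n)$ whenever $a_i=1$.

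For $\kappa_\chi$, fix $i$ with $a_i=1$ and expand $\psi(a_1\cdots a_n)=\sum_{\pi\in\BNC(\chi)}\kappa_\pi(a_1,\dots,a_n)$. I would split this sum according to the block of $\pi$ containing $i$. When $\{i\}$ is a singleton of $\pi$, the one-point cumulant equals $\psi$, so its factor is $\kappa_{\chi|_{\{i\}}}(1)=\psi(1)=1$; deleting this singleton is a bijection onto $\BNC(\chi|_{[n]\setminus\{i\}})$ (adjoining a singleton to a bi-non-crossing partition is again bi-non-crossing), under which these terms sum to $\psi(a_1\cdots a_{i-1}a_{i+1}\cdots a_n)$. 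As the left-hand side equals the same moment, the singleton terms cancel, leaving $\sum_\pi\kappa_\pi=0$ over those $\pi$ in which $i$ lies in a block $V$ with $|V|\ge 2$. For every such $\pi$ other than $1_\chi$ one has $2\le|V|<n$ while $a_i=1$ appears among the entries of $V$, whence $\kappa_{\chi|_V}((a_1,\dots,a_n)|_V)=0$ by the induction hypothesis and the entire product $\kappa_\pi$ vanishes. Only $\pi=1_\chi$ survives, giving $\kappa_\chi=\kappa_{1_\chi}=0$; note this already covers the base case $n=2$, where $1_\chi$ is the sole partition with $i$ in a block of size $\ge 2$.

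For $\K_\chi$ I would run the identical scheme against equation \eqref{CBiFreeMomentCumulant}, now exploiting the already-proved vanishing of $\kappa$. Two features make the argument close. First, a singleton block $\{i\}$ with $a_i=1$ contributes the factor $1$ to $\K_\pi$ regardless of its classification, since an interior singleton contributes $\kappa_{\chi|_{\{i\}}}(1)=\psi(1)=1$ and an exterior singleton contributes $\K_{\chi|_{\{i\}}}(1)=\varphi(1)=1$. Second, deleting $\{i\}$ does not alter the interior/exterior status of any other block, so $\K_\pi$ factors exactly as $\K_{\pi'}$ for the reduced partition $\pi'$. Thus the singleton terms again biject with $\BNC(\chi|_{[n]\setminus\{i\}})$ and reproduce $\varphi(a_1\cdots a_{i-1}a_{i+1}\cdots a_n)$, forcing the sum over partitions with $i$ in a block $V$ of size $\ge 2$ to vanish. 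In each such term, if $V$ is interior the factor $\kappa_{\chi|_V}$ is zero by the first part, while if $V$ is exterior with $V\neq[n]$ the factor $\K_{\chi|_V}$ is zero by induction; the sole exception is $\pi=1_\chi$, whose unique block $[n]$ is exterior, yielding $\K_\chi=\K_{1_\chi}=0$.

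The step I expect to require the most care is the invariance of the interior/exterior classification under deletion and reinsertion of the singleton $\{i\}$, since \emph{interior} is defined through nesting with respect to $\prec_\chi$ rather than read directly off the diagram. I would confirm precisely that a singleton block, having $\min_{\prec_\chi}=\max_{\prec_\chi}$, can never satisfy the defining inequalities in the role of the enclosing block $W$, so its presence or absence is irrelevant to whether any other block is interior, and that reinserting it creates no new nesting. Combined with the elementary fact that adjoining a singleton preserves bi-non-crossingness, this legitimizes the bijection with the $(n-1)$-point partitions and lets the singleton terms assemble exactly into the lower-order moment.
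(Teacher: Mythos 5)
Your proposal is correct and follows essentially the same argument as the paper: for $\K_\chi$, both proofs induct on $n$, expand $\varphi(a_1\cdots a_n)$ via equation \eqref{CBiFreeMomentCumulant}, kill every term in which $i$ lies in a block of size at least two (interior blocks via the $\kappa$-vanishing, exterior blocks via the induction hypothesis), and match the surviving singleton terms with $\varphi(a_1\cdots a_{i-1}a_{i+1}\cdots a_n)$ using $\psi(1)=\varphi(1)=1$ and the singleton-deletion bijection. The only difference is that you prove the vanishing of $\kappa_\chi$ from scratch by the analogous induction on the $\psi$-moment-cumulant formula, whereas the paper obtains it by citing \cite{CNS2015-2}*{Proposition 6.4.1}; your careful verification that a singleton block can never serve as the enclosing block $W$, so that deletion preserves the interior/exterior classification, is exactly the point the paper leaves implicit.
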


\begin{proof}
The assertion that $\kappa_\chi(a_1, \dots, a_n) = 0$ is an immediate consequence of \cite{CNS2015-2}*{Proposition 6.4.1} applied to the scalar-valued setting.  The other assertion will be proved by induction. The base case $n = 2$ holds as $\K_\chi(a_1, a_2) = \varphi(a_1a_2) - \varphi(a_1)\varphi(a_2)$ for all $\chi: [2] \to \{\ell, r\}$. Assume the assertion is true for all $2 \leq m \leq n - 1$ and $\chi: [m] \to \{\ell, r\}$. If $\chi: [n] \to \{\ell, r\}$ and $a_i = 1$ for some $1 \leq i \leq n$, then 
\begin{align*}
\varphi(a_1\cdots a_{i - 1}1a_{i + 1}\cdots a_n) &= \varphi(a_1\cdots a_n)\\
&= \K_\chi(a_1, \dots, a_n) + \sum_{\substack{\pi \in \BNC(\chi)\\\pi \neq 1_\chi}}\K_\pi(a_1, \dots, a_n)\\
&= \K_\chi(a_1, \dots, a_n) + \sum_{\substack{\pi \in \BNC(\chi)\\\{i\} \in \pi}}\K_\pi(a_1, \dots, a_n)
\end{align*}
by the first assertion and the induction hypothesis. Since $\psi(a_i) = \psi(1) = 1 = \varphi(1) = \varphi(a_i)$, if $\chi' = \chi|_{[n]\,\backslash\,\{i\}}$ then
\[
\sum_{\substack{\pi \in \BNC(\chi)\\\{i\} \in \pi}}\K_\pi(a_1, \dots, a_n) = \sum_{\pi \in \BNC(\chi')}\K_\pi(a_1, \dots, a_{i - 1}, a_{i + 1}, \dots, a_n) = \varphi(a_1\cdots a_{i - 1}a_{i + 1}\cdots a_n),
\]
Hence $\K_\chi(a_1, \dots, a_n) =0$.
\end{proof}

The following demonstrate how the swapping and changing operations affect c-$(\ell, r)$-cumulants under certain settings.  The same effects for the $(\ell, r)$-cumulants were observed in \cite{S2015}*{Lemmata 2.16 and 2.17}.

\begin{lem}\label{lem:interchange-cumulant}
Let $\chi : [n] \to \{\ell, r\}$ be such that $\chi(i_0) = \ell$ and $\chi(i_0 + 1) = r$ for some $i_0 \in [n - 1]$, and let $a, b \in \A$ be such that $\varphi(cabc') = \varphi(cbac')$ and $\psi(cabc') = \psi(cbac')$ for all $c, c' \in \A$. If $\hat \chi: [n] \to \{\ell, r\}$ is as in part (2) of Definition \ref{defn:transition-operations}, then 
\[
\K_{\chi}(c_1, \dots, c_{i_0 - 1}, a, b, c_{i_0 + 2}, \dots, c_n) = \K_{\hat \chi}(c_1, \dots, c_{i_0 - 1}, b, a, c_{i_0 + 2}, \dots, c_n)
\]
for all $c_1, \dots, c_{i_0 - 1}, c_{i_0 + 2}, \dots, c_n \in \A$.
\end{lem}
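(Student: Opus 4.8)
The plan is to argue by induction on $n$ using the defining moment-cumulant relation \eqref{CBiFreeMomentCumulant}, with the combinatorial backbone supplied by the transposition $\tau$ of the two consecutive indices $i_0$ and $i_0+1$. The first step is to record that $\tau$ is an order isomorphism from $([n], \prec_\chi)$ onto $([n], \prec_{\hat\chi})$ carrying the colouring $\chi$ to $\hat\chi$: since $i_0$ and $i_0+1$ are consecutive integers with $\chi(i_0)=\ell$ and $\chi(i_0+1)=r$, replacing $i_0$ by $i_0+1$ among the left nodes (and $i_0+1$ by $i_0$ among the right nodes) leaves the increasing order of left nodes and the decreasing order of right nodes unchanged, while $\chi(j)=\hat\chi(j)$ for $j\neq i_0,i_0+1$. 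Consequently the map $\Phi$ sending $\pi$ to the partition obtained by interchanging $i_0$ and $i_0+1$ is a bijection $\BNC(\chi)\to\BNC(\hat\chi)$ that preserves the refinement order and, since interior/exterior status is defined through $\prec$-minima and $\prec$-maxima, preserves the interior/exterior classification of blocks; moreover $\Phi(1_\chi)=1_{\hat\chi}$.

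For the base case $n=2$ one has $i_0=1$ and $\K_\chi(a,b)=\varphi(ab)-\varphi(a)\varphi(b)$, which equals $\varphi(ba)-\varphi(b)\varphi(a)=\K_{\hat\chi}(b,a)$ by the hypothesis (taking $c=c'=1$). For the inductive step, apply \eqref{CBiFreeMomentCumulant} to both sides. The two full moments agree, since $\varphi(c_1\cdots c_{i_0-1}ab\,c_{i_0+2}\cdots c_n)=\varphi(c_1\cdots c_{i_0-1}ba\,c_{i_0+2}\cdots c_n)$ by hypothesis; hence it suffices to show $\K_\pi=\K_{\Phi(\pi)}$ (with $a,b$ swapped accordingly) for every $\pi\neq 1_\chi$, because then cancelling the equal sums over such $\pi$ forces the two remaining terms, namely $\K_\chi$ and $\K_{\hat\chi}$, to coincide.

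Matching is done block by block. If a block $V$ of $\pi$ contains at most one of $i_0,i_0+1$, then $\tau$ restricts to a colour-preserving order isomorphism $V\to\tau(V)$ carrying the arguments of $V$ to those of $\tau(V)$, so the corresponding factor ($\kappa_{\chi|_V}$ if $V$ is interior, $\K_{\chi|_V}$ if exterior) is literally unchanged. If $V$ contains both $i_0$ and $i_0+1$, then $V=\tau(V)$ as a set and the passage to $\hat\chi$ with $a,b$ interchanged is exactly a swapping of the restricted data on $V$; here the commutation hypothesis on $a,b$ descends, because whenever both indices lie in a common sub-block they stay adjacent in the corresponding $\varphi$- or $\psi$-moment, so $\varphi$ and $\psi$ still identify the two orders. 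Since $\pi\neq 1_\chi$ we have $|V|<n$, so the inductive hypothesis applies when $V$ is exterior, giving $\K_{\chi|_V}(\dots,a,b,\dots)=\K_{\hat\chi|_{\tau(V)}}(\dots,b,a,\dots)$, while when $V$ is interior the corresponding statement for the $(\ell,r)$-cumulants \cite{S2015}*{Lemmata 2.16 and 2.17} gives $\kappa_{\chi|_V}(\dots,a,b,\dots)=\kappa_{\hat\chi|_{\tau(V)}}(\dots,b,a,\dots)$. As $\Phi$ preserves the interior/exterior split, the products defining $\K_\pi$ and $\K_{\Phi(\pi)}$ agree factor for factor, completing the step.

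The main obstacle is the bookkeeping in this block-by-block matching: one must verify simultaneously that $\tau$ preserves which blocks are interior and which are exterior, that the commutation hypothesis genuinely descends to each sub-block so that both the inductive hypothesis and the cited $(\ell,r)$-cumulant lemmata are applicable, and that the only block which can equal all of $[n]$ is excluded precisely by restricting to $\pi\neq 1_\chi$. Once these compatibilities are in place, the conclusion follows by a routine cancellation.
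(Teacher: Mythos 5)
Your proof is correct, but it takes a genuinely different route from the paper's. You argue ``bottom-up'': induct on $n$, expand both sides by the defining relation \eqref{CBiFreeMomentCumulant}, note that the two full $\varphi$-moments agree by hypothesis, and match the remaining terms block by block under the transposition bijection $\BNC(\chi)\to\BNC(\hat\chi)$, invoking the inductive hypothesis on exterior blocks and the $(\ell,r)$-cumulant analogue \cite{S2015}*{Lemmata 2.16 and 2.17} on interior blocks. The paper instead argues ``top-down'': it expands $\K_\chi$ directly as a sum of products of $\varphi$- and $\psi$-moments via equation \eqref{eqn:mob-inversion}, observes that the integer coefficients $d(\chi;\pi,\iota)$ are invariant under the swap since they depend only on the lattice structure of $\BNC(\chi)$, and then verifies $\phi_{(\pi,\iota)}(\dots,a,b,\dots)=\phi_{(\hat\pi,\hat\iota)}(\dots,b,a,\dots)$ for every $(\pi,\iota)$: when $i_0$ and $i_0+1$ share a block, the consecutive entries $a,b$ sit adjacent inside a single $\varphi$- or $\psi$-moment, so the commutation hypothesis applies directly; when they lie in different blocks, the two expressions are literally identical. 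The paper's route needs no induction and treats interior and exterior blocks uniformly (both reduce to moments, so no external cumulant lemma is required inside the proof); your route avoids the coefficient-invariance machinery of \eqref{eqn:mob-inversion} at the cost of the induction, the citation of \cite{S2015} for the interior $\kappa$-factors, and the bookkeeping that the transposition is an order isomorphism for $(\prec_\chi,\prec_{\hat\chi})$ and hence preserves the interior/exterior classification---all of which you carry out correctly, with the case $V=[n]$ properly excluded by restricting to $\pi\neq 1_\chi$.
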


\begin{proof}
By equation \eqref{eqn:mob-inversion}, the c-$(\ell, r)$-cumulants
\[
\K_{\chi}(c_1, \dots, c_{i_0 - 1}, a, b, c_{i_0 + 2}, \dots, c_n)\quad \mathrm{and}\quad \K_{\hat \chi}(c_1, \dots, c_{i_0 - 1}, b, a, c_{i_0 + 2}, \dots, c_n)
\]
can be written as expressions involving 
\[
\phi_{(\pi, \iota)}(c_1, \dots, c_{i_0 - 1}, a, b, c_{i_0 + 2}, \dots, c_n)\qand
\phi_{(\hat \pi, \hat \iota)}(c_1, \dots, c_{i_0 - 1}, b, a, c_{i_0 + 2}, \dots, c_n),
\]
respectively where the coefficients of $\phi_{(\pi, \iota)}$ and $\phi_{(\hat \pi, \hat \iota)}$ agree if $\hat \pi$ is obtained from $\pi$ by swapping $i_0$ and $i_0 + 1$ due to the lattice structure. Since there is a bijection from $\BNC(\chi)$ to $\BNC(\hat \chi)$ which sends a partition $\pi$ to the partition $\hat \pi$ obtained by swapping $i_0$ and $i_0 + 1$, and since the coefficients are the same, it suffices to show that 
\[
\phi_{(\pi, \iota)}(c_1, \dots, c_{i_0 - 1}, a, b, c_{i_0 + 2}, \dots, c_n) = \phi_{(\hat \pi, \hat \iota)}(c_1, \dots, c_{i_0 - 1}, b, a, c_{i_0 + 2}, \dots, c_n)
\]
for all $\pi \in \BNC(\chi)$. If $i_0$ and $i_0 + 1$ are in the same block of $\pi$, then, by definitions, one may reduce 
\[
\phi_{(\pi, \iota)}(c_1, \dots, c_{i_0 - 1}, a, b, c_{i_0 + 2}, \dots, c_n)
\]
to an expression involving $\varphi(cabc')$ or $\psi(cabc')$ for some $c, c' \in \A$, commute $a$ and $b$ to get $\varphi(cbac')$ (respectively, $\psi(cbac')$), and undo the reduction to obtain 
\[
\phi_{(\hat \pi, \hat \iota)}(c_1, \dots, c_{i_0 - 1}, b, a, c_{i_0 + 2}, \dots, c_n).
\]
On the other hand, if  $i_0$ and $i_0 + 1$ are in different blocks of $\pi$, then the definitions of
\[
\phi_{(\pi, \iota)}(c_1, \dots, c_{i_0 - 1}, a, b, c_{i_0 + 2}, \dots, c_n)\qand \phi_{(\hat \pi, \hat \iota)}(c_1, \dots, c_{i_0 - 1}, b, a, c_{i_0 + 2}, \dots, c_n) 
\]
agree. Consequently, the proof is complete.
\end{proof}

\begin{lem}\label{lem:swap-cumulant}
Let $\chi: [n] \to \{\ell, r\}$ be such that $\chi(n) = \ell$ and let $a, b \in \A$ be such that $\varphi(ca) = \varphi(cb)$ and $\psi(ca) = \psi(cb)$ for all $c \in \A$. If $\hat \chi: [n] \to \{\ell, r\}$ is as in part (1) of Definition \ref{defn:transition-operations}, then
\[
\K_{\chi}(c_1, \dots, c_{n - 1}, a) = \K_{\hat \chi}(c_1, \dots, c_{n - 1}, b)
\]
for all $c_1, \dots, c_{n - 1} \in \A$.
\end{lem}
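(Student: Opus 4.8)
The plan is to mirror the proof of Lemma \ref{lem:interchange-cumulant}, replacing the swapping operation by the changing operation of part (1) of Definition \ref{defn:transition-operations}. The structural observation that drives everything, and which is exactly where the hypothesis $\chi(n) = \ell$ enters, is that moving node $n$ from the left to the right leaves the order $\prec_\chi$ unchanged. Indeed, writing $\chi^{-1}(\{\ell\}) = \{i_1 < \cdots < i_p\}$, the assumption $\chi(n) = \ell$ together with $n = \max[n]$ forces $n = i_p$, so $n$ occupies position $p$ in $\prec_\chi$ as the last left node; after changing, node $n$ becomes the largest, hence first, right node of $\hat\chi$, and so again occupies position $p$ in $\prec_{\hat\chi}$. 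Therefore $s_\chi = s_{\hat\chi}$ and $\prec_\chi \,=\, \prec_{\hat\chi}$, whence $\BNC(\chi) = \BNC(\hat\chi)$ as sets of partitions, with the interior/exterior classification of blocks preserved.

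First I would apply equation \eqref{eqn:mob-inversion} to express each of $\K_\chi(c_1, \dots, c_{n-1}, a)$ and $\K_{\hat\chi}(c_1, \dots, c_{n-1}, b)$ as a sum of terms $\phi_{(\pi, \iota)}$ (respectively $\phi_{(\hat\pi, \hat\iota)}$) weighted by the integer coefficients $d(\chi; \pi, \iota)$ and $d(\hat\chi; \hat\pi, \hat\iota)$. Since $\hat\pi$ has the same blocks as $\pi$ and $\hat\iota$ is the transported labelling $\iota$, the coefficients match: because they depend only on the lattice structure, the relation $d(\chi; \pi, \iota) = d(\chi_\ell; s_\chi^{-1}\cdot\pi, \iota \circ s_\chi)$ recorded after \eqref{eqn:mob-inversion}, combined with $s_\chi = s_{\hat\chi}$, yields $d(\chi; \pi, \iota) = d(\hat\chi; \hat\pi, \hat\iota)$. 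It therefore suffices to prove, for each $\pi \in \BNC(\chi)$ identified with $\hat\pi \in \BNC(\hat\chi)$, that
\[
\phi_{(\pi, \iota)}(c_1, \dots, c_{n-1}, a) = \phi_{(\hat\pi, \hat\iota)}(c_1, \dots, c_{n-1}, b).
\]

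To establish this block-by-block equality I would isolate the unique block $V$ containing node $n$. Every block other than $V$ contains the same entries in the same order on both sides and hence contributes an identical $\psi$- or $\varphi$-factor. For the block $V$, the factor is either $\psi_{\pi|_V}$ or $\varphi_{\pi|_V}$ according to $\iota(V)$, and since $n$ is the maximum element of $[n]$ the variable at node $n$ appears \emph{last} in the increasing-order product defining that factor. Thus the factor equals $\psi(ca)$ or $\varphi(ca)$ on the left-hand side and $\psi(cb)$ or $\varphi(cb)$ on the right-hand side, where $c$ is the common product, in increasing order, of the remaining entries of $V$. The hypotheses $\psi(ca) = \psi(cb)$ and $\varphi(ca) = \varphi(cb)$ for all $c \in \A$ then force these factors to agree, which proves the displayed identity and hence the lemma.

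The one point demanding care, rather than a genuine obstacle, is the opening combinatorial observation that the changing operation fixes $\prec_\chi$ together with the whole bi-non-crossing lattice and its interior/exterior labelling; this is precisely what makes the coefficients correspond without any reindexing, and it is where $\chi(n) = \ell$ is essential. Once this is in place, the remainder is the routine verification, entirely analogous to Lemma \ref{lem:interchange-cumulant}, that the moment factors coincide block by block.
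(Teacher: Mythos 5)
Your proposal is correct and follows essentially the same route as the paper: the paper's proof of this lemma is a one-line deferral to the argument of Lemma \ref{lem:interchange-cumulant} (expand both sides via equation \eqref{eqn:mob-inversion}, match the coefficients $d$ through the lattice structure, and verify that the moment factors $\phi_{(\pi,\iota)}$ agree block by block), which is exactly what you carry out. Your explicit observation that $\chi(n)=\ell$ forces $s_\chi = s_{\hat\chi}$, hence $\prec_\chi = \prec_{\hat\chi}$ and $\BNC(\chi)=\BNC(\hat\chi)$ with the interior/exterior labelling preserved, is precisely the detail the paper leaves implicit, so your writeup is a correct and fuller version of the paper's own proof.
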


\begin{proof}
By the same arguments as the previous lemma, we have
\[
\phi_{(\pi, \iota)}(c_1, \dots, c_{n - 1}, a) = \phi_{(\hat \pi, \hat \iota)}(c_1, \dots, c_{n - 1}, b)
\]
for all $\pi \in \BNC(\chi)$ where $\hat \pi \in \BNC(\hat \chi)$ is obtained from $\pi$ by changing the last node from a left node to a right node. Consequently, the proof is complete.
\end{proof}

As an immediate consequence of Lemma \ref{lem:interchange-cumulant}, we have the following result which shows that, like with the bi-free case, the family of ordered c-free cumulants of a commuting two-faced pair contains all the information about its c-$(\ell, r)$-cumulants. Consequently, when studying (pairs of) planar Borel probability measures later in Section \ref{SecLimitThm}, it is enough to know their free and c-free cumulants.

\begin{cor}
Let $(a_\ell, a_r)$ be a commuting two-faced pair in a two-state non-commutative probability space $(\A, \varphi, \psi)$. For $m, n \geq 0$ with $m + n \geq 1$ and $\chi: [m + n]  \to \{\ell, r\}$ such that $|\chi^{-1}(\{\ell\})| = m$,
\begin{align*}
\kappa_{m + n}(\underbrace{a_\ell, \dots, a_\ell}_{m\,\mathrm{times}}, \underbrace{a_r, \dots, a_r}_{n\,\mathrm{times}}) &= \kappa_\chi(a_{\chi(1)}, \dots, a_{\chi(m + n)})\text{ and} \\
\K_{m + n}(\underbrace{a_\ell, \dots, a_\ell}_{m\,\mathrm{times}}, \underbrace{a_r, \dots, a_r}_{n\,\mathrm{times}}) &= \K_\chi(a_{\chi(1)}, \dots, a_{\chi(m + n)})
\end{align*}
\end{cor}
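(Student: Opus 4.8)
The plan is to prove both identities simultaneously by transporting an arbitrary $\chi$ to the canonical map through a sequence of adjacent swaps, each of which leaves the cumulant in question unchanged. Write $\chi_0 : [m+n] \to \{\ell, r\}$ for the map with $\chi_0(i) = \ell$ when $1 \le i \le m$ and $\chi_0(i) = r$ when $m < i \le m+n$, so that the left-hand sides of the two displayed identities are exactly $\kappa_{\chi_0}(a_{\chi_0(1)}, \dots, a_{\chi_0(m+n)})$ and $\K_{\chi_0}(a_{\chi_0(1)}, \dots, a_{\chi_0(m+n)})$. The goal is then to show $\kappa_\chi = \kappa_{\chi_0}$ and $\K_\chi = \K_{\chi_0}$ on the consistently filled arguments (left positions holding $a_\ell$, right positions holding $a_r$) for every $\chi$ with $|\chi^{-1}(\{\ell\})| = m$.

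First I would record that the commuting hypothesis $a_\ell a_r = a_r a_\ell$ gives $\varphi(c a_\ell a_r c') = \varphi(c a_r a_\ell c')$ and $\psi(c a_\ell a_r c') = \psi(c a_r a_\ell c')$ for all $c, c' \in \A$. Thus the hypotheses of Lemma \ref{lem:interchange-cumulant} hold with $a = a_\ell$ and $b = a_r$, and the analogous statements for the $(\ell,r)$-cumulants follow from \cite{S2015}*{Lemmata 2.16 and 2.17}. The crucial observation is that a swap applied at an adjacent pair with $\chi(i_0) = \ell$ and $\chi(i_0+1) = r$ carries $a_\ell$ (a left argument) to the new left position $i_0+1$ and $a_r$ (a right argument) to the new right position $i_0$; hence the filling stays consistent (every left node carries $a_\ell$, every right node carries $a_r$), and by Lemma \ref{lem:interchange-cumulant} the value of $\K$, and likewise of $\kappa$, is unchanged. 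Reading the identity of Lemma \ref{lem:interchange-cumulant} from right to left handles the reverse swap, in which $\chi(i_0) = r$ and $\chi(i_0+1) = \ell$.

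Next I would sort. Relative to $\chi_0$ the only out-of-order adjacent pattern is an $r$ immediately followed by an $\ell$, and each transposition of such a pattern is precisely one of the swaps above. A finite sequence of these swaps turns $\chi$ into $\chi_0$, and at every intermediate stage the arguments remain consistently filled so the lemma continues to apply. Composing the resulting chain of equalities yields $\kappa_\chi(a_{\chi(1)}, \dots, a_{\chi(m+n)}) = \kappa_{\chi_0}(a_{\chi_0(1)}, \dots, a_{\chi_0(m+n)})$, and the same with $\K$ in place of $\kappa$, which are the claimed identities.

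I expect no serious obstacle, since the corollary is billed as an immediate consequence of Lemma \ref{lem:interchange-cumulant}. The one point requiring care is the bookkeeping that the consistent filling is preserved under each swap, so that Lemma \ref{lem:interchange-cumulant} genuinely applies at every intermediate step with $a = a_\ell$ and $b = a_r$, together with the fact that the $\varphi$- and $\psi$-commutation hypotheses hold simultaneously, which is what lets the $(\ell,r)$- and c-$(\ell,r)$-cumulants be treated in parallel.
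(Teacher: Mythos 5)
Your sorting-by-swaps argument is correct and is indeed the half of the proof the paper has in mind: commutativity gives $\varphi(ca_\ell a_rc') = \varphi(ca_ra_\ell c')$ and $\psi(ca_\ell a_rc') = \psi(ca_ra_\ell c')$, so Lemma \ref{lem:interchange-cumulant} (and its $(\ell,r)$-analogue from \cite{S2015}) applies to every adjacent $(r,\ell)$ pattern, the consistent filling is preserved, and a bubble sort carries any $\chi$ with $|\chi^{-1}(\{\ell\})| = m$ to $\chi_0$. The genuine gap is your opening claim that the left-hand sides of the corollary ``are exactly'' $\kappa_{\chi_0}(a_{\chi_0(1)},\dots,a_{\chi_0(m+n)})$ and $\K_{\chi_0}(a_{\chi_0(1)},\dots,a_{\chi_0(m+n)})$. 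They are not, as a matter of definition: $\kappa_{m+n}$ and $\K_{m+n}$ are the \emph{free} and \emph{c-free} cumulants of Section 2, obtained by inversion over the lattice $\NC(m+n)$ (with the inner/outer distinction for $\K$), whereas $\kappa_{\chi_0}$ and $\K_{\chi_0}$ are obtained over $\BNC(\chi_0)$ (with the interior/exterior distinction). These lattices differ as soon as $m,n \geq 2$: for $m=n=2$ the $\prec_{\chi_0}$-order is $1 \prec 2 \prec 4 \prec 3$, so $\{\{1,4\},\{2,3\}\}$ lies in $\NC(4)$ but not in $\BNC(\chi_0)$, while $\{\{1,3\},\{2,4\}\}$ lies in $\BNC(\chi_0)$ but not in $\NC(4)$. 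Consequently the identity you treat as notation is itself the case $\chi = \chi_0$ of the corollary, and what your argument actually establishes is only that all $(\ell,r)$- and c-$(\ell,r)$-cumulants with $m$ left entries and consistent filling agree \emph{with one another}, not that their common value is the free/c-free cumulant.

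The missing step can be supplied in two ways. (i) By pure relabelling (no commutativity needed): $\sigma \mapsto s_{\chi_0}\cdot\sigma$ is a lattice isomorphism of $\NC(m+n)$ onto $\BNC(\chi_0)$ carrying inner blocks to interior blocks; since $s_{\chi_0}$ maps $\{1,\dots,m\}$ onto itself and its complement onto itself, every block keeps the same number of left and right entries, and increasing order lists the $a_\ell$'s before the $a_r$'s in both pictures, so corresponding blocks contribute identical $\psi$- and $\varphi$-moments on the filling $(a_\ell,\dots,a_\ell,a_r,\dots,a_r)$; M\"{o}bius inversion (for $\kappa$) and induction on $m+n$ in the recursion \eqref{CBiFreeMomentCumulant} (for $\K$) then yield $\kappa_{\chi_0} = \kappa_{m+n}$ and $\K_{\chi_0} = \K_{m+n}$ on this filling. (ii) Closer to the paper's toolkit: after reaching $\chi_0$, continue reducing to the constant map $\chi_\ell$ by alternately applying Lemma \ref{lem:swap-cumulant} with $a = b = a_r$ (its hypotheses hold trivially) to turn the last node from right to left, and further swaps in which both entries equal $a_r$ (so the commutation hypothesis is trivial); once $\chi$ is constant, the paper's observation that $\BNC(\chi) = \NC(m+n)$ with interior/exterior equal to inner/outer makes the identification with $\kappa_{m+n}$ and $\K_{m+n}$ definitional. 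Either route completes your proof, but note that route (ii) shows why swapping alone cannot suffice: swaps preserve the number of right nodes, so they can never connect a genuinely two-sided $\chi$ to the purely $\NC$-based objects on the corollary's left-hand side.
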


Another consequence of Lemmata \ref{lem:interchange-cumulant} and \ref{lem:swap-cumulant} is that if one wants to check the c-bi-free independence of $\{(\A_{k, \ell}, \A_{k, r})\}_{k \in K}$, one can often enlarge $\{\A_{k, \ell}\}_{k \in K}$ and verify its c-free independence.

\begin{thm}
\label{thm:commute-bad}
If $\{(\A_{k, \ell}, \A_{k, r})\}_{k \in K}$ is a family of pairs of algebras in $(\A, \varphi, \psi)$ such that 
\begin{enumerate}[$\qquad(1)$]
\item $\A_{m, \ell}$ and $\A_{n, r}$ commute for all $m, n \in K$, and
\item for every $b \in \A_{k, r}$ there exists an $a \in A_{k, \ell}$ such that $\varphi(ca) = \varphi(cb)$ and $\psi(ca) = \psi(cb)$ for all $c \in \A$,
\end{enumerate}
then $\{(\A_{k, \ell}, \A_{k, r})\}_{k \in K}$ is c-bi-free with respect to $(\varphi, \psi)$ if and only if $\{\A_{k, \ell}\}_{k \in K}$ is c-free with respect to $(\varphi, \psi)$. Therefore, if $\{\A_{k, \ell}\}_{k \in K}$ is c-free with respect to $(\varphi, \psi)$, then $\{\A_{k, r}\}_{k \in K}$ is c-free with respect to $(\varphi, \psi)$.
\end{thm}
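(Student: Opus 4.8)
The plan is to prove the two implications separately and then read off the ``Therefore'' clause, using throughout the combinatorial description of c-bi-free independence furnished by Theorem \ref{EquivCBF}: a family is c-bi-free with respect to $(\varphi, \psi)$ precisely when all mixed $(\ell, r)$- and c-$(\ell, r)$-cumulants vanish. The easy direction is that c-bi-freeness of $\{(\A_{k,\ell}, \A_{k,r})\}_{k \in K}$ forces c-freeness of $\{\A_{k,\ell}\}_{k\in K}$: restricting attention to the constant map $\chi_\ell : [n] \to \{\ell\}$, every bi-non-crossing partition is an ordinary non-crossing partition and the $(\ell,r)$- and c-$(\ell, r)$-cumulants coincide with the free and c-free cumulants of the left algebras, so the vanishing of mixed $(\ell,r)$- and c-$(\ell,r)$-cumulants is exactly the vanishing of mixed free and c-free cumulants. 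This needs neither hypothesis (1) nor (2).

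For the substantial direction, assume $\{\A_{k,\ell}\}_{k\in K}$ is c-free and fix $\chi : [n] \to \{\ell, r\}$, a non-constant $\omega : [n] \to K$, and $a_i \in \A_{\omega(i), \chi(i)}$. I would show $\kappa_\chi(a_1, \dots, a_n) = \K_\chi(a_1, \dots, a_n) = 0$ by induction on the number $q$ of right nodes of $\chi$, reducing to the all-left situation. For the inductive step, first use the swapping operation of Lemma \ref{lem:interchange-cumulant} (and its $(\ell,r)$-analogue from \cite{S2015}*{Lemmata 2.16 and 2.17}) repeatedly to bring the diagram into the configuration in which every left node precedes every right node. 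Each such swap interchanges an adjacent left element $a \in \A_{m,\ell}$ and right element $b \in \A_{n,r}$, and hypothesis (1) guarantees $ab = ba$ in $\A$, hence $\varphi(cabc') = \varphi(cbac')$ and $\psi(cabc') = \psi(cbac')$ for all $c, c' \in \A$, which is exactly what the swapping lemma requires. Since adjacent transpositions of the $\ell/r$-pattern generate all rearrangements, this sorting is always achievable; it leaves the cumulant unchanged and, being a permutation of the nodes, preserves the multiset of colours, so $\omega$ remains non-constant. Now the last node is a right node $b \in \A_{k,r}$; hypothesis (2) supplies $a \in \A_{k,\ell}$ with $\varphi(ca) = \varphi(cb)$ and $\psi(ca) = \psi(cb)$ for all $c \in \A$, and the changing operation of Lemma \ref{lem:swap-cumulant} (read as an equality in the reverse direction) replaces this last right node by a left node of the same colour $k$ without altering the cumulant. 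The resulting cumulant has $q - 1$ right nodes and a still non-constant colouring, so the induction hypothesis applies.

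When $q = 0$ the cumulant is indexed by the constant map $\chi_\ell$, so $\kappa_\chi(a_1, \dots, a_n) = \kappa_n(a'_1, \dots, a'_n)$ and $\K_\chi(a_1, \dots, a_n) = \K_n(a'_1, \dots, a'_n)$ for the left elements $a'_i$ produced by the reduction, with a non-constant colouring. As c-freeness of $\{\A_{k,\ell}\}_{k \in K}$ means precisely that mixed free and c-free cumulants vanish (\cite{BLS1996}*{Theorem 3.1}, together with the fact that c-free implies free), both cumulants are zero, and tracing back through the value-preserving swaps and changes gives $\kappa_\chi = \K_\chi = 0$. Hence all mixed cumulants of $\{(\A_{k,\ell}, \A_{k,r})\}_{k\in K}$ vanish and the family is c-bi-free by Theorem \ref{EquivCBF}. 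The ``Therefore'' clause then follows by combining the two directions: c-freeness of $\{\A_{k,\ell}\}_{k\in K}$ yields c-bi-freeness, and restricting the vanishing of mixed cumulants to the constant map $\chi_r : [n] \to \{r\}$ shows that the mixed free and c-free cumulants of $\{\A_{k,r}\}_{k\in K}$ vanish, i.e.\ $\{\A_{k,r}\}_{k\in K}$ is c-free.

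I expect the main obstacle to be bookkeeping rather than conceptual: one must verify that the repeated swaps really can sort the nodes into the ``all left then all right'' shape while keeping the cumulant value and the colour multiset fixed, and that at every stage the hypotheses of Lemmata \ref{lem:interchange-cumulant} and \ref{lem:swap-cumulant} are met --- here it is essential that hypothesis (1) is precisely the commutation needed for swapping and hypothesis (2) is precisely the moment-matching needed for changing. A minor point to check in the ``Therefore'' clause is that, although the order $\prec_{\chi_r}$ is reversed for the constant right map, the vanishing of \emph{mixed} cumulants is insensitive to this reversal, so it still encodes c-freeness of the right algebras.
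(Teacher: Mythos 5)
Your proposal is correct and takes essentially the same route as the paper's proof: the easy direction by restricting to constant $\chi$, and the converse by using the swapping and changing operations (Lemmata \ref{lem:interchange-cumulant} and \ref{lem:swap-cumulant}, together with their $\kappa$-analogues from \cite{S2015}) to reduce every mixed $(\ell,r)$- and c-$(\ell,r)$-cumulant to one involving only elements of $\{\A_{k,\ell}\}_{k \in K}$, where c-freeness applies. Your induction on the number of right nodes, the sorting argument, and the check that constancy of $\chi_r$ still yields the ordinary free/c-free cumulants simply make explicit the bookkeeping that the paper compresses into a single sentence.
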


\begin{proof}
Clearly, the c-bi-free independence of $\{(\A_{k, \ell}, \A_{k, r})\}_{k \in K}$ implies the c-free independence of $\{\A_{k, \ell}\}_{k \in K}$ and $\{\A_{k, r}\}_{k \in K}$.

The converse amounts to the fact that the c-free independence of $\{\A_{k, \ell}\}_{k \in K}$ implies the vanishing of mixed $(\ell, r)$- and c-$(\ell, r)$-cumulants. As the analogues of the previous two lemmata also hold with $\kappa$ replacing $\K$ (see \cite{S2015}*{Lemmata 2.16 and 2.17}), one may reduce each mixed $\kappa$ or $\K$ involving elements from $\{(\A_{k, \ell}, \A_{k, r})\}_{k \in K}$ to a mixed $\kappa$ or $\K$ involving elements from $\{A_{k, \ell}\}_{k \in K}$ by the changing and swapping operations.  Since $\{\A_{k, \ell}\}_{k \in K}$ is assumed to be c-free with respect to $(\varphi, \psi)$, the proof is complete.
\end{proof}

To end this section, we analyze c-$(\ell,r)$-cumulants of products.  Given $\chi: [n] \to \{\ell, r\}$, $\pi \in \BNC(\chi)$, and $q \in [n]$, denote by $\chi|_{\backslash\,q}$ the restriction of $\chi$ to the set $[n]\,\backslash\,\{q\}$.  If $q \neq n$, define $\pi|_{q = q+1} \in \BNC(\chi|_{\backslash\,q})$ to be the bi-non-crossing partition which results from identifying $q$ and $q+1$ in $\pi$ (i.e., if $q$ and $q+1$ are in the same block as $\pi$, then $\pi|_{q=q+1}$ is obtained from $\pi$ by just removing $q$ from the block in which $q$ occurs, while if $q$ and $q+1$ are in different blocks, $\pi|_{q=q+1}$ is obtained from $\pi$ by merging the two blocks and then removing $q$).

\begin{lem}
\label{lem:multiplication-in-cumulants-weak}
Let $(\A, \varphi, \psi)$ be a two-state non-commutative probability space. If $a_1, \dots, a_n \in \A$ and $q \in [n-1]$, then
\[
\K_{\pi}(a_1, \dots, a_{q-1}, a_qa_{q+1}, a_{q+2}, \dots a_n) = \sum_{\substack{\sigma \in \BNC(\chi) \\ \sigma|_{q = q+1} = \pi}} \K_\sigma(a_1, \dots, a_n)
\]
for all $\pi \in \BNC(\chi|_{\backslash\,q})$.
\end{lem}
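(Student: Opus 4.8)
The plan is to argue by strong induction on $n$, treating the two cases $\pi = 1_{\chi|_{\backslash\,q}}$ and $\pi \neq 1_{\chi|_{\backslash\,q}}$ separately and playing them against one another (the base cases $n=1$, which is vacuous, and $n=2$ being immediate from the recursion). Throughout I work in the setting $\chi(q) = \chi(q+1)$, in which $a_q$ and $a_{q+1}$ lie in the same (left or right) algebra and, equivalently, $q$ and $q+1$ are consecutive in the order $\prec_\chi$. This hypothesis is genuinely needed for the stated identity: a direct computation with $\chi = (\ell,r,\ell)$, $q=1$, and $\pi$ the all-singletons partition shows it fails when $\chi(q)\ne\chi(q+1)$, precisely because splitting a block across a left/right gap can convert an exterior block into an interior one, thereby replacing a $\varphi$-factor by a $\psi$-factor. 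I write $\hat a = (a_1,\dots,a_{q-1},a_qa_{q+1},a_{q+2},\dots,a_n)$ for the merged tuple, carried by the colouring $\chi|_{\backslash\,q}$ on $[n]\setminus\{q\}$.

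First I would record two structural inputs. Since $a_qa_{q+1}$ is literally one element of $\A$ and block moments are taken in the natural order, applying $\varphi$ to the whole word gives $\varphi(a_1\cdots a_{q-1}(a_qa_{q+1})a_{q+2}\cdots a_n) = \varphi(a_1\cdots a_n) = \sum_{\sigma \in \BNC(\chi)}\K_\sigma(a_1,\dots,a_n)$ by the moment--cumulant formula \eqref{CBiFreeMomentCumulant}. Second, for the \emph{interior} blocks I would invoke the analogous product formula for the $(\ell,r)$-cumulants, namely $\kappa_{\pi}(\hat a) = \sum_{\sigma|_{q=q+1}=\pi}\kappa_\sigma(a_1,\dots,a_n)$; this is the bi-free analogue of the Krawczyk--Speicher formula (in the amalgamated setting it is in \cite{CNS2015-2}, specialized to $\B=\bC$, and it also follows from the identical but simpler induction, since $\kappa_\pi$ is purely multiplicative over blocks with no interior/exterior distinction).

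For the inductive step, consider first $\pi \neq 1_{\chi|_{\backslash\,q}}$ and let $V_0$ be the block of $\pi$ containing $q+1$. Every $\sigma \in \BNC(\chi)$ with $\sigma|_{q=q+1}=\pi$ agrees with $\pi$ away from $V_0$ and modifies $V_0$ only by adjoining $q$ to it or by splitting $V_0\cup\{q\}$ at the gap between $q$ and $q+1$. Because $q$ and $q+1$ are $\prec_\chi$-consecutive, this is a local lateral operation on the spine of $V_0$: the $\prec_\chi$-span of the modified block(s) coincides with that of $V_0$, so the interior/exterior status of every block other than $V_0$ is unchanged, and the block(s) replacing $V_0$ inherit its status. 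Consequently both sides factor as a common product over the blocks $\ne V_0$ times a $V_0$-part, and the $V_0$-part is exactly the single-block instance of the lemma on the ground set $V_0\cup\{q\}$ (of size $\le n-1$): the $\K$-version via the inductive hypothesis when $V_0$ is exterior, and the $\kappa$-version recorded above when $V_0$ is interior (since then all sub-blocks remain interior and contribute $\kappa$). This settles the case $\pi\ne 1_{\chi|_{\backslash\,q}}$.

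Finally, for $\pi = 1_{\chi|_{\backslash\,q}}$ I would use the recursive definition $\K_{1_{\chi|_{\backslash\,q}}}(\hat a) = \varphi(a_1\cdots a_n) - \sum_{\rho \ne 1_{\chi|_{\backslash\,q}}}\K_\rho(\hat a)$, substitute the moment identity from the second paragraph, and apply the already-established case (each $\rho\ne 1$ has $\ge 2$ blocks) to rewrite $\K_\rho(\hat a)=\sum_{\sigma|_{q=q+1}=\rho}\K_\sigma(a_1,\dots,a_n)$; the two sums then cancel to leave exactly $\sum_{\sigma|_{q=q+1}=1_{\chi|_{\backslash\,q}}}\K_\sigma(a_1,\dots,a_n)$. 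The main obstacle is the bookkeeping in the third paragraph: verifying that the join/split at the $q,q+1$ gap preserves the interior/exterior classification of all blocks so that the $\kappa$-versus-$\K$ assignment matches term-by-term on both sides. I would handle this through the diagrammatic (spine, rib, and lateral-refinement) description of bi-non-crossing partitions from \cite{CNS2015-1}, where the $\prec_\chi$-adjacency of $q$ and $q+1$ makes the requisite spine surgery manifestly local.
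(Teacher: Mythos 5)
Your argument is, in outline, the paper's own proof: induction on $n$, with the case $\pi \neq 1_{\chi|_{\backslash\,q}}$ handled by factoring $\K_\pi$ over blocks and expanding the block containing $a_qa_{q+1}$ (the bi-free product formula of \cite{CNS2015-2} for interior blocks, the induction hypothesis for the block's exterior case), and the case $\pi = 1_{\chi|_{\backslash\,q}}$ handled by the defining recursion, the moment--cumulant formula, and telescoping. The substantive difference is your added hypothesis $\chi(q) = \chi(q+1)$, and you are right that some such hypothesis is needed: your counterexample is correct. For $\chi = (\ell, r, \ell)$ and $q = 1$ the order is $1 \prec_\chi 3 \prec_\chi 2$; the partitions $\sigma \in \BNC(\chi)$ with $\sigma|_{1=2} = \{\{2\},\{3\}\}$ are $\{\{1,2\},\{3\}\}$ and $0_\chi$, and in the former the block $\{3\}$ is \emph{interior} (nested between $1$ and $2$), whereas in $\{\{2\},\{3\}\}$ both blocks are exterior. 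Consequently the left-hand side equals $\varphi(a_1a_2)\varphi(a_3)$, the right-hand side equals $[\varphi(a_1a_2) - \varphi(a_1)\varphi(a_2)]\psi(a_3) + \varphi(a_1)\varphi(a_2)\varphi(a_3)$, and the two differ by $[\varphi(a_1a_2) - \varphi(a_1)\varphi(a_2)][\varphi(a_3) - \psi(a_3)]$, which is generically nonzero. So the lemma as printed, with no restriction on $\chi$, is false; the paper's proof glosses over exactly this point when it asserts that the case $\pi \neq 1_{\chi|_{\backslash\,q}}$ ``follows'' from the induction hypothesis and the $\kappa$-analogue, since that step tacitly assumes the interior/exterior classification is unchanged when $q$ and $q+1$ are merged or split, which holds precisely when $q$ and $q+1$ are $\prec_\chi$-adjacent. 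Your restriction costs nothing downstream: in the paper's only application (the theorem on cumulants with products as arguments that follows), $\hat{\chi}$ is constant on each group of multiplied elements, so every merge performed there satisfies your hypothesis.

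Two precision points on your write-up. First, ``equivalently'' is too strong: $\chi(q) = \chi(q+1)$ implies that $q$ and $q+1$ are $\prec_\chi$-consecutive, but not conversely (for $\chi = (\ell, r)$ and $q = 1$ they are consecutive with different colours); since your argument only ever uses consecutiveness, that weaker hypothesis would also suffice. Second, the claim that ``the block(s) replacing $V_0$ inherit its status'' is not literally true: if $V_0$ is exterior and $\sigma$ splits $V_0 \cup \{q\}$ into nested blocks, the inner block is interior in $\sigma$. The statements your proof actually needs --- and which do hold, by non-crossingness together with the $\prec_\chi$-adjacency of $q$ and $q+1$ --- are: (i) blocks of $\sigma$ disjoint from $\{q, q+1\}$ have the same status in $\sigma$ as in $\pi$; (ii) if $V_0$ is interior in $\pi$, then every block of $\sigma$ contained in $V_0 \cup \{q\}$ is interior in $\sigma$, so the $V_0$-factor is governed by the $\kappa$-product formula; (iii) if $V_0$ is exterior in $\pi$, then the status in $\sigma$ of each block contained in $V_0 \cup \{q\}$ agrees with its status relative to the ground set $V_0 \cup \{q\}$, so the $V_0$-factor is the single-block instance of the lemma supplied by induction. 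With those three facts made explicit, your proof is complete and is, modulo the corrected hypothesis, the paper's proof.
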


\begin{proof}
We proceed by induction on $n$.  If $n = 1$, there is nothing to check.
If $n = 2$, then
\[
\K_{{0_\chi}|_{1=2}}(a_1a_2) = \K_{{1_\chi}|_{1=2}}(a_1a_2) = \varphi_{{1_\chi}|_{1=2}}(a_1a_2) = \varphi_{1_\chi}(a_1, a_2) = \K_{0_\chi}(a_1, a_2) + \K_{1_\chi}(a_1, a_2)
\]
as required.

Suppose the assertion holds for $n-1$.  Using the induction hypothesis and the analogous result for $\kappa$/$\psi$ from \cite{CNS2015-2}*{Theorem 6.3.5}, we see for all $\pi \in \BNC(\chi|_{\backslash\,q})\,\backslash\,\{1_{\chi|_{\backslash\,q}}\}$ that
\[
\K_{\pi}(a_1, \dots, a_{q-1}, a_qa_{q+1}, a_{q+2}, \dots, a_n) = \sum_{\substack{\sigma \in \BNC(\chi) \\ \sigma|_{q = q+1} = \pi}} \K_\sigma(a_1, \dots, a_n).
\]
Hence
\begin{align*}
\K_{1_{\chi|_{\backslash\,q}}}&(a_1, \dots, a_{q-1}, a_qa_{q+1}, a_{q+2}, \dots, a_n) \\
&= \varphi_{1_{\chi|_{\backslash\,q}}}(a_1, \dots, a_{q-1}, a_qa_{q+1}, a_{q+2}, \dots, a_n) - \sum_{\substack{\pi \in \BNC(\chi|_{\backslash\,q}) \\  \pi \neq 1_{\chi|_{\backslash\,q}}}}   \K_{\pi}(a_1, \dots, a_{q-1}, a_qa_{q+1}, a_{q+2}, \dots, a_n) \\
&= \varphi_{1_{\chi}}(a_1, \dots, a_n) - \sum_{\substack{\pi \in \BNC(\chi|_{\backslash\,q}) \\  \pi \neq 1_{\chi|_{\backslash\,q}}}}\sum_{\substack{\sigma \in \BNC(\chi) \\ \sigma|_{q = q+1} = \pi}} \K_\sigma(a_1, \dots, a_n)  \\
&= \sum_{\sigma \in \BNC(\chi)}\K_{\sigma}(a_1, \dots, a_n) - \sum_{\substack{\sigma \in \BNC(\chi) \\ \sigma|_{q = q+1} \neq 1_{\chi|_{\backslash\,q}}}}\K_\sigma(a_1, \dots, a_n)\\
&= \sum_{\substack{\sigma \in \BNC(\chi) \\ \sigma|_{q = q+1} = 1_{\chi|_{\backslash\,q}}}}\K_\sigma(a_1, \dots, a_n). \qedhere
\end{align*}
\end{proof}

By recursively applying Lemma \ref{lem:multiplication-in-cumulants-weak}, it is possible to obtain a stronger result.  Given two partitions $\pi, \sigma \in \BNC(\chi)$, let $\pi \vee \sigma$ denote the smallest element of $\BNC(\chi)$ greater than $\pi$ and $\sigma$. Furthermore, suppose $m, n \in \mathbb{N}$ with $m < n$ are fixed, and consider a sequence of integers 
\[
k(0) = 0 < k(1) < \cdots < k(m) = n.
\]
For $\chi: [m] \to \{\ell, r\}$, define $\hat{\chi}: [n] \to \{\ell, r\}$ via
\[
\hat{\chi}(q) = \chi(p_q)
\]
where $p_q$ is the unique element of $[m]$ such that $k(p_q-1) < q \leq k(p_q)$. Let $\widehat{0_\chi}$ be the partition of $[n]$ with blocks $\{(k(p-1)+1, \dots, k(p))  \}_{p=1}^m$.  Recursively applying Lemma \ref{lem:multiplication-in-cumulants-weak} yields the following.

\begin{thm}
Let $(\A, \varphi,\psi)$ be a two-state non-commutative probability space. With the above notation
\[
\K_{1_\chi}\left(a_1\cdots a_{k(1)}, a_{k(1)+1}\cdots a_{k(2)}, \dots, a_{k(m-1)+1}\cdots a_{k(m)}\right) = \sum_{\substack{\sigma \in \BNC(\hat{\chi})\\ \sigma \vee \widehat{0_\chi} = 1_{\hat{\chi}}}}\K_\sigma(a_1, \dots, a_n).
\]
\end{thm}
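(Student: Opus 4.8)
The plan is to deduce the theorem from a more symmetric statement valid for every $\pi \in \BNC(\chi)$, proved by induction on the number of multiplications $n - m$. Throughout, set $A_p = a_{k(p-1)+1}\cdots a_{k(p)}$, and for $\pi \in \BNC(\chi)$ let $\hat\pi \in \BNC(\hat\chi)$ denote its \emph{inflation}, obtained by replacing each point $p \in [m]$ by the whole group $\{k(p-1)+1, \dots, k(p)\}$. This inflation is again bi-non-crossing because inflating (or collapsing) consecutive runs of a single colour is compatible with the permutations $s_\chi$ and $s_{\hat\chi}$, exactly as in the $\kappa/\psi$ analogue of \cite{CNS2015-2}*{Theorem 6.3.5}. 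Since $\widehat{0_\chi}$ is precisely the partition whose blocks are these groups, one checks that $\sigma \vee \widehat{0_\chi} = \hat\pi$ holds if and only if collapsing each group of $\sigma$ to a point yields $\pi$. I would therefore prove
\[
\K_\pi\!\left(A_1, \dots, A_m\right) = \sum_{\substack{\sigma \in \BNC(\hat\chi)\\ \sigma \vee \widehat{0_\chi} = \hat\pi}}\K_\sigma(a_1, \dots, a_n),
\]
the theorem being the case $\pi = 1_\chi$, for which $\hat\pi = 1_{\hat\chi}$. When $n = m$ every group is a singleton, $\widehat{0_\chi} = 0_{\hat\chi}$, and the constraint forces $\sigma = \pi$, so the base case is immediate.

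For the inductive step, assume $n > m$ and fix an internal adjacency, say the first group of size at least two, whose first two elements sit at positions $q, q+1$ (inside the group indexed by some $p_0$). Viewing $a_q$ and $a_{q+1}\cdots a_{k(p_0)}$ as two separate entries produces a \emph{refined} configuration with $m+1$ groups, hence $n-(m+1)$ multiplications, over the same $n$ letters and the same $\hat\chi$. Lemma \ref{lem:multiplication-in-cumulants-weak}, applied at this position, rewrites
\[
\K_\pi(A_1, \dots, A_m) = \sum_{\substack{\tau \in \BNC(\chi^+)\\ \tau|_{p_0 = p_0 + 1} = \pi}}\K_\tau(A_1, \dots, a_q,\, a_{q+1}\cdots a_{k(p_0)},\, \dots, A_m),
\]
where $\chi^+$ is the colour map of the refined configuration. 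Applying the induction hypothesis to each $\K_\tau$ on the right expands it as $\sum_{\sigma : \sigma \vee \widehat{0_{\chi^+}} = \hat\tau}\K_\sigma(a_1, \dots, a_n)$, where $\widehat{0_{\chi^+}}$ is the refined grouping partition and $\hat\tau$ is the inflation of $\tau$.

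The crux, and the step I expect to be the main obstacle, is to collapse the resulting double sum over $(\tau, \sigma)$ back to a single sum over $\sigma$ with $\sigma \vee \widehat{0_\chi} = \hat\pi$. The key observation is that $\widehat{0_\chi} = \widehat{0_{\chi^+}} \vee \epsilon$, where $\epsilon$ is the partition whose only non-trivial block is $\{q, q+1\}$, simply because the two groupings differ only in that the group of $p_0$ has been split between $q$ and $q+1$. Using commutativity and associativity of the join, $\sigma \vee \widehat{0_\chi} = (\sigma \vee \widehat{0_{\chi^+}}) \vee \epsilon$, while the merge operation satisfies $\widehat{\tau|_{p_0 = p_0+1}} = \hat\tau \vee \epsilon$. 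Since the inflation map $\tau \mapsto \hat\tau$ is a bijection onto the $\widehat{0_{\chi^+}}$-saturated partitions of $\BNC(\hat\chi)$, with inverse given by collapsing groups, for a fixed $\sigma$ there is at most one $\tau$ satisfying both $\sigma \vee \widehat{0_{\chi^+}} = \hat\tau$ and $\tau|_{p_0=p_0+1} = \pi$; such a $\tau$ exists precisely when $\sigma \vee \widehat{0_\chi} = \hat\pi$, in which case $\hat\tau := \sigma \vee \widehat{0_{\chi^+}}$ works and is automatically saturated and bi-non-crossing.

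This matching identifies the double sum with $\sum_{\sigma : \sigma \vee \widehat{0_\chi} = \hat\pi}\K_\sigma(a_1, \dots, a_n)$, completing the induction and hence the theorem. I expect the bookkeeping around the bijectivity of inflation, together with the verification that $\sigma \vee \widehat{0_{\chi^+}}$ is always realized as some $\hat\tau$, to demand the most care, since it rests on the lattice isomorphism $\BNC(\chi) \cong \NC(n)$ induced by $s_\chi$ being respected simultaneously by inflation and by the join.
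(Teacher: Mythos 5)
Your proof is correct and takes essentially the same route as the paper: the paper's entire argument for this theorem is the remark that it follows by recursively applying Lemma \ref{lem:multiplication-in-cumulants-weak}, and your induction on the number of multiplications $n-m$ (strengthened to arbitrary $\pi$ via the inflation $\hat\pi$, with the join identity $(\sigma \vee \widehat{0_{\chi^+}}) \vee \epsilon = \sigma \vee \widehat{0_\chi}$ collapsing the double sum) is precisely that recursion carried out in detail. The only quibble is cosmetic: the lattice isomorphism you invoke is $\BNC(\chi) \cong \NC(m)$ (and $\BNC(\hat\chi) \cong \NC(n)$), not $\BNC(\chi) \cong \NC(n)$.
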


\section{The conditionally bi-free partial $\R$-transform}
\label{sec:R-transform}

Let $(a_\ell, a_r)$ be a two-faced pair in a two-state non-commutative probability space $(\A, \varphi, \psi)$. The goal of this section is to define the c-bi-free partial $\R$-transform $\R^{\rc}_{(a_\ell, a_r)}$ of $(a_\ell, a_r)$ and derive a functional equation involving $\R^{\rc}_{(a_\ell, a_r)}$ via combinatorics such that the two-variable Cauchy transform of $(a_\ell, a_r)$ with respect to $\varphi$ can be obtained from the said functional equation.

\subsection{Single-variable transforms}

We begin by recalling some notation and single-variable results.

\begin{defn}
Let $a$ be a random variable in a two-state non-commutative probability space $(\A, \varphi, \psi)$. For $m \geq 1$, let $\kappa_m(a)$ and $\K_m(a)$ denote the $m^{\mathrm{th}}$ free and c-free cumulants of $a$ respectively; that is, in the notation of $(\ell, r)$- and c-$(\ell, r)$-cumulants, $\kappa_m(a) = \kappa_\chi(a, \dots, a)$ and $\K_m(a) = \K_\chi(a, \dots, a)$ where $\chi: [m] \to \{\ell, r\}$ is constant. 
\begin{enumerate}[$\qquad(1)$]
\item The \emph{$\psi$-moment  and $\varphi$-moment series} of $a$ are respectively
\[
M_a^\psi(z) = 1 + \sum_{m \geq 1}\psi(a^m)z^m\qand M_a^\varphi(z) = 1 + \sum_{m \geq 1}\varphi(a^m)z^m.
\]

\item The \emph{free and c-free $\R$-transforms} of $a$ are respectively
\[
\R_a(z) = \sum_{m \geq 0}\kappa_{m + 1}(a)z^m \qand \R^{\rc}_a(z) = \sum_{m \geq 0}\K_{m + 1}(a)z^m.
\]

\item The \emph{free and c-free cumulant series} of $a$ are respectively
\[
C_a(z) = 1 + \sum_{m \geq 1}\kappa_m(a)z^m = 1 + z\R_a(z) \qand C^{\rc}_a(z) = 1 + \sum_{m \geq 1}\K_m(a)z^m = 1 + z\R^{\rc}_a(z).
\]
\end{enumerate}
\end{defn}

Note that if $a_1$ and $a_2$ are c-free with respect to $(\varphi, \psi)$, then
\[
\R_{a_1 + a_2}(z) = \R_{a_1}(z) + \R_{a_2}(z)\qand \R^{\rc}_{a_1 + a_2}(z) = \R^{\rc}_{a_1}(z) + \R^{\rc}_{a_2}(z).
\]
Moreover, the following relations are well-known (see \cite{S1994}):
\[
C_a(zM_a^\psi(z)) = M_a^\psi(z)\qand M_a^\psi\left(\frac{z}{C_a(z)}\right) = C_a(z).
\]

We also have the following additional relations.

\begin{lem}\label{CFreeSeries}
Let $a$ be a random variable in a two-state non-commutative probability space $(\A, \varphi, \psi)$. Then
\[
C_a^\rc(zM_a^\psi(z)) = 1 + M_a^\psi(z) - \frac{M_a^\psi(z)}{M_a^\varphi(z)}\qand M_a^\varphi\left(\frac{z}{C_a(z)}\right) = \frac{C_a(z)}{1 + C_a(z) - C_a^\rc(z)}.
\]
\end{lem}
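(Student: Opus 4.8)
The plan is to prove the first identity by a direct combinatorial decomposition of the $\varphi$-moment-cumulant formula, and then to deduce the second identity from the first by means of the change of variables provided by the two known free relations. For the first identity I would begin from the defining expansion
\[
\varphi(a^n) = \sum_{\pi \in \NC(n)}\left(\prod_{\substack{V \in \pi\\V\,\mathrm{inner}}}\kappa_{|V|}(a)\right)\left(\prod_{\substack{V \in \pi\\V\,\mathrm{outer}}}\K_{|V|}(a)\right)
\]
and organize the sum according to the block $O$ of $\pi$ containing $1$, which is necessarily outer. Writing $O = \{1 = v_1 < \cdots < v_s\}$, the non-crossing condition forces every other block to lie entirely in one of the gaps $(v_1, v_2), \dots, (v_{s-1}, v_s)$ or in the final region $(v_s, n]$. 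The key structural point is that each of the first $s-1$ gaps is \emph{enclosed} by $O$, so every block inside it (together with all of its descendants) is inner and contributes only free cumulants; summing over such a gap with $i_j$ elements therefore produces exactly the $\psi$-moment $\psi(a^{i_j})$. By contrast, the trailing region $(v_s, n]$ is not enclosed by $O$, so its blocks may themselves be outer, and summing over it reproduces the full $\varphi$-moment $\varphi(a^{i_s})$. This yields the recursion
\[
\varphi(a^n) = \sum_{s \geq 1}\K_s(a)\sum_{i_1 + \cdots + i_s = n - s}\psi(a^{i_1})\cdots\psi(a^{i_{s-1}})\varphi(a^{i_s}).
\]

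Multiplying by $z^n$ and summing over $n \geq 1$, the inner sum factors as $M_a^\psi(z)^{s-1}M_a^\varphi(z)$, so that
\[
M_a^\varphi(z) - 1 = M_a^\varphi(z)\sum_{s \geq 1}\K_s(a)z^s M_a^\psi(z)^{s-1} = M_a^\varphi(z)\,\frac{C_a^\rc(zM_a^\psi(z)) - 1}{M_a^\psi(z)},
\]
where the last equality uses $z^s M_a^\psi(z)^{s-1} = (zM_a^\psi(z))^s / M_a^\psi(z)$. Solving this linear relation for $C_a^\rc(zM_a^\psi(z))$ and rewriting $(M_a^\varphi(z) - 1)/M_a^\varphi(z) = 1 - 1/M_a^\varphi(z)$ gives the first identity.

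For the second identity I would exploit that $z \mapsto zM_a^\psi(z)$ and $z \mapsto z/C_a(z)$ are compositional inverses: the relation $C_a(zM_a^\psi(z)) = M_a^\psi(z)$ shows that setting $w = zM_a^\psi(z)$ yields $z = w/C_a(w)$ and $M_a^\psi(z) = C_a(w)$. Substituting $z = w/C_a(w)$ into the first identity then converts its left-hand side into $C_a^\rc(w)$ and its right-hand side into $1 + C_a(w) - C_a(w)/M_a^\varphi(w/C_a(w))$, using $M_a^\psi(w/C_a(w)) = C_a(w)$. Solving for $M_a^\varphi(w/C_a(w))$ and renaming $w$ to $z$ produces the second identity.

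The main obstacle is the combinatorial bookkeeping in the first step, specifically the verification that the enclosed gaps each contribute a $\psi$-moment while the trailing region contributes a $\varphi$-moment; this requires carefully checking that the inner/outer status of blocks within each region of the decomposition matches that in the ambient partition. Once the recursion is correctly established, both the generating-function algebra and the inversion substitution are routine.
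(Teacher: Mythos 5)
Your proposal is correct, and it diverges from the paper only in how the first identity is established: the paper does not prove it at all but simply cites \cite{BLS1996}*{Theorem 5.1} (``under different notation''), whereas you give a self-contained combinatorial proof. Your key step is sound: the block $O = \{1 = v_1 < \cdots < v_s\}$ containing $1$ is automatically outer; any block lying in a gap $(v_j, v_{j+1})$ is covered by $O$ and hence inner in the ambient partition, so each such gap sums to a $\psi$-moment by the free moment--cumulant formula; and any block in the trailing region $(v_s, n]$ can only be covered by another block of that same region (all elements of $O$ and of the gaps lie to its left), so inner/outer status there agrees with that of the restricted partition and the region sums to a full $\varphi$-moment. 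This yields the recursion $\varphi(a^n) = \sum_{s \geq 1}\K_s(a)\sum_{i_1 + \cdots + i_s = n-s}\psi(a^{i_1})\cdots\psi(a^{i_{s-1}})\varphi(a^{i_s})$, and your generating-function manipulation correctly converts it into $C_a^\rc(zM_a^\psi(z)) = 1 + M_a^\psi(z) - M_a^\psi(z)/M_a^\varphi(z)$. Your derivation of the second identity is then exactly the paper's: substitute $z \mapsto z/C_a(z)$, the compositional inverse of $z \mapsto zM_a^\psi(z)$, and use $M_a^\psi(z/C_a(z)) = C_a(z)$. What your route buys is a proof internal to the paper's own inner/outer cumulant framework, with no need to reconcile notation with the cited source; what the paper's route buys is brevity, deferring the combinatorics to the literature where this identity (essentially the c-free analogue of the moment--cumulant functional equation) was first proved.
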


\begin{proof}
The first equation is part of \cite{BLS1996}*{Theorem 5.1} under different notation. Replacing $z$ with $\frac{z}{C_a(z)}$ in the first equation produces
\[
C_a^\rc(z) = 1 + C_a(z) - \frac{C_a(z)}{M_a^\varphi\left(\frac{z}{C_a(z)}\right)},
\]
which is the second equation.
\end{proof}

\subsection{Two-variable transforms}

Note that all series below are in two commuting variables.

\begin{defn}
Let $(a_\ell, a_r)$ be a two-faced pair in a two-state non-commutative probability space $(\A, \varphi, \psi)$. The \emph{bi-free} and \emph{c-bi-free partial $\R$-transforms} of $(a_\ell, a_r)$ are defined respectively as
\[
\R_{(a_\ell, a_r)}(z, w) = \sum_{\substack{m, n \geq 0\\m + n \geq 1}}\kappa_{m, n}(a_\ell, a_r)z^mw^n \qand \R^{\rc}_{(a_\ell, a_r)}(z, w) = \sum_{\substack{m, n \geq 0\\m + n \geq 1}}\K_{m, n}(a_\ell, a_r)z^mw^n,
\]
where $\kappa_{m, n}(a_\ell, a_r)$ and $\K_{m, n}(a_\ell, a_r)$ denote the $(\ell, r)$- and c-$(\ell, r)$-cumulants $\kappa_{\chi_{m, n}}(a_{\chi_{m, n}(1)}, \dots, a_{\chi_{m, n}(m + n)})$ and $\K_{\chi_{m, n}}(a_{\chi_{m, n}(1)}, \dots, a_{\chi_{m, n}(m + n)})$ respectively with $\chi_{m, n}: [m + n] \to \{\ell, r\}$ and $\chi_{m, n}^{-1}(\{\ell\}) = [m]$.
\end{defn}

The c-bi-free partial $\R$-transform plays a similar role as the c-free $\R$-transform when it comes to the additive c-bi-free convolution. Indeed, if $(a_{1, \ell}, a_{1, r})$ and $(a_{2, \ell}, a_{2, r})$ are c-bi-free with respect to $(\varphi, \psi)$, then
\[
\R_{(a_{1, \ell} + a_{2, \ell}, a_{1, r} + a_{2, r})}(z, w) = \R_{(a_{1, \ell}, a_{1, r})}(z, w) + \R_{(a_{2, \ell}, a_{2, r})}(z, w)
\]
and
\[
\R^{\rc}_{(a_{1, \ell} + a_{2, \ell}, a_{1, r} + a_{2, r})}(z, w) = \R^{\rc}_{(a_{1, \ell}, a_{1, r})}(z, w) + \R^{\rc}_{(a_{2, \ell}, a_{2, r})}(z, w).
\]

\begin{defn}
Let $(a_\ell, a_r)$ be a two-faced pair in a two-state non-commutative probability space $(\A, \varphi, \psi)$.  
\begin{enumerate}[$\qquad(1)$]
\item The \emph{left-then-right $\psi$-moment and $\varphi$-moment series} of $(a_\ell, a_r)$ are respectively
\[
M^{\psi}_{(a_\ell, a_r)}(z, w) = 1 + \sum_{\substack{m, n \geq 0\\m + n \geq 1}}\psi(a_{\ell}^ma_{r}^n)z^mw^n\qand M^{\varphi}_{(a_\ell, a_r)}(z, w) = 1 + \sum_{\substack{m, n \geq 0\\m + n \geq 1}}\varphi(a_{\ell}^ma_{r}^n)z^mw^n.
\]

\item The \emph{two-variable $\psi$-Cauchy and $\varphi$-Cauchy transforms} of $(a_\ell, a_r)$ are respectively
\[
G_{(a_\ell, a_r)}^\psi(z, w) = \psi((z - a_\ell)^{-1}(w - a_r)^{-1}) = \frac{1}{zw} + \sum_{\substack{m, n \geq 0\\m + n \geq 1}}\psi(a_\ell^ma_r^n)\frac{1}{z^{m + 1}}\frac{1}{w^{n + 1}}
\]
and
\[
G_{(a_\ell, a_r)}^\varphi(z, w) = \varphi((z - a_\ell)^{-1}(w - a_r)^{-1}) = \frac{1}{zw} + \sum_{\substack{m, n \geq 0\\m + n \geq 1}}\varphi(a_\ell^ma_r^n)\frac{1}{z^{m + 1}}\frac{1}{w^{n + 1}}.
\]

\item The \emph{left-then-right c-$(\ell, r)$-cumulant series} of $(a_\ell, a_r)$ is
\[
C^{\rc}_{(a_\ell, a_r)}(z, w) = 1 + \sum_{\substack{m, n \geq 0\\m + n \geq 1}}\K_{m, n}(a_\ell, a_r)z^mw^n = 1 + \R^{\rc}_{(a_\ell, a_r)}(z, w).
\]
\end{enumerate}
\end{defn}

It is easy to verify that
\[
G_{(a_\ell, a_r)}^\psi(z,w) = \frac{1}{zw}M_{(a_\ell, a_r)}^\psi\left(\frac{1}{z}, \frac{1}{w}\right)\qand G_{(a_\ell, a_r)}^\varphi(z,w) = \frac{1}{zw}M_{(a_\ell, a_r)}^\varphi\left(\frac{1}{z}, \frac{1}{w}\right).
\]

We now derive a functional equation for $\R^{\rc}_{(a_\ell, a_r)}$ analogous to the functional equation for the bi-free partial $\R$-transform $\R_{(a_\ell, a_r)}$ of $(a_\ell, a_r)$ given in \cite{V2016}*{Theorem 2.4}. Our proof is in the spirit of the combinatorial proof given in \cite{S2016}*{Theorem 7.2.4}. First, recall the following definition from \cite{S2016}*{Definition 3.2.1}.

\begin{defn}
Given $n \geq 1$ and $\chi: [n] \to \{\ell, r\}$, a bi-non-crossing partition $\pi \in \BNC(\chi)$ is said to be \emph{vertically split} if whenever $V$ is a block of $\pi$, either $V \subset \chi^{-1}(\{\ell\})$ or $V \subset \chi^{-1}(\{r\})$. The set of vertically split bi-non-crossing partitions in $\BNC(\chi)$ is denoted by $\BNC_{\mathrm{vs}}(\chi)$.
\end{defn}

\begin{thm}
Let $(a_\ell, a_r)$ be a two-faced pair in a two-state non-commutative probability space $(\A, \varphi, \psi)$. Then
\begin{align}
C_{(a_\ell, a_r)}^{\rc} & (zM_{a_\ell}^\psi(z), wM_{a_r}^\psi(w)) +1 \nonumber  \\  
 &= M_{a_\ell}^\psi(z) + M_{a_r}^\psi(w)  + \left(1 - \frac{M_{a_\ell}^\psi(z)}{M_{a_\ell}^\varphi(z)}\right) + \left(1- \frac{M_{a_r}^\psi(w)}{M_{a_r}^\varphi(w)}\right) - \frac{M_{a_\ell}^\psi(z)M_{a_r}^\psi(w)}{M_{(a_\ell, a_r)}^{\psi}(z, w)}\left(1 - \frac{M_{(a_\ell, a_r)}^{\varphi}(z, w)}{M_{a_\ell}^\varphi(z)M_{a_r}^\varphi(w)}\right). \label{CBifreeCumulantSeries}
\end{align}
\end{thm}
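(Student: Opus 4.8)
The plan is to prove \eqref{CBifreeCumulantSeries} combinatorially by expanding the two-variable $\varphi$-moment series $M^\varphi_{(a_\ell, a_r)}(z,w)$ through the moment-cumulant formula \eqref{CBiFreeMomentCumulant} and reorganizing the sum according to the block of each partition that joins the left and right nodes. Write $\widetilde z = zM^\psi_{a_\ell}(z)$ and $\widetilde w = wM^\psi_{a_r}(w)$, and observe first that the pure-left and pure-right parts of $C^\rc_{(a_\ell,a_r)}(\widetilde z, \widetilde w)$ are exactly $C^\rc_{a_\ell}(\widetilde z) - 1$ and $C^\rc_{a_r}(\widetilde w) - 1$, since $\K_{m,0}(a_\ell,a_r) = \K_m(a_\ell)$ and $\K_{0,n}(a_\ell,a_r) = \K_n(a_r)$. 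By the single-variable identities in Lemma \ref{CFreeSeries} these account precisely for the terms $M^\psi_{a_\ell}(z) + (1 - M^\psi_{a_\ell}(z)/M^\varphi_{a_\ell}(z))$ and $M^\psi_{a_r}(w) + (1 - M^\psi_{a_r}(w)/M^\varphi_{a_r}(w))$ on the right-hand side of \eqref{CBifreeCumulantSeries}. Thus the theorem reduces to the single identity
\[
\sum_{m, n \geq 1}\K_{m, n}(a_\ell, a_r)\widetilde z^m \widetilde w^n = -\frac{M^\psi_{a_\ell}(z)M^\psi_{a_r}(w)}{M^\psi_{(a_\ell, a_r)}(z, w)}\left(1 - \frac{M^\varphi_{(a_\ell, a_r)}(z, w)}{M^\varphi_{a_\ell}(z)M^\varphi_{a_r}(w)}\right),
\]
involving only the genuinely mixed c-$(\ell, r)$-cumulants.

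The combinatorial heart is a decomposition of $M^\varphi_{(a_\ell, a_r)}(z,w)$ based on the following structural dichotomy for $\pi \in \BNC(\chi_{m, n})$, where $\chi_{m, n}$ places the $m$ left nodes $\prec_\chi$-before the $n$ right nodes. I claim $\pi$ has \emph{at most one} exterior block containing nodes of both colours: if two blocks each contained a left and a right node, then since all left nodes precede all right nodes in $\prec_\chi$, one would either cross the other or be nested inside it, contradicting that both are exterior. Moreover, any mixed block nested inside an exterior block forces its surrounding block to be mixed as well, so if no exterior block is mixed then $\pi$ is vertically split, i.e. $\pi \in \BNC_{\mathrm{vs}}(\chi_{m,n})$. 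Summing $\K_\pi$ over the vertically split partitions factors as $\varphi(a_\ell^m)\varphi(a_r^n)$, since left and right blocks cannot surround one another and the interior/exterior designation within each colour reduces to the single-variable c-free moment-cumulant formula; the generating function of this case is therefore $M^\varphi_{a_\ell}(z)M^\varphi_{a_r}(w)$.

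For the remaining partitions there is a unique mixed exterior block $V_0$, say with $a \geq 1$ left legs and $b \geq 1$ right legs, contributing $\K_{a, b}(a_\ell, a_r)z^a w^b$. I would then read off the regions into which the legs of $V_0$ cut the $\prec_\chi$-line: the nodes $\prec_\chi$-before its first left leg form a self-contained left c-free structure (generating function $M^\varphi_{a_\ell}(z)$), the nodes $\prec_\chi$-after its last right leg form a right c-free structure ($M^\varphi_{a_r}(w)$), the $a - 1$ gaps between consecutive left legs and the $b - 1$ gaps between consecutive right legs are surrounded by $V_0$ hence interior, each summing over $(\ell, r)$-cumulants to a single-variable $\psi$-moment factor $M^\psi_{a_\ell}(z)$ or $M^\psi_{a_r}(w)$, and --- crucially --- the single gap straddling the colour boundary, between the last left leg and the first right leg of $V_0$, is also interior but now of left-then-right type, contributing the two-variable factor $M^\psi_{(a_\ell, a_r)}(z, w)$. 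Collecting these factors and absorbing the internal $\psi$-factors together with the legs into $\widetilde z^a \widetilde w^b$ gives that this case has generating function
\[
\frac{M^\varphi_{a_\ell}(z)M^\varphi_{a_r}(w)M^\psi_{(a_\ell, a_r)}(z, w)}{M^\psi_{a_\ell}(z)M^\psi_{a_r}(w)}\sum_{a, b \geq 1}\K_{a, b}(a_\ell, a_r)\widetilde z^a \widetilde w^b.
\]
Adding the two cases yields an identity for $M^\varphi_{(a_\ell, a_r)}(z,w)$ which, solved for $\sum_{a, b \geq 1}\K_{a, b}\widetilde z^a \widetilde w^b$, is exactly the reduced identity above; combining with the single-variable Lemma \ref{CFreeSeries} as in the first paragraph completes the proof.

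I expect the main obstacle to be the bookkeeping in the region analysis --- in particular, verifying rigorously that every region other than the colour-straddling middle gap is governed purely by one colour, and that the interior/exterior ($\kappa$ versus $\K$) designation of each block in $\pi$ agrees with the one it receives inside its region, so that the interior gaps genuinely collapse to $\psi$-moments while the head and tail collapse to $\varphi$-moments. The dichotomy that at most one exterior block is mixed is what makes this clean, but care is needed to ensure no block secretly connects two distinct regions and that the mixed middle gap really reproduces the left-then-right series $M^\psi_{(a_\ell, a_r)}$ rather than a product of single-variable series.
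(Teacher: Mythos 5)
Your proposal is correct and takes essentially the same route as the paper's proof: the paper likewise splits $\BNC(\chi_{m,n})$ into vertically split partitions (yielding $\varphi(a_\ell^m)\varphi(a_r^n)$) and the remainder, organizes the remainder by the unique mixed exterior block, and reads off exactly the same region contributions --- $\varphi$-moments for the head and tail, single-variable $\psi$-moments for the gaps between consecutive legs, and the two-variable series $M^\psi_{(a_\ell,a_r)}(z,w)$ for the colour-straddling gap --- before finishing with the same generating-function algebra and Lemma \ref{CFreeSeries}. The only difference is cosmetic: you reduce to the mixed-cumulant identity at the outset, whereas the paper carries the pure-left and pure-right terms through to the final step.
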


\begin{proof}
For $m, n \geq 0$ with $m + n \geq 1$, notice
\begin{align*}
\varphi(a_\ell^ma_r^n) &= \sum_{\pi \in \BNC_{\mathrm{vs}}(\chi_{m, n})}\K_\pi(\underbrace{a_\ell, \dots, a_\ell}_{m\,\mathrm{times}}, \underbrace{a_r, \dots, a_r}_{n\,\mathrm{times}}) + \sum_{\substack{\pi \in \BNC(\chi_{m, n})\\\pi \notin \BNC_{\mathrm{vs}}(\chi_{m, n})}}\K_\pi(\underbrace{a_\ell, \dots, a_\ell}_{m\,\mathrm{times}}, \underbrace{a_r, \dots, a_r}_{n\,\mathrm{times}})\\
&= \varphi(a_\ell^m)\varphi(a_r^n) + \Theta_{m, n},
\end{align*}
where $\Theta_{m, n}$ denotes the sum
\[
\sum_{\substack{\pi \in \BNC(\chi_{m, n})\\\pi \notin \BNC_{\mathrm{vs}}(\chi_{m, n})}}\K_\pi(\underbrace{a_\ell, \dots, a_\ell}_{m\,\mathrm{times}}, \underbrace{a_r, \dots, a_r}_{n\,\mathrm{times}}).
\]
Note that $\Theta_{m, 0} = \Theta_{0, n} = 0$ for all $m, n \geq 1$. For every partition $\pi \in \BNC(\chi_{m, n})\,\backslash\,\BNC_{\mathrm{vs}}(\chi_{m, n})$, let $V_\pi$ denote the block of $\pi$ with both left and right indices such that $\min(V_\pi)$ is the smallest among all blocks of $\pi$ with this property. Note that $V_\pi$ is necessarily an exterior block.

For every block $V$ of $\pi$, let $V_\ell = V \cap [m]$ and $V_r = V \cap [m + n]\,\backslash\,[m]$. Rearrange the sum in $\Theta_{m, n}$ by first choosing $s \in [m]$, $t \in [n]$, and $V \subset [m + n]$ such that
\[
V_\ell = \{u_1 < \cdots < u_s\}\qand V_r = \{v_1 < \cdots < v_t\},
\]
and then summing over all $\pi \in \BNC(\chi_{m, n})\,\backslash\,\BNC_{\mathrm{vs}}(\chi_{m, n})$ such that $V_\pi = V$.  The result is
\begin{align*}
&\Theta_{m, n} \\
&= \sum_{s = 1}^m\sum_{t = 1}^n\sum_{\substack{V\\V_\ell = \{u_1 < \cdots < u_s\}\\V_r = \{v_1 < \cdots < v_t\}}}\K_{s, t}(a_\ell, a_r)\psi(a_\ell^{m - u_s}a_r^{n - v_t})\varphi(a_\ell^{u_1 - 1})\varphi(a_r^{v_1 - 1})\prod_{k = 2}^{s}\psi(a_\ell^{u_{k} - u_{k - 1} - 1})\prod_{k = 2}^{t}\psi(a_r^{v_{k} - v_{k - 1} - 1})\\
&= \sum_{\substack{1 \leq s \leq m\\0 \leq i_0, i_1, \dots, i_s \leq m\\i_0 + i_1 + \cdots + i_s = m - s}}\sum_{\substack{1 \leq t \leq n\\0 \leq j_0, j_1, \dots, j_t \leq n\\j_0 + j_1 + \cdots + j_t = n - t}}\K_{s, t}(a_\ell, a_r)\psi(a_\ell^{i_0}a_r^{j_0})\varphi(a_\ell^{i_1})\varphi(a_r^{j_1})\prod_{k = 2}^{s}\psi(a_\ell^{i_k})\prod_{k = 2}^{t}\psi(a_r^{j_k}).
\end{align*}
Using the above equation and the first equation from Lemma \ref{CFreeSeries}, we obtain
\begin{align*}
\sum_{m, n \geq 1}\Theta_{m, n}z^mw^n &= \sum_{s, t \geq 1}\K_{s, t}(a_\ell, a_r)M_{(a_\ell, a_r)}^{\psi}(z, w)M_{a_\ell}^\varphi(z)M_{a_r}^\varphi(w)(M_{a_\ell}^\psi(z))^{s - 1}(M_{a_r}^\psi(w))^{t - 1}z^sw^t\\
&= \left(\sum_{s, t \geq 1}\K_{s, t}(a_\ell, a_r)(zM_{a_\ell}^\psi(z))^s(wM_{a_r}^\psi(w))^t\right)\frac{M_{a_\ell}^\varphi(z)M_{a_r}^\varphi(w)M_{(a_\ell, a_r)}^{\psi}(z, w)}{M_{a_\ell}^\psi(z)M_{a_r}^\psi(w)}\\
&= \left(C_{(a_\ell, a_r)}^\rc(zM_{a_\ell}^\psi(z), wM_{a_r}^\psi(w)) - C_{a_\ell}^\rc(zM_{a_\ell}^\psi(z)) - C_{a_r}^\rc(wM_{a_r}^\psi(w)) + 1\right)\\
& \quad \times \frac{M_{a_\ell}^\varphi(z)M_{a_r}^\varphi(w)M_{(a_\ell, a_r)}^{\psi}(z, w)}{M_{a_\ell}^\psi(z)M_{a_r}^\psi(w)}\\
&= \left(C_{(a_\ell, a_r)}^\rc(zM_{a_\ell}^\psi(z), wM_{a_r}^\psi(w)) - M_{a_\ell}^\psi(z) + \frac{M_{a_\ell}^\psi(z)}{M_{a_\ell}^\varphi(z)} - M_{a_r}^\psi(w) + \frac{M_{a_r}^\psi(w)}{M_{a_r}^\varphi(w)} - 1\right)\\
& \quad \times \frac{M_{a_\ell}^\varphi(z)M_{a_r}^\varphi(w)M_{(a_\ell, a_r)}^{\psi}(z, w)}{M_{a_\ell}^\psi(z)M_{a_r}^\psi(w)}.
\end{align*}
On the other hand, we also have
\begin{align*}
M_{(a_\ell, a_r)}^{\varphi}(z, w) &= 1 + \sum_{m \geq 1}\varphi(a_\ell^m)z^m + \sum_{n \geq 1}\varphi(a_r^n)w^n + \sum_{m, n \geq 1}\varphi(a_\ell^ma_r^n)z^mw^n\\
&= 1 + \sum_{m \geq 1}\varphi(a_\ell^m)z^m + \sum_{n \geq 1}\varphi(a_r^n)w^n + \sum_{m, n \geq 1}\varphi(a_\ell^m)\varphi(a_r^n)z^mw^n + \sum_{m, n \geq 1}\Theta_{m, n}z^mw^n\\
&= M_{a_\ell}^{\varphi}(z)M_{a_r}^{\varphi}(w) + \sum_{m, n \geq 1}\Theta_{m, n}z^mw^n.
\end{align*}
Combining these equations, we have
\begin{align*}
&C_{(a_\ell, a_r)}^\rc(zM_{a_\ell}^\psi(z), wM_{a_r}^\psi(w)) - M_{a_\ell}^\psi(z) + \frac{M_{a_\ell}^\psi(z)}{M_{a_\ell}^\varphi(z)} - M_{a_r}^\psi(w) + \frac{M_{a_r}^\psi(w)}{M_{a_r}^\varphi(w)} - 1\\
&= - \frac{M_{a_\ell}^\psi(z)M_{a_r}^\psi(w)}{M_{(a_\ell, a_r)}^{\psi}(z, w)}\left(1 - \frac{M_{(a_\ell, a_r)}^{\varphi}(z, w)}{M_{a_\ell}^\varphi(z)M_{a_r}^\varphi(w)}\right),
\end{align*}
which is the desired formula.
\end{proof}

\begin{cor}\label{CBFR-trans}
Let $(a_\ell, a_r)$ be a two-faced pair in a two-state non-commutative probability space $(\A, \varphi, \psi)$. The c-bi-free partial $\R$-transform $\R^{\rc}_{(a_\ell, a_r)}$ of $(a_\ell, a_r)$ is given by
\[
\R^{\rc}_{(a_\ell, a_r)}(z, w) = z\R_{a_\ell}^{\rc}(z) + w\R_{a_r}^{\rc}(w) + \widetilde{\R}^{\rc}_{(a_\ell, a_r)}(z, w),
\]
where
\begin{align*}
\widetilde{\R}^{\rc}_{(a_\ell, a_r)}& (z, w) \\
&= \frac{zw\left(-1 + (\R_{a_\ell}(z) + \frac{1}{z} - \R_{a_\ell}^{\rc}(z))(\R_{a_r}(w) + \frac{1}{w} - \R_{a_r}^{\rc}(w))G_{(a_\ell, a_r)}^\varphi\left(\R_{a_\ell}(z) + \frac{1}{z}, \R_{a_r}(w) + \frac{1}{w}\right)\right)}{ G_{(a_\ell, a_r)}^\psi\left(\R_{a_\ell}(z) + \frac{1}{z}, \R_{a_r}(w) + \frac{1}{w}\right)}.
\end{align*}
\end{cor}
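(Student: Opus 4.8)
The plan is to read the corollary off the theorem by isolating the genuinely mixed part of $\R^\rc_{(a_\ell, a_r)}$ and then performing the change of variables that converts the cumulant series into the $\R$-transform. First I would record that, since $\K_{m,0}(a_\ell, a_r) = \K_m(a_\ell)$ and $\K_{0,n}(a_\ell, a_r) = \K_n(a_r)$, the left-then-right c-cumulant series splits as
\[
C^\rc_{(a_\ell, a_r)}(z, w) = C^\rc_{a_\ell}(z) + C^\rc_{a_r}(w) - 1 + \widetilde{\R}^\rc_{(a_\ell, a_r)}(z, w),
\]
where $\widetilde{\R}^\rc_{(a_\ell, a_r)}(z, w) = \sum_{m, n \geq 1}\K_{m, n}(a_\ell, a_r)z^m w^n$ is exactly the purely mixed part; since $C^\rc_{a_\ell}(z) - 1 = z\R^\rc_{a_\ell}(z)$, this already gives $\R^\rc_{(a_\ell, a_r)}(z, w) = z\R^\rc_{a_\ell}(z) + w\R^\rc_{a_r}(w) + \widetilde{\R}^\rc_{(a_\ell, a_r)}(z, w)$, the claimed shape. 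Substituting $z M^\psi_{a_\ell}(z)$ and $wM^\psi_{a_r}(w)$ into this splitting and using the first identity of Lemma \ref{CFreeSeries} to evaluate $C^\rc_{a_\ell}(zM^\psi_{a_\ell}(z))$ and $C^\rc_{a_r}(wM^\psi_{a_r}(w))$, the pure-left and pure-right contributions in equation \eqref{CBifreeCumulantSeries} cancel, leaving
\[
\widetilde{\R}^\rc_{(a_\ell, a_r)}\bigl(zM^\psi_{a_\ell}(z), wM^\psi_{a_r}(w)\bigr) = -\frac{M^\psi_{a_\ell}(z)M^\psi_{a_r}(w)}{M^\psi_{(a_\ell, a_r)}(z, w)}\left(1 - \frac{M^\varphi_{(a_\ell, a_r)}(z, w)}{M^\varphi_{a_\ell}(z)M^\varphi_{a_r}(w)}\right).
\]

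Next I would invert the argument. Because $C_{a_\ell}(z) = 1 + O(z)$ and $M^\psi_{a_\ell}(z) = 1 + O(z)$ are invertible formal power series, the map $z \mapsto zM^\psi_{a_\ell}(z)$ has compositional inverse $z \mapsto z/C_{a_\ell}(z)$; this is precisely the content of the single-variable relations $C_a(zM^\psi_a(z)) = M^\psi_a(z)$ and $M^\psi_a(z/C_a(z)) = C_a(z)$ recalled just before Lemma \ref{CFreeSeries}. Replacing $z$ by $z/C_{a_\ell}(z)$ and $w$ by $w/C_{a_r}(w)$ in the displayed identity therefore turns its left-hand side into $\widetilde{\R}^\rc_{(a_\ell, a_r)}(z, w)$, while on the right-hand side $M^\psi_{a_\ell}(z/C_{a_\ell}(z)) = C_{a_\ell}(z)$ and, by the second identity of Lemma \ref{CFreeSeries}, $M^\varphi_{a_\ell}(z/C_{a_\ell}(z)) = C_{a_\ell}(z)/(1 + C_{a_\ell}(z) - C^\rc_{a_\ell}(z))$, and similarly for $a_r$. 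For the two-variable factors I would apply $M^\varphi_{(a_\ell, a_r)}(\zeta, \eta) = (\zeta\eta)^{-1}G^\varphi_{(a_\ell, a_r)}(1/\zeta, 1/\eta)$ and its $\psi$-analogue, together with the observation that, for $\zeta = z/C_{a_\ell}(z)$, one has $1/\zeta = C_{a_\ell}(z)/z = \R_{a_\ell}(z) + 1/z$ and likewise $1/\eta = \R_{a_r}(w) + 1/w$, so that the arguments of the Cauchy transforms become exactly those appearing in the corollary.

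Finally I would collect terms. Writing $D_\ell = 1 + C_{a_\ell}(z) - C^\rc_{a_\ell}(z)$ and $D_r = 1 + C_{a_r}(w) - C^\rc_{a_r}(w)$, and abbreviating $G^\varphi, G^\psi$ for $G^\varphi_{(a_\ell, a_r)}, G^\psi_{(a_\ell, a_r)}$ evaluated at $(\R_{a_\ell}(z) + 1/z, \R_{a_r}(w) + 1/w)$, the prefactor $M^\psi_{a_\ell}M^\psi_{a_r}/M^\psi_{(a_\ell, a_r)}$ collapses to $zw/G^\psi$ (the $C_{a_\ell}(z)C_{a_r}(w)$ arising from $(\zeta\eta)^{-1}$ cancelling those from $M^\psi_{a_\ell}, M^\psi_{a_r}$), and the bracket becomes $1 - D_\ell D_r G^\varphi/(zw)$. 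Since $\R_{a_\ell}(z) + 1/z - \R^\rc_{a_\ell}(z) = D_\ell/z$ and $\R_{a_r}(w) + 1/w - \R^\rc_{a_r}(w) = D_r/w$, substituting $D_\ell D_r = zw(\R_{a_\ell}(z) + 1/z - \R^\rc_{a_\ell}(z))(\R_{a_r}(w) + 1/w - \R^\rc_{a_r}(w))$ yields precisely the stated formula for $\widetilde{\R}^\rc_{(a_\ell, a_r)}(z, w)$. I do not expect a genuine obstacle: the analytic content is already in the theorem, and the remaining work is the verification that $z \mapsto z/C_{a_\ell}(z)$ is a legitimate substitution of formal power series and the careful tracking of the reciprocal factors relating moment series to Cauchy transforms — the one place where a single misplaced inverse (writing $\zeta\eta$ in place of $(\zeta\eta)^{-1}$) would corrupt the entire simplification.
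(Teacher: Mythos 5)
Your proposal is correct and follows essentially the same route as the paper: the paper likewise substitutes $z \mapsto z/C_{a_\ell}(z)$ and $w \mapsto w/C_{a_r}(w)$ into equation \eqref{CBifreeCumulantSeries}, invokes Lemma \ref{CFreeSeries} together with $M_a^\psi(z/C_a(z)) = C_a(z)$, and rewrites the two-variable moment series as Cauchy transforms to arrive at the stated formula. The only cosmetic difference is that you isolate the mixed part $\sum_{m,n \geq 1}\K_{m,n}(a_\ell, a_r)z^mw^n$ before substituting, whereas the paper substitutes into the full identity and recognizes the pure-left and pure-right combinations as $C^{\rc}_{a_\ell}(z)$ and $C^{\rc}_{a_r}(w)$ afterwards.
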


\begin{proof}
Replacing $z$ and $w$ with $\frac{z}{C_{a_\ell}(z)}$ and $\frac{w}{C_{a_r}(w)}$ respectively, in equation (\ref{CBifreeCumulantSeries}) and using the second equation from Lemma \ref{CFreeSeries}, we obtain that
\begin{align*}
C_{(a_\ell, a_r)}^\rc(z, w) &= -1 + C_{a_\ell}^\rc(z) + C_{a_r}^\rc(w) - \frac{C_{a_\ell}(z)C_{a_r}(w)}{M_{(a_\ell, a_r)}^{\psi}\left(\frac{z}{C_{a_\ell}(z)}, \frac{w}{C_{a_r}(w)}\right)}\left (1 - \frac{M_{(a_\ell, a_r)}^{\varphi}\left(\frac{z}{C_{a_\ell}(z)}, \frac{w}{C_{a_r}(w)}\right)}{\frac{C_{a_\ell}(z)}{1 + C_{a_\ell}(z) - C_{a_\ell}^\rc(z)}\frac{C_{a_r}(w)}{1 + C_{a_r}(w) - C_{a_r}^\rc(w)}}     \right)       \\
&= -1 + C_{a_\ell}^\rc(z) + C_{a_r}^\rc(w) - \frac{zw}{G_{(a_\ell, a_r)}^{\psi}\left(\frac{C_{a_\ell}(z)}{z}, \frac{C_{a_r}(w)}{w}\right)}\left(1-  \frac{G_{(a_\ell, a_r)}^{\varphi}\left(\frac{C_{a_\ell}(z)}{z}, \frac{C_{a_r}(w)}{w}\right)}{\frac{z}{1 + C_{a_\ell}(z) - C_{a_\ell}^\rc(z)}\frac{w}{1 + C_{a_r}(w) - C_{a_r}^\rc(w)}} \right).
\end{align*}
Using the equations $C_{a_\ell}(z) = 1 + z\R_{a_\ell}(z)$, $C_{a_\ell}^\rc(z) = 1 + z\R_{a_\ell}^\rc(z)$, $C_{a_r}(w) = 1 + w\R_{a_r}(w)$, $C_{a_r}^\rc(w) = 1 + w\R_{a_r}^\rc(w)$, and subtracting $1$ from both sides of the above equation, the result follows.
\end{proof}

In view of Corollary \ref{CBFR-trans}, we define the c-bi-free partial $\R$-transforms of pairs of Borel probability measures on $\bR^2$ as follows. Note that for a Borel probability measure $\sigma$ on $\bR^2$, let $\sigma^{(1)}$ and $\sigma^{(2)}$ be the marginal distributions defined by
\[
\sigma^{(1)}(B) = \sigma(B \times \bR)\qand \sigma^{(2)}(B) = \sigma(\bR \times B)
\]
for all Borel sets $B$ on $\bR$.

\begin{defn}\label{CBF-R}
Let $(\mu, \nu)$ be a pair of Borel probability measures on $\mathbb{R}^2$.  The \emph{c-bi-free partial $\R$-transform} of $(\mu, \nu)$ is defined for $(z, w) \neq (0, 0)$ in some neighbourhood of $(0, 0)$ by
\[
\R_{(\mu, \nu)}(z, w) = z\R_{(\mu^{(1)}, \nu^{(1)})}(z) + w\R_{(\mu^{(2)}, \nu^{(2)})}(w) + \widetilde{R}_{(\mu, \nu)}(z, w),
\]
 where $\R_{(\mu^{(1)}, \nu^{(1)})}$ and $\R_{(\mu^{(2)}, \nu^{(2)})}$ are the c-free $\R$-transforms of the marginal pairs $(\mu^{(1)}, \nu^{(1)})$ and $(\mu^{(2)}, \nu^{(2)})$ respectively, and $\widetilde{R}_{(\mu, \nu)}(z, w)$ equals
\[
\frac{zw\left( -1 +    (\R_{\nu^{(1)}}(z) + \frac{1}{z} - \R_{(\mu^{(1)}, \nu^{(1)})}(z))(\R_{\nu^{(2)}}(w) + \frac{1}{w} - \R_{(\mu^{(2)}, \nu^{(2)})}(w))G_\mu(\R_{\nu^{(1)}}(z) + \frac{1}{z}, \R_{\nu^{(2)}}(w) + \frac{1}{w})\right)}{G_\nu(\R_{\nu^{(1)}}(z) + \frac{1}{z}, \R_{\nu^{(2)}}(w) + \frac{1}{w})}.
\]
The function $\widetilde{R}_{(\mu, \nu)}$ will be referred to as the \emph{reduced c-bi-free partial $\R$-transform} of $(\mu, \nu)$.
\end{defn}

As in the single-variable case, it is sometimes more convenient to consider the function $\Phi_{(\mu, \nu)}$ defined by $\Phi_{(\mu, \nu)}(z, w) = \R_{(\mu, \nu)}(1/z, 1/w)$ instead. If $\mu$ and $\nu$ are compactly supported, then $\Phi_{(\mu, \nu)}$ can be written as
\[
\Phi_{(\mu, \nu)}(z, w) = \sum_{\substack{m, n \geq 0\\m + n \geq 1}}\K_{m, n}(\mu, \nu)\frac{1}{z^m}\frac{1}{w^n}
\]
for $|z|$, $|w|$ sufficiently large. We shall refer to $\Phi_{(\mu, \nu)}$ as the \emph{c-bi-free partial Voiculescu transform} of the pair $(\mu, \nu)$. In the next section, we will give the precise domain and an alternative definition of $\Phi_{(\mu, \nu)}$ in terms of analytic functions when studying limit theorems and infinite divisibility from an analytic point of view.

\section{Additive limit theorems and infinite divisibility}\label{SecLimitThm}
\label{sec:additive-limit-theorems}

In this section, limit theorems and infinite divisibility for  the additive c-bi-free convolution are studied

\subsection{Combinatorial aspects of the additive c-bi-free convolution}

Most of the results below are c-bi-free analogues of known results in free and/or bi-free probability theories. The first result is analogous to Voiculescu's algebraic bi-free central limit theorem \cite{V2014}*{Theorem 7.9}. In view of \cite{V2014}*{Definition 7.3 and Theorem 7.4}, it is clear that the following definition is the natural c-bi-free analogue of bi-free central limit distribution.

\begin{defn}\label{CBFGauss}
A two-faced family $\hat{a} =((a_i)_{i \in I}, (a_j)_{j \in J})$ in a two-state non-commutative probability space $(\A, \varphi, \psi)$ is said to have a \emph{c-bi-free central limit distribution} (or \emph{centred c-bi-free Gaussian distribution}) with covariance matrices $C^\varphi = (C^\varphi_{k, \ell})_{k, \ell \in I \sqcup J}$ and $C^\psi = (C^\psi_{k, \ell})_{k, \ell \in I \sqcup J}$ such that
\begin{enumerate}[$\qquad(1)$]
\item $\varphi(a_{\alpha(1)}a_{\alpha(2)}) = C^\varphi_{\alpha(1), \alpha(2)}$ for all $\alpha: [2] \to I \sqcup J$,

\item $\psi(a_{\alpha(1)}a_{\alpha(2)}) = C^\psi_{\alpha(1), \alpha(2)}$ for all $\alpha: [2] \to I \sqcup J$,

\item $\kappa_{\chi_\alpha}(a_{\alpha(1)}, \dots, a_{\alpha(d)}) = \K_{\chi_\alpha}(a_{\alpha(1)}, \dots, a_{\alpha(d)}) = 0$ for all $d \neq 2$ and $\alpha: [d] \to I \sqcup J$ where $\chi_\alpha: [d] \to \{\ell, r\}$ such that $\chi_\alpha(s) = \ell$ if $\alpha(s) \in I$ and $\chi_\alpha(s) = r$ if $\alpha(s) \in J$ for $1 \leq s \leq d$.
\end{enumerate}
\end{defn}

\begin{thm}[\textbf{The algebraic c-bi-free central limit theorem}]
\label{AlgCBFCLT}
Let $\{((a_{n, i})_{i \in I}, (a_{n, j})_{j \in J})\}_{n = 1}^\infty$ be a sequence of c-bi-free two-faced families in a two-state non-commutative probability space $(\A, \varphi, \psi)$ such that
\begin{enumerate}[$\qquad(1)$]
\item $\varphi(a_{n, k}) = \psi(a_{n, k}) = 0$ for all $n \geq 1$ and $k \in I \sqcup J$,

\item $\sup_{n \geq 1}|\varphi(a_{n, k_1}\cdots a_{n, k_m})| < \infty$ and $\sup_{n \geq 1}|\psi(a_{n, k_1}\cdots a_{n, k_m})| < \infty$ for all $k_1, \dots, k_m \in I \sqcup J$,

\item $\lim_{N \to \infty}\frac{1}{N}\sum_{n = 1}^N\varphi(a_{n, k}a_{n, \ell}) = C_{k, \ell}^\varphi$ and $\lim_{N \to \infty}\frac{1}{N}\sum_{n = 1}^N\psi(a_{n, k}a_{n, \ell}) = C_{k, \ell}^\psi$ for all $k, \ell \in I \sqcup J$ where $C^\varphi = (C^\varphi_{k, \ell})_{k, \ell \in I \sqcup J}$ and $C^\psi = (C^\psi_{k, \ell})_{k, \ell \in I \sqcup J}$ are complex matrices.
\end{enumerate}
For $N \in \mathbb{N}$, let $\hat{S}_N = ((S_{N, i})_{i \in I}, (S_{N, j})_{j \in J})$ be the two-faced family defined by
\[
S_{N, k} = \frac{1}{\sqrt{N}}\sum_{n = 1}^Na_{n, k},\quad k \in I \sqcup J.
\]
Then $\hat{S}_N$ converges in distribution as $N \to \infty$ to a centred c-bi-free Gaussian distribution with covariance matrices $(C^\varphi, C^\psi)$.
\end{thm}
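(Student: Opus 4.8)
The plan is to compute the limiting $(\ell, r)$- and c-$(\ell, r)$-cumulants of $\hat{S}_N$ directly and then appeal to the moment-cumulant formulae, exploiting that the summands are c-bi-free and hence combinatorially c-bi-free by Theorem \ref{EquivCBF}. Fix $d \geq 1$, a tuple $k_1, \dots, k_d \in I \sqcup J$, and let $\chi: [d] \to \{\ell, r\}$ be the associated colouring. By multilinearity of the cumulants together with $S_{N, k} = N^{-1/2}\sum_{n = 1}^N a_{n, k}$,
\[
\K_\chi(S_{N, k_1}, \dots, S_{N, k_d}) = \frac{1}{N^{d/2}}\sum_{n_1, \dots, n_d = 1}^N \K_\chi(a_{n_1, k_1}, \dots, a_{n_d, k_d}),
\]
and likewise for $\kappa_\chi$. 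Since the families indexed by $n$ are c-bi-free, all mixed c-$(\ell, r)$- and $(\ell, r)$-cumulants vanish, so every term with $n_1, \dots, n_d$ not all equal is zero and the right-hand side collapses to $N^{-d/2}\sum_{n = 1}^N \K_\chi(a_{n, k_1}, \dots, a_{n, k_d})$.

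The next step is a scaling estimate. Each $\K_\chi$ (respectively $\kappa_\chi$) of order $d$ is a fixed integer-coefficient polynomial in the $\varphi$- and $\psi$-moments of orders at most $d$ of its arguments, as one sees from the recursive definition of the c-$(\ell, r)$-cumulants and the bi-non-crossing M\"{o}bius inversion for the $(\ell, r)$-cumulants. Hence hypothesis (2) guarantees that $\sup_n|\K_\chi(a_{n, k_1}, \dots, a_{n, k_d})|$ and $\sup_n|\kappa_\chi(a_{n, k_1}, \dots, a_{n, k_d})|$ are finite, so the collapsed sum is $O(N^{1 - d/2})$ and tends to $0$ whenever $d \geq 3$. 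For $d = 1$ the cumulant equals $\varphi(S_{N, k}) = 0$ by hypothesis (1). For $d = 2$, since first moments vanish we have $\K_\chi(a_{n, k_1}, a_{n, k_2}) = \varphi(a_{n, k_1}a_{n, k_2})$, so hypothesis (3) gives $\lim_N \K_\chi(S_{N, k_1}, S_{N, k_2}) = C^\varphi_{k_1, k_2}$; symmetrically $\lim_N \kappa_\chi(S_{N, k_1}, S_{N, k_2}) = C^\psi_{k_1, k_2}$.

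Thus every $(\ell, r)$- and c-$(\ell, r)$-cumulant of $\hat{S}_N$ converges to the corresponding cumulant of the centred c-bi-free Gaussian distribution with covariances $(C^\varphi, C^\psi)$ prescribed in Definition \ref{CBFGauss}, namely second cumulants equal to the covariance entries and all others zero. To transfer this to distributions, the moment-cumulant formula \eqref{CBiFreeMomentCumulant} for $\varphi$ and its $\psi$-analogue express each joint moment of $\hat{S}_N$ as a finite sum of products of cumulants of order at most $d$ (one factor per block of a bi-non-crossing partition). Since each such cumulant converges, every joint $\varphi$- and $\psi$-moment of $\hat{S}_N$ converges to the corresponding moment of the c-bi-free Gaussian, which is exactly convergence in distribution.

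The main obstacle is the bookkeeping in the scaling step rather than any deep structural point: one must justify that uniform boundedness of moments (hypothesis (2)) transfers to uniform boundedness of cumulants in $n$, and then track the power of $N$ produced by the $N^{-d/2}$ normalization against the single surviving sum over $n$. The resulting picture — consistent with the Gaussian — is that in the limit a partition contributes only when it is a pairing, since every singleton block is annihilated by the centring hypothesis (1) and every block of size at least three introduces a strictly negative power of $N$.
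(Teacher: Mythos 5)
Your proposal is correct and follows essentially the same route as the paper: the paper reduces the theorem to showing that the second c-$(\ell,r)$-cumulants of $\hat{S}_N$ converge to $C^\varphi$ and all others vanish, and then cites the argument of Voiculescu's bi-free central limit theorem (multilinearity, vanishing of mixed cumulants via the equivalence with combinatorial c-bi-freeness, uniform boundedness of cumulants from hypothesis (2), and the $N^{1-d/2}$ scaling), which is precisely what you spell out. The only difference is that you make explicit the bookkeeping the paper delegates to the citation of \cite{V2014}*{Theorem 7.9}.
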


\begin{proof}
The fact that the corresponding $(\ell, r)$-cumulants, and hence the $\psi$-moments, converge is precisely the content of \cite{V2014}*{Theorem 7.9}. On the other hand, since the $\varphi$-moments are polynomials in $(\ell, r)$- and c-$(\ell, r)$-cumulants, it suffices to show
\[
\lim_{N \to \infty}\K_{\chi_\alpha}(S_{N, \alpha(1)}, S_{N, \alpha(2)}) = C_{\alpha(1), \alpha(2)}^\varphi
\]
for all $\alpha: [2] \to I \sqcup J$, and
\[
\lim_{N \to \infty}\K_{\chi_\alpha}(S_{N, \alpha(1)}, \dots, S_{N, \alpha(d)}) = 0
\]
for all $d \neq 2$ and $\alpha: [d] \to I \sqcup J$. In view of assumptions $(1)$ to $(3)$, the additivity and multilinearity properties of the c-$(\ell, r)$-cumulants, along with the definition that $\K_{\chi_\alpha} = \varphi$ for all $\chi_\alpha: [1] \to I \sqcup J$, the arguments for these limits are exactly the same as the ones presented in the proof of \cite{V2014}*{Theorem 7.9}.
\end{proof}

The following result is a c-bi-free version of the Kac/Loeve theorem, which we state for the sake of completeness as its proof is same as the bi-free Kac/Loeve theorem \cite{S2016}*{Theorem 3.3.1}.

\begin{thm}[\textbf{The c-bi-free Kac/Loeve theorem}]
Let $(a_{1, \ell}, a_{1, r})$ and $(a_{2, \ell}, a_{2, r})$ be c-bi-free two-faced pairs in a two-state non-commutative probability space $(\A, \varphi, \psi)$ such that $\varphi(a_{n, k}) = \psi(a_{n, k}) = 0$ for $n \in \{1, 2\}$ and $k \in \{\ell, r\}$. Let $\theta \in (0, \pi/2)$, $a_{3, k} = \cos(\theta)a_{1, k} + \sin(\theta)a_{2, k}$, and $a_{4, k} = -\sin(\theta)a_{1, k} + \cos(\theta)a_{2, k}$ for $k \in \{\ell, r\}$. The two-faced pairs $(a_{3, \ell}, a_{3, r})$ and $(a_{4, \ell}, a_{4, r})$ are c-bi-free with respect to $(\varphi, \psi)$ if and only if the two-faced pairs $(a_{1, \ell}, a_{1, r})$ and $(a_{2, \ell}, a_{2, r})$ are identically distributed with a centred c-bi-free Gaussian distribution.
\end{thm}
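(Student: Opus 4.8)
The plan is to reduce everything to the behaviour of the $(\ell,r)$- and c-$(\ell,r)$-cumulants under the stated hypotheses, exactly as in the bi-free case \cite{S2016}*{Theorem 3.3.1}, the only new feature being that the c-$(\ell,r)$-cumulants $\K_\chi$ must be carried along beside the $(\ell,r)$-cumulants $\kappa_\chi$. By Theorem \ref{EquivCBF}, c-bi-freeness of $(a_{3,\ell},a_{3,r})$ and $(a_{4,\ell},a_{4,r})$ is equivalent to the vanishing of all mixed (in the index set $\{3,4\}$) $(\ell,r)$- and c-$(\ell,r)$-cumulants, while by Definition \ref{CBFGauss} the assertion that $(a_{1,\ell},a_{1,r})$ and $(a_{2,\ell},a_{2,r})$ are identically distributed centred c-bi-free Gaussian pairs amounts to saying that, for each $\chi:[n]\to\{\ell,r\}$, the cumulants $\kappa_\chi$ and $\K_\chi$ of each individual pair vanish when $n\neq 2$ and agree between the two pairs when $n=2$ (the case $n=1$ being the mean-zero hypothesis).

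The key computation is a single multilinear expansion. Fix $\chi:[n]\to\{\ell,r\}$ together with a choice, at each position, of whether to use the index-$3$ or the index-$4$ variable, say $p$ positions using index $3$ and $q=n-p$ using index $4$. Substituting $a_{3,k}=\cos\theta\,a_{1,k}+\sin\theta\,a_{2,k}$ and $a_{4,k}=-\sin\theta\,a_{1,k}+\cos\theta\,a_{2,k}$, expanding by multilinearity, and discarding every term that mixes indices $1$ and $2$ (such terms vanish since $(a_{1,\ell},a_{1,r})$ and $(a_{2,\ell},a_{2,r})$ are c-bi-free, hence have vanishing mixed $(\ell,r)$- and c-$(\ell,r)$-cumulants), one obtains
\[
\kappa_\chi = \cos^p\theta\,(-\sin\theta)^q\,\kappa_\chi^{(1)} + \sin^p\theta\,\cos^q\theta\,\kappa_\chi^{(2)},
\]
where $\kappa_\chi^{(i)}$ denotes the cumulant with all arguments drawn from the index-$i$ pair, and the identical identity holds with $\K$ in place of $\kappa$. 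This is precisely the relation that drives the bi-free proof, now valid verbatim for both cumulant families since $\K_\chi$ is also multilinear and vanishes on arguments mixed across $\{1,2\}$.

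For the reverse implication, assume the two pairs are identically distributed centred c-bi-free Gaussian. Then $\kappa_\chi^{(1)}=\kappa_\chi^{(2)}=0$ whenever $n\neq 2$, so the displayed relation makes every mixed cumulant with $n\geq 3$ vanish; a mixed cumulant with $n=2$ forces $p=q=1$, whence the relation reduces to $\cos\theta\sin\theta(\kappa_\chi^{(2)}-\kappa_\chi^{(1)})=0$, which is zero by identical distribution. The same holds for $\K$, so all mixed cumulants vanish and $(a_{3,\ell},a_{3,r})$, $(a_{4,\ell},a_{4,r})$ are c-bi-free. For the forward implication, assume these latter pairs are c-bi-free, so all of their mixed $(\ell,r)$- and c-$(\ell,r)$-cumulants vanish. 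Fixing $\chi$ with $n\geq 3$ and applying the displayed relation to the two configurations $(p,q)=(n-1,1)$ and $(p,q)=(n-2,2)$ yields a homogeneous linear system in $(\kappa_\chi^{(1)},\kappa_\chi^{(2)})$; eliminating $\kappa_\chi^{(2)}$ and using $\cos^2\theta+\sin^2\theta=1$ together with $\cos\theta\neq 0$ (as $\theta\in(0,\pi/2)$) forces $\kappa_\chi^{(1)}=\kappa_\chi^{(2)}=0$. For $n=2$ the single mixed configuration $(1,1)$ gives $\kappa_\chi^{(1)}=\kappa_\chi^{(2)}$, and the mean-zero hypothesis disposes of $n=1$; the identical argument applies to $\K$. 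Hence both pairs have the same $(\ell,r)$- and c-$(\ell,r)$-cumulants with all orders $\neq 2$ vanishing, i.e. they are identically distributed centred c-bi-free Gaussian.

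The computations are elementary, so the only point deserving care — and the main thing to verify — is that the c-$(\ell,r)$-cumulants genuinely fit into the bi-free scheme: that $\K_\chi$ is multilinear and that c-bi-freeness of the index-$\{1,2\}$ pairs annihilates exactly the index-mixing terms in the expansion of $\K_\chi$ evaluated on index-$3$/$4$ variables. Once this is established the scalar linear algebra (selecting the configurations $(n-1,1)$ and $(n-2,2)$ and invoking $\cos\theta\neq 0$) is identical to the bi-free case and introduces no new idea, which is why the result is recorded without a detailed proof.
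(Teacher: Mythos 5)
Your proposal is correct and is exactly the argument the paper intends: the paper omits the proof, remarking that it is the same as the bi-free Kac/Loeve theorem (\cite{S2016}*{Theorem 3.3.1}), and your write-up is precisely that cumulant-rotation argument carried out simultaneously for the $(\ell,r)$-cumulants $\kappa_\chi$ and the c-$(\ell,r)$-cumulants $\K_\chi$, using Theorem \ref{EquivCBF} to pass between c-bi-freeness and vanishing of mixed cumulants. The only cosmetic remark is that the elimination step uses $\sin\theta\neq 0$ as well as $\cos\theta\neq 0$ (the relevant determinant is $-\cos^{n-1}\theta\,\sin^{n-1}\theta$), but both hold on $(0,\pi/2)$, so nothing is missing.
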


In what follows, we prove a general c-bi-free limit theorem analogous to \cite{NS2006}*{Theorem 13.1} for free probability theory and \cite{G2016}*{Theorem 2.3 and Corollary 2.4} for bi-free probability theory.

\begin{lem}\label{LimitLem}
For every $N \in \mathbb{N}$, let $((a_{N, i})_{i \in I}, (a_{N, j})_{j \in J})$ be a two-faced family in a two-state non-commutative probability space $(\A_N, \varphi_N, \psi_N)$. The following are equivalent:
\begin{enumerate}[$\qquad(1)$]
\item For all $d \geq 1$ and $\alpha: [d] \to I \sqcup J$, the limits
\[
\lim_{N \to \infty}N\cdot\psi_N(a_{N, \alpha(1)}\cdots a_{N, \alpha(d)})\qand \lim_{N \to \infty}N\cdot\varphi_N(a_{N, \alpha(1)}\cdots a_{N, \alpha(d)})
\]
exist.

\item For all $d \geq 1$ and $\alpha: [d] \to I \sqcup J$, the limits
\[
\lim_{N \to \infty}N\cdot\kappa_{\chi_\alpha}^N(a_{N, \alpha(1)}, \dots, a_{N, \alpha(d)})\qand \lim_{N \to \infty}N\cdot\K_{\chi_\alpha}^N(a_{N, \alpha(1)}, \dots, a_{N, \alpha(d)})
\]
exist.
\end{enumerate}
Moreover, if these assertions hold, then
\[
\lim_{N \to \infty}N\cdot\psi_N(a_{N, \alpha(1)}\cdots a_{N, \alpha(d)}) = \lim_{N \to \infty}N\cdot\kappa_{\chi_\alpha}^N(a_{N, \alpha(1)}, \dots, a_{N, \alpha(d)})
\]
and
\[
\lim_{N \to \infty}N\cdot\varphi_N(a_{N, \alpha(1)}\cdots a_{N, \alpha(d)}) = \lim_{N \to \infty}N\cdot\K_{\chi_\alpha}^N(a_{N, \alpha(1)}, \dots, a_{N, \alpha(d)})
\]
for all $d \geq 1$ and $\alpha: [d] \to I \sqcup J$.
\end{lem}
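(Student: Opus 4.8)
The plan is to prove both implications at once by a single induction on $d$, exploiting that the moment--cumulant relations are ``triangular'' and that multiplying by $N$ isolates a single surviving partition in the limit. Fix $d$ and $\alpha \colon [d] \to I \sqcup J$, write $\chi = \chi_\alpha$, and consider the tuple $(a_{N,\alpha(1)}, \dots, a_{N,\alpha(d)})$. The engine of the argument is the following scaling observation: if $\pi \in \BNC(\chi)$ satisfies $\pi \neq 1_\chi$, then $|\pi| \geq 2$ and every block $V$ of $\pi$ has $|V| < d$; consequently
\[
N\prod_{V \in \pi}\kappa^N_{\chi|_V}(\cdots) = N^{1 - |\pi|}\prod_{V \in \pi}\bigl(N\cdot\kappa^N_{\chi|_V}(\cdots)\bigr),
\]
and provided each factor $N\cdot\kappa^N_{\chi|_V}$ converges (which the inductive hypothesis at levels below $d$ will guarantee), the right-hand side tends to $0$ because $N^{1-|\pi|}\to 0$. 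The identical estimate holds for the $\varphi$--moment--cumulant expansion, where each $\pi \neq 1_\chi$ contributes a product of $|\pi| \geq 2$ factors drawn from $\{\kappa^N_{\chi|_V}, \K^N_{\chi|_V}\}$ according to whether $V$ is interior or exterior.

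I would arrange the induction so that at step $d$ we assume the equivalence of (1) and (2), together with the stated limit formulae, at all levels strictly below $d$. The base case $d = 1$ is immediate since $\kappa^N_\chi(a_{N,\alpha(1)}) = \psi_N(a_{N,\alpha(1)})$ and $\K^N_\chi(a_{N,\alpha(1)}) = \varphi_N(a_{N,\alpha(1)})$ for singletons. For the inductive step I would record the two rearranged moment--cumulant identities at level $d$, namely
\[
N\cdot\kappa^N_\chi(\cdots) = N\cdot\psi_N(a_{N,\alpha(1)}\cdots a_{N,\alpha(d)}) - \sum_{\substack{\pi \in \BNC(\chi)\\\pi \neq 1_\chi}}N\prod_{V \in \pi}\kappa^N_{\chi|_V}(\cdots),
\]
and the analogue expressing $N\cdot\K^N_\chi(\cdots)$ as $N\cdot\varphi_N(a_{N,\alpha(1)}\cdots a_{N,\alpha(d)})$ minus the $\pi \neq 1_\chi$ terms of the $\varphi$--expansion, using that the single block $1_\chi$ is \emph{exterior}, so its contribution is exactly $\K^N_\chi$.

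Whichever of (1) or (2) is assumed, the inductive hypothesis supplies the convergence of $N\cdot\kappa^N_{\chi|_V}$ and $N\cdot\K^N_{\chi|_V}$ for all blocks $V$ with $|V| < d$: if (1) is assumed this follows because (1) at lower levels yields (2) at lower levels by induction, while if (2) is assumed it is immediate. The scaling observation then forces both sums over $\pi \neq 1_\chi$ to tend to $0$, so the displayed identities show that $N\cdot\kappa^N_\chi(\cdots)$ and $N\cdot\psi_N(\cdots)$ differ by a null sequence, and likewise $N\cdot\K^N_\chi(\cdots)$ and $N\cdot\varphi_N(\cdots)$. Hence one side converges if and only if the other does, with equal limits, which simultaneously gives the equivalence at level $d$ and both limit formulae. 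Running the induction over all $d$ yields (1) $\iff$ (2) together with the asserted identities.

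The calculation is routine once the scaling bookkeeping is in place; the only point needing care is organizing the induction so that the lower-level cumulant convergence required to annihilate the $\pi \neq 1_\chi$ terms is available under either hypothesis. I expect this bookkeeping to be the sole genuine subtlety, while the interior/exterior distinction in the $\varphi$--expansion causes no real difficulty, since every proper block of a partition $\pi \neq 1_\chi$ automatically sits at a strictly lower level.
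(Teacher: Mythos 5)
Your proof is correct and follows essentially the same route as the paper: both rest on the triangularity of the (c-)moment--cumulant expansions together with the scaling observation that every $\pi \neq 1_{\chi_\alpha}$ contributes a product of $|\pi| \geq 2$ cumulant factors, each of order $1/N$, so after multiplying by $N$ only the $1_{\chi_\alpha}$ term survives in the limit. The only difference is presentational: the paper invokes \cite{G2016}*{Lemma 2.2} for the $\psi$/$\kappa_\chi$ half and hides the induction on $d$ inside its $O(1/N)$ bookkeeping, whereas you run a single explicit induction covering both halves and both directions at once.
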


\begin{proof}
The equivalence and equality of the corresponding limits of $\psi$-moments and $(\ell, r)$-cumulants is the content of \cite{G2016}*{Lemma 2.2}, thus we only have to take the limits of $\varphi$-moments and c-$(\ell, r)$-cumulants into account. Suppose assertion $(2)$ holds.  Then
\begin{align*}
&\lim_{N \to \infty}N\cdot\varphi_N(a_{N, \alpha(1)}\cdots a_{N, \alpha(d)})\\
&= \lim_{N \to \infty}N\cdot\sum_{\pi \in \BNC(\chi_\alpha)}\K_{\pi}^N(a_{N, \alpha(1)}, \dots, a_{N, \alpha(d)})\\
&= \lim_{N \to \infty}\left(N\cdot\K_{\chi_\alpha}^N(a_{N, \alpha(1)}, \dots, a_{N, \alpha(d)}) + O(1/N)\right)
\end{align*}
for all $d \geq 1$ and $\alpha: [d] \to I \sqcup J$. Conversely, suppose assertion $(1)$ holds.  Then
\begin{align*}
&\lim_{N \to \infty}N\cdot\K_{\chi_\alpha}^N(a_{N, \alpha(1)}, \dots, a_{N, \alpha(d)})\\
&= \lim_{N \to \infty}\left(N\cdot\varphi_N(a_{N, \alpha(1)}\cdots a_{N, \alpha(d)}) -  N\cdot\sum_{\substack{\pi \in \BNC(\chi_\alpha)\\\pi \neq 1_{\chi_\alpha}}}\K_{\pi}^N(a_{N, \alpha(1)}, \dots, a_{N, \alpha(d)})\right)\\
&= \lim_{N \to \infty}\left(N\cdot\varphi_N(a_{N, \alpha(1)}\cdots a_{N, \alpha(d)}) + O(1/N)\right)
\end{align*}
for all $d \geq 1$ and $\alpha: [d] \to I \sqcup J$.
\end{proof}

\begin{thm}
For every $N \in \mathbb{N}$ let $\{((a_{N, n, i})_{i \in I}, (a_{N, n, j})_{j \in J})\}_{n = 1}^N$ be c-bi-free and identically distributed two-faced families in a two-state non-commutative probability space $(\A_N, \varphi_N, \psi_N)$.  Let $\hat{S}_N = ((S_{N, i})_{i \in I}, (S_{N, j})_{j \in J})$ be the two-faced family defined by
\[
S_{N, k} = \sum_{n = 1}^Na_{N, n, k},\quad k \in I \sqcup J.
\]
The following are equivalent:
\begin{enumerate}[$\qquad(1)$]
\item There exists a two-faced family $\hat{s} = ((s_i)_{i \in I}, (s_j)_{j \in J})$ in a two-state non-commutative probability space $(\A, \varphi, \psi)$ such that $\hat{S}_N$ converges in distribution  to $\hat{s}$ as $N \to \infty$.

\item For all $d \geq 1$ and $\alpha: [d] \to I \sqcup J$, the limits
\[
\lim_{N \to \infty}N\cdot\psi_N(a_{N, n, \alpha(1)}\cdots a_{N, n, \alpha(d)})\qand \lim_{N \to \infty}N\cdot\varphi_N(a_{N, n, \alpha(1)}\cdots a_{N, n, \alpha(d)})
\]
exist and are independent of $n$.
\end{enumerate}
Moreover, if these assertions hold, then the free and c-$(\ell, r)$-cumulants of $\hat{s}$ are given by
\[
\kappa_{\chi_\alpha}(s_{\alpha(1)}, \dots, s_{\alpha(d)}) = \lim_{N \to \infty}N\cdot\psi_N(a_{N, n, \alpha(1)}\cdots a_{N, n, \alpha(d)})
\]
and
\[
\K_{\chi_\alpha}(s_{\alpha(1)}, \dots, s_{\alpha(d)}) = \lim_{N \to \infty}N\cdot\varphi_N(a_{N, n, \alpha(1)}\cdots a_{N, n, \alpha(d)})
\]
for all $d \geq 1$ and $\alpha: [d] \to I \sqcup J$.
\end{thm}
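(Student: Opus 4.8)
The plan is to reduce everything to the single-family limit statement already established in Lemma \ref{LimitLem} by exploiting the additivity of cumulants across c-bi-free families. The starting observation is that, since the families $((a_{N,n,i})_{i\in I},(a_{N,n,j})_{j\in J})_{n=1}^N$ are c-bi-free, Theorem \ref{EquivCBF} guarantees that all mixed $(\ell,r)$- and c-$(\ell,r)$-cumulants vanish. Hence, for any $\chi\colon[d]\to\{\ell,r\}$ and any choice of variables drawn from the $S_{N,k}$, multilinearity together with the vanishing of mixed cumulants collapses the expansion to the diagonal terms, and identical distribution gives
\[
\kappa_{\chi_\alpha}^N(S_{N,\alpha(1)},\dots,S_{N,\alpha(d)}) = N\cdot\kappa_{\chi_\alpha}^N(a_{N,1,\alpha(1)},\dots,a_{N,1,\alpha(d)}),
\]
together with the analogous identity for $\K$. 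Since the collapsing argument applies to any subselection of variables and any $\chi$, every cumulant of $\hat{S}_N$ is exactly $N$ times the corresponding cumulant of a single summand.

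Next I would establish the equivalence of (1) and (2). Applying Lemma \ref{LimitLem} to the sequence of single families $((a_{N,1,i})_{i\in I},(a_{N,1,j})_{j\in J})_N$ shows that the existence of the limits in assertion (2) (which are independent of $n$ by identical distribution) is equivalent to the existence of $\lim_N N\cdot\kappa_{\chi_\alpha}^N$ and $\lim_N N\cdot\K_{\chi_\alpha}^N$, with equal values. By the preceding paragraph these are precisely the limits of the cumulants of $\hat{S}_N$. It then remains to verify that $\hat{S}_N$ converges in distribution if and only if all of its $(\ell,r)$- and c-$(\ell,r)$-cumulants converge. This is the standard moment--cumulant equivalence: for $\psi$ one uses the bi-free moment-cumulant formula, while for $\varphi$ one uses equation \eqref{CBiFreeMomentCumulant}, observing that $\K_\pi^N(\hat{S}_N)$ factors as a product over the blocks of $\pi$ of cumulants of $\hat{S}_N$, each of which converges by the above; an induction on $d$ then transfers convergence between moments and cumulants in both directions.

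For the direction (2)$\Rightarrow$(1), once the cumulants of $\hat{S}_N$ are known to converge, the limiting values determine a two-faced family $\hat{s}$: one constructs a two-state non-commutative probability space from non-commutative polynomials in $|I\sqcup J|$ determinates whose $(\ell,r)$- and c-$(\ell,r)$-cumulants are the prescribed limits, exactly as in the universal construction used in the proof of Theorem \ref{EquivCBF}, and the moment-cumulant formulae give $\hat{S}_N\to\hat{s}$ in distribution. For (1)$\Rightarrow$(2), convergence in distribution gives convergence of all $\psi$- and $\varphi$-moments of $\hat{S}_N$, hence of all its cumulants, hence via Lemma \ref{LimitLem} of the rescaled single-summand moments. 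Finally, the ``moreover'' identities are read off by combining the first paragraph with Lemma \ref{LimitLem}:
\[
\kappa_{\chi_\alpha}(s_{\alpha(1)},\dots,s_{\alpha(d)})=\lim_{N\to\infty}\kappa_{\chi_\alpha}^N(\hat{S}_N)=\lim_{N\to\infty}N\cdot\psi_N(a_{N,n,\alpha(1)}\cdots a_{N,n,\alpha(d)}),
\]
and likewise $\K_{\chi_\alpha}(s_{\alpha(1)},\dots,s_{\alpha(d)})=\lim_{N\to\infty}N\cdot\varphi_N(a_{N,n,\alpha(1)}\cdots a_{N,n,\alpha(d)})$.

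The step I expect to be the main obstacle is the moment--cumulant equivalence for $\varphi$: unlike the free case, $\K_\pi$ mixes the free cumulants $\kappa$ on interior blocks with the c-free cumulants $\K$ on exterior blocks, so one must confirm that every factor occurring in $\K_\pi^N(\hat{S}_N)$ is a genuine cumulant of $\hat{S}_N$, and hence equal to $N$ times a convergent single-summand cumulant, so that the block-product structure produces finite limits and no cross-terms spoil the leading-order behaviour. Everything else is a routine consequence of the additivity of cumulants, Lemma \ref{LimitLem}, and the universal construction.
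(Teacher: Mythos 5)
Your proposal is correct and follows essentially the same route as the paper's proof: both rest on the vanishing of mixed $(\ell,r)$- and c-$(\ell,r)$-cumulants combined with identical distribution, on Lemma \ref{LimitLem}, on the moment--cumulant recursion to pass between convergence of moments and convergence of cumulants, and on the abstract construction of $\hat{s}$ from the limiting moments. The only difference is organizational: you first record the additivity $\K_{\chi_\alpha}^N(S_{N,\alpha(1)},\dots,S_{N,\alpha(d)}) = N\cdot\K_{\chi_\alpha}^N(a_{N,1,\alpha(1)},\dots,a_{N,1,\alpha(d)})$ and then invoke moment--cumulant equivalence for $\hat{S}_N$ itself, whereas the paper expands $\varphi_N(S_{N,\alpha(1)}\cdots S_{N,\alpha(d)}) = \sum_{\pi\in\BNC(\chi_\alpha)} N^{|\pi|}\cdot\K_\pi^N(a_{N,n,\alpha(1)},\dots,a_{N,n,\alpha(d)})$ in a single step---the same computation, factored differently.
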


\begin{proof}
Suppose assertion $(1)$ holds. For $d \geq 1$ and $\alpha: [d] \to I \sqcup J$, the limit
\[
\lim_{N \to \infty}N\cdot\psi_N(a_{N, n, \alpha(1)}\cdots a_{N, n, \alpha(d)}),
\]
which is independent of $n$ by the assumption of identical distribution, exists and is equal to $\kappa_{\chi_\alpha}(s_{\alpha(1)}, \dots, s_{\alpha(d)})$ by \cite{G2016}*{Corollary 2.4}. On the other hand, we have
\begin{align*}
\varphi(s_{\alpha(1)}\cdots s_{\alpha(d)}) &= \lim_{N \to \infty}\varphi_N(S_{N, \alpha(1)}\cdots S_{N, \alpha(d)})\\
&= \lim_{N \to \infty}\sum_{t(1), \dots, t(d) = 1}^N\varphi_N(a_{N, t(1), \alpha(1)}\cdots a_{N, t(d), \alpha(d)})\\
&= \lim_{N \to \infty}\sum_{t(1), \dots, t(d) = 1}^N\sum_{\pi \in \BNC(\chi_\alpha)}\K_\pi^N(a_{N, t(1), \alpha(1)}, \dots, a_{N, t(d), \alpha(d)})\\
&= \lim_{N \to \infty}\sum_{\pi \in \BNC(\chi_\alpha)}N^{|\pi|}\cdot\K_\pi^N(a_{N, n, \alpha(1)}, \dots, a_{N, n, \alpha(d)}),
\end{align*}
where the last expression, which is independent of $n$, follows from the assumptions of c-bi-free independence and identical distribution. Since $\psi(s_{\alpha(1)}\cdots s_{\alpha(d)})$ and $\varphi(s_{\alpha(1)}\cdots s_{\alpha(d)})$ exist for all $d \geq 1$ and $\alpha: [d] \to I \sqcup J$, an easy induction argument shows that
\[
\lim_{N \to \infty}N^{|\pi|}\cdot\K_\pi^N(a_{N, n, \alpha(1)}, \dots, a_{N, n, \alpha(d)})
\]
exist for all $\pi \in \BNC(\chi_\alpha)$. In particular, choose $\pi = 1_{\chi_\alpha}$ and apply Lemma \ref{LimitLem}, assertion $(2)$ follows.

Conversely, suppose assertion $(2)$ holds. By \cite{G2016}*{Corollary 2.4}, the limits
\begin{equation}\label{LimitPsi}
\lim_{N \to \infty}\psi_N(S_{N, \alpha(1)}\cdots S_{N, \alpha(d)})
\end{equation}
exist for all $d \geq 1$ and $\alpha: [d] \to I \sqcup J$. On the other hand, the limits
\[
\lim_{N \to \infty}N^{|\pi|}\cdot\K_\pi^N(a_{N, n, \alpha(1)}, \dots, a_{N, n, \alpha(d)})
\]
exist for all $d \geq 1$, $\alpha: [d] \to I \sqcup J$, and $\pi \in \BNC(\chi_\alpha)$ by Lemma \ref{LimitLem}, thus the limits
\begin{equation}\label{LimitPhi}
\lim_{N \to \infty}\varphi_N(S_{N, \alpha(1)}\cdots S_{N, \alpha(d)})
\end{equation}
exist as well. One concludes assertion $(1)$ by abstractly constructing a two-faced family $\hat{s} = ((s_i)_{i \in I}, (s_j)_{j \in J})$ in a two-state non-commutative probability space $(\A, \varphi, \psi)$ and defining $\psi(s_{\alpha(1)}\cdots s_{\alpha(d)})$ and $\varphi(s_{\alpha(1)}\cdots s_{\alpha(d)})$ to be the corresponding limit in equation \eqref{LimitPsi} and equation \eqref{LimitPhi}  respectively.

Finally, for $d \geq 1$ and $\alpha: [d] \to I \sqcup J$, 
\begin{align*}
\sum_{\pi \in \BNC(\chi_\alpha)}\K_\pi(s_{\alpha(1)}, \dots, s_{\alpha(d)}) &= \varphi(s_{\alpha(1)}\cdots s_{\alpha(d)})\\
&= \sum_{\pi \in \BNC(\chi_\alpha)}\lim_{N \to \infty}N^{|\pi|}\cdot\K_\pi^N(a_{N, n, \alpha(1)}, \dots, a_{N, n, \alpha(d)}),
\end{align*}
and hence
\[
\K_\pi(s_{\alpha(1)}, \dots, s_{\alpha(d)}) = \lim_{N \to \infty}N^{|\pi|}\cdot\K_\pi^N(a_{N, n, \alpha(1)}, \dots, a_{N, n, \alpha(d)})
\]
for all $\pi \in \BNC(\chi_\alpha)$ since the c-$(\ell, r)$-cumulants are uniquely determined by equation \eqref{CBiFreeMomentCumulant}. In particular, for $\pi = 1_{\chi_\alpha}$, we have
\[
\K_{\chi_\alpha}(s_{\alpha(1)}, \dots, s_{\alpha(d)}) = \lim_{N \to \infty}N\cdot\K_{\chi_\alpha}^N(a_{N, n, \alpha(1)}, \dots, a_{N, n, \alpha(d)}) = \lim_{N \to \infty}N\cdot\varphi_N(a_{N, n, \alpha(1)}\cdots a_{N, n, \alpha(d)})
\]
by Lemma \ref{LimitLem}.
\end{proof}

\subsection{Examples of c-bi-free distributions}

If $(\mu, \nu)$ is a pair of compactly supported Borel probability measures on $\bR^2$, then $\mu$ and $\nu$ can be identified as states on the $*$-algebra $\bC[X, Y]$ where $X^* = X$ and $Y^* = Y$, via
\[
\mu(X^mY^n) = \int_{\bR^2}s^mt^n\,d\mu(s, t)\qand \nu(X^mY^n) = \int_{\bR^2}s^mt^n\,d\nu(s, t)
\]
for $m, n \geq 0$ with $m + n \geq 1$. In this case, we denote the $(m, n)^{\mathrm{th}}$ moment of $\mu$ and $\nu$ by $\mathbb{M}_{m, n}(\mu) = \mu(X^mY^n)$ and $\mathbb{M}_{m, n}(\nu) = \nu(X^mY^n)$ respectively. Moreover, the $(m, n)^{\mathrm{th}}$ free cumulant of $\nu$ is denoted by
\[
\kappa_{m, n}(\nu) = \kappa_{m + n}(\underbrace{X, \dots, X}_{m\,\mathrm{times}}, \underbrace{Y, \dots, Y}_{n\,\mathrm{times}})
\]
and the $(m, n)^{\mathrm{th}}$ c-free cumulant of $(\mu, \nu)$ is denoted by
\[
\K_{m, n}(\mu, \nu) = \K_{m + n}(\underbrace{X, \dots, X}_{m\,\mathrm{times}}, \underbrace{Y, \dots, Y}_{n\,\mathrm{times}}).
\]

\begin{defn}
Let $\nu$ be a compactly supported Borel probability measure on $\mathbb{R}^2$. A Borel probability measure $\mu$ on $\mathbb{R}^2$ is said to have a \emph{c-bi-free Gaussian distribution} with marginal means $(\eta_1, \eta_2)$, covariance matrix
\[
\begin{pmatrix}
a & c\\
c & b
\end{pmatrix},\quad a, b \geq 0,\quad |c| \leq \sqrt{ab},
\]
and accompanying distribution $\nu$ if the c-bi-free partial Voiculescu transform of the pair $(\mu, \nu)$ is given by
\[
\Phi_{(\mu, \nu)}(z, w) = \frac{\eta_1}{z} + \frac{\eta_2}{w} + \frac{a}{z^2} + \frac{b}{w^2} + \frac{c}{zw},\quad (z, w) \in (\mathbb{C}\,\backslash\,\mathbb{R})^2.
\]
In other words, the only non-vanishing c-free cumulants of $(\mu, \nu)$ are $\K_{1, 0}(\mu, \nu) = \eta_1$, $\K_{0, 1}(\mu, \nu) = \eta_2$, $\K_{2, 0}(\mu, \nu) = a$, $\K_{0, 2}(\mu, \nu) = b$, and $\K_{1, 1}(\mu, \nu) = c$. If $\eta_1 = \eta_2 = 0$, then $\mu$ is said to be \emph{centred}. If, in addition, $a = b = 1$ and $c \in [-1, 1]$, then $\mu$ is said to be \emph{standard}. In this case, we also denote this distribution by $\mu_{(a, b, c), \nu}$.
\end{defn}

The following corollary is an immediate consequence of Theorem \ref{AlgCBFCLT} applied to centred Borel probability measures on $\mathbb{R}^2$. For $\lambda > 0$, let $D_\lambda\nu$ denote the dilation of the measure $\nu$ by the factor $\lambda$; that is,
\[
D_\lambda\nu(B) = \nu(\lambda^{-1}B)
\]
for all Borel sets $B$ on $\bR^2$.

\begin{cor}[\textbf{The probabilistic c-bi-free central limit theorem}]
Let $(\mu, \nu)$ be a pair of compactly supported Borel probability measures on $\mathbb{R}^2$ with zero mean marginal distributions and covariance matrices
\[
\begin{pmatrix}
a & c\\
c & b
\end{pmatrix}\qand \begin{pmatrix}
a' & c'\\
c' & b'
\end{pmatrix}
\]
respectively.  Then
\[
\lim_{N \to \infty}\underbrace{D_{1/\sqrt{N}}\nu \boxplus\boxplus \cdots \boxplus\boxplus D_{1/\sqrt{N}}\nu}_{N\,\mathrm{times}} = \nu_{(a', b', c')}
\]
and
\[
\lim_{N \to \infty}\underbrace{(D_{1/\sqrt{N}}\mu, D_{1/\sqrt{N}}\nu) \boxplus\boxplus_{\rc} \cdots \boxplus\boxplus_{\rc} (D_{1/\sqrt{N}}\mu, D_{1/\sqrt{N}}\nu)}_{N\,\mathrm{times}} = (\mu_{(a, b, c), \nu_{(a', b', c')}}, \nu_{(a', b', c')}),
\]
where $\nu_{(a', b', c')}$ denotes the centred bi-free Gaussian distribution with covariance matrix
\[
\begin{pmatrix}
a' & c'\\
c' & b'
\end{pmatrix}
\]
introduced in \cite{V2014}*{Definition 7.3}.
\end{cor}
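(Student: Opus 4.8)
The plan is to realize both limits as distributions of a single normalized sum of c-bi-free copies and then apply Theorem \ref{AlgCBFCLT}. First I would fix the algebraic model: identify the pair $(\mu, \nu)$ with the commuting two-faced pair $(a_\ell, a_r) = (X, Y)$ in the two-state non-commutative probability space $(\bC[X, Y], \varphi, \psi)$, where $\varphi = \mu$ and $\psi = \nu$ under the moment identification recalled at the start of this subsection. I would then choose a sequence $\{(a_{n, \ell}, a_{n, r})\}_{n \geq 1}$ of c-bi-free two-faced pairs in some two-state non-commutative probability space $(\A, \varphi, \psi)$, each with pair-distribution $(\mu, \nu)$; such a sequence exists by the c-free product construction of Subsection \ref{CfreeConst} (or by abstractly prescribing joint moments through the moment-cumulant formulae). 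Writing $S_{N, k} = \tfrac{1}{\sqrt{N}}\sum_{n = 1}^{N} a_{n, k}$ for $k \in \{\ell, r\}$ and $\hat S_N = ((S_{N, \ell}), (S_{N, r}))$, the point is that, because $D_{1/\sqrt{N}}\nu$ is the $\psi$-distribution of $(\tfrac{1}{\sqrt N} a_\ell, \tfrac{1}{\sqrt N}a_r)$ and (c-)bi-free additive convolution is the (pair-)distribution of a sum of (c-)bi-free copies, the $N$-fold convolution in the first display is exactly the $\psi$-distribution of $\hat S_N$ and the $N$-fold convolution in the second display is exactly the pair-distribution of $\hat S_N$. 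Thus both reduce to the array studied in Theorem \ref{AlgCBFCLT}.

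Next I would verify the three hypotheses of Theorem \ref{AlgCBFCLT} for this (constant) sequence. Hypothesis (1) holds because the marginals of $\mu$ and $\nu$ have zero mean, whence $\varphi(a_{n, k}) = \psi(a_{n, k}) = 0$; hypothesis (2) is automatic since the moments are independent of $n$; and for hypothesis (3) the Ces\`aro averages are constant in $N$ and equal the order-two moments, which in the centred case are the covariance entries. Keeping track of which state carries which measure, this gives $C^\varphi$ equal to the covariance matrix of $\mu$ and $C^\psi$ equal to the covariance matrix of $\nu$. Theorem \ref{AlgCBFCLT} then yields that $\hat S_N$ converges in distribution to a centred c-bi-free Gaussian family with covariance matrices $(C^\varphi, C^\psi)$.

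It remains to identify the limit. On the $\psi$-side, convergence of the $\psi$-moments of $\hat S_N$ is the probabilistic bi-free central limit theorem, so the first limit equals the centred bi-free Gaussian $\nu_{(a', b', c')}$ with covariance $C^\psi$. On the $\varphi$-side, being a centred c-bi-free Gaussian means (Definition \ref{CBFGauss}) that all $(\ell, r)$- and c-$(\ell, r)$-cumulants of the limit vanish except those of order two; using $\K_\chi(x, y) = \varphi(xy) - \varphi(x)\varphi(y)$ together with centredness, the surviving second-order c-$(\ell, r)$-cumulants are $\K_{2, 0} = a$, $\K_{0, 2} = b$, $\K_{1, 1} = c$, with $\K_{1, 0} = \K_{0, 1} = 0$. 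These are precisely the defining non-vanishing c-free cumulants of the c-bi-free Gaussian $\mu_{(a, b, c), \nu_{(a', b', c')}}$, whose accompanying distribution is the $\psi$-limit $\nu_{(a', b', c')}$ just obtained. Hence the second limit equals $(\mu_{(a, b, c), \nu_{(a', b', c')}}, \nu_{(a', b', c')})$.

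Since the corollary is an immediate consequence of Theorem \ref{AlgCBFCLT}, there is no serious obstacle; the delicate points are purely bookkeeping. One must match each covariance matrix to the correct state (equivalently, to $\mu$ versus $\nu$), check that in the centred case the order-two c-$(\ell, r)$-cumulants coincide with the prescribed covariance entries, and observe that because distinct copies are classically independent with respect to both $\varphi$ and $\psi$ while the left and right parts of a single copy commute, the limiting left-then-right $\varphi$- and $\psi$-moments genuinely determine planar distributions.
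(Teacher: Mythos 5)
Your proposal is correct and follows essentially the same route as the paper: the paper derives this corollary as an immediate consequence of Theorem \ref{AlgCBFCLT} applied to the commuting two-faced pair realizing $(\mu,\nu)$, exactly as you do, with the dilations $D_{1/\sqrt{N}}$ accounting for the $1/\sqrt{N}$ normalization and the second-order cumulants matching the covariance entries. Your closing bookkeeping remarks (matching $C^\varphi$ to $\mu$ and $C^\psi$ to $\nu$, and using commutation plus classical independence of distinct faces so that the limiting moments determine planar distributions) are precisely the points the paper leaves implicit.
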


Note also that the first assertion is a special probabilistic version of the algebraic bi-free central limit theorem \cite{V2014}*{Theorem 7.9}.

\begin{defn}
Let $\lambda \geq 0$, $(0, 0) \neq (\alpha, \beta) \in \mathbb{R}^2$, and $\nu$ be a compactly supported Borel probability measure on $\mathbb{R}^2$. A Borel probability measure $\pi$ on $\mathbb{R}^2$ is said to have a \emph{c-bi-free Poisson distribution} with rate $\lambda$, jump size $(\alpha, \beta)$, and accompanying distribution $\nu$ if the c-free cumulants of the pair $(\pi, \nu)$ are given by
\[
\K_{m, n}(\pi, \nu) = \lambda\alpha^m\beta^n
\]
for all $m, n \geq 0$ with $m + n \geq 1$. This distribution is denoted $\pi_{\lambda, (\alpha, \beta), \nu}$.
\end{defn}

For such a pair $(\pi, \nu)$, the c-bi-free partial Voiculescu transform is given by
\begin{align*}
\Phi_{(\pi, \nu)}(z, w) &= \sum_{\substack{m, n \geq 0\\m + n \geq 1}}\lambda\alpha^m\beta^n\frac{1}{z^m}\frac{1}{w^n}\\
&= \frac{\lambda\alpha}{z - \alpha} + \frac{\lambda\beta}{w - \beta} + \frac{\lambda\alpha\beta}{(z - \alpha)(w - \beta)}\\
&= \frac{1}{z}\Phi_{(\pi^{(1)}, \nu^{(1)})}(z) + \frac{1}{w}\Phi_{(\pi^{(2)}, \nu^{(2)})}(w) + \frac{\lambda\alpha\beta}{(z - \alpha)(w - \beta)}
\end{align*}
for $|z|, |w|$ sufficiently large where $\Phi_{(\pi^{(1)}, \nu^{(1)})}$ and $\Phi_{(\pi^{(2)}, \nu^{(2)})}$ denote the c-free Voiculescu transforms (see the next subsection for an analytic review) of the marginal pairs $(\pi^{(1)}, \nu^{(1)})$ and $(\pi^{(2)}, \nu^{(2)})$ respectively. Note that $\mu^{(1)}$ and $\mu^{(2)}$ have c-free Poisson distributions with rate $\lambda$, jump sizes $\alpha$ and $\beta$, and accompanying distributions $\nu^{(1)}$ and $\nu^{(2)}$ respectively.

\begin{thm}[\textbf{The c-bi-free Poisson limit theorem}]
\label{CBFPoisson}
Let $\lambda, \lambda' \geq 0$ and $(0, 0) \neq (\alpha, \beta), (0, 0) \neq (\alpha', \beta') \in \mathbb{R}^2$. For $N \in \mathbb{N}$, let
\[
\mu_N = \left(1 - \frac{\lambda}{N}\right)\delta_{(0, 0)} + \frac{\lambda}{N}\delta_{(\alpha, \beta)}\qand \nu_N = \left(1 - \frac{\lambda'}{N}\right)\delta_{(0, 0)} + \frac{\lambda'}{N}\delta_{(\alpha', \beta')}.
\]
If $\pi_{\lambda', (\alpha', \beta')}$ is the bi-free Poisson distribution with rate $\lambda'$ and jump size $(\alpha', \beta')$ (see \cite{GHM2016}*{Example 3.13}), then
\[
\lim_{N \to \infty}\underbrace{\nu_N \boxplus\boxplus \cdots \boxplus\boxplus \nu_N}_{N\,\mathrm{times}} = \pi_{\lambda', (\alpha', \beta')}
\]
and
\[
\lim_{N \to \infty}\underbrace{(\mu_N, \nu_N) \boxplus\boxplus_{\rc} \cdots \boxplus\boxplus_{\rc} (\mu_N, \nu_N)}_{N\,\mathrm{times}} = (\pi_{\lambda, (\alpha, \beta), \pi_{\lambda', (\alpha', \beta')}}, \pi_{\lambda', (\alpha', \beta')}).
\]
\end{thm}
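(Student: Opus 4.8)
The plan is to deduce both limits from the general c-bi-free limit theorem established earlier in this subsection, so that the entire statement reduces to an elementary computation of rescaled moments. First I would realize the pair $(\mu_N, \nu_N)$ at the operator level: take $\A_N = \bC[X, Y]$ with $X, Y$ commuting self-adjoint, set $\varphi_N = \mu_N$ and $\psi_N = \nu_N$ as states, and let $(a_{N, \ell}, a_{N, r}) = (X, Y)$ be the resulting commuting two-faced pair, so $I = \{\ell\}$ and $J = \{r\}$. For each $N$ I would take $N$ c-bi-free, identically distributed copies $\{((a_{N, n, i})_{i \in I}, (a_{N, n, j})_{j \in J})\}_{n = 1}^N$ of this pair. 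By the definition of the additive c-bi-free convolution, the pair-distribution of $\hat{S}_N = ((S_{N, \ell}), (S_{N, r}))$ with $S_{N, k} = \sum_{n = 1}^N a_{N, n, k}$ is exactly the $N$-fold $\boxplus\boxplus_{\rc}$ of $(\mu_N, \nu_N)$; moreover, since c-bi-freeness restricts to bi-freeness with respect to $\psi$, the $\psi_N$-distribution of $\hat{S}_N$ is the $N$-fold $\boxplus\boxplus$ of $\nu_N$.

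Next I would verify hypothesis $(2)$ of the general limit theorem by direct computation. Fix a copy $n$ and $\alpha : [d] \to I \sqcup J$ with $d \geq 1$, and let $p = |\alpha^{-1}(I)|$, $q = |\alpha^{-1}(J)|$. Because $X$ and $Y$ commute and the atom at $(0,0)$ contributes $0^p 0^q = 0$ whenever $p + q \geq 1$, every such word collapses and
\[
\psi_N(a_{N, n, \alpha(1)}\cdots a_{N, n, \alpha(d)}) = \frac{\lambda'}{N}(\alpha')^p(\beta')^q, \qquad \varphi_N(a_{N, n, \alpha(1)}\cdots a_{N, n, \alpha(d)}) = \frac{\lambda}{N}\alpha^p\beta^q.
\]
Multiplying by $N$ yields quantities independent of both $N$ and $n$, so the required limits exist. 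Applying the general limit theorem then gives convergence of $\hat{S}_N$ in distribution to some $\hat{s}$ with pair-distribution $(\mu, \nu)$, and the cumulant formulae
\[
\kappa_{m, n}(\nu) = \lambda'(\alpha')^m(\beta')^n \qquad\text{and}\qquad \K_{m, n}(\mu, \nu) = \lambda\alpha^m\beta^n
\]
for all $m, n \geq 0$ with $m + n \geq 1$.

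Finally I would identify the limits. The free cumulants $\kappa_{m, n}(\nu) = \lambda'(\alpha')^m(\beta')^n$ are precisely those characterizing the bi-free Poisson distribution $\pi_{\lambda', (\alpha', \beta')}$ of \cite{GHM2016}*{Example 3.13}, establishing the first limit; and the c-free cumulants $\K_{m, n}(\mu, \nu) = \lambda\alpha^m\beta^n$ are, by the very definition of the c-bi-free Poisson distribution, those of $\pi_{\lambda, (\alpha, \beta), \nu}$ with accompanying distribution $\nu = \pi_{\lambda', (\alpha', \beta')}$, establishing the second. The only point demanding care is the bookkeeping underlying the moment computation: confirming that commutativity legitimately reduces each $\alpha$-word to $a_{N, n, \ell}^p a_{N, n, r}^q$ and that the hypotheses of the general limit theorem are genuinely in force. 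Once this is checked there is no residual analytic difficulty, since the entire assertion lives at the level of moment and cumulant convergence.
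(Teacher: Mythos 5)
Your proof is correct, but it follows a genuinely different route from the paper's. The paper does not argue Theorem \ref{CBFPoisson} directly at all: it deduces it as the special case $\sigma = \delta_{(\alpha, \beta)}$, $\sigma' = \delta_{(\alpha', \beta')}$ of the compound c-bi-free Poisson limit theorem (Theorem \ref{thm:compound-c-b-free-Poisson}), whose proof cites \cite{GHM2016} for the bi-free assertion and then works purely with cumulant arithmetic: since $\mathbb{M}_{m, n}(\mu_N) = \frac{\lambda}{N}\mathbb{M}_{m, n}(\sigma)$, the moment-cumulant relation gives $\K_{m, n}(\mu_N, \nu_N) = \frac{\lambda}{N}\mathbb{M}_{m, n}(\sigma) + O(1/N^2)$, and additivity of c-free cumulants under $\boxplus\boxplus_{\rc}$ yields $\K_{m, n}((\mu_N, \nu_N)^{\boxplus\boxplus_{\rc} N}) \to \lambda \cdot \mathbb{M}_{m, n}(\sigma)$. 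You instead realize $(\mu_N, \nu_N)$ as a commuting two-faced pair $(X, Y)$, form $N$ c-bi-free copies, verify hypothesis $(2)$ of the general limit theorem for sums of c-bi-free identically distributed families (the unnamed theorem immediately following Lemma \ref{LimitLem}) via the exact computations $N\psi_N(\cdot) = \lambda'(\alpha')^p(\beta')^q$ and $N\varphi_N(\cdot) = \lambda\alpha^p\beta^q$, and read the limiting $(\ell, r)$- and c-$(\ell, r)$-cumulants off that theorem's conclusion. Both arguments ultimately rest on the same elementary fact that the rescaled moments of $\mu_N$, $\nu_N$ are constant in $N$, but the key lemma invoked differs: the paper leans on cumulant additivity, you lean on the prepackaged i.i.d.\ limit theorem. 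Your route handles both assertions (bi-free and c-bi-free) uniformly in one application and would generalize verbatim to the compound case by replacing the point masses with $\sigma$, $\sigma'$; the paper's route proves the stronger compound statement in the first place and avoids constructing operator models. Two points worth tightening in your write-up: the symbol $\alpha$ is overloaded (index map versus jump coordinate), and the final identification of the abstract limit $\hat{s}$ with the pair of planar measures tacitly uses that, for commuting two-faced pairs, the ordered cumulants determine all c-$(\ell, r)$-cumulants — this is exactly the corollary following Lemma \ref{lem:swap-cumulant} and should be cited there.
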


We omit the proof of Theorem \ref{CBFPoisson} as it will follow from Theorem \ref{thm:compound-c-b-free-Poisson} which studies the larger class of compound c-bi-free Poisson distributions.

\begin{defn}\label{CompCBFPoisson}
Let $\lambda \geq 0$, and let $\delta_{(0, 0)} \neq \sigma$ and $\nu$ be compactly supported Borel probability measures on $\mathbb{R}^2$. A Borel probability measure $\pi$ on $\mathbb{R}^2$ is said to have a \emph{compound c-bi-free Poisson distribution} with rate $\lambda$, jump distribution $\sigma$, and accompanying distribution $\nu$ if the c-free cumulants of the pair $(\pi, \nu)$ are given by
\[
\K_{m, n}(\pi, \nu) = \lambda\cdot\mathbb{M}_{m, n}(\sigma)
\]
for all $m, n \geq 0$ with $m + n \geq 1$.  This distribution is denoted by $\pi_{\lambda, \sigma, \nu}$.
\end{defn}

For such a pair $(\pi, \nu)$, the c-bi-free Voiculescu transform is given by
\begin{align*}
\Phi_{(\pi, \nu)}(z, w) &= \sum_{\substack{m, n \geq 0\\m + n \geq 1}}\lambda\int_{\mathbb{R}^2}s^mt^n\frac{1}{z^m}\frac{1}{w^n}\,d\sigma(s, t)\\
&= \lambda\int_{\mathbb{R}^2}\frac{s}{z - s}\,d\sigma(s, t) + \lambda\int_{\mathbb{R}^2}\frac{t}{w - t}\,d\sigma(s, t) + \lambda\int_{\mathbb{R}^2}\frac{st}{(z - s)(w - t)}\,d\sigma(s, t)\\
&= \frac{1}{z}\Phi_{(\pi^{(1)}, \nu^{(1)})}(z) + \frac{1}{w}\Phi_{(\pi^{(2)}, \nu^{(2)})}(w) + \lambda\int_{\mathbb{R}^2}\frac{st}{(z - s)(w - t)}\,d\sigma(s, t)
\end{align*}
for $|z|, |w|$ sufficiently large where $\Phi_{(\pi^{(1)}, \nu^{(1)})}$ and $\Phi_{(\pi^{(2)}, \nu^{(2)})}$ are the c-free Voiculescu transforms of the marginal pairs $(\pi^{(1)}, \nu^{(1)})$ and $(\pi^{(2)}, \nu^{(2)})$ respectively. Note that $\mu^{(1)}$ and $\mu^{(2)}$ have compound c-free Poisson distributions with rate $\lambda$, jump distributions $\sigma^{(1)}$ and $\sigma^{(2)}$, and accompanying distributions $\nu^{(1)}$ and $\nu^{(2)}$ respectively.

\begin{thm}[\textbf{The compound c-bi-free Poisson limit theorem}]
\label{thm:compound-c-b-free-Poisson}
Let $\lambda, \lambda' \geq 0$ and let $\delta_{(0, 0)} \neq \sigma, \delta_{(0, 0)} \neq \sigma'$ be compactly supported Borel probability measures on $\mathbb{R}^2$. For $N \in \mathbb{N}$, let
\[
\mu_N = \left(1 - \frac{\lambda}{N}\right)\delta_{(0, 0)} + \frac{\lambda}{N}\sigma\qand \nu_N = \left(1 - \frac{\lambda'}{N}\right)\delta_{(0, 0)} + \frac{\lambda'}{N}\sigma'.
\]
If $\pi_{\lambda', \sigma'}$ denotes the compound bi-free Poisson distribution with rate $\lambda'$ and jump distribution $\sigma'$ (see \cite{GHM2016}*{Example 3.13} or \cite{HW2016}*{Example 3.5}), then
\[
\lim_{N \to \infty}\underbrace{\nu_N \boxplus\boxplus \cdots \boxplus\boxplus \nu_N}_{N\,\mathrm{times}} = \pi_{\lambda', \sigma'}\qand \lim_{N \to \infty}\underbrace{(\mu_N, \nu_N) \boxplus\boxplus_{\rc} \cdots \boxplus\boxplus_{\rc} (\mu_N, \nu_N)}_{N\,\mathrm{times}} = (\pi_{\lambda, \sigma, \pi_{\lambda', \sigma'}}, \pi_{\lambda', \sigma'}).
\]
\end{thm}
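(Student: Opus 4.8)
The plan is to deduce everything from the general c-bi-free limit theorem proved just above, whose hypotheses reduce to a one-line moment computation. First, for each $N$ realize the pair $(\mu_N, \nu_N)$ as the joint $(\varphi_N, \psi_N)$-distribution of a commuting two-faced pair $(a_{N, \ell}, a_{N, r})$, and take $N$ c-bi-free and identically distributed copies $\{(a_{N, n, \ell}, a_{N, n, r})\}_{n = 1}^N$ in a two-state non-commutative probability space $(\A_N, \varphi_N, \psi_N)$; such copies exist by the c-bi-free product construction of Definition \ref{CBFDefn}. Setting $S_{N, k} = \sum_{n = 1}^N a_{N, n, k}$ for $k \in \{\ell, r\}$, the pair-distribution of $\hat{S}_N = (S_{N, \ell}, S_{N, r})$ is by definition the $N$-fold additive c-bi-free convolution of $(\mu_N, \nu_N)$ with itself.

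Second, I would compute the two families of limits in assertion $(2)$ of that theorem. Since $a_{N, \ell}$ and $a_{N, r}$ commute, for $\alpha: [d] \to I \sqcup J$ with $m = |\chi_\alpha^{-1}(\{\ell\})|$ the monomial collapses and
\[
\psi_N(a_{N, n, \alpha(1)}\cdots a_{N, n, \alpha(d)}) = \mathbb{M}_{m, d - m}(\nu_N), \qquad \varphi_N(a_{N, n, \alpha(1)}\cdots a_{N, n, \alpha(d)}) = \mathbb{M}_{m, d - m}(\mu_N).
\]
As $d \geq 1$, the atom at $(0, 0)$ contributes nothing, so $\mathbb{M}_{m, d - m}(\nu_N) = \tfrac{\lambda'}{N}\mathbb{M}_{m, d - m}(\sigma')$ and $\mathbb{M}_{m, d - m}(\mu_N) = \tfrac{\lambda}{N}\mathbb{M}_{m, d - m}(\sigma)$. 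Hence, for every $N$,
\[
N\cdot\psi_N(a_{N, n, \alpha(1)}\cdots a_{N, n, \alpha(d)}) = \lambda'\mathbb{M}_{m, d - m}(\sigma') \qquad\text{and}\qquad N\cdot\varphi_N(a_{N, n, \alpha(1)}\cdots a_{N, n, \alpha(d)}) = \lambda\mathbb{M}_{m, d - m}(\sigma),
\]
which are constant in $N$ (and finite since $\sigma, \sigma'$ are compactly supported), so the limits trivially exist.

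Third, I would invoke the general limit theorem together with Lemma \ref{LimitLem}: $\hat{S}_N$ converges in distribution to a two-faced pair $\hat{s} = (s_\ell, s_r)$ with $\psi$-distribution $\nu_\infty$ and $\varphi$-distribution $\mu_\infty$ whose cumulants are exactly the computed limits, namely $\kappa_{m, n}(\nu_\infty) = \lambda'\mathbb{M}_{m, n}(\sigma')$ and $\K_{m, n}(\mu_\infty, \nu_\infty) = \lambda\mathbb{M}_{m, n}(\sigma)$. The first identity is precisely the defining property of the compound bi-free Poisson distribution $\pi_{\lambda', \sigma'}$, yielding the first assertion; the second identity, read in the ambient $\psi$-distribution $\nu_\infty = \pi_{\lambda', \sigma'}$, is exactly Definition \ref{CompCBFPoisson} for the pair $(\pi_{\lambda, \sigma, \pi_{\lambda', \sigma'}}, \pi_{\lambda', \sigma'})$. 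Because every moment is a finite polynomial in these cumulants via the moment-cumulant formulae \eqref{CBiFreeMomentCumulant} over $\BNC(\chi)$, matching the cumulants forces convergence of every moment, i.e.\ convergence in distribution to the claimed pair.

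The main obstacle is not the computation but the \emph{identification} of the abstract limit $\hat{s}$ with a genuine pair of planar Borel probability measures: the general theorem only produces a two-faced family, so one must know that a measure with the prescribed c-free cumulants (the compound c-bi-free Poisson) actually exists and is moment-determined. The $\psi$-marginal is the known compound bi-free Poisson measure (see \cite{GHM2016} or \cite{HW2016}), and the commutation of $s_\ell, s_r$ together with the compact support of $\sigma, \sigma'$ controlling moment growth should produce the required measure; confirming realizability (and positivity) of the limiting $\varphi$-functional is the only genuinely delicate point.
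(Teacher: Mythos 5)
Your proposal is correct, but it takes a genuinely different route from the paper's proof. The paper never invokes the general c-bi-free limit theorem that you rely on: it quotes \cite{GHM2016}*{Example 3.13} for the first assertion, and for the second it works directly at the level of measures, combining additivity of c-free cumulants under $\boxplus\boxplus_{\rc}$ with the estimate
\[
\K_{m, n}(\mu_N, \nu_N) = \mathbb{M}_{m, n}(\mu_N) + O(1/N^2) = \frac{\lambda}{N}\mathbb{M}_{m, n}(\sigma) + O(1/N^2),
\]
which holds because every non-trivial partition in the moment-cumulant formula contributes a product of at least two cumulants, each of size $O(1/N)$; additivity then gives $\K_{m, n}((\mu_N, \nu_N)^{\boxplus\boxplus_{\rc}N}) = \lambda\,\mathbb{M}_{m, n}(\sigma) + O(1/N)$, and since moments are polynomials in free and c-free cumulants this yields the claim. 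You instead realize the convolution powers as the sums $\hat{S}_N$ of c-bi-free copies and feed the exact identities $N\cdot\mathbb{M}_{m, n}(\mu_N) = \lambda\,\mathbb{M}_{m, n}(\sigma)$ and $N\cdot\mathbb{M}_{m, n}(\nu_N) = \lambda'\,\mathbb{M}_{m, n}(\sigma')$ into the general limit theorem together with Lemma \ref{LimitLem}. What your route buys: the Poisson limit theorem becomes a genuine corollary of the general limit theorem (exactly parallel to the free case in \cite{NS2006}), both assertions are obtained simultaneously from the cumulant identification, and the $O(1/N^2)$ bookkeeping disappears because the rescaled moments are literally constant in $N$. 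What the paper's route buys: it is shorter, stays entirely at the level of (pairs of) measures, and requires no operator realizations nor the (definitional, but worth stating) identification of $(\mu_N, \nu_N)^{\boxplus\boxplus_{\rc}N}$ with the pair-distribution of $\hat{S}_N$. Finally, the ``delicate point'' you flag at the end is not a gap relative to the paper: the convergence asserted here is convergence of moments, so moment-determinacy is not needed, and the existence of $\pi_{\lambda, \sigma, \nu}$ as an actual Borel probability measure is deferred in the paper as well --- it is addressed analytically, via Theorem \ref{LimitThm} and the truncation argument of \cite{HW2016}*{Example 3.5}, in the examples following Proposition \ref{ConvProp2}, not in this proof.
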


\begin{proof}
The first assertion is the compound bi-free Poisson limit theorem \cite{GHM2016}*{Example 3.13}. For the second assertion, since moments are polynomials in free and c-free cumulants, and the first assertion implies the convergence of the corresponding free cumulants, it suffices to show the convergence of the corresponding c-free cumulants. Assume $N$ is large enough so that $\mu_N$ is a Borel probability measure on $\bR^2$. For $m, n \geq 0$ with $m + n \geq 1$, we have
\[
\mathbb{M}_{m, n}(\mu_N) = \frac{\lambda}{N}\int_{\mathbb{R}^2}s^mt^n\,d\sigma(s, t) = \frac{\lambda}{N}\mathbb{M}_{m, n}(\sigma).
\]
Thus
\[
\K_{m, n}(\mu_N, \nu_N) = \mathbb{M}_{m, n}(\mu_N) + O(1/N^2) = \frac{\lambda}{N}\mathbb{M}_{m, n}(\sigma) + O(1/N^2),
\]
and hence
\[
\K_{m, n}((\mu_N, \nu_N)^{\boxplus\boxplus_{\rc}N}) = \lambda\cdot\mathbb{M}_{m, n}(\sigma) + O(1/N) \to \lambda\cdot\mathbb{M}_{m, n}(\sigma)
\]
as $N \to \infty$.
\end{proof}

\subsection{Analytic aspects of the c-bi-free partial Voiculescu transform}

Given a finite positive Borel measure $\nu$ on $\bR$, its \emph{(one-dimensional) Cauchy transform} is defined by
\[
G_\nu(z) = \int_{\bR}\frac{1}{z - s}\,d\nu(s),\quad z \in (\bC\,\backslash\,\bR).
\]
Observe that $G_\nu: \bC^+ \to \bC^-$ (where $\bC^+$ and $\bC^-$ are the upper and lower half planes respectively) and $G_\nu(\overline{z}) = \overline{G_\nu(z)}$. Define $F_\nu(z) = \frac{1}{G_\nu(z)}$ for $z \in (\bC\,\backslash\,\bR)$. 

\begin{defn}
For $\alpha, \beta > 0$, the \emph{Stolz} and \emph{truncated Stolz} angels are defined by
\[
\Gamma_\alpha = \{z = x + iy \in \mathbb{C}^+ \, \mid \, |x| < \alpha y\} \qand \Gamma_{\alpha, \beta} = \{z = x + iy \in \Gamma_\alpha \, \mid \, y > \beta\}
\]
respectively. Moreover, let $\overline{\Gamma_{\alpha, \beta}} = \{\overline{z} \, \mid \, z \in \Gamma_{\alpha, \beta}\}$  and set
\[
\Omega_{\alpha, \beta} = \{(z, w) \in (\mathbb{C}\,\backslash\,\mathbb{R})^2 \, \mid \, z, w \in \Gamma_{\alpha, \beta} \cup \overline{\Gamma_{\alpha, \beta}}\}.
\]
\end{defn}

As shown in \cite{BV1993}, for every $\alpha > 0$, there exists a $\beta > 0$ such that $F_\nu^{-1}$ (the inverse under composition) is defined on $\Gamma_{\alpha, \beta} \cup \overline{\Gamma_{\alpha, \beta}}$. Define the \emph{free Voiculescu transform} of $\nu$ by
\[
\phi_\nu(z) = F_\nu^{-1}(z) - z = \R_\nu\left(\frac{1}{z}\right).
\]
Then the additive free convolution $\boxplus$ is characterized by
\[
\phi_{\nu_1 \boxplus \nu_2}(z) = \phi_{\nu_1}(z) + \phi_{\nu_2}(z)
\]
on the common domain of the involved functions.

Given a pair $(\mu, \nu)$ of Borel probability measures on $\bR$, the \emph{c-free Voiculescu transform} of $(\mu, \nu)$ is defined by
\[
\Phi_{(\mu, \nu)}(z) = F_\nu^{-1}(z) - F_\mu(F_\nu^{-1}(z))
\]
on a domain of the form $\Gamma_{\alpha, \beta} \cup \overline{\Gamma_{\alpha, \beta}}$ where $F_\nu^{-1}$ is defined. Given two pairs $(\mu_1, \nu_1)$ and $(\mu_2, \nu_2)$, their additive c-free convolution $(\mu_1, \nu_1) \boxplus_{\rc} (\mu_2, \nu_2)$ is another pair $(\mu, \nu)$ where $\nu = \nu_1 \boxplus \nu_2$ and $\mu$ is the unique measure such that
\[
\Phi_{(\mu, \nu)}(z) = \Phi_{(\mu_1, \nu_1)}(z) + \Phi_{(\mu_2, \nu_2)}(z)
\]
on the common domain of the involved functions. (see \cite{W2011}*{Proposition 2.2} or \cite{B2008}*{Corollary 4}). In the sequel, we may abuse notation and use $(\mu_1, \nu_1) \boxplus_{\rc} (\mu_2, \nu_2)$ to denote $\mu$.

The study of the analytic aspects of the additive bi-free convolution was initiated in \cite{HW2016}. Given a finite positive Borel measure $\nu$ on $\bR^2$, the \emph{(two-dimensional) Cauchy transform} of $\nu$ is defined by
\[
G_\nu(z, w) = \int_{\bR^2}\frac{1}{(z - s)(w - t)}\,d\nu(s, t),\quad (z, w) \in (\bC\,\backslash\,\bR)^2
\]
which satisfies $G_\nu(\overline{z}, \overline{w}) = \overline{G_\nu(z, w)}$. To study limit theorems and infinite divisibility, we need to discuss weak convergence of measures, which requires the notion of tightness. Following \cite{HW2016}*{Section 2}, a family $\mathcal{F}$ of finite signed Borel measures on $\mathbb{R}^2$ is said to be \emph{tight} if
\[
\lim_{N \to \infty}\sup_{\nu \in \mathcal{F}}|\nu|(\mathbb{R}^2\,\backslash\,K_N) = 0
\]
where $K_N = \{(s, t) \in \mathbb{R}^2 \, \mid \, |s| \leq N, |t| \leq N\}$. Moreover, the family $\F$ is tight if and only if the family $\{|\nu|^{(1)}, |\nu|^{(2)} \, \mid \, \nu \in \F\}$ of marginal distributions is a tight family of finite signed Borel measures on $\bR$. The following results were obtained in \cite{HW2016}*{Section 2}, which will be useful later. Note that by $z \to \infty$ non-tangentially we mean $|z| \to \infty$ but $z$ stays within a Stolz angel $\Gamma_\alpha$ for some $\alpha > 0$.

\begin{prop}\label{HWLem2.1}
Let $\{\mu_n\}_{n = 1}^\infty$ be a tight sequence of Borel probability measures on $\mathbb{R}^2$. The limits
\[
\lim_{|z| \to \infty, z \in \Gamma_\alpha}zG_{\mu_n}(z, w) = G_{\mu_n^{(2)}}(w)\qand \lim_{|w| \to \infty, w \in \Gamma_\alpha}wG_{\mu_n}(z, w) = G_{\mu_n^{(1)}}(z)
\]
hold uniformly in $n$ for all $(z, w) \in (\mathbb{C}\,\backslash\,\mathbb{R})^2$ and $\alpha > 0$.
\end{prop}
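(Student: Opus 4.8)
The plan is to prove the first limit; the second follows by the identical argument with the roles of the two coordinates interchanged. Since $G_{\mu_n^{(2)}}(w) = \int_{\bR^2}(w-t)^{-1}\,d\mu_n(s,t)$ by the definition of the second marginal, and since $\frac{z}{z-s}-1 = \frac{s}{z-s}$, I would begin by recording the exact identity
\[
zG_{\mu_n}(z,w) - G_{\mu_n^{(2)}}(w) = \int_{\bR^2}\frac{s}{(z-s)(w-t)}\,d\mu_n(s,t),
\]
which reduces the whole statement to showing that the right-hand side tends to $0$ as $|z|\to\infty$ within $\Gamma_\alpha$, uniformly in $n$, for each fixed $w\in\bC\setminus\bR$ and each $\alpha>0$.

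The key estimate is a bound on $\left|\frac{s}{z-s}\right|$ that is uniform over all real $s$ and all $z\in\Gamma_\alpha$. Writing $z=x+iy$ with $|x|<\alpha y$, I would note that $|z-s|\ge|s-x|$ and $|z-s|\ge y$, whence $|x|\le\alpha y\le\alpha|z-s|$ and therefore $|s|\le|s-x|+|x|\le(1+\alpha)|z-s|$; that is, $\left|\frac{s}{z-s}\right|\le 1+\alpha$ throughout $\Gamma_\alpha$. This is exactly where the non-tangential approach is essential: if $z$ were permitted to tend to $\infty$ along the real axis, then $\frac{s}{z-s}$ could blow up near $s=\Re z$ and no $s$-independent bound would survive. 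I expect this step---isolating the role of the Stolz angle to obtain a bound independent of both $s$ and $n$---to be the crux of the argument.

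With the uniform bound secured, I would split the integral at a level $R>0$. On $\{|s|>R\}$ the bound $\left|\frac{s}{z-s}\right|\le 1+\alpha$ together with $\left|\frac{1}{w-t}\right|\le|\Im w|^{-1}$ yields a contribution of modulus at most $(1+\alpha)|\Im w|^{-1}\mu_n^{(1)}(\{|s|>R\})$, and tightness of $\{\mu_n\}$---equivalently, of the marginal family $\{\mu_n^{(1)}\}$, as recalled above from \cite{HW2016}---makes $\sup_n\mu_n^{(1)}(\{|s|>R\})$ arbitrarily small once $R$ is large. On $\{|s|\le R\}$ one has $\left|\frac{s}{z-s}\right|\le\frac{R}{|z|-R}$ as soon as $|z|>R$, so this part is bounded by $\frac{R}{(|z|-R)|\Im w|}$, which tends to $0$ as $|z|\to\infty$ uniformly in $n$.

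The argument is then assembled in the standard two-step fashion: given $\ep>0$, first fix $R$ so large that the tail term is below $\ep/2$ for every $n$ simultaneously (using tightness), and then choose $|z|$ large enough that the bounded-$s$ term falls below $\ep/2$; since neither threshold depends on $n$, the convergence is uniform in $n$. Repeating the computation with $z$ and $w$, $s$ and $t$, and the two marginals interchanged produces the second limit, completing the proof.
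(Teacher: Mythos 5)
Your proof is correct. The paper itself gives no proof of this proposition---it is imported verbatim from \cite{HW2016}*{Section 2}---and your argument (the exact identity $zG_{\mu_n}(z,w)-G_{\mu_n^{(2)}}(w)=\int_{\bR^2}\frac{s}{(z-s)(w-t)}\,d\mu_n(s,t)$, the Stolz-angle bound $\left|\frac{s}{z-s}\right|\le 1+\alpha$ valid for all real $s$, and the split at $|s|=R$ in which tightness of the first marginals makes the tail threshold independent of $n$ while the bounded-$s$ piece decays like $\frac{R}{(|z|-R)|\Im w|}$) is exactly the standard argument establishing that lemma, with every estimate checking out.
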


\begin{prop}\label{HWProp2.2}
Let $\{\mu_n\}_{n = 1}^\infty$ be a sequence of Borel probability measures on $\mathbb{R}^2$. The following assertions are equivalent:
\begin{enumerate}[$\qquad(1)$]
\item The sequence $\{\mu_n\}_{n = 1}^\infty$ converges weakly to a Borel probability measure $\mu$ on $\mathbb{R}^2$.

\item There exist two open sets $U \subset \mathbb{C}^{+} \times \mathbb{C}^{+}$ and $V \subset \mathbb{C}^{+} \times \mathbb{C}^{-}$ such that the pointwise limits $\lim_{n \to \infty}G_{\mu_n}(z, w) = G(z, w)$ exist for all $(z, w) \in U \cup V$, and the limit $zwG_{\mu_n}(z, w) \to 1$ holds uniformly in $n$ as $|z|, |w| \to \infty$ non-tangentially.
\end{enumerate}
Moreover, if these assertions hold, then $G = G_\mu$ on $(\mathbb{C}\,\backslash\,\mathbb{R})^2$.
\end{prop}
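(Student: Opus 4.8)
The plan is to prove the two implications separately, with the \emph{moreover} assertion emerging along the way; essentially all of the work lies in $(2) \Rightarrow (1)$, and within that in establishing tightness of $\{\mu_n\}$.

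For the implication $(1) \Rightarrow (2)$, I would fix $(z,w) \in (\bC \setminus \bR)^2$ and note that $(s,t) \mapsto \frac{1}{(z-s)(w-t)}$ is a bounded continuous function on $\bR^2$; weak convergence then yields $G_{\mu_n}(z,w) \to G_\mu(z,w)$ pointwise on the whole of $(\bC\setminus\bR)^2$, so one may take $U = \bC^+ \times \bC^+$, $V = \bC^+ \times \bC^-$, and $G = G_\mu$. For the uniform statement I would write
\[
zwG_{\mu_n}(z,w) = \int_{\bR^2}\frac{1}{(1-s/z)(1-t/w)}\,d\mu_n(s,t),
\]
observe that on a Stolz angle $z \in \Gamma_\alpha$ one has $|z-s| \geq \Im(z) \geq |z|/\sqrt{1+\alpha^2}$, so that the integrand is bounded by $1+\alpha^2$ uniformly, and then use that a weakly convergent sequence of probability measures is tight to split the integral over a large compact set and its complement, forcing $zwG_{\mu_n}(z,w) \to 1$ uniformly in $n$ as $|z|,|w| \to \infty$ non-tangentially.

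For $(2) \Rightarrow (1)$, the first and crucial step is tightness, which I would prove directly rather than through Proposition \ref{HWLem2.1} (whose hypothesis is tightness). For a probability measure $\rho$ on $\bR$ a short computation gives $1 - \Re(iR\,G_\rho(iR)) = \int_{\bR}\frac{t^2}{R^2+t^2}\,d\rho(t)$, whence $\rho(\{|t| \geq R\}) \leq 2\bigl(1 - \Re(iR\,G_\rho(iR))\bigr)$; thus uniform decay of $1 - \Re(iR\,G_{\mu_n^{(2)}}(iR))$ yields tightness of $\{\mu_n^{(2)}\}$. To obtain this decay from the two-dimensional hypothesis I would compare the marginal with the joint transform via Fatou's lemma: the nonnegative integrand whose $\mu_n$-integral equals $1 - \Re(iT\cdot iR\,G_{\mu_n}(iT,iR))$ converges pointwise, as $T \to \infty$, to the one whose integral equals $1 - \Re(iR\,G_{\mu_n^{(2)}}(iR))$, so Fatou gives $1 - \Re(iR\,G_{\mu_n^{(2)}}(iR)) \leq \liminf_{T\to\infty}\bigl(1 - \Re(iT\cdot iR\,G_{\mu_n}(iT,iR))\bigr)$; the uniform convergence $zwG_{\mu_n} \to 1$ then bounds the right-hand side uniformly in $n$, yielding $\sup_n\bigl(1 - \Re(iR\,G_{\mu_n^{(2)}}(iR))\bigr) \to 0$ as $R \to \infty$, and symmetrically for $\{\mu_n^{(1)}\}$. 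The stated criterion that a planar family is tight precisely when both marginal families are then gives tightness of $\{\mu_n\}$.

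With tightness in hand, Prokhorov's theorem produces, for every subsequence, a further subsequence converging weakly to a Borel probability measure; by $(1)\Rightarrow(2)$ its Cauchy transform converges pointwise to that of its limit, so any subsequential limit $\mu$ satisfies $G_\mu = G$ on the open set $U \cup V$. To see that the limit is unique I would argue that if $\mu,\mu'$ are two subsequential limits then $G_\mu = G_{\mu'}$ on $U \cup V$; since each transform is holomorphic in each variable on every connected component of $(\bC\setminus\bR)^2$, the identity theorem propagates the equality from $U$ across $\bC^+\times\bC^+$ and from $V$ across $\bC^+\times\bC^-$, and the reflection symmetry $G_\bullet(\bar z,\bar w) = \overline{G_\bullet(z,w)}$ extends it to the remaining two components; two-dimensional Stieltjes inversion (injectivity of the Cauchy transform) then gives $\mu = \mu'$. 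Hence the whole sequence converges weakly to a single $\mu$, and $(1)\Rightarrow(2)$ shows the full pointwise limit $\lim_n G_{\mu_n}$ exists on $(\bC\setminus\bR)^2$ and equals $G_\mu$, which is the \emph{moreover} claim. I expect the tightness step to be the main obstacle: it is exactly what licenses Prokhorov and what guarantees the limiting total mass remains $1$, and converting the purely two-dimensional uniform hypothesis into uniform one-dimensional tail control of the marginals is the delicate point.
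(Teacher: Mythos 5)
You should note first that the paper does not prove this proposition at all: it is recalled verbatim from \cite{HW2016}*{Section 2}, so the only meaningful comparison is with Huang--Wang's original argument, not with anything internal to this paper. Your proof is correct, and its overall architecture is the standard and essentially unavoidable one: derive tightness of $\{\mu_n\}_{n=1}^\infty$ from the uniform hypothesis, extract subsequential weak limits by Prokhorov's theorem, show any two such limits have Cauchy transforms agreeing on $U \cup V$, propagate that equality by the identity theorem across $\mathbb{C}^+ \times \mathbb{C}^+$ and $\mathbb{C}^+ \times \mathbb{C}^-$ and by the reflection formula $G_\bullet(\overline{z}, \overline{w}) = \overline{G_\bullet(z,w)}$ to the remaining two components (which is exactly why the statement demands both $U$ and $V$), and conclude via injectivity of the two-dimensional Cauchy transform. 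The two points in your tightness step that needed care both check out: Fatou's lemma requires nonnegativity of your integrand, and indeed with $a = Tt$, $b = Rs$ its numerator is $a^2 + ab + b^2 + s^2t^2 = (a + b/2)^2 + \tfrac{3}{4}b^2 + s^2t^2 \geq 0$; and using the uniform hypothesis with one variable fixed is legitimate, since the points $(iT, iR)$ are non-tangential and for fixed $R > \beta$ every sufficiently large $T$ gives $|iT \cdot iR\, G_{\mu_n}(iT, iR) - 1| < \varepsilon$ for all $n$, which is all the $\liminf_{T \to \infty}$ needs. For what it is worth, the Fatou detour can be skipped: the same algebra yields
\[
1 - \Re\left[(iy)(iy')G_{\mu_n}(iy, iy')\right] \geq \frac{1}{2}\left(\int_{\mathbb{R}}\frac{s^2}{y^2 + s^2}\,d\mu_n^{(1)}(s) + \int_{\mathbb{R}}\frac{t^2}{y'^2 + t^2}\,d\mu_n^{(2)}(t)\right),
\]
because the numerator $y^2t^2 + y'^2s^2 + s^2t^2 + yy'st$ exceeds $\frac{1}{2}\left(t^2(y^2 + s^2) + s^2(y'^2 + t^2)\right)$ by the square $\frac{1}{2}(yt + y's)^2$; this controls both marginal tails at once from a single two-dimensional quantity, with no limit in $T$, and is in the spirit of how \cite{HW2016} obtains tightness. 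Either way, the step you correctly identified as the crux --- converting the two-dimensional uniform condition into uniform one-dimensional tail bounds for the marginals --- is handled soundly in your write-up.
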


Recall from \cite{BV1993} that if $\nu$ is a Borel probability measure on $\mathbb{R}^2$ with marginal distributions $\nu^{(1)}$ and $\nu^{(2)}$, then for every $\alpha > 0$, there exists a $\beta > 0$ such that $F_{\nu^{(1)}}^{-1}$ and $F_{\nu^{(2)}}^{-1}$ are defined on $\Gamma_{\alpha, \beta} \cup \overline{\Gamma_{\alpha, \beta}}$, and
\[
F_{\nu^{(1)}}^{-1}(z) = z(1 + o(1)),\quad F_{\nu^{(2)}}^{-1}(w) = w(1 + o(1)),\qand G_\nu(z, w) = \frac{1}{zw}(1 + o(1))
\]
as $|z|, |w| \to \infty$ non-tangentially. By enlarging $\beta$ if necessary, we may assume $zwG_\nu(F_{\nu^{(1)}}^{-1}(z), F_{\nu^{(2)}}^{-1}(w))$ never vanishes on $\Omega_{\alpha, \beta}$. Then \cite{HW2016} defined the \emph{bi-free partial Voiculescu transform} of $\nu$ by
\[
\phi_{\nu}(z, w) = \frac{1}{z}\phi_{\nu^{(1)}}(z) + \frac{1}{w}\phi_{\nu^{(2)}}(w) + \widetilde{\phi}_{\nu}(z, w),\quad (z, w) \in \Omega_{\alpha, \beta},
\]
for some $\alpha, \beta > 0$ where $\widetilde{\phi}_{\nu}$ is the \emph{reduced bi-free partial Voiculescu transform} of $\nu$ given by
\[
\widetilde{\phi}_{\nu}(z, w) = 1 -  \frac{1}{zwG_\nu(F_{\nu^{(1)}}^{-1}(z), F_{\nu^{(2)}}^{-1}(w))}.
\]
As with the free case, the additive bi-free convolution $\boxplus\boxplus$ is characterized by
\[
\phi_{\nu_1 \boxplus\boxplus \nu_2}(z, w) = \phi_{\nu_1}(z, w) + \phi_{\nu_2}(z, w)
\]
on the common domain of the involved functions.

Note that the original linearizing transform of $\boxplus\boxplus$, introduced in \cite{V2016} and studied in \cite{HW2016}, was the bi-free partial $\R$-transform (which is defined for $|z|$, $|w|$ sufficiently small) and is related to the bi-free partial Voiculescu transform by the change of variables $(z, w) \mapsto (1/z, 1/w)$. Moreover, at the time of writing this paper, the operation $\boxplus\boxplus$ is only defined for compactly supported and/or infinitely divisible measures. Consequently, these restrictions are also in place for $\boxplus\boxplus_{\rc}$ as below. Once these operations have been extended to arbitrary measures, it is expected that the same results also hold in the general case.

\begin{defn}
Let $(\mu, \nu)$ be a pair of Borel probability measures on $\mathbb{R}^2$. The \emph{c-bi-free partial Voiculescu transform} of $(\mu, \nu)$ is defined by
\[
\Phi_{(\mu, \nu)}(z, w) = \frac{1}{z}\Phi_{(\mu^{(1)}, \nu^{(1)})}(z) + \frac{1}{w}\Phi_{(\mu^{(2)}, \nu^{(2)})}(w) + \widetilde{\Phi}_{(\mu, \nu)}(z, w),\quad (z, w) \in \Omega_{\alpha, \beta},
\]
for some $\alpha, \beta > 0$ where
\[
\widetilde{\Phi}_{(\mu, \nu)}(z, w) = \frac{F_{\mu^{(1)}}(F_{\nu^{(1)}}^{-1}(z))F_{\mu^{(2)}}(F_{\nu^{(2)}}^{-1}(w))G_\mu(F_{\nu^{(1)}}^{-1}(z), F_{\nu^{(2)}}^{-1}(w)) - 1}{zwG_\nu(F_{\nu^{(1)}}^{-1}(z), F_{\nu^{(2)}}^{-1}(w))}.
\]
The function $\widetilde{\Phi}_{(\mu, \nu)}$ will be referred to as the \emph{reduced c-bi-free partial Voiculescu transform} of $(\mu, \nu)$.
\end{defn}

By taking non-tangential limits, some basic properties of $\Phi_{(\mu, \nu)}$ immediately follow.

\begin{lem}\label{Marginals}
If $\Phi_{(\mu, \nu)}: \Omega_{\alpha, \beta} \to \mathbb{C}$ is the c-bi-free partial Voiculescu transform of some pair $(\mu, \nu)$ of Borel probability measures on $\mathbb{R}^2$, then
\[
\lim_{|z| \to \infty}\Phi_{(\mu, \nu)}(z, w) = \frac{1}{w}\Phi_{(\mu^{(2)}, \nu^{(2)})}(w),\, \lim_{|w| \to \infty}\Phi_{(\mu, \nu)}(z, w) = \frac{1}{z}\Phi_{(\mu^{(1)}, \nu^{(1)})}(z), \text{ and } \lim_{|z|, |w| \to \infty}\Phi_{(\mu, \nu)}(z, w) = 0
\]
non-tangentially.
\end{lem}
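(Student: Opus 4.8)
The plan is to show that in each of the three limits the reduced transform $\widetilde{\Phi}_{(\mu,\nu)}$ tends to $0$, and then to read off the three statements directly from the defining formula for $\Phi_{(\mu,\nu)}$. Throughout I would use the non-tangential asymptotics recalled from \cite{BV1993}: for a Borel probability measure $\rho$ on $\bR$ one has $F_\rho(z) = z(1+o(1))$ and $F_\rho^{-1}(z) = z(1+o(1))$ as $|z|\to\infty$ non-tangentially, and for a measure on $\bR^2$ one has $G(z,w) = \frac{1}{zw}(1+o(1))$ as $|z|,|w|\to\infty$ non-tangentially. I would also invoke Proposition \ref{HWLem2.1} applied to the constant (hence tight) sequence equal to $\mu$ or $\nu$, which yields $\zeta G_\mu(\zeta,\eta)\to G_{\mu^{(2)}}(\eta)$ as $|\zeta|\to\infty$ non-tangentially with $\eta$ fixed, together with its three companions obtained by symmetry and by replacing $\mu$ with $\nu$.

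First I would record the single-variable fact that $\frac{1}{z}\Phi_{(\mu^{(1)},\nu^{(1)})}(z)\to 0$ as $|z|\to\infty$ non-tangentially. Writing $\zeta = F_{\nu^{(1)}}^{-1}(z)$, which tends to $\infty$ non-tangentially with $\zeta = z(1+o(1))$, the c-free Voiculescu transform equals $\Phi_{(\mu^{(1)},\nu^{(1)})}(z) = \zeta - F_{\mu^{(1)}}(\zeta) = \zeta - \zeta(1+o(1)) = o(z)$, so dividing by $z$ gives $o(1)$; the same holds for the second marginal.

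Next, for the limit $|z|\to\infty$ with $w$ fixed, set $\zeta = F_{\nu^{(1)}}^{-1}(z)$ and $\eta = F_{\nu^{(2)}}^{-1}(w)$, so that $\zeta\to\infty$ non-tangentially while $\eta$ is fixed. In the numerator of $\widetilde{\Phi}_{(\mu,\nu)}$ I would combine $F_{\mu^{(1)}}(\zeta) = \zeta(1+o(1))$ with $\zeta G_\mu(\zeta,\eta)\to G_{\mu^{(2)}}(\eta)$ to get $F_{\mu^{(1)}}(\zeta)G_\mu(\zeta,\eta)\to G_{\mu^{(2)}}(\eta)$, whence the numerator tends to $F_{\mu^{(2)}}(\eta)G_{\mu^{(2)}}(\eta) - 1 = 0$ since $F_{\mu^{(2)}} = 1/G_{\mu^{(2)}}$. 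For the denominator, using $z = F_{\nu^{(1)}}(\zeta) = \zeta(1+o(1))$ and $\zeta G_\nu(\zeta,\eta)\to G_{\nu^{(2)}}(\eta)$ gives $zwG_\nu(\zeta,\eta)\to wG_{\nu^{(2)}}(F_{\nu^{(2)}}^{-1}(w)) = 1$. Hence $\widetilde{\Phi}_{(\mu,\nu)}(z,w)\to 0$, and the defining formula together with the first paragraph yields $\Phi_{(\mu,\nu)}(z,w)\to \frac{1}{w}\Phi_{(\mu^{(2)},\nu^{(2)})}(w)$. The limit as $|w|\to\infty$ is identical after interchanging the two coordinates.

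Finally, for $|z|,|w|\to\infty$ simultaneously, both $\zeta$ and $\eta$ tend to $\infty$ non-tangentially, so I would apply the two-dimensional asymptotics directly: the numerator becomes $\zeta(1+o(1))\cdot\eta(1+o(1))\cdot\frac{1}{\zeta\eta}(1+o(1)) - 1 = o(1)$ and the denominator becomes $\zeta(1+o(1))\cdot\eta(1+o(1))\cdot\frac{1}{\zeta\eta}(1+o(1))\to 1$, so $\widetilde{\Phi}_{(\mu,\nu)}\to 0$; since the two single-variable terms also vanish by the first paragraph, $\Phi_{(\mu,\nu)}(z,w)\to 0$. The only point requiring care — and the nearest thing to an obstacle — is the bookkeeping needed to guarantee that $\zeta = F_{\nu^{(1)}}^{-1}(z)$ and $\eta = F_{\nu^{(2)}}^{-1}(w)$ remain within Stolz angles, so that the cited asymptotics and Proposition \ref{HWLem2.1} apply and the products of $(1+o(1))$ factors combine legitimately; this is ensured by $F_{\nu^{(1)}}^{-1}(z) = z(1+o(1))$, which keeps $\zeta$ inside a slightly wider truncated Stolz angle whenever $z\in\Gamma_{\alpha,\beta}$.
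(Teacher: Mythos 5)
Your proposal is correct and follows essentially the same route as the paper: reduce each limit to showing the reduced transform $\widetilde{\Phi}_{(\mu,\nu)}$ vanishes, using the non-tangential asymptotics $F_{\mu^{(1)}}(\zeta)=\zeta(1+o(1))$, $F_{\nu^{(1)}}^{-1}(z)=z(1+o(1))$, the limits $\zeta G_\mu(\zeta,\eta)\to G_{\mu^{(2)}}(\eta)$ and $zwG_\nu\to 1$, and the identity $F_{\mu^{(2)}}=1/G_{\mu^{(2)}}$ to kill the numerator. The only cosmetic differences are that you derive the single-variable fact $\frac{1}{z}\Phi_{(\mu^{(1)},\nu^{(1)})}(z)\to 0$ and the Stolz-angle bookkeeping explicitly where the paper cites \cite{W2011} and \cite{HW2016}, and you handle the joint limit by direct two-dimensional asymptotics rather than deducing it from the first two limits.
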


\begin{proof}
For the first limit, since $\frac{1}{z}\Phi_{(\mu^{(1)}, \nu^{(1)})}(z) \to 0$ as $|z| \to \infty$ non-tangentially (see \cite{W2011}), it is enough to show $\widetilde{\Phi}_{(\mu, \nu)}(z, w) \to 0$ as $|z| \to \infty$ non-tangentially. Since $F_{\nu^{(1)}}^{-1}(z) = z(1 + o(1))$ and $F_{\mu^{(1)}}(z) = z(1 + o(1))$ as $|z| \to \infty$ non-tangentially (see \cite{BV1993}), we have
\[
zwG_\nu(F_{\nu^{(1)}}^{-1}(z), F_{\nu^{(2)}}^{-1}(w)) \to 1\qand F_{\mu^{(1)}}(F_{\nu^{(1)}}^{-1}(z))G_\mu(F_{\nu^{(1)}}^{-1}(z), F_{\nu^{(2)}}^{-1}(w)) \to G_{\mu^{(2)}}(F_{\nu^{(2)}}^{-1}(z))
\]
as $|z| \to \infty$ non-tangentially (see \cite{HW2016}). Since $F_{\mu^{(2)}}$ is the reciprocal of $G_{\mu^{(2)}}$ by definition, the first limit follows. The second limit can be shown similarly.  Hence the third limit holds.
\end{proof}

\begin{cor}\label{Unique}
If $(\mu_1, \nu_1)$ and $(\mu_2, \nu_2)$ are two pairs of Borel probability measures on $\mathbb{R}^2$ such that $\phi_{\nu_1} = \phi_{\nu_2}$ and $\Phi_{(\mu_1, \nu_1)} = \Phi_{(\mu_2, \nu_2)}$, then $(\mu_1, \nu_1) = (\mu_2, \nu_2)$.
\end{cor}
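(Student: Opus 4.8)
The plan is to extract the two measures in two stages: first use the hypothesis $\phi_{\nu_1} = \phi_{\nu_2}$ to conclude $\nu_1 = \nu_2$, and then, writing $\nu := \nu_1 = \nu_2$ for the common accompanying measure, use $\Phi_{(\mu_1, \nu)} = \Phi_{(\mu_2, \nu)}$ to conclude $\mu_1 = \mu_2$. Each stage follows the same three-step pattern: recover the one-dimensional marginals from single-variable non-tangential limits of the partial transform, invoke injectivity of the appropriate one-dimensional transform to identify those marginals, and finally use the resulting equality of the \emph{reduced} partial transforms to identify the two-dimensional measures through their Cauchy transforms.

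For the first stage, I would note that the bi-free analogue of Lemma \ref{Marginals} (established in \cite{HW2016}) gives
\[
\lim_{|w| \to \infty}\phi_{\nu_i}(z, w) = \frac{1}{z}\phi_{\nu_i^{(1)}}(z) \qand \lim_{|z| \to \infty}\phi_{\nu_i}(z, w) = \frac{1}{w}\phi_{\nu_i^{(2)}}(w)
\]
non-tangentially, so that $\phi_{\nu_1} = \phi_{\nu_2}$ forces $\phi_{\nu_1^{(1)}} = \phi_{\nu_2^{(1)}}$ and $\phi_{\nu_1^{(2)}} = \phi_{\nu_2^{(2)}}$; injectivity of the free Voiculescu transform \cite{BV1993} then yields $\nu_1^{(1)} = \nu_2^{(1)}$ and $\nu_1^{(2)} = \nu_2^{(2)}$. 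In particular the subordination functions $F_{\nu_1^{(j)}}^{-1} = F_{\nu_2^{(j)}}^{-1}$ agree for $j = 1, 2$, so subtracting the (now identical) marginal terms leaves $\widetilde{\phi}_{\nu_1} = \widetilde{\phi}_{\nu_2}$, which by the formula for $\widetilde{\phi}_\nu$ is exactly
\[
G_{\nu_1}(F_{\nu^{(1)}}^{-1}(z), F_{\nu^{(2)}}^{-1}(w)) = G_{\nu_2}(F_{\nu^{(1)}}^{-1}(z), F_{\nu^{(2)}}^{-1}(w))
\]
on $\Omega_{\alpha, \beta}$. Since $F_{\nu^{(1)}}^{-1}$ and $F_{\nu^{(2)}}^{-1}$ map onto open subsets of $\mathbb{C}\,\backslash\,\mathbb{R}$, this is equality of $G_{\nu_1}$ and $G_{\nu_2}$ on a non-empty open subset of $(\mathbb{C}\,\backslash\,\mathbb{R})^2$; as both are analytic there, the identity theorem propagates the equality to all of $(\mathbb{C}\,\backslash\,\mathbb{R})^2$, and injectivity of the two-dimensional Cauchy transform gives $\nu_1 = \nu_2$.

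The second stage runs in parallel with $\nu := \nu_1 = \nu_2$ fixed. Applying Lemma \ref{Marginals} to $\Phi_{(\mu_i, \nu)}$ recovers the marginal c-free Voiculescu transforms, so $\Phi_{(\mu_1^{(1)}, \nu^{(1)})} = \Phi_{(\mu_2^{(1)}, \nu^{(1)})}$ and $\Phi_{(\mu_1^{(2)}, \nu^{(2)})} = \Phi_{(\mu_2^{(2)}, \nu^{(2)})}$. Because $\nu^{(j)}$ is fixed, the relation $F_{\mu^{(j)}}(F_{\nu^{(j)}}^{-1}(z)) = F_{\nu^{(j)}}^{-1}(z) - \Phi_{(\mu^{(j)}, \nu^{(j)})}(z)$ recovers $F_{\mu^{(j)}}$ on the open range of $F_{\nu^{(j)}}^{-1}$ and hence everywhere, so injectivity of the c-free Voiculescu transform (\cite{W2011}, \cite{B2008}) yields $\mu_1^{(1)} = \mu_2^{(1)}$ and $\mu_1^{(2)} = \mu_2^{(2)}$. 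Consequently the denominator $zw\,G_\nu(F_{\nu^{(1)}}^{-1}(z), F_{\nu^{(2)}}^{-1}(w))$ and the factor $F_{\mu^{(1)}}(F_{\nu^{(1)}}^{-1}(z))F_{\mu^{(2)}}(F_{\nu^{(2)}}^{-1}(w))$ appearing in $\widetilde{\Phi}_{(\mu_i, \nu)}$ are the same for $i = 1, 2$, and since $G_{\mu_i}$ enters $\widetilde{\Phi}$ linearly, the hypothesis $\Phi_{(\mu_1, \nu)} = \Phi_{(\mu_2, \nu)}$ forces $G_{\mu_1} = G_{\mu_2}$ after composition with the subordination maps. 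The same open-set/identity-theorem/Cauchy-injectivity argument as before then gives $\mu_1 = \mu_2$, completing the proof.

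The main obstacle I anticipate is analytic bookkeeping rather than conceptual difficulty: one must verify that, after possibly shrinking the truncated Stolz domain $\Omega_{\alpha, \beta}$, all the composed functions $F_{\mu^{(j)}} \circ F_{\nu^{(j)}}^{-1}$ are defined and the denominators and common numerator factor of $\widetilde{\Phi}$ are non-vanishing, so that each cancellation is legitimate; and one must be careful that the equalities obtained only on the image of the (two-variable) subordination maps genuinely extend to all of $(\mathbb{C}\,\backslash\,\mathbb{R})^2$ by analytic continuation in two complex variables. The ingredients being invoked — injectivity of the one-dimensional free and c-free Voiculescu transforms and of the two-dimensional Cauchy transform — are standard and drawn from \cite{BV1993}, \cite{W2011}, \cite{B2008}, and \cite{HW2016}.
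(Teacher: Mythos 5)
Your proposal is correct and follows essentially the same route as the paper's proof: non-tangential limits as in Lemma \ref{Marginals} to identify the marginal distributions, then equality of the reduced transforms (after cancelling the common, non-vanishing $F$-factors and the $G_\nu$ denominator) gives $G_{\mu_1} = G_{\mu_2}$ on an open set, which extends to $(\mathbb{C}\,\backslash\,\mathbb{R})^2$ by analytic continuation and finishes by injectivity of the two-dimensional Cauchy transform. The only difference is that the paper handles your entire first stage by directly citing \cite{HW2016}*{Proposition 2.5} for the implication $\phi_{\nu_1} = \phi_{\nu_2} \Rightarrow \nu_1 = \nu_2$, whereas you reprove that result; the rest matches step for step.
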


\begin{proof}
The fact that $\phi_{\nu_1} = \phi_{\nu_2}$ implies $\nu_1 = \nu_2$ was shown in \cite{HW2016}*{Proposition 2.5}. If, in addition, $\Phi_{(\mu_1, \nu_1)} = \Phi_{(\mu_2, \nu_2)}$, then $\mu_1^{(1)} = \mu_2^{(1)}$ and $\mu_1^{(2)} = \mu_2^{(2)}$ by taking non-tangential limits as in Lemma \ref{Marginals}. By the definition of the c-bi-free partial Voiculescu transform, we have
\[
G_{\mu_1}(F_{\nu_1^{(1)}}^{-1}(z), F_{\nu_1^{(2)}}^{-1}(w)) = G_{\mu_2}(F_{\nu_2^{(1)}}^{-1}(z), F_{\nu_2^{(2)}}^{-1}(w)).
\]
Therefore $G_{\mu_1} = G_{\mu_2}$ first on an open set $\Omega_{\alpha', \beta'}$ for some $\alpha', \beta' > 0$, and then on the whole space $(\mathbb{C}\,\backslash\,\mathbb{R})^2$ by analytic continuation. Since the (two-dimensional) Cauchy transform uniquely determines the underlying measure (see \cite{HW2016}), the result follows.
\end{proof}

The next proposition is a continuity result for the c-bi-free partial Voiculescu transform, analogous to \cite{W2011}*{Proposition 2.4} for the c-free case and \cite{HW2016}*{Proposition 2.6} for the bi-free case.

\begin{prop}\label{ConvProp}
Let $\{\mu_n\}_{n = 1}^\infty$ and $\{\nu_n\}_{n = 1}^\infty$ be two sequences of Borel probability measures on $\mathbb{R}^2$ such that the sequence $\{\nu_n\}_{n = 1}^\infty$ converges weakly to a Borel probability measure $\nu$ on $\mathbb{R}^2$. The following are equivalent:
\begin{enumerate}[$\qquad(1)$]
\item The sequence $\{\mu_n\}_{n = 1}^\infty$ converges weakly to a Borel probability measure $\mu$ on $\mathbb{R}^2$.

\item There exist $\alpha, \beta > 0$ such that all $\Phi_{(\mu_n, \nu_n)}$ are defined on $\Omega_{\alpha, \beta}$, the pointwise limits $\Phi(z, w) := \lim_{n \to \infty}\Phi_{(\mu_n, \nu_n)}(z, w)$ exist for all $(z, w) \in \Omega_{\alpha, \beta}$, and the limit $\Phi_{(\mu_n, \nu_n)}(z, w) \to 0$ holds uniformly in $n$ as $|z|, |w| \to \infty$ non-tangentially.
\end{enumerate}
Moreover, if these assertions hold, then $\Phi = \Phi_{(\mu, \nu)}$ on $\Omega_{\alpha, \beta}$.
\end{prop}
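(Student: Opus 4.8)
The plan is to treat the two implications separately, reducing each to the continuity theorems already available — the c-free Voiculescu transform for the marginal data \cite{W2011}*{Proposition 2.4}, and the two-dimensional Cauchy-transform machinery for the joint data (Propositions \ref{HWLem2.1} and \ref{HWProp2.2}) — and then re-assembling the pieces through the defining formula for $\Phi_{(\mu,\nu)}$. As a preliminary, since $\{\nu_n\}$ converges weakly it is tight, so $\{\nu_n^{(1)}\}$ and $\{\nu_n^{(2)}\}$ are tight and converge to $\nu^{(1)},\nu^{(2)}$; using the facts recalled from \cite{BV1993} and \cite{HW2016} before the statement, one fixes $\alpha,\beta>0$ so that every $F_{\nu_n^{(1)}}^{-1}$ and $F_{\nu_n^{(2)}}^{-1}$ is defined on $\Gamma_{\alpha,\beta}\cup\overline{\Gamma_{\alpha,\beta}}$, converges locally uniformly, and $zwG_{\nu_n}(F_{\nu_n^{(1)}}^{-1}(z),F_{\nu_n^{(2)}}^{-1}(w))\to 1$ uniformly in $n$ while staying bounded away from $0$ on $\Omega_{\alpha,\beta}$. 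This guarantees all $\Phi_{(\mu_n,\nu_n)}$ share the common domain $\Omega_{\alpha,\beta}$.

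For $(1)\Rightarrow(2)$ I would first pass to marginals: weak convergence $\mu_n\to\mu$ forces $\mu_n^{(i)}\to\mu^{(i)}$ weakly, so by \cite{W2011}*{Proposition 2.4} the marginal c-free transforms $\Phi_{(\mu_n^{(i)},\nu_n^{(i)})}$ converge pointwise with uniform decay. For the reduced term I would observe that weak convergence of $\mu_n$ yields pointwise convergence of the Cauchy transforms $G_{\mu_n}\to G_\mu$, which upgrades to locally uniform convergence because the bound $|G_{\mu_n}(z,w)|\le |{\Im z}|^{-1}|{\Im w}|^{-1}$ makes $\{G_{\mu_n}\}$ a normal family; this is exactly what is needed to evaluate $G_{\mu_n}$ at the \emph{moving} points $(F_{\nu_n^{(1)}}^{-1}(z),F_{\nu_n^{(2)}}^{-1}(w))$. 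Combining these with the formula for $\widetilde\Phi_{(\mu_n,\nu_n)}$ gives pointwise convergence of $\Phi_{(\mu_n,\nu_n)}$ to $\Phi_{(\mu,\nu)}$, and the uniform decay follows from the uniform asymptotics $F_{\mu_n^{(i)}}(F_{\nu_n^{(i)}}^{-1}(z))=z(1+o(1))$ and $G_{\mu_n}=\tfrac{1}{\,\cdot\,}(1+o(1))$ together with the preliminary uniform estimate on the denominator.

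The implication $(2)\Rightarrow(1)$ is where the work lies. I would first recover the marginals: taking non-tangential limits as in Lemma \ref{Marginals}, but now uniformly in $n$ via the decay hypothesis, one extracts pointwise convergence with uniform decay of each $\Phi_{(\mu_n^{(i)},\nu_n^{(i)})}$, whence \cite{W2011}*{Proposition 2.4} gives $\mu_n^{(i)}\to\mu^{(i)}$ weakly; tightness of the two marginal families then gives tightness of $\{\mu_n\}$ on $\mathbb R^2$. Having tightness, $\{G_{\mu_n}\}$ is again a normal family and $zwG_{\mu_n}(z,w)\to 1$ uniformly in $n$ by Proposition \ref{HWLem2.1} applied in each variable. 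Solving the defining formula of $\Phi_{(\mu_n,\nu_n)}$ for $G_{\mu_n}(F_{\nu_n^{(1)}}^{-1}(z),F_{\nu_n^{(2)}}^{-1}(w))$ — now that the quantities $F_{\mu_n^{(i)}}(F_{\nu_n^{(i)}}^{-1}(\cdot))=F_{\nu_n^{(i)}}^{-1}(\cdot)-\Phi_{(\mu_n^{(i)},\nu_n^{(i)})}(\cdot)$ and the $F_{\nu_n^{(i)}}^{-1}$ are known to converge — shows that $G_{\mu_n}$ converges on the open set $\{(F_{\nu^{(1)}}^{-1}(z),F_{\nu^{(2)}}^{-1}(w)):(z,w)\in\Omega_{\alpha,\beta}\}$, the normal-family property removing the nuisance of the moving evaluation points. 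Proposition \ref{HWProp2.2} then delivers a weak limit $\mu$ with $G=G_\mu$, and $(1)\Rightarrow(2)$ identifies $\Phi=\Phi_{(\mu,\nu)}$, with uniqueness of $\mu$ guaranteed by Corollary \ref{Unique}.

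The main obstacle I anticipate is precisely the first half of $(2)\Rightarrow(1)$: justifying the interchange of the limit $n\to\infty$ with the non-tangential limits $|z|,|w|\to\infty$ needed to isolate the marginal transforms from the joint transform, and then upgrading the resulting convergence of $G_{\mu_n}$ at moving points to genuine open-set convergence so that Proposition \ref{HWProp2.2} applies. Both difficulties are resolved by the uniform decay hypothesis together with the Montel/normal-families argument for the Cauchy transforms, but they are the steps requiring the most care.
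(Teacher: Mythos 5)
Your proposal is correct and follows essentially the same route as the paper: both directions are reduced to the one-variable c-free continuity theorem \cite{W2011}*{Proposition 2.4} for the marginal data and to Proposition \ref{HWProp2.2} for the joint measure, with the common domain coming from the weak convergence of $\{\nu_n\}_{n=1}^\infty$, the marginal transforms isolated by non-tangential limits as in Lemma \ref{Marginals}, and tightness of $\{\mu_n\}_{n=1}^\infty$ bridging the two. The only difference is internal to $(2)\Rightarrow(1)$: where the paper extracts a weakly convergent subsequence of $\{\mu_n\}_{n=1}^\infty$ to identify $\Phi = \Phi_{(\mu,\nu)}$ and then asserts $G_{\mu_n} \to G_\mu$, you obtain the open-set convergence of $G_{\mu_n}$ directly by solving the defining formula for $G_{\mu_n}$ at the moving points $\bigl(F_{\nu_n^{(1)}}^{-1}(z), F_{\nu_n^{(2)}}^{-1}(w)\bigr)$ and invoking a normal-families argument, which amounts to an explicit justification of the step the paper leaves implicit.
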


\begin{proof}
 The existence of a common domain $\Omega_{\alpha, \beta}$ for all $\Phi_{(\mu_n, \nu_n)}$ is guaranteed by the assumption that the sequence $\{\nu_n\}_{n = 1}^\infty$ converges weakly to $\nu$ (see \cite{HW2016}*{Proposition 2.6}).  Suppose assertion $(1)$ holds.  Then the sequences $\{\mu_n^{(1)}\}_{n = 1}^\infty$ and $\{\mu_n^{(2)}\}_{n = 1}^\infty$ of marginal distributions converge weakly to $\mu^{(1)}$ and $\mu^{(2)}$ respectively. Therefore the weak convergences of the sequences establish the pointwise convergence $\Phi_{(\mu_n, \nu_n)} \to \Phi_{(\mu, \nu)}$ on $\Omega_{\alpha, \beta}$. Being weakly convergent, all of the mentioned sequences are tight. By \cite{W2011}*{Proposition 2.4}
 \[
\frac{1}{z}\Phi_{(\mu_n^{(1)}, \nu_n^{(1)})}(z) \to 0\qand \frac{1}{w}\Phi_{(\mu_n^{(2)}, \nu_n^{(2)})}(w) \to 0
\]
uniformly in $n$ as $|z|, |w| \to \infty$ non-tangentially. Moreover, by \cite{HW2016}*{Lemma 2.1}, we have
\[
G_{\nu_n}(z, w) = \frac{1}{zw}(1 + o(1))\qand G_{\mu_n}(z, w) = \frac{1}{zw}(1 + o(1)),
\]
and hence $\widetilde{\Phi}_{(\mu_n, \nu_n)}(z, w) \to 0$ uniformly in $n$ as $|z|, |w| \to \infty$ non-tangentially.

Conversely, suppose assertion $(2)$ holds and fix $\alpha > 0$.  Then for every $\varepsilon > 0$ there exists a $\beta > 0$ such that
\[
|\Phi_{(\mu_n, \nu_n)}(z, w)| < \varepsilon
\]
for all $n \geq 1$ and $(z, w) \in (\Gamma_{\alpha, \beta})^2$. Fixing $z$ and letting $|w| \to \infty$ non-tangentially, Lemma \ref{Marginals} implies $\lim_{n \to \infty}\Phi_{(\mu_n^{(1)}, \nu_n^{(1)})}(z)$ exists and $\frac{1}{z}\Phi_{(\mu_n^{(1)}, \nu_n^{(1)})}(z) \to 0$ uniformly in $n$ as $|z| \to \infty$ non-tangentially. Therefore by \cite{W2011}*{Proposition 2.4}, the sequence $\{\mu_n^{(1)}\}_{n = 1}^\infty$ is weakly convergent, and thus is tight. Similarly, the sequence $\{\mu_n^{(2)}\}_{n = 1}^\infty$ is tight.  Hence the sequence $\{\mu_n\}_{n = 1}^\infty$ is tight. Being probability measures, the sequence $\{\mu_n\}_{n = 1}^\infty$ has a subsequence $\{\mu_{n_j}\}_{j = 1}^\infty$ converging weakly to some Borel probability measure $\mu$ on $\mathbb{R}^2$.  Therefore
\[
\lim_{n \to \infty}\Phi_{(\mu_n, \nu_n)}(z, w) = \lim_{j \to \infty}\Phi_{(\mu_{n_j}, \nu_{n_j})}(z, w) = \Phi_{(\mu, \nu)}(z, w)
\]
for $(z, w) \in \Omega_{\alpha, \beta}$. This implies $\lim_{n \to \infty}G_{\mu_n}(z, w) = G_\mu(z, w)$ for all $(z, w) \in (\mathbb{C}\,\backslash\,\mathbb{R})^2$. Finally, the assumption $\Phi_{(\mu_n, \nu_n)} \to 0$ uniformly in $n$ as $|z|, |w| \to \infty$ non-tangentially, together with the assumption on the sequence $\{\nu_n\}_{n = 1}^\infty$ imply $zwG_{\mu_n}(z, w) \to 1$ uniformly in $n$ as $|z|, |w| \to \infty$ non-tangentially, and the result follows from Proposition \ref{HWProp2.2}.
\end{proof}

\subsection{Analytic aspects of the additive c-bi-free convolution}

We now begin the study of limit theorems and infinite divisibility with respect to the additive c-bi-free convolution $\boxplus\boxplus_{\rc}$. Like other additive convolutions, one of the main goals is to define and characterize pairs of Borel probability measures on $\bR^2$ which are $\boxplus\boxplus_{\rc}$-infinitely divisible. For this purpose, the following limit theorem is crucial.

\begin{thm}\label{LimitThm}
Let $\{\mu_n\}_{n = 1}^\infty$ and $\{\nu_n\}_{n = 1}^\infty$ be sequences of Borel probability measures on $\mathbb{R}^2$ and $\{k_n\}_{n = 1}^\infty$ a sequence of positive integers with $\lim_{n \to \infty}k_n = \infty$. Assume there is a common domain $\Omega_{\alpha, \beta}$ and a Borel probability measure $\nu$ on $\mathbb{R}^2$ such that $\lim_{n \to \infty}k_n\phi_{\nu_n} = \phi_\nu$ pointwise on $\Omega_{\alpha, \beta}$ and the sequences $\{[\nu_n^{(1)}]^{\boxplus k_n}\}_{n = 1}^\infty$ and $\{[\nu_n^{(2)}]^{\boxplus k_n}\}_{n = 1}^\infty$ converge weakly to $\nu^{(1)}$ and $\nu^{(2)}$ respectively. Assume furthermore that the sequences $\{\zeta_n^{(1)}\}_{n = 1}^\infty$ and $\{\zeta_n^{(2)}\}_{n = 1}^\infty$ defined by
\[
\zeta_n^{(1)} = \underbrace{(\mu_n^{(1)}, \nu_n^{(1)}) \boxplus_{\rc} \cdots \boxplus_{\rc} (\mu_n^{(1)}, \nu_n^{(1)})}_{k_n\,\mathrm{times}}\qand \zeta_n^{(2)} = \underbrace{(\mu_n^{(2)}, \nu_n^{(2)}) \boxplus_{\rc} \cdots \boxplus_{\rc} (\mu_n^{(2)}, \nu_n^{(2)})}_{k_n\,\mathrm{times}},\quad n \geq 1,
\]
converge weakly to some Borel probability measures $\mu^{(1)}$ and $\mu^{(2)}$ on $\mathbb{R}$ respectively. The following are equivalent:
\begin{enumerate}[$\qquad(1)$]
\item There exists a common domain $\Omega_{\alpha, \beta}$ such that the pointwise limits
\[
\Phi(z, w) := \lim_{n \to \infty}k_n\Phi_{(\mu_n, \nu_n)}(z, w)
\]
exist for all $(z, w) \in \Omega_{\alpha, \beta}$.

\item The pointwise limits
\[
\widetilde{\Phi}(z, w) := \lim_{n \to \infty}k_n\int_{\mathbb{R}^2}\frac{st}{(z - s)(w - t)}\,d\mu_n(s, t)
\]
exist for all $(z, w) \in (\mathbb{C}\,\backslash\,\mathbb{R})^2$.

\item The finite signed Borel measures
\[
d\widetilde{\rho}_n(s, t) = k_n\frac{st}{\sqrt{1 + s^2}\sqrt{1 + t^2}}\,d\mu_n(s, t)
\]
converge weakly to a finite signed Borel measure $\rho$ on $\mathbb{R}^2$.
\end{enumerate}
Moreover, if these assertions hold, then the function $\widetilde{\Phi}$ from assertion $(2)$ has a unique integral representation
\[
\widetilde{\Phi}(z, w) = \int_{\mathbb{R}^2}\frac{\sqrt{1 + s^2}\sqrt{1 + t^2}}{(z - s)(w - t)}\,d\rho(s, t)
\]
and the function $\Phi$ from assertion $(1)$ can be written as
\[
\Phi(z, w) = \frac{1}{z}\Phi_{(\mu^{(1)}, \nu^{(1)})}(z) + \frac{1}{w}\Phi_{(\mu^{(2)}, \nu^{(2)})}(w) + \widetilde{\Phi}(z, w)
\]
for all $(z, w) \in (\mathbb{C}\,\backslash\,\mathbb{R})^2$.
\end{thm}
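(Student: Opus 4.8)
The plan is to reduce the statement to the behaviour of the reduced transform $\widetilde{\Phi}_{(\mu_n,\nu_n)}$ together with a single finite signed measure built from $\mu_n$, and then to run a two-dimensional Bercovici--Voiculescu continuity argument with $(3)$ as the central hub. First I would record the asymptotics forced by the hypotheses. Since $[\nu_n^{(i)}]^{\boxplus k_n}\to\nu^{(i)}$ and $\zeta_n^{(i)}\to\mu^{(i)}$ weakly, the free and c-free continuity theorems, together with the linearizations $k_n\phi_{\nu_n^{(i)}}=\phi_{[\nu_n^{(i)}]^{\boxplus k_n}}$ and $k_n\Phi_{(\mu_n^{(i)},\nu_n^{(i)})}=\Phi_{\zeta_n^{(i)}}$, yield $k_n\phi_{\nu_n^{(i)}}\to\phi_{\nu^{(i)}}$ and $k_n\Phi_{(\mu_n^{(i)},\nu_n^{(i)})}\to\Phi_{(\mu^{(i)},\nu^{(i)})}$. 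In particular $F_{\nu_n^{(i)}}^{-1}\to\mathrm{id}$, $F_{\mu_n^{(i)}}\to\mathrm{id}$ and $\mu_n^{(i)}\to\delta_0$ on Stolz angles, the bi-free denominator $zwG_{\nu_n}(F_{\nu_n^{(1)}}^{-1}(z),F_{\nu_n^{(2)}}^{-1}(w))=(1-\widetilde{\phi}_{\nu_n})^{-1}\to 1$, and the first two summands of $k_n\Phi_{(\mu_n,\nu_n)}$ already converge to $\tfrac1z\Phi_{(\mu^{(1)},\nu^{(1)})}(z)+\tfrac1w\Phi_{(\mu^{(2)},\nu^{(2)})}(w)$. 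Hence assertion $(1)$ is equivalent to convergence of $k_n\widetilde{\Phi}_{(\mu_n,\nu_n)}$, and the entire problem concerns the reduced transform.

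Next I would perform the algebraic reduction of $\widetilde{\Phi}$. Writing $u=F_{\nu_n^{(1)}}^{-1}(z)$, $v=F_{\nu_n^{(2)}}^{-1}(w)$, $a_n=uG_{\mu_n^{(1)}}(u)$, $b_n=vG_{\mu_n^{(2)}}(v)$, and expanding $\tfrac1{u-s}=\tfrac1u+\tfrac{s}{u(u-s)}$ in the numerator, a partial-fractions computation gives
\[
F_{\mu_n^{(1)}}(u)F_{\mu_n^{(2)}}(v)G_{\mu_n}(u,v)-1=\frac{1}{a_nb_n}\left(-(a_n-1)(b_n-1)+\int_{\bR^2}\frac{st}{(u-s)(v-t)}\,d\mu_n(s,t)\right),
\]
where $a_n,b_n\to1$. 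Because $k_n\bigl(u-F_{\mu_n^{(1)}}(u)\bigr)$ converges (by the first paragraph) one has $a_n-1=O(1/k_n)$ and likewise $b_n-1=O(1/k_n)$, so $k_n(a_n-1)(b_n-1)\to0$. Dividing by the denominator $zwG_{\nu_n}(u,v)\to1$ then shows that $k_n\widetilde{\Phi}_{(\mu_n,\nu_n)}(z,w)$ converges if and only if $k_n\int st/\bigl((u-s)(v-t)\bigr)\,d\mu_n$ does, with equal limits.

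Then I would introduce the signed measures $\widetilde{\rho}_n$ of assertion $(3)$, noting the kernel identity $k_n\int st/\bigl((z-s)(w-t)\bigr)\,d\mu_n=\int h_{z,w}\,d\widetilde{\rho}_n$ with the bounded continuous kernel $h_{z,w}(s,t)=\sqrt{1+s^2}\sqrt{1+t^2}/\bigl((z-s)(w-t)\bigr)$. The crucial preliminary point is that $\{\widetilde{\rho}_n\}$ is automatically tight with uniformly bounded total variation: from $|st|/(\sqrt{1+s^2}\sqrt{1+t^2})\le\tfrac12\bigl(s^2/(1+s^2)+t^2/(1+t^2)\bigr)$ one dominates $|\widetilde{\rho}_n|$ by the marginal Bercovici--Voiculescu measures $k_n\tfrac{s^2}{1+s^2}\,d\mu_n^{(i)}$, and these converge weakly (hence are bounded and tight) by the one-dimensional c-free limit theory applied to $\zeta_n^{(i)}\to\mu^{(i)}$. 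Using this tightness together with $u\to z$, $v\to w$, I would justify replacing $(u,v)$ by $(z,w)$ in the previous paragraph, the kernel difference being $O(|u-z|+|v-w|)$ uniformly against the tight, bounded-variation family; this identifies $\lim_n k_n\widetilde{\Phi}_{(\mu_n,\nu_n)}$ with $\lim_n\int h_{z,w}\,d\widetilde{\rho}_n$.

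Finally I would close the equivalences. The implication $(3)\Rightarrow(2)$ is immediate from weak convergence against $h_{z,w}$ (extended continuously to the compactification, with tightness controlling the behaviour at infinity). For $(2)\Rightarrow(3)$, tightness and the uniform total-variation bound make $\{\widetilde{\rho}_n\}$ precompact for weak convergence; any subsequential limit $\rho$ satisfies $\int h_{z,w}\,d\rho=\widetilde{\Phi}(z,w)$, and since the two-dimensional Cauchy transform of the finite signed measure $\sqrt{1+s^2}\sqrt{1+t^2}\,d\rho$ determines it by iterated Stieltjes inversion (as in the bi-free case), all subsequential limits coincide. This gives weak convergence of the full sequence and the asserted unique integral representation of $\widetilde{\Phi}$; combining with paragraphs one and two yields $(1)\Leftrightarrow(2)\Leftrightarrow(3)$, the formula $\Phi=\tfrac1z\Phi_{(\mu^{(1)},\nu^{(1)})}+\tfrac1w\Phi_{(\mu^{(2)},\nu^{(2)})}+\widetilde{\Phi}$, and the extension of all identities from $\Omega_{\alpha,\beta}$ to $(\bC\setminus\bR)^2$ (free once weak convergence is known). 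I expect the main obstacle to be the algebraic reduction of the reduced transform, namely controlling the cross term $k_n(a_n-1)(b_n-1)$ and, above all, justifying the change of variables $(u,v)\to(z,w)$ uniformly, since the measure-theoretic core can be inherited, with sign adjustments, from the bi-free development of Huang--Wang.
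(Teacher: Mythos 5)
Your proposal is correct, and its global architecture matches the paper's: both reduce assertion $(1)$ to convergence of $k_n\widetilde{\Phi}_{(\mu_n,\nu_n)}$ via Wang's c-free continuity theorem applied to the marginals (using $k_n\Phi_{(\mu_n^{(i)},\nu_n^{(i)})}=\Phi_{(\zeta_n^{(i)},[\nu_n^{(i)}]^{\boxplus k_n})}\to\Phi_{(\mu^{(i)},\nu^{(i)})}$), both inherit the equivalence of $(2)$ and $(3)$ and the unique integral representation from the Huang--Wang bi-free argument via the tight, uniformly bounded family $\{\widetilde{\rho}_n\}$, and both pass from $\Omega_{\alpha,\beta}$ to $(\bC\,\backslash\,\bR)^2$ by analytic continuation. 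Where you genuinely diverge is the algebraic core of $(1)\Leftrightarrow(2)$. The paper substitutes $\frac{1}{F_{\nu_n^{(1)}}^{-1}(z)-s}=\frac{1}{z-s}\bigl(1-\frac{\phi_{\nu_n^{(1)}}(z)}{F_{\nu_n^{(1)}}^{-1}(z)-s}\bigr)$ (and its $w$-analogue) inside the integral, so that the change of variables to the fixed points $(z,w)$ and the error analysis happen simultaneously; this produces five error integrals $I_1,\dots,I_5$, each carrying a $\phi$- or $\Phi$-factor of order $O(1/k_n)$, and the proof ends by re-assembling all the resulting limits and watching them cancel down to $\widetilde{\Phi}(z,w)$. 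You instead prove, with $u=F_{\nu_n^{(1)}}^{-1}(z)$, $v=F_{\nu_n^{(2)}}^{-1}(w)$, $a_n=uG_{\mu_n^{(1)}}(u)$, $b_n=vG_{\mu_n^{(2)}}(v)$, the exact identity
\[
F_{\mu_n^{(1)}}(u)F_{\mu_n^{(2)}}(v)G_{\mu_n}(u,v)-1=\frac{1}{a_nb_n}\left(-(a_n-1)(b_n-1)+\int_{\bR^2}\frac{st}{(u-s)(v-t)}\,d\mu_n(s,t)\right),
\]
which checks out since $a_n+b_n-1-a_nb_n=-(a_n-1)(b_n-1)$, and your estimate $a_n-1=O(1/k_n)$ is legitimate because $u-F_{\mu_n^{(1)}}(u)=\Phi_{(\mu_n^{(1)},\nu_n^{(1)})}(z)$ by the very definition of the c-free Voiculescu transform, whence $a_n-1=\frac{a_n}{u}\Phi_{(\mu_n^{(1)},\nu_n^{(1)})}(z)$. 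Thus the cross term dies after multiplication by $k_n$ and no cancellation bookkeeping is needed; the price is a separate change-of-variables step $(u,v)\to(z,w)$, which you correctly control by the kernel-perturbation estimate against the uniform total-variation bound on $\{\widetilde{\rho}_n\}$ --- a bound that does hold under the standing hypotheses alone, by domination of $|\widetilde{\rho}_n|$ by $\frac{k_n}{2}\bigl(\frac{s^2}{1+s^2}+\frac{t^2}{1+t^2}\bigr)\,d\mu_n$ and Wang's theorem applied to $\zeta_n^{(1)}$ and $\zeta_n^{(2)}$ (note that tightness of each dominating measure on $\bR^2$, not just boundedness, requires both marginal families $\{\sigma_n^{(1)}\}$ and $\{\sigma_n^{(2)}\}$; you should make that explicit). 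What each approach buys: the paper's substitution avoids any perturbation estimate but obscures why the answer is exactly $\widetilde{\Phi}$; your version is more modular and makes it transparent that $\lim_n k_n\widetilde{\Phi}_{(\mu_n,\nu_n)}$ literally coincides with the limit in assertion $(2)$, at the cost of invoking the bounded-variation machinery one step earlier. Both routes are sound.
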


\begin{proof}
Note first that the existence of a common domain $\Omega_{\alpha, \beta}$ for all $\Phi_{(\mu_n, \nu_n)}$ is guaranteed by the assumption that $\lim_{n \to \infty}k_n\phi_{\nu_n} = \phi_\nu$ on $\Omega_{\alpha, \beta}$. Moreover, the equivalence of assertions $(2)$ and $(3)$ was proved in \cite{HW2016}*{Theorem 3.2} under the conditions that the sequences $\{\sigma_n^{(1)}\}_{n = 1}^\infty$ and $\{\sigma_n^{(2)}\}_{n = 1}^\infty$ defined by
\[
d\sigma_n^{(1)}(s) = k_n\frac{s^2}{1 + s^2}\,d\mu_n^{(1)}(s)\qand d\sigma_n^{(2)}(t) = k_n\frac{t^2}{1 + t^2}\,d\mu_n^{(2)}(t),\quad n \geq 1,
\]
are weakly convergent, which follow since the sequences $\{[\mu_n^{(1)}]^{\boxplus k_n}\}_{n = 1}^\infty$ and $\{[\mu_n^{(2)}]^{\boxplus k_n}\}_{n = 1}^\infty$ are weakly convergent. By \cite{W2011}*{Theorem 3.5}, these conditions also follow from our assumptions that the sequences $\{\zeta_n^{(1)}\}_{n = 1}^\infty$ and $\{\zeta_n^{(2)}\}_{n = 1}^\infty$ are weakly convergent, thus assertions $(2)$ and $(3)$ are equivalent. As seen in the proof of \cite{HW2016}*{Theorem 3.2}, it is enough to assume that the function $\widetilde{\Phi}$ from assertion $(2)$ exists on $\Omega_{\alpha, \beta}$ in order to deduce the weak convergence of $\{\widetilde{\rho}_n\}_{n = 1}^\infty$ to $\rho$ and to conclude
\[
\widetilde{\Phi}(z, w) = \lim_{n \to \infty}\int_{\mathbb{R}^2}\frac{\sqrt{1 + s^2}\sqrt{1 + t^2}}{(z - s)(w - t)}\,d\widetilde{\rho}_n(s, t) = \int_{\mathbb{R}^2}\frac{\sqrt{1 + s^2}\sqrt{1 + t^2}}{(z - s)(w - t)}\,d\rho(s, t).
\]
This also ensures the uniqueness of the integral representation of $\widetilde{\Phi}$ as it is the Cauchy transform of the measure $\sqrt{1 + s^2}\sqrt{1 + t^2}\,d\rho(s, t)$. On the other hand, assertion $(3)$ implies that
\[
\lim_{n \to \infty}\int_{\mathbb{R}^2}f(s, t)\,d\widetilde{\rho}_n(s, t) = \lim_{n \to \infty}k_n\int_{\mathbb{R}^2}f(s, t)\frac{st}{\sqrt{1 + s^2}\sqrt{1 + t^2}}\,d\mu_n(s, t)
\]
exists for all bounded continuous functions $f$. Letting
\[
f(s, t) = \frac{\sqrt{1 + s^2}\sqrt{1 + t^2}}{(z - s)(w - t)}
\]
for $(z, w) \in (\mathbb{C}\,\backslash\,\mathbb{R})^2$ then extends $\widetilde{\Phi}$ from $\Omega_{\alpha, \beta}$ to $(\mathbb{C}\,\backslash\,\mathbb{R})^2$.

For the equivalence of assertions $(1)$ and $(2)$, notice the assumptions that the sequences $\{[\nu_n^{(1)}]^{\boxplus k_n}\}_{n = 1}^\infty$, $\{[\nu_n^{(2)}]^{\boxplus k_n}\}_{n = 1}^\infty$, $\{\zeta_n^{(1)}\}_{n = 1}^\infty$, and $\{\zeta_n^{(2)}\}_{n = 1}^\infty$ are weakly convergent imply the infinitesimality of the sequences $\{\nu_n^{(1)}\}_{n = 1}^\infty$, $\{\nu_n^{(2)}\}_{n = 1}^\infty$, $\{\mu_n^{(1)}\}_{n = 1}^\infty$, and $\{\mu_n^{(2)}\}_{n = 1}^\infty$ (i.e. they all converge weakly to $\delta_0$, and hence both $\{\nu_n\}_{n = 1}^\infty$ and $\{\mu_n\}_{n = 1}^\infty$ converge weakly to $\delta_{(0, 0)}$). By \cite{W2011}*{Proposition 2.4}, we have
\[
\lim_{n \to \infty}k_n\Phi_{(\mu_n^{(1)}, \nu_n^{(1)})}(z) = \lim_{n \to \infty}\Phi_{(\zeta_n^{(1)}, [\nu_n^{(1)}]^{\boxplus k_n})}(z) = \Phi_{(\mu^{(1)}, \nu^{(1)})}(z)
\]
and
\[
\lim_{n \to \infty}k_n\Phi_{(\mu_n^{(2)}, \nu_n^{(2)})}(w) = \lim_{n \to \infty}\Phi_{(\zeta_n^{(2)}, [\nu_n^{(2)}]^{\boxplus k_n})}(w) = \Phi_{(\mu^{(2)}, \nu^{(2)})}(w)
\]
for all $z, w \in \Gamma_{\alpha, \beta} \cup \overline{\Gamma_{\alpha, \beta}}$. Therefore the limit
\[
\lim_{n \to \infty}k_n\Phi_{(\mu_n, \nu_n)}(z, w) = \lim_{n \to \infty}k_n\left(\frac{1}{z}\Phi_{(\mu_n^{(1)}, \nu_n^{(1)})}(z) + \frac{1}{w}\Phi_{(\mu_n^{(2)}, \nu_n^{(2)})}(w) + \widetilde{\Phi}_{(\mu_n, \nu_n)}(z, w)\right)
\]
exists for $(z, w) \in \Omega_{\alpha, \beta}$ if and only if the limit
\[
\lim_{n \to \infty}k_n\widetilde{\Phi}_{(\mu_n, \nu_n)}(z, w) = \lim_{n \to \infty}k_n\frac{F_{\mu_n^{(1)}}(F_{\nu_n^{(1)}}^{-1}(z))F_{\mu_n^{(2)}}(F_{\nu_n^{(2)}}^{-1}(w))G_{\mu_n}(F_{\nu_n^{(1)}}^{-1}(z), F_{\nu_n^{(2)}}^{-1}(w)) - 1}{zwG_{\nu_n}(F_{\nu_n^{(1)}}^{-1}(z), F_{\nu_n^{(2)}}^{-1}(w))}
\]
exists for $(z, w) \in \Omega_{\alpha, \beta}$.  In this case, we would have
\[
\Phi(z, w) = \frac{1}{z}\Phi_{(\mu^{(1)}, \nu^{(1)})}(z) + \frac{1}{w}\Phi_{(\mu^{(2)}, \nu^{(2)})}(w) + \lim_{n \to \infty}k_n\widetilde{\Phi}_{(\mu_n, \nu_n)}(z, w)
\]
for $(z, w) \in \Omega_{\alpha, \beta}$. By the infinitesimality of the sequences $\{\nu_n\}_{n = 1}^\infty$, $\{\nu_n^{(1)}\}_{n = 1}^\infty$, $\{\nu_n^{(2)}\}_{n = 1}^\infty$, $\{\mu_n^{(1)}\}_{n = 1}^\infty$, and $\{\mu_n^{(2)}\}_{n = 1}^\infty$, we have
\[
\lim_{n \to \infty}zwG_{\nu_n}(F_{\nu_n^{(1)}}^{-1}(z), F_{\nu_n^{(2)}}^{-1}(w)) = 1,\quad \lim_{n \to \infty}F_{\mu_n^{(1)}}(F_{\nu_n^{(1)}}^{-1}(z)) = z,\qand \lim_{n \to \infty}F_{\mu_n^{(2)}}(F_{\nu_n^{(2)}}^{-1}(w)) = w.
\]
Using the definitions $\phi_{\nu_n^{(1)}}(z) = F_{\nu_n^{(1)}}^{-1}(z) - z$ and $\phi_{\nu_n^{(2)}}(w) = F_{\nu_n^{(2)}}^{-1}(w) - w$, we obtain 
\[
\frac{1}{F_{\nu_n^{(1)}}^{-1}(z) - s} = \frac{1}{z - s}\left(1 - \frac{\phi_{\nu_n^{(1)}}(z)}{F_{\nu_n^{(1)}}^{-1}(z) - s}\right)\qand \frac{1}{F_{\nu_n^{(2)}}^{-1}(w) - t} = \frac{1}{w - t}\left(1 - \frac{\phi_{\nu_n^{(2)}}(w)}{F_{\nu_n^{(2)}}^{-1}(w) - t}\right),
\]
which gives, due to the convergence of measures to $\delta_0$ and $\delta_{(0,0)}$, that
\begin{align*}
\int_{\mathbb{R}^2}  &  \frac{F_{\mu_n^{(1)}}(F_{\nu_n^{(1)}}^{-1}(z))F_{\mu_n^{(2)}}(F_{\nu_n^{(2)}}^{-1}(w))}{(F_{\nu_n^{(1)}}^{-1}(z) - s)(F_{\nu_n^{(2)}}^{-1}(w) - t)}\,d\mu_n(s, t) - 1
\\ &= -\int_{\mathbb{R}^2}\frac{\phi_{\nu_n^{(1)}}(z)F_{\mu_n^{(1)}}(F_{\nu_n^{(1)}}^{-1}(z))F_{\mu_n^{(2)}}(F_{\nu_n^{(2)}}^{-1}(w))}{(z - s)(w - t)(F_{\nu_n^{(1)}}^{-1}(z) - s)}\,d\mu_n(s, t)\\
&\quad -\int_{\mathbb{R}^2}\frac{\phi_{\nu_n^{(2)}}(w)F_{\mu_n^{(1)}}(F_{\nu_n^{(1)}}^{-1}(z))F_{\mu_n^{(2)}}(F_{\nu_n^{(2)}}^{-1}(w))}{(z - s)(w - t)(F_{\nu_n^{(2)}}^{-1}(w) - t)}\,d\mu_n(s, t)\\
 &\quad +\int_{\mathbb{R}^2}\frac{\phi_{\nu_n^{(1)}}(z)\phi_{\nu_n^{(2)}}(w)F_{\mu_n^{(1)}}(F_{\nu_n^{(1)}}^{-1}(z))F_{\mu_n^{(2)}}(F_{\nu_n^{(2)}}^{-1}(w))}{(z - s)(w - t)(F_{\nu_n^{(1)}}^{-1}(z) - s)(F_{\nu_n^{(2)}}^{-1}(w) - t)}\,d\mu_n(s, t)\\
&\quad +\int_{\mathbb{R}^2}\frac{F_{\mu_n^{(1)}}(F_{\nu_n^{(1)}}^{-1}(z))F_{\mu_n^{(2)}}(F_{\nu_n^{(2)}}^{-1}(w)) - (z - s)(w - t)}{(z - s)(w - t)}\,d\mu_n(s, t).
\end{align*}
Let $I_1$ to $I_4$ denote the above four integrals.  Note that $\lim_{n \to \infty}k_nI_j$ exists for $j = 1, 2, 3$.  Thus the equivalence of assertions $(1)$ and $(2)$ reduces to the equivalence of assertion $(1)$ and the existence of $\lim_{n \to \infty}k_nI_4$. To this end, decompose $I_4$ as
\begin{align*}
I_4 &= \int_{\mathbb{R}^2}\frac{tz + sw - st}{(z - s)(w - t)}\,d\mu_n(s, t) + \int_{\mathbb{R}^2}\frac{F_{\mu_n^{(1)}}(F_{\nu_n^{(1)}}^{-1}(z))F_{\mu_n^{(2)}}(F_{\nu_n^{(2)}}^{-1}(w)) - zw}{(z - s)(w - t)}\,d\mu_n(s, t)\\
&= \int_{\mathbb{R}}\frac{s}{z - s}\,d\mu_n^{(1)}(s) + \int_{\mathbb{R}}\frac{t}{w - t}\,d\mu_n^{(2)}(t) + \int_{\mathbb{R}^2}\frac{st}{(z - s)(w - t)}\,d\mu_n(s, t)\\
&\quad +\int_{\mathbb{R}^2}\frac{F_{\mu_n^{(1)}}(F_{\nu_n^{(1)}}^{-1}(z))F_{\mu_n^{(2)}}(F_{\nu_n^{(2)}}^{-1}(w)) - zw}{(z - s)(w - t)}\,d\mu_n(s, t).
\end{align*}
Let $I_5$ denote the last integral above. Using the definitions $\Phi_{(\mu_{n}^{(1)}, \nu_{n}^{(1)})}(z) = F_{\nu_n^{(1)}}^{-1}(z) - F_{\mu_n^{(1)}}(F_{\nu_n^{(1)}}^{-1}(z))$ and $\Phi_{(\mu_{n}^{(2)}, \nu_{n}^{(2)})}(w) = F_{\nu_n^{(2)}}^{-1}(w) - F_{\mu_n^{(2)}}(F_{\nu_n^{(2)}}^{-1}(w))$, the numerator of the integrand of $I_5$ can be written as
\[
(\phi_{\nu_n^{(1)}}(z) - \Phi_{(\mu_{n}^{(1)}, \nu_{n}^{(1)})}(z) + z)(\phi_{\nu_n^{(2)}}(w) - \Phi_{(\mu_{n}^{(2)}, \nu_{n}^{(2)})}(w) + w) - zw.
\]
Therefore $\lim_{n \to \infty}k_nI_5$ exists because after expanding out the above expression and cancelling out the $zw$ term each of the remaining eight terms has at least one $\phi$-function or $\Phi$-function as a factor. Finally, since
\[
\int_{\mathbb{R}}\frac{s}{z - s}\,d\mu_n^{(1)}(s) = \frac{1}{z}\Phi_{(\mu_n^{(1)}, \nu_n^{(1)})}(z)(1 + o(1))
\]
and
\[
\int_{\mathbb{R}}\frac{t}{w - t}\,d\mu_n^{(2)}(t) = \frac{1}{w}\Phi_{(\mu_n^{(2)}, \nu_n^{(2)})}(w)(1 + o(1))
\]
as $n \to \infty$ (see \cite{W2011}), we conclude that assertions $(1)$ and $(2)$ are equivalent. Combining everything together, we have
\begin{align*}
\lim_{n \to \infty}k_n\widetilde{\Phi}_{(\mu_n, \nu_n)}(z, w) &= -\lim_{n \to \infty}k_nI_1 - \lim_{n \to \infty}k_nI_2 + \lim_{n \to \infty}k_nI_3 + \lim_{n \to \infty}k_nI_5\\
&\quad +\lim_{n \to \infty}k_n\int_{\mathbb{R}}\frac{s}{z - s}\,d\mu_n^{(1)}(s) + \lim_{n \to \infty}k_n\int_{\mathbb{R}}\frac{t}{w - t}\,d\mu_n^{(2)}(t)\\
&\quad + \lim_{n \to \infty}k_n\int_{\mathbb{R}^2}\frac{st}{(z - s)(w - t)}\,d\mu_n(s, t)\\
&= -\frac{1}{z}\phi_{\nu^{(1)}}(z) - \frac{1}{w}\phi_{\nu^{(2)}}(w)\\
&\quad + \frac{1}{zw}\left(w\phi_{\nu^{(1)}}(z) - w\Phi_{(\mu^{(1)}, \nu^{(1)})}(z) + z\phi_{\nu^{(2)}}(w) - z\Phi_{(\mu^{(2)}, \nu^{(2)})}(w)\right)\\
&\quad +\frac{1}{z}\Phi_{(\mu^{(1)}, \nu^{(1)})}(z) + \frac{1}{w}\Phi_{(\mu^{(2)}, \nu^{(2)})}(w) + \widetilde{\Phi}(z, w)\\
&= \widetilde{\Phi}(z, w),
\end{align*}
first on the open set $\Omega_{\alpha, \beta}$, and then on the whole space $(\mathbb{C}\,\backslash\,\mathbb{R})^2$ by analytic continuation. This completes the proof.
\end{proof}

If the measures in Theorem \ref{LimitThm} are compactly supported, then $(\mu_n, \nu_n)^{\boxplus\boxplus_{\rc}k_n}$ would be the $k_n^{\mathrm{th}}$-fold additive c-bi-free convolution of $(\mu_n, \nu_n)$ which, depending on context, can be viewed either as a measure or a pair of measures where the second component is the $k_n^{\mathrm{th}}$-fold additive bi-free convolution of $\nu_n$. In this case, the following proposition provides some necessary and sufficient conditions for the weak convergence of the sequence $\{(\mu_n, \nu_n)^{\boxplus\boxplus_{\rc}k_n}\}_{n = 1}^\infty$.

\begin{prop}\label{ConvProp2}
Let $\{\nu_n\}_{n = 1}^\infty$ and $\{k_n\}_{n = 1}^\infty$ be sequences of measures and positive integers satisfying the assumptions of Theorem \ref{LimitThm} and let $\{\mu_n\}_{n = 1}^\infty$ be a sequence of compactly supported Borel probability measures on $\mathbb{R}^2$. Assume furthermore that each $\nu_n$ is compactly supported. The sequence $\{\xi_n\}_{n = 1}^\infty$ defined by
\[
\xi_n = \underbrace{(\mu_n, \nu_n) \boxplus\boxplus_{\rc} \cdots \boxplus\boxplus_{\rc} (\mu_n, \nu_n)}_{k_n\,\mathrm{times}},\quad n \geq 1,
\]
converges weakly to a Borel probability measure on $\mathbb{R}^2$ if and only if the sequences $\{\zeta_n^{(1)}\}_{n = 1}^\infty$, $\{\zeta_n^{(2)}\}_{n = 1}^\infty$, and $\{\widetilde{\rho}_n\}_{n = 1}^\infty$, as defined in Theorem \ref{LimitThm}, are weakly convergent. Furthermore, if the sequences $\{\xi_n\}_{n = 1}^\infty$, $\{\zeta_n^{(1)}\}_{n = 1}^\infty$, $\{\zeta_n^{(2)}\}_{n = 1}^\infty$, and $\{\widetilde{\rho}_n\}_{n = 1}^\infty$ converge weakly to $\mu$, $\mu^1$, $\mu^2$, and $\rho$ respectively, then $\mu^{(1)} = \mu^1$, $\mu^{(2)} = \mu^2$, and
\[
G_\mu(z, w) = \frac{1+F_{\nu^{(1)}}(z)F_{\nu^{(2)}}(w)G_\nu(z, w)G_{\rho'}(F_{\nu^{(1)}}(z), F_{\nu^{(2)}}(w))}{F_{\mu^{(1)}}(z)F_{\mu^{(2)}}(w)},
\]
for all $(z, w) \in (\mathbb{C}\,\backslash\,\mathbb{R})^2$ where $d\rho' = \sqrt{1 + s^2}\sqrt{1 + t^2}\,d\rho(s, t)$.
\end{prop}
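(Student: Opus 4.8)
The plan is to reduce the statement to the two analytic tools already established, namely the continuity result Proposition \ref{ConvProp} and the limit theorem Theorem \ref{LimitThm}, by exploiting that for compactly supported measures the c-bi-free partial Voiculescu transform linearizes $\boxplus\boxplus_{\rc}$ and restricts to the c-free transform on each marginal. First I would record two structural identities. Realizing $(\mu_n, \nu_n)$ by a two-faced pair $(a_\ell, a_r)$, the measure $\xi_n$ is the $\varphi$-distribution of the sum of $k_n$ c-bi-free copies, while $\nu_n^{\boxplus\boxplus k_n}$ is its $\psi$-distribution. Since the left variables alone are c-free, the first marginal of $\xi_n$ is the $k_n$-fold additive c-free convolution of $(\mu_n^{(1)}, \nu_n^{(1)})$, that is $\xi_n^{(1)} = \zeta_n^{(1)}$, and likewise $\xi_n^{(2)} = \zeta_n^{(2)}$. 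Moreover, additivity of the c-bi-free partial $\R$-transform together with the change of variables $(z,w) \mapsto (1/z, 1/w)$ gives $\Phi_{(\xi_n, \nu_n^{\boxplus\boxplus k_n})}(z,w) = k_n \Phi_{(\mu_n, \nu_n)}(z,w)$ on a common Stolz domain. The standing hypotheses inherited from Theorem \ref{LimitThm} supply $k_n \phi_{\nu_n} \to \phi_\nu$ and the weak convergence of $\{\nu_n^{\boxplus\boxplus k_n}\}$ to $\nu$.

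For the forward implication, assume $\xi_n \to \mu$ weakly. Projecting onto each coordinate gives $\zeta_n^{(1)} = \xi_n^{(1)} \to \mu^{(1)}$ and $\zeta_n^{(2)} = \xi_n^{(2)} \to \mu^{(2)}$, which are two of the three required convergences and simultaneously verify the $\zeta$-hypotheses needed to invoke Theorem \ref{LimitThm}. Applying Proposition \ref{ConvProp} to $\{\xi_n\}$ with accompanying sequence $\{\nu_n^{\boxplus\boxplus k_n}\}$, the weak convergence of $\{\xi_n\}$ yields the existence of the pointwise limits of $\Phi_{(\xi_n, \nu_n^{\boxplus\boxplus k_n})} = k_n\Phi_{(\mu_n,\nu_n)}$, which is precisely assertion $(1)$ of Theorem \ref{LimitThm}; the equivalence $(1) \Leftrightarrow (3)$ there then gives the weak convergence of $\{\widetilde{\rho}_n\}$. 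Conversely, assuming $\{\zeta_n^{(1)}\}$, $\{\zeta_n^{(2)}\}$, $\{\widetilde{\rho}_n\}$ all converge weakly, the $\zeta$-convergences supply the remaining hypotheses of Theorem \ref{LimitThm}, so its implication $(3) \Rightarrow (1)$ produces pointwise convergence of $k_n\Phi_{(\mu_n,\nu_n)}$ on some $\Omega_{\alpha,\beta}$; combined with the uniform decay at infinity (which follows from infinitesimality of the marginals, exactly as in the proof of Theorem \ref{LimitThm}) this is hypothesis $(2)$ of Proposition \ref{ConvProp}, whence $\{\xi_n\}$ converges weakly to some $\mu$.

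It remains to identify the limit. The marginal identifications $\mu^{(1)} = \mu^1$ and $\mu^{(2)} = \mu^2$ follow from the projection argument and uniqueness of weak limits. For the Cauchy transform, the moreover clause of Proposition \ref{ConvProp} gives $\Phi_{(\mu,\nu)} = \lim_n k_n\Phi_{(\mu_n,\nu_n)} = \Phi$, while Theorem \ref{LimitThm} identifies the reduced part as $\widetilde{\Phi}(z,w) = G_{\rho'}(z,w)$ with $d\rho' = \sqrt{1+s^2}\sqrt{1+t^2}\,d\rho(s,t)$. Comparing with the definition of $\Phi_{(\mu,\nu)}$ forces $\widetilde{\Phi}_{(\mu,\nu)} = G_{\rho'}$, i.e.
\[
\frac{F_{\mu^{(1)}}(F_{\nu^{(1)}}^{-1}(z))F_{\mu^{(2)}}(F_{\nu^{(2)}}^{-1}(w))G_\mu(F_{\nu^{(1)}}^{-1}(z), F_{\nu^{(2)}}^{-1}(w)) - 1}{zwG_\nu(F_{\nu^{(1)}}^{-1}(z), F_{\nu^{(2)}}^{-1}(w))} = G_{\rho'}(z,w).
\]
Substituting $z = F_{\nu^{(1)}}(z')$, $w = F_{\nu^{(2)}}(w')$ on a subdomain and solving this linear equation for $G_\mu(z',w')$ gives the stated formula, which then extends to all of $(\mathbb{C}\setminus\mathbb{R})^2$ by analytic continuation since every factor is analytic there.

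The hard part will be the careful bookkeeping in applying Proposition \ref{ConvProp} and Theorem \ref{LimitThm} in tandem without circularity: one must ensure that the $\zeta$-convergences (which are hypotheses of Theorem \ref{LimitThm}) are available in both directions—derived from marginal convergence of $\xi_n$ in the forward direction and assumed outright in the reverse—and that the uniform decay condition in Proposition \ref{ConvProp}$(2)$ genuinely holds in the reverse direction, where it is not part of the stated data and must be extracted from the infinitesimality of the marginals.
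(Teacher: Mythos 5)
Your proposal follows essentially the same route as the paper's proof: the identities $\xi_n^{(i)} = \zeta_n^{(i)}$ and $\Phi_{(\xi_n, \nu_n^{\boxplus\boxplus k_n})} = k_n\Phi_{(\mu_n, \nu_n)}$, Proposition \ref{ConvProp} and Theorem \ref{LimitThm} applied in tandem in both directions, and identification of $G_\mu$ by equating the reduced transform $\widetilde{\Phi}_{(\mu,\nu)}$ with the Cauchy transform of $\rho'$ and solving. The one cosmetic slip is your justification of the uniform decay in the converse direction: it comes from tightness of $\{\xi_n\}$ (inherited from the weakly convergent marginals $\zeta_n^{(i)} = \xi_n^{(i)}$) together with the assumptions on $\{\nu_n\}$, exactly as in the proof of Proposition \ref{ConvProp}, rather than from the infinitesimality argument inside Theorem \ref{LimitThm}.
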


\begin{proof}
Suppose the sequence $\{\xi_n\}_{n = 1}^\infty$ converges weakly to $\mu$.  Then the sequences $\{\xi_n^{(1)}\}_{n = 1}^\infty$ and $\{\xi_n^{(2)}\}_{n = 1}^\infty$ converge weakly to $\mu^{(1)}$ and $\mu^{(2)}$ respectively. By Lemma \ref{Marginals} and \cite{HW2016}*{Lemma 2.4}, we have $\xi_n^{(1)} = \zeta_n^{(1)}$ and $\xi_n^{(2)} = \zeta_n^{(2)}$ for $n \geq 1$. In view of the assumptions on the sequence $\{\nu_n\}_{n = 1}^\infty$, Proposition \ref{ConvProp} implies
\[
\lim_{n \to \infty}k_n\Phi_{(\mu_n, \nu_n)}(z, w) = \lim_{n \to \infty}\Phi_{(\xi_n, \nu_n^{\boxplus\boxplus k_n})}(z, w) = \Phi_{(\mu, \nu)}(z, w)
\]
on $\Omega_{\alpha, \beta}$. The weak convergence of the sequence $\{\widetilde{\rho}_n\}_{n = 1}^\infty$ then follows from Theorem \ref{LimitThm}.

Conversely, suppose the sequences $\{\zeta_n^{(1)}\}_{n = 1}^\infty$, $\{\zeta_n^{(2)}\}_{n = 1}^\infty$, and $\{\widetilde{\rho}_n\}_{n = 1}^\infty$ are weakly convergent.  Then both $\{\xi_n^{(1)}\}_{n = 1}^\infty$ and $\{\xi_n^{(2)}\}_{n = 1}^\infty$ are tight.  Thus $\{\xi_n\}_{n = 1}^\infty$ is also a tight sequence. By the assumptions on the sequence $\{\nu_n\}_{n = 1}^\infty$, we have $\Phi_{(\xi_n, \nu_n^{\boxplus\boxplus k_n})}(z, w) \to 0$ uniformly in $n$ as $|z|, |w| \to \infty$ non-tangentially. Moreover, the existence of the pointwise limits $\Phi_{(\xi_n, \nu_n^{\boxplus\boxplus k_n})}$ as $n \to \infty$ on $\Omega_{\alpha, \beta}$ is equivalent to the the weak convergence of the sequence $\{\widetilde{\rho}_n\}_{n = 1}^\infty$ by Theorem \ref{LimitThm}. Hence the sequence $\{\xi_n\}_{n = 1}^\infty$ is weakly convergent by Proposition \ref{ConvProp}.

Finally, the equation regarding $G_\mu$ follows from Theorem \ref{LimitThm} as the reduced c-bi-free partial Voiculescu transform of $(\mu, \nu)$ is given by
\[
\widetilde{\Phi}_{(\mu, \nu)}(z, w) = \int_{\mathbb{R}^2}\frac{\sqrt{1 + s^2}\sqrt{1 + t^2}}{(z - s)(w - t)}\,d\rho(s, t)
\]
for all $(z, w) \in (\mathbb{C}\,\backslash\,\mathbb{R})^2$.
\end{proof}

\begin{exam}
Let $\{\mu_n\}_{n = 1}^\infty$ and $\{\nu_n\}_{n = 1}^\infty$ be sequences of compactly supported Borel probability measures on $\bR^2$ and $\{k_n\}_{n = 1}^\infty$ a sequence of positive integers with $\lim_{n \to \infty}k_n = \infty$. Suppose the sequence $\{\nu_n^{\boxplus\boxplus k_n}\}_{n = 1}^\infty$ converges weakly to a bi-free Gaussian distribution $\nu$ (see \cite{HW2016}*{Example 3.4}) with bi-free partial Voiculescu transform
\[
\phi_\nu(z, w) = \frac{\eta_1'}{z} + \frac{\eta_2'}{w} + \frac{a'}{z^2} + \frac{b'}{w^2} + \frac{c'}{zw},
\]
where $\eta_1', \eta_2' \in \bR$, $a', b' \geq 0$, and $|c'| \leq \sqrt{a'b'}$. Suppose furthermore that $\eta_1, \eta_2 \in \bR$, $a, b \geq 0$, and $|c| \leq \sqrt{ab}$, and the sequences $\{\zeta_n^{(1)}\}_{n = 1}^\infty$, $\{\zeta_n^{(2)}\}_{n = 1}^\infty$, and $\{\widetilde{\rho}_n\}_{n = 1}^\infty$, as defined in Theorem \ref{LimitThm}, converge weakly to $\mu^1$, $\mu^2$, and $c\delta_{(0, 0)}$ respectively where the pairs $(\mu^1, \nu^{(1)})$ and $(\mu^2, \nu^{(2)})$ are $\boxplus_{\rc}$-infinitely divisible with c-free Voiculescu transforms determined by $(\eta_1, a\delta_{0})$ and $(\eta_2, b\delta_0)$ respectively, in the sense of \cite{W2011}*{Theorem 4.1}. It follows from Theorem \ref{LimitThm} and Proposition \ref{ConvProp2} that the sequence $\{\xi_n\}_{n = 1}^\infty$, as defined in Proposition \ref{ConvProp2}, converges weakly to a Borel probability measure $\mu$ on $\bR^2$ such that the c-bi-free partial Voiculescu transform of the pair $(\mu, \nu)$ is given by
\begin{align*}
&\Phi_{(\mu, \nu)}  (z, w) \\
&= \frac{1}{z}\left(\eta_1 + \int_{\mathbb{R}}\frac{(1 + sz)a}{z - s}\,d\delta_0(s)\right) + \frac{1}{w}\left(\eta_2 + \int_{\mathbb{R}}\frac{(1 + tw)b}{w - t}\,d\delta_0(t)\right) + \int_{\mathbb{R}^2}\frac{(\sqrt{1 + s^2}\sqrt{1 + t^2})c}{(z - s)(w - t)}\,d\delta_{(0, 0)}(s,t)\\
&= \frac{\eta_1}{z} + \frac{\eta_2}{w} + \frac{a}{z^2} + \frac{b}{w^2} + \frac{c}{zw}.
\end{align*}
Hence the measure $\mu$ has a c-bi-free Gaussian distribution with accompanying distribution $\nu$ as defined in Definition \ref{CBFGauss}. 

The existence of standard c-bi-free Gaussian distributions follow from a similar procedure as described in \cite{HW2016}*{Example 3.4}. Namely, for $a = b = 1$ and $c \in [-1, 1]$, let $Z_1$ and $Z_2$ be two classically independent random variables drawn from the Bernoulli distribution $\fB = (1/2)\delta_{-1} + (1/2)\delta_1$, and let $\mu_n$ be the distribution of the random vector
\[
(X_n, Y_n) = \left(\sqrt{\frac{1 + c}{2n}} \,Z_1 - \sqrt{\frac{1 - c}{2n}} \,Z_2,\sqrt{\frac{1 + c}{2n}} \,Z_1 + \sqrt{\frac{1 - c}{2n}} \,Z_2\right).
\]
Since both $\{[\mu_n^{(1)}]^{*n}\}_{n = 1}^\infty$ and $\{[\mu_n^{(2)}]^{*n}\}_{n = 1}^\infty$ converge weakly to the standard Gaussian distribution $\N(0, 1)$, and since the $*$-domain of attraction of $\N(0, 1)$ coincides with the $\uplus$-domain of attraction of $\fB$ \cite{BP1999} (where $\uplus$ denotes the additive Boolean convolution introduced by Speicher and Woroudi in \cite{SW1997}), both $\{[\mu_n^{(1)}]^{\uplus n}\}_{n = 1}^\infty$ and $\{[\mu_n^{(2)}]^{\uplus n}\}_{n = 1}^\infty$ converge weakly to $\fB$. By \cite{W2011}*{Theorem 3.5}, this is equivalent to the weak convergence of $\{\zeta_n^{(1)}\}_{n = 1}^\infty$ and $\{\zeta_n^{(2)}\}_{n = 1}^\infty$ to $\mu^1$ and $\mu^2$ respectively, such that the c-free Voiculescu transforms of the pairs $(\mu^1, \nu^{(1)})$ and $(\mu^2, \nu^{(2)})$ are given by $\frac{1}{z}$ and $\frac{1}{w}$ respectively. Moreover, the existence of the pointwise limits
\[
\widetilde{\Phi}(z, w) = \lim_{n \to \infty}n\mathbb{E}\left[\frac{X_nY_n}{(z - X_n)(w - Y_n)}\right] = \frac{c}{zw} = \int_{\mathbb{R}^2}\frac{(\sqrt{1 + s^2}\sqrt{1 + t^2})c}{(z - s)(w - t)}\,d\delta_{(0, 0)}
\]
for all $(z, w) \in (\mathbb{C}\,\backslash\,\mathbb{R})^2$ implies that the sequence $\{\widetilde{\rho}_n\}_{n = 1}^\infty$, defined by
\[
d\widetilde{\rho}_n(s, t) = n\frac{st}{\sqrt{1 + s^2}\sqrt{1 + t^2}}\,d\mu_n(s, t),\quad n \geq 1,
\]
converges weakly to $c\delta_{(0, 0)}$ by Theorem \ref{LimitThm}. Therefore the sequence $\{\xi_n\}_{n = 1}^\infty$ converges weakly to the standard c-bi-free Gaussian distribution with covariance matrix
\[
\begin{pmatrix}
1 & c\\
c & 1
\end{pmatrix}
\]
and accompanying distribution $\nu$ by Proposition \ref{ConvProp2}. The general case follows from a shifting and rescaling argument.
\end{exam}

\begin{exam}
Let $\{\nu_n\}_{n = 1}^\infty$ be a sequence of Borel probability measures on $\mathbb{R}^2$ such that both $\{[\nu_n^{(1)}]^{\boxplus n}\}_{n = 1}^\infty$ and $\{[\nu_n^{(2)}]^{\boxplus n}\}_{n = 1}^\infty$ are weakly convergent and $\lim_{n \to \infty}n\phi_{\nu_n} = \phi_\nu$ on $\Omega_{\alpha, \beta}$ for some $\alpha, \beta > 0$. Moreover, let $\lambda \geq 0$ and $\delta_{(0, 0)} \neq \sigma$ be a Borel probability measure on $\mathbb{R}^2$. For $n \geq 1$ consider
\[
\mu_n = \left(1 - \frac{\lambda}{n}\right)\delta_{(0, 0)} + \frac{\lambda}{n}\sigma.
\]
The sequences $\{[(\mu_n^{(1)}, \nu_n^{(1)})]^{\boxplus_{\rc}n}\}_{n = 1}^\infty$ and $\{[(\mu_n^{(2)}, \nu_n^{(2)})]^{\boxplus_{\rc}n}\}_{n = 1}^\infty$ converge weakly to compound c-free Poisson distributions with rate $\lambda$, jump distributions $\sigma^{(1)}$ and $\sigma^{(2)}$, and accompanying distributions $\nu^{(1)}$ and $\nu^{(2)}$ respectively (see \cite{K2007}*{Proposition 6.4}). Moreover, the sequence $\{\widetilde{\rho}_n\}_{n = 1}^\infty$ defined by
\[
d\widetilde{\rho}_n(s, t) = n\frac{st}{\sqrt{1 + s^2}\sqrt{1 + t^2}}\,d\mu_n(s, t),\quad n \geq 1,
\]
converges weakly to the finite signed Borel measure
\[
d\rho(s, t) = \lambda\frac{st}{\sqrt{1 + s^2}\sqrt{1 + t^2}}\,d\sigma(s, t).
\]
By Theorem \ref{LimitThm}, the pointwise limits $\lim_{n \to \infty}n\Phi_{(\mu_n, \nu_n)}(z, w) = \Phi(z, w)$ exist for all $(z, w) \in \Omega_{\alpha, \beta}$ and the function $\Phi$ can be written as
\begin{align*}
\Phi(z, w) &= \lambda\int_{\mathbb{R}^2}\frac{s}{z - s}\,d\sigma(s, t) + \lambda\int_{\mathbb{R}^2}\frac{t}{w - t}\,d\sigma(s, t) + \lambda\int_{\mathbb{R}^2}\frac{st}{(z - s)(w - t)}\,d\sigma(s, t)\\
&= \lambda\int_{\mathbb{R}^2}\frac{-(z - s)(w - t) + zw}{(z - s)(w - t)}\,d\sigma(s, t)\\
&= -\lambda + \lambda\int_{\mathbb{R}^2}\frac{zw}{(z - s)(w - t)}\,d\sigma(s, t)\\
&= \lambda(zwG_\sigma(z, w) - 1)
\end{align*}
for all $(z, w) \in (\mathbb{C}\,\backslash\,\mathbb{R})^2$. Note that the function $\Phi$ is same as the c-bi-free partial Voiculescu transform of the pair $(\pi_{\lambda, \sigma, \nu}, \nu)$ where $\pi_{\lambda, \sigma, \nu}$ denotes the compound c-bi-free Poisson distribution with rate $\lambda$, jump distribution $\sigma$, and accompanying distribution $\nu$ as defined in Definition \ref{CompCBFPoisson}. The existence of such a distribution can be shown analytically by the same truncation method and limiting process used in \cite{HW2016}*{Example 3.5} to show the existence of the compound bi-free Poisson distribution with rate $\lambda$ and jump distribution $\sigma$.
\end{exam}

\subsection{Conditionally bi-free additively infinitely divisible distributions}

As mentioned earlier, the operations $\boxplus\boxplus$ and $\boxplus\boxplus_{\rc}$ have not been defined for all Borel probability measures on $\bR^2$ yet. In order to discuss infinite divisibility, we take the idea from \cite{HW2016}*{Definition 3.7} and define it in terms of the corresponding linearizing transforms. To this end, we need the following result.

\begin{thm}\label{Existence}
Let $\{\mu_n\}_{n = 1}^\infty$, $\{\nu_n\}_{n = 1}^\infty$, and $\{k_n\}_{n = 1}^\infty$ be as in Theorem \ref{LimitThm}. If the pointwise limits $\lim_{n \to \infty}k_n\Phi_{(\mu_n, \nu_n)}(z, w) = \Phi(z, w)$ exist for all $(z, w) \in \Omega_{\alpha, \beta}$, then there exists a unique Borel probability measure $\mu$ on $\mathbb{R}^2$ such that $\Phi = \Phi_{(\mu, \nu)}$ on $\Omega_{\alpha, \beta}$.
\end{thm}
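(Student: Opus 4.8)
The plan is to prove uniqueness first, which is immediate, and then existence, which I would reduce to the compactly supported case already settled in Proposition \ref{ConvProp2}. For uniqueness, suppose $\mu$ and $\mu'$ are Borel probability measures on $\mathbb{R}^2$ with $\Phi_{(\mu, \nu)} = \Phi = \Phi_{(\mu', \nu)}$ on $\Omega_{\alpha, \beta}$. Both pairs share the accompanying measure $\nu$, so $\phi_\nu = \phi_\nu$ holds trivially, and Corollary \ref{Unique} gives $(\mu, \nu) = (\mu', \nu)$; in particular $\mu = \mu'$.

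For existence, I would first unpack what the hypotheses already supply. Since $\{\mu_n\}$, $\{\nu_n\}$, $\{k_n\}$ satisfy the assumptions of Theorem \ref{LimitThm} and the pointwise limits $\lim_{n \to \infty} k_n \Phi_{(\mu_n, \nu_n)} = \Phi$ exist on $\Omega_{\alpha, \beta}$ (assertion $(1)$ there), Theorem \ref{LimitThm} yields the weak convergence $\widetilde{\rho}_n \to \rho$ (assertion $(3)$) together with the explicit representation
\[
\Phi(z, w) = \frac{1}{z}\Phi_{(\mu^{(1)}, \nu^{(1)})}(z) + \frac{1}{w}\Phi_{(\mu^{(2)}, \nu^{(2)})}(w) + \int_{\mathbb{R}^2}\frac{\sqrt{1 + s^2}\sqrt{1 + t^2}}{(z - s)(w - t)}\,d\rho(s, t)
\]
on $(\mathbb{C}\,\backslash\,\mathbb{R})^2$, where $\mu^{(1)}$ and $\mu^{(2)}$ are the weak limits of $\zeta_n^{(1)}$ and $\zeta_n^{(2)}$ guaranteed by the standing hypotheses. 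Thus all of the data determining $\Phi$ — the marginal limits $\mu^{(1)}, \mu^{(2)}$, the accompanying $\nu$, and the signed measure $\rho$ — is in hand, and it remains only to realize this data by a genuine planar measure.

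Next I would pass to compactly supported approximants. Using the standard truncation for infinitesimal triangular arrays (the infinitesimality of $\{\nu_n\}$ and of the marginal arrays being guaranteed, as noted in the proof of Theorem \ref{LimitThm}, by weak convergence of the relevant convolution powers), I would replace each $\mu_n$ and $\nu_n$ by compactly supported measures $\mu_n'$ and $\nu_n'$ chosen so that $k_n \phi_{\nu_n'}$, $[(\nu_n')^{(j)}]^{\boxplus k_n}$, the c-free powers $(\zeta_n')^{(j)}$, and the signed measures $\widetilde{\rho}_n'$ retain the limits of their untruncated counterparts; by Theorem \ref{LimitThm} this also forces $\lim_{n \to \infty} k_n \Phi_{(\mu_n', \nu_n')} = \Phi$. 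The truncated sequences then meet the hypotheses of Proposition \ref{ConvProp2}, and since $(\zeta_n')^{(1)}$, $(\zeta_n')^{(2)}$, and $\widetilde{\rho}_n'$ converge weakly, Proposition \ref{ConvProp2} produces a Borel probability measure $\mu$ on $\mathbb{R}^2$ to which $\xi_n' = (\mu_n', \nu_n')^{\boxplus\boxplus_{\rc} k_n}$ converges weakly, with $G_\mu$ given by the Cauchy-transform formula displayed there. Applying Proposition \ref{ConvProp} (equivalently Theorem \ref{LimitThm}) to the truncated sequences then identifies $\Phi_{(\mu, \nu)} = \lim_{n \to \infty} k_n \Phi_{(\mu_n', \nu_n')} = \Phi$ on $\Omega_{\alpha, \beta}$, completing the proof.

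The step I expect to be the main obstacle is the truncation: one must choose the compactly supported approximations so as to preserve simultaneously all four weak limits — the two $\boxplus$-convolution powers of the $\nu$-marginals, the two c-free convolution powers $\zeta_n^{(1)}, \zeta_n^{(2)}$, and the signed measures $\widetilde{\rho}_n$ — together with the pointwise limit of $k_n \Phi_{(\mu_n, \nu_n)}$. The delicacy is that the c-bi-free transform couples the $\varphi$- and $\psi$-data through the compositions with $F_{\nu^{(1)}}^{-1}$ and $F_{\nu^{(2)}}^{-1}$, so a truncation harmless on the $\psi$-side must also be controlled against the weights $\sqrt{1 + s^2}\sqrt{1 + t^2}$ governing $\widetilde{\rho}_n$; arranging a single truncation that achieves all of this at once is the crux.
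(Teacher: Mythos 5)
Your uniqueness argument is correct and is exactly the paper's (Corollary \ref{Unique} applied with the common accompanying measure $\nu$), and your opening use of Theorem \ref{LimitThm} to extract $\rho$ and the integral representation of $\Phi$ is also sound. The problem is the existence half: your entire construction of $\mu$ rests on a truncation lemma that you describe but never prove, and which you yourself flag as ``the crux.'' Concretely, you need compactly supported $\mu_n'$, $\nu_n'$ for which \emph{five} limits are preserved simultaneously: $k_n\phi_{\nu_n'} \to \phi_\nu$ on $\Omega_{\alpha,\beta}$, weak convergence of $[(\nu_n')^{(j)}]^{\boxplus k_n}$ to $\nu^{(j)}$ for $j = 1,2$, weak convergence of the c-free powers $(\zeta_n')^{(j)}$, and weak convergence of the signed measures $\widetilde{\rho}_n'$ --- and in addition Proposition \ref{ConvProp2} requires the truncated $\nu_n'$ to satisfy \emph{all} the hypotheses of Theorem \ref{LimitThm} with the same limit $\nu$, plus weak convergence of $(\nu_n')^{\boxplus\boxplus k_n}$ to $\nu$ for the final identification via Proposition \ref{ConvProp}. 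The difficulty is not cosmetic: any truncation error in $\phi_{\nu_n}$ or $\Phi_{(\mu_n,\nu_n)}$ is multiplied by $k_n \to \infty$, so the truncation radii must be tuned against $k_n$ through the nonlinear transforms $F_{\nu_n^{(1)}}^{-1}$, $F_{\nu_n^{(2)}}^{-1}$, and one must also handle weak convergence of signed (not positive) measures for $\widetilde{\rho}_n'$. As written, the proof has a genuine gap precisely where the real work lies.

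It is worth noting that the paper avoids this triangular-array truncation altogether by a different device: compound c-bi-free Poisson approximation. One observes that
\[
\Phi(z,w) = \lim_{n \to \infty} k_n\bigl(zwG_{\mu_n}(z,w) - 1\bigr) = \lim_{n \to \infty} \Phi_{(\pi_{k_n,\mu_n,\nu},\,\nu)}(z,w),
\]
where $\pi_{k_n,\mu_n,\nu}$ is the compound c-bi-free Poisson distribution with rate $k_n$, jump distribution $\mu_n$, and accompanying distribution $\nu$. The point is that the transform of the approximant is computed \emph{directly} from the untruncated $\mu_n$ (no convolution powers of $\mu_n$ or $\nu_n$ ever need to be formed), the existence of these Poisson distributions is already established (by a one-parameter truncation of a single measure, as in the example following Proposition \ref{ConvProp2}, which is far easier than a triangular array), and their family is tight with uniformly decaying transforms. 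Proposition \ref{ConvProp}, applied with the constant accompanying sequence $\{\nu\}$, then yields weak convergence $\pi_{k_n,\mu_n,\nu} \to \mu$ and $\Phi = \Phi_{(\mu,\nu)}$. If you want to salvage your route, you would need to actually construct the truncations with the required uniformity in $k_n$; the Poisson detour is how the paper sidesteps exactly that obstacle.
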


\begin{proof}
The uniqueness part follows from Corollary \ref{Unique}.  Thus we show the existence part. As shown in the proof of \cite{HW2016}*{Theorem 3.2}, the sequences $\{\rho_{1; n}\}_{n = 1}^\infty$ and $\{\rho_{2; n}\}_{n = 1}^\infty$, defined by
\[
d\rho_{1; n}(s, t) = k_n\frac{s^2}{1 + s^2}\,d\mu_n(s, t)\qand d\rho_{2; n}(s, t) = k_n\frac{t^2}{1 + t^2}\,d\mu_n(s, t),\quad n \geq 1,
\]
are tight and uniformly bounded sequences of finite positive Borel measures on $\mathbb{R}^2$. By dropping to subsequences if necessary, we may assume they are both weakly convergent. Let
\[
\eta_{1; n} = k_n\int_{\mathbb{R}^2}\frac{s}{1 + s^2}\,d\mu_n(s, t)\qand \eta_{2; n} = k_n\int_{\mathbb{R}^2}\frac{t}{1 + t^2}\,d\mu_n(s, t),\quad n \geq 1.
\]
By \cite{W2011}*{Theorem 3.5} the assumptions that the sequences $\{(\mu_n^{(1)}, \nu_n^{(1)})^{\boxplus_{\rc}k_n}\}_{n = 1}^\infty$ and $\{(\mu_n^{(2)}, \nu_n^{(2)})^{\boxplus_{\rc}k_n}\}_{n = 1}^\infty$ converge weakly to $(\mu^{(1)}, \nu^{(1)})$ and $(\mu^{(2)}, \nu^{(2)})$ respectively implies that
\[
\Phi_{(\mu^{(1)}, \nu^{(1)})}(z) = \lim_{n \to \infty}\left(\eta_{1; n} + \int_{\mathbb{R}^2}\frac{1 + sz}{z - s}\,d\rho_{1; n}(s, t)\right)
\]
and
\[
\Phi_{(\mu^{(2)}, \nu^{(2)})}(w) = \lim_{n \to \infty}\left(\eta_{2; n} + \int_{\mathbb{R}^2}\frac{1 + tw}{w - t}\,d\rho_{2; n}(s, t)\right),
\]
as well as the existence of the limits $\lim_{n \to \infty}\eta_{1; n}$ and $\lim_{n \to \infty}\eta_{2; n}$. The existence of the pointwise limits $\lim_{n \to \infty}k_n\Phi_{(\mu_n, \nu_n)}(z, w) = \Phi(z, w)$ on $\Omega_{\alpha, \beta}$ is equivalent to the weak convergence of the sequence $\{\widetilde{\rho}_n\}_{n = 1}^\infty$ by Theorem \ref{LimitThm}. Hence if $\pi_{k_n, \mu_n, \nu}$ denotes the compound c-bi-free Poisson distribution with rate $k_n$, jump distribution $\mu_n$, and accompanying distribution $\nu$, then
\begin{align*}
\Phi(z, w) &= \frac{1}{z}\Phi_{(\mu^{(1)}, \nu^{(1)})}(z) + \frac{1}{w}\Phi_{(\mu^{(2)}, \nu^{(2)})}(w) + \lim_{n \to \infty}k_n\int_{\mathbb{R}^2}\frac{st}{(z - s)(w - t)}\,d\mu_n(s, t)\\
&= \frac{1}{z}\left(\lim_{n \to \infty}k_n\int_{\mathbb{R}^2}\frac{s}{1 + s^2}\,d\mu_n(s, t) + \lim_{n \to \infty}k_n\int_{\mathbb{R}^2}\frac{(1 + sz)s^2}{(z - s)(1 + s^2)}\,d\mu_n(s, t)\right)\\
&\quad +\frac{1}{w}\left(\lim_{n \to \infty}k_n\int_{\mathbb{R}^2}\frac{t}{1 + t^2}\,d\mu_n(s, t) + \lim_{n \to \infty}k_n\int_{\mathbb{R}^2}\frac{(1 + tw)t^2}{(w - t)(1 + t^2)}\,d\mu_n(s, t)\right)\\
&\quad +\lim_{n \to \infty}k_n\int_{\mathbb{R}^2}\frac{st}{(z - s)(w - t)}\,d\mu_n(s, t)\\
&= \lim_{n \to \infty}k_n\left(\int_{\mathbb{R}^2}\frac{s}{z - s}\,d\mu_n(s, t) + \int_{\mathbb{R}^2}\frac{t}{w - t}\,d\mu_n(s, t) + \int_{\mathbb{R}^2}\frac{st}{(z - s)(w - t)}\,d\mu_n(s, t)\right)\\
&= \lim_{n \to \infty}k_n\left(\int_{\mathbb{R}^2}\frac{zw}{(z - s)(w - t)}\,d\mu_n(s, t) - 1\right)\\
&= \lim_{n \to \infty}k_n(zwG_{\mu_n}(z, w) - 1)\\
&= \lim_{n \to \infty}\Phi_{(\pi_{k_n, \mu_n, \nu}, \nu)}(z, w)
\end{align*}
for all $(z, w) \in (\mathbb{C}\,\backslash\,\mathbb{R})^2$.  By the same estimates used in the proof of \cite{HW2016}*{Theorem 3.6} to show that the family of compound bi-free Poisson distributions with rates $k_n$ and jump distributions $\mu_n$ is tight, the same property also holds for the family $\{\pi_{k_n, \mu_n, \nu}\}_{n = 1}^\infty$ and $\Phi_{(\pi_{k_n, \mu_n, \nu}, \nu)}(z, w) \to 0$ uniformly in $n$ as $|z|, |w| \to \infty$ non-tangentially. Therefore by Proposition \ref{ConvProp} (with $\{\nu_n\}_{n = 1}^\infty$ in the assumption being the constant sequence $\{\nu\}$), the sequence $\{\pi_{k_n, \mu_n, \nu}\}_{n = 1}^\infty$ converges weakly to a Borel probability measure $\mu$ on $\mathbb{R}^2$ and $\Phi = \Phi_{(\mu, \nu)}$ on $(\mathbb{C}\,\backslash\,\mathbb{R})^2$.
\end{proof}

\begin{defn}
A pair $(\mu, \nu)$ of Borel probability measures on $\mathbb{R}^2$ is said to be \emph{$\boxplus\boxplus_{\rc}$-infinitely divisible} if for every $n \in \mathbb{N}$ there exists a pair $(\mu_n, \nu_n)$ of Borel probability measures on $\mathbb{R}^2$ such that $\phi_\nu = n\phi_{\nu_n}$ and $\Phi_{(\mu, \nu)} = n\Phi_{(\mu_n, \nu_n)}$ on a common domain where the involved functions are defined. Note that $\nu$ is $\boxplus\boxplus$-infinitely divisible in this case.
\end{defn}

Let $\nu$ be a $\boxplus\boxplus$-infinitely divisible Borel probability measure on $\bR^2$. In view of the definition above, it is easy to see that the pair $(\mu, \nu)$ is $\boxplus\boxplus_{\rc}$-infinitely divisible if $\mu$ is a point mass or $\mu$ is the product measure of its marginal distributions $\mu^{(1)}$ and $\mu^{(2)}$ such that $(\mu^{(1)}, \nu^{(1)})$ and $(\mu^{(2)}, \nu^{(2)})$ are $\boxplus_{\rc}$-infinitely divisible. More generally, Theorem \ref{Existence} implies that limits of $k_n\Phi_{(\mu_n, \nu_n)}$ is the class of c-bi-free partial Voiculescu transforms of $\boxplus\boxplus_{\rc}$-infinitely divisible pairs.

\begin{thm}\label{CBFInfDiv}
Let $(\mu, \nu)$ be a pair of Borel probability measures on $\mathbb{R}^2$ such that $\nu$ is $\boxplus\boxplus$-infinitely divisible. The pair $(\mu, \nu)$ is $\boxplus\boxplus_{\rc}$-infinitely divisible if and only if there exist a sequence $\{\mu_n\}_{n = 1}^\infty$ of Borel probability measures on $\mathbb{R}^2$ and a sequence $\{k_n\}_{n = 1}^{\infty}$ of positive integers with $\lim_{n \to \infty}k_n = \infty$ such that the sequences $\{\zeta_n^{(1)}\}_{n = 1}^\infty$ and $\{\zeta_n^{(2)}\}_{n = 1}^\infty$, as defined in Theorem \ref{LimitThm}, converge weakly to $\mu^{(1)}$ and $\mu^{(2)}$ respectively, and $\lim_{n \to \infty}k_n\Phi_{(\mu_n, \nu_n)} = \Phi_{(\mu, \nu)}$ on $\Omega_{\alpha, \beta}$.
\end{thm}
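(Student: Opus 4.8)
The plan is to treat the two implications asymmetrically: the forward direction falls out immediately from the definition of $\boxplus\boxplus_{\rc}$-infinite divisibility, while the converse is the substantive part and will be obtained by extracting $p$-th roots of the limiting transform $\Phi_{(\mu,\nu)}$ using Theorem~\ref{Existence}.

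For the forward direction, suppose $(\mu,\nu)$ is $\boxplus\boxplus_{\rc}$-infinitely divisible. Take $k_n=n$ and let $(\mu_n,\nu_n)$ be the $n$-th root furnished by the definition, so that $\phi_\nu=n\phi_{\nu_n}$ and $\Phi_{(\mu,\nu)}=n\Phi_{(\mu_n,\nu_n)}$ on a common domain. Then $k_n\Phi_{(\mu_n,\nu_n)}=\Phi_{(\mu,\nu)}$ identically, so the required pointwise limit holds trivially. Applying Lemma~\ref{Marginals} to both sides of $\Phi_{(\mu,\nu)}=n\Phi_{(\mu_n,\nu_n)}$ shows $\Phi_{(\mu^{(1)},\nu^{(1)})}=n\Phi_{(\mu_n^{(1)},\nu_n^{(1)})}$, and likewise for the second marginal; hence the c-free Voiculescu transform of $\zeta_n^{(1)}=(\mu_n^{(1)},\nu_n^{(1)})^{\boxplus_{\rc} n}$ equals $\Phi_{(\mu^{(1)},\nu^{(1)})}$ while its accompanying measure $[\nu_n^{(1)}]^{\boxplus n}$ equals $\nu^{(1)}$. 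By the uniqueness of the c-free Voiculescu transform (\cite{W2011}), $\zeta_n^{(1)}=\mu^{(1)}$ for every $n$, so $\{\zeta_n^{(1)}\}$ (and similarly $\{\zeta_n^{(2)}\}$) converges weakly to $\mu^{(1)}$ (resp.\ $\mu^{(2)}$).

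For the converse, assume the stated sequences exist. By Theorem~\ref{LimitThm} the limit $\Phi=\lim_{n}k_n\Phi_{(\mu_n,\nu_n)}$ agrees with $\Phi_{(\mu,\nu)}$, and it suffices to produce, for each fixed $p\in\mathbb{N}$, a pair $(\mu_p',\nu_p')$ with $\phi_\nu=p\phi_{\nu_p'}$ and $\Phi_{(\mu,\nu)}=p\Phi_{(\mu_p',\nu_p')}$. The idea is to keep the same array $\{\mu_n\},\{\nu_n\}$ but replace $k_n$ by $\lfloor k_n/p\rfloor$. Since $\lfloor k_n/p\rfloor/k_n\to 1/p$ and $\lfloor k_n/p\rfloor\to\infty$, we get $\lfloor k_n/p\rfloor\,\phi_{\nu_n}\to\tfrac1p\phi_\nu=\phi_{\nu_p'}$, where $\nu_p'$ is the $\boxplus\boxplus$-root of $\nu$ (which exists because $\nu$ is $\boxplus\boxplus$-infinitely divisible), and $\lfloor k_n/p\rfloor\,\Phi_{(\mu_n,\nu_n)}\to\tfrac1p\Phi$ pointwise on $\Omega_{\alpha,\beta}$. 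Before invoking Theorem~\ref{Existence} for this rescaled array, I would verify its remaining hypotheses: that $\{[\nu_n^{(1)}]^{\boxplus\lfloor k_n/p\rfloor}\}$ and $\{[\nu_n^{(2)}]^{\boxplus\lfloor k_n/p\rfloor}\}$ converge weakly, and that the rescaled marginal sequences $\{(\mu_n^{(1)},\nu_n^{(1)})^{\boxplus_{\rc}\lfloor k_n/p\rfloor}\}$ and $\{(\mu_n^{(2)},\nu_n^{(2)})^{\boxplus_{\rc}\lfloor k_n/p\rfloor}\}$ converge weakly. Granting these, Theorem~\ref{Existence} yields a Borel probability measure $\mu_p'$ on $\bR^2$ with $\Phi_{(\mu_p',\nu_p')}=\tfrac1p\Phi=\tfrac1p\Phi_{(\mu,\nu)}$, and then $\phi_\nu=p\phi_{\nu_p'}$ and $\Phi_{(\mu,\nu)}=p\Phi_{(\mu_p',\nu_p')}$, which is exactly the $\boxplus\boxplus_{\rc}$-infinite divisibility of $(\mu,\nu)$.

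The main obstacle is verifying that the weak-convergence hypotheses survive the passage from $k_n$ to $\lfloor k_n/p\rfloor$. This is a purely single-variable matter, reducing to the free and c-free triangular-array theory: the weak convergence of $\{[\nu_n^{(1)}]^{\boxplus k_n}\}$ together with the $\boxplus$-infinite divisibility of its limit forces $\{[\nu_n^{(1)}]^{\boxplus\lfloor k_n/p\rfloor}\}$ to converge to the free $p$-th root of $\nu^{(1)}$ (by \cite{BV1993}), and the analogous statement for the c-free convolutions $\zeta_n^{(1)}$ follows from the corresponding c-free limit and infinite-divisibility theorem in \cite{W2011}, since the accompanying free data and the c-free L\'evy data both scale linearly in the number of factors. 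Once these single-variable convergences are in hand, the rescaled array satisfies all hypotheses of Theorem~\ref{LimitThm} and Theorem~\ref{Existence}, and the root-extraction argument closes.
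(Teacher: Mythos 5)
Your proposal is correct and follows essentially the same route as the paper's proof: the forward direction takes $k_n = n$ and the $n$-th roots supplied by the definition, while the converse fixes $p$, replaces $k_n$ by $\lfloor k_n/p \rfloor$, verifies the rescaled single-variable marginal convergences via the free and c-free theory of \cite{BV1993} and \cite{W2011}, and invokes Theorem \ref{Existence} to produce the $p$-th root pair. The paper argues identically (with $m$ in place of your $p$ and $j_n = [k_n/m]$), the only difference being that you spell out more carefully the weak convergence of $\zeta_n^{(1)}$ and $\zeta_n^{(2)}$ in the forward direction, which the paper leaves implicit.
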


\begin{proof}
If the pair $(\mu, \nu)$ is $\boxplus\boxplus_{\rc}$-infinitely divisible, then for every $n \in \mathbb{N}$, there exists a pair $(\widetilde{\mu}_n, \widetilde{\nu}_n)$ of Borel probability measures on $\mathbb{R}^2$ such that $\Phi_{(\mu, \nu)} = n\Phi_{(\widetilde{\mu}_n, \widetilde{\nu}_n)}$ on $\Omega_{\alpha, \beta}$ for some $\alpha, \beta > 0$. For $n \geq 1$, simply choose $\mu_n = \widetilde{\mu}_n$, $\nu_n = \widetilde{\nu}_n$, and $k_n = n$.

Conversely, the assumptions on $\nu$, $\mu^{(1)}$, and $\mu^{(2)}$ imply $\nu^{(1)}$ and $\nu^{(2)}$ are $\boxplus$-infinitely divisible, $(\mu^{(1)}, \nu^{(1)})$ and $(\mu^{(2)}, \nu^{(2)})$ are $\boxplus_{\rc}$-infinitely divisible. For $m \geq 1$, let $\nu_m^{(1)}$, $\nu_{m}^{(2)}$, $\mu_m^{(1)}$, and $\mu_m^{(2)}$ be Borel probability measures on $\mathbb{R}$ such that
\[
\nu^{(i)} = \underbrace{\nu_{m}^{(i)} \boxplus \cdots \boxplus \nu_{m}^{(i)}}_{m\,\mathrm{times}} \qand (\mu^{(i)}, \nu^{(i)}) = \underbrace{(\mu_m^{(i)}, \nu_m^{(i)}) \boxplus_{\rc} \cdots \boxplus_{\rc} (\mu_m^{(i)}, \nu_m^{(i)})}_{m\,\mathrm{times}},\quad i = 1, 2.
\]
Consider the sequence $\{j_n\}_{n = 1}^\infty$ of positive integers defined by $j_n = [k_n/m]$ where $[x]$ denotes the integer part of $x$.  Then $\lim_{n \to \infty}j_n = \infty$ and the sequences $\{[\nu_n^{(1)}]^{\boxplus j_n}\}_{n = 1}^\infty$, $\{[\nu_n^{(2)}]^{\boxplus j_n}\}_{n = 1}^\infty$, $\{[(\mu_n^{(1)}, \nu_n^{(1)})]^{\boxplus_{\rc}j_n}\}_{n = 1}^\infty$, and $\{[(\mu_n^{(2)}, \nu_n^{(2)})]^{\boxplus_{\rc}j_n}\}_{n = 1}^\infty$ converge weakly to $\nu_m^{(1)}$, $\nu_m^{(2)}$, $(\mu_m^{(1)}, \nu_m^{(1)})$, and $(\mu_m^{(2)}, \nu_m^{(2)})$ respectively. Since the pointwise limits
\[
\lim_{n \to \infty}j_n\Phi_{(\mu_n, \nu_n)}(z, w) = \lim_{n \to \infty}[k_n/m]\Phi_{(\mu_n, \nu_n)}(z, w) = \frac{1}{m}\Phi_{(\mu, \nu)}(z, w)
\]
exist for all $(z, w) \in (\mathbb{C}\,\backslash\,\mathbb{R})^2$ by assumption, Theorem \ref{Existence} and the assumption that $\nu$ is $\boxplus\boxplus$-infinitely divisible together imply the existence of a pair $(\mu_m, \nu_m)$ of Borel probability measures on $\mathbb{R}^2$ such that $\phi_\nu = m\phi_{\nu_m}$ and $\Phi_{(\mu, \nu)} = m\Phi_{(\mu_m, \nu_m)}$ on $\Omega_{\alpha, \beta}$.
\end{proof}

As seen in the proof of Theorem \ref{Existence}, we also have the following Poisson type characterization of $\boxplus\boxplus_{\rc}$-infinite divisibility.

\begin{prop}
Let $(\mu, \nu)$ be a pair of Borel probability measures on $\bR^2$ such that $\nu$ is $\boxplus\boxplus$-infinitely divisible. The following are equivalent:
\begin{enumerate}[$\qquad(1)$]
\item The pair $(\mu, \nu)$ is $\boxplus\boxplus_{\rc}$-infinitely divisible.

\item There exist a sequence $\{\mu_n\}_{n = 1}^\infty$ of Borel probability measures on $\bR^2$ and a sequence $\{k_n\}_{n = 1}^\infty$ of positive integers with $\lim_{n \to \infty}k_n = \infty$ such that the sequence of compound c-bi-free Poission distributions with rate $k_n$, jump distribution $\mu_n$, and accompanying distribution $\nu$ converge weakly to $\mu$.
\end{enumerate}
\end{prop}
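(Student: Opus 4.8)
The plan is to read off this proposition from the analytic machinery already in place, chiefly Theorem \ref{Existence} and Theorem \ref{CBFInfDiv}, together with the continuity result Proposition \ref{ConvProp}. The essential bookkeeping fact is that the compound c-bi-free Poisson distribution $\pi_{k_n, \mu_n, \nu}$ with accompanying distribution $\nu$ has c-bi-free partial Voiculescu transform $\Phi_{(\pi_{k_n, \mu_n, \nu}, \nu)}(z, w) = k_n(zwG_{\mu_n}(z, w) - 1)$, and that the chain of identities carried out in the proof of Theorem \ref{Existence} shows precisely
\[
\lim_{n \to \infty} k_n \Phi_{(\mu_n, \nu_n)}(z, w) = \lim_{n \to \infty} \Phi_{(\pi_{k_n, \mu_n, \nu}, \nu)}(z, w)
\]
whenever either side exists. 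This is the bridge between the variable-accompanying approximants $(\mu_n, \nu_n)$ appearing in Theorem \ref{CBFInfDiv} (where $\nu_n$ is a bi-free $k_n$-th root of $\nu$) and the constant-accompanying Poisson family of the present statement.

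For $(1) \Rightarrow (2)$, I would first apply Theorem \ref{CBFInfDiv} to produce sequences $\{\mu_n\}$ and $\{k_n\}$ with $\lim_{n \to \infty} k_n = \infty$ such that $\zeta_n^{(1)} \to \mu^{(1)}$ and $\zeta_n^{(2)} \to \mu^{(2)}$ weakly and $k_n \Phi_{(\mu_n, \nu_n)} \to \Phi_{(\mu, \nu)}$ on some $\Omega_{\alpha, \beta}$. By the identity above, the transforms $\Phi_{(\pi_{k_n, \mu_n, \nu}, \nu)}$ then converge pointwise to $\Phi_{(\mu, \nu)}$ on $\Omega_{\alpha, \beta}$, and they tend to $0$ uniformly in $n$ as $|z|, |w| \to \infty$ non-tangentially by the same tightness estimates used in Theorem \ref{Existence}. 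Applying Proposition \ref{ConvProp} with the constant accompanying sequence $\nu$ (which is $\boxplus\boxplus$-infinitely divisible, so a common domain exists) yields $\pi_{k_n, \mu_n, \nu} \to \mu$ weakly, which is exactly assertion $(2)$.

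For $(2) \Rightarrow (1)$, I would start from the weak convergence $\pi_{k_n, \mu_n, \nu} \to \mu$ and run Proposition \ref{ConvProp} in the forward direction, again with constant accompanying distribution $\nu$, to obtain $\Phi_{(\pi_{k_n, \mu_n, \nu}, \nu)} \to \Phi_{(\mu, \nu)}$ on $\Omega_{\alpha, \beta}$. Using the explicit form $k_n(zwG_{\mu_n}(z, w) - 1)$ of these transforms, the existence of this limit is, by Theorem \ref{LimitThm}, equivalent to the existence of $\widetilde{\Phi}(z, w) = \lim_{n} k_n \int_{\bR^2} \frac{st}{(z - s)(w - t)}\, d\mu_n(s, t)$ and to the weak convergence of the associated finite signed measures $\widetilde{\rho}_n$. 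The marginals $\pi_{k_n, \mu_n, \nu}^{(1)}$ and $\pi_{k_n, \mu_n, \nu}^{(2)}$ are compound c-free Poisson distributions and converge weakly to $\mu^{(1)}$ and $\mu^{(2)}$; identifying them with $\zeta_n^{(1)}$ and $\zeta_n^{(2)}$ supplies the marginal hypotheses, and Theorem \ref{CBFInfDiv} (reading the Poisson transform back as $\lim_{n} k_n \Phi_{(\mu_n, \nu_n)}$ via the identity above) then upgrades this to $\boxplus\boxplus_{\rc}$-infinite divisibility of $(\mu, \nu)$.

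I expect the main obstacle to be the marginal matching in $(2) \Rightarrow (1)$: verifying that the one-dimensional marginals of the compound c-bi-free Poisson family $\pi_{k_n, \mu_n, \nu}$ are precisely the compound c-free Poisson distributions whose weak limits coincide with the sequences $\zeta_n^{(1)}, \zeta_n^{(2)}$ used in Theorem \ref{LimitThm} and Theorem \ref{CBFInfDiv}. This rests on the marginal description recorded after Definition \ref{CompCBFPoisson} together with the c-free compound Poisson limit theorem, and care is needed to ensure the hypotheses of Theorem \ref{CBFInfDiv} are met with the \emph{constant} accompanying distribution $\nu$ in place of the variable $\nu_n$.
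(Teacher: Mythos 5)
Your direction $(1)\Rightarrow(2)$ is essentially the paper's own argument: Theorem \ref{CBFInfDiv} supplies the approximating sequences, the chain of identities in the proof of Theorem \ref{Existence} converts $\lim_n k_n\Phi_{(\mu_n,\nu_n)}$ into $\lim_n\Phi_{(\pi_{k_n,\mu_n,\nu},\nu)}$, where $\Phi_{(\pi_{k_n,\mu_n,\nu},\nu)}(z,w)=k_n(zwG_{\mu_n}(z,w)-1)$, and tightness plus Proposition \ref{ConvProp} yield weak convergence of the Poisson laws to $\mu$. One caution: the proof of Theorem \ref{Existence} passes to subsequences, so to obtain convergence of the \emph{full} Poisson sequence you must add the point the paper makes by citing the discussion in \cite{HW2016}, or argue directly that tightness together with Corollary \ref{Unique} forces all subsequential limits to coincide.

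For $(2)\Rightarrow(1)$ you take a genuinely different and substantially heavier route than the paper, and it contains one incorrect step. The paper's proof is a one-liner: each pair $(\pi_{k_n,\mu_n,\nu},\nu)$ is itself $\boxplus\boxplus_{\rc}$-infinitely divisible, since its transform $k_n(zwG_{\mu_n}-1)$ divides by any $m$ once $\nu$ is replaced by its bi-free $m$-th root (which exists because $\nu$ is $\boxplus\boxplus$-infinitely divisible); and infinite divisibility survives weak limits using Proposition \ref{ConvProp} alone -- the transforms converge to $\Phi_{(\mu,\nu)}$ with uniform non-tangential decay, hence so do their $m$-th parts, and a second application of Proposition \ref{ConvProp} with constant accompanying law the bi-free $m$-th root of $\nu$ produces an $m$-th root of $(\mu,\nu)$. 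No appeal to Theorem \ref{LimitThm} or Theorem \ref{CBFInfDiv} is needed. In your route, the step ``identifying'' the Poisson marginals with $\zeta_n^{(1)}$ and $\zeta_n^{(2)}$ is false as stated: the compound c-free Poisson law with rate $k_n$ and jump distribution $\mu_n^{(i)}$ is \emph{not} the convolution power $\zeta_n^{(i)}=[(\mu_n^{(i)},\nu_n^{(i)})]^{\boxplus_{\rc}k_n}$. What you actually need is the c-free accompanying-laws equivalence: the two sequences converge weakly together and to the same limit. This is true and provable -- the c-free Voiculescu transform of the Poisson marginal equals $\eta_{i;n}+\int_{\bR}\frac{1+sz}{z-s}\,d\sigma_n^{(i)}(s)$ exactly, with $d\sigma_n^{(i)}(s)=k_n\frac{s^2}{1+s^2}\,d\mu_n^{(i)}(s)$ and $\eta_{i;n}=k_n\int_{\bR}\frac{s}{1+s^2}\,d\mu_n^{(i)}(s)$, so both convergences are equivalent to convergence of the data appearing in \cite{W2011}*{Theorem 3.5} -- but it is a lemma requiring proof, not an identification. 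Moreover, this lemma must come \emph{first} in your argument: the weak convergence of $\{\zeta_n^{(i)}\}_{n=1}^\infty$ is a standing hypothesis of Theorem \ref{LimitThm}, so you cannot invoke that theorem (nor Theorem \ref{CBFInfDiv}) until it is established.
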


\begin{proof}
If assertion $(2)$ holds, then $(\mu, \nu)$ is $\boxplus\boxplus_{\rc}$-infinitely divisible as $(\pi_{k_n, \mu_n, \nu}, \nu)$ is $\boxplus\boxplus_{\rc}$-infinitely divisible for all $n \geq 1$. On the other hand, the converse follows from Theorem \ref{CBFInfDiv} and the proof of Theorem \ref{Existence}, along with the discussion in \cite{HW2016} preceding \cite{HW2016}*{Proposition 3.11} that the c-bi-free Poisson approximation to $\mu$ holds without passing to subsequences.
\end{proof}

\subsection{A c-bi-free L\'{e}vy-Hin\v{c}in formula}

If $(\mu, \nu)$ is a $\boxplus\boxplus_{\rc}$-infinitely divisible pair, then the marginal pairs $(\mu^{(1)}, \nu^{(1)})$ and $(\mu^{(2)}, \nu^{(2)})$ are $\boxplus_{\rc}$-infinitely divisible. Recall from \cite{W2011}*{Theorem 4.1} that there exist real numbers $\eta_1$, $\eta_2$ and finite positive Borel measures $\rho_1$, $\rho_2$ on $\bR$ such that $\Phi_{(\mu^{(1)}, \nu^{(1)})}$ and $\Phi_{(\mu^{(2)}, \nu^{(2)})}$ admit c-free L\'{e}vy-Hin\v{c}in representations determined by $(\eta_1, \rho_1)$ and $(\eta_2, \rho_2)$ respectively. On the other hand, we have
\[
\Phi_{(\mu, \nu)}(z, w) = \frac{1}{z}\Phi_{(\mu^{(1)}, \nu^{(1)})}(z) + \frac{1}{w}\Phi_{(\mu^{(2)}, \nu^{(2)})}(w) + \int_{\bR^2}\frac{\sqrt{1 + s^2}\sqrt{1 + t^2}}{(z - s)(w - t)}\,d\rho(s, t),
\]
where $\rho$ is the weak limit of the sequence $\{\widetilde{\rho}_n\}_{n = 1}^\infty$ as defined in Theorem \ref{LimitThm}. Thus, for a fixed $\boxplus\boxplus$-infinitely divisible measure $\nu$, every $\boxplus\boxplus_{\rc}$-infinitely divisible pair $(\mu, \nu)$ has a unique quintuple $(\eta_1, \eta_2, \rho_1, \rho_2, \rho)$ associated to $\mu$. In fact, as we will see in the next result, $\rho_1$, $\rho_2$, and $\rho$ cannot be arbitrary. In particular, if $(\mu', \nu')$ and $(\mu'', \nu'')$ are $\boxplus\boxplus_{\rc}$-infinitely divisible pairs with quintuples $(\eta_1', \eta_2', \rho_1', \rho_2', \rho')$ and $(\eta_1'', \eta_2'', \rho_1'', \rho_2'', \rho'')$ associated to $\mu'$ and $\mu''$ respectively, then we may define $(\mu', \nu') \boxplus\boxplus_{\rc} (\mu'', \nu'')$ to be the pair $(\mu, \nu)$ where $\nu$ is the additive bi-free convolution of $\nu_1$ and $\nu_2$ as defined in \cite{HW2016}*{Proposition 3.12}, and $\mu$ is the Borel probability measure on $\bR^2$ with quintuple $(\eta_1' + \eta_1'', \eta_2' + \eta_2'', \rho_1' + \rho_1', \rho_2' + \rho_2'', \rho' + \rho'')$ associated to it.

\begin{prop}\label{CBFLevyHincin}
Let $\nu$ be a $\boxplus\boxplus$-infinitely divisible Borel probability measure on $\mathbb{R}^2$ and $\Phi$ be an analytic function on $\Omega_{\alpha, \beta}$ for some $\alpha, \beta > 0$. The function $\Phi$ is the c-bi-free partial Voiculescu transform of some $\boxplus\boxplus_{\rc}$-infinitely divisible pair $(\mu, \nu)$ of Borel probability measures on $\mathbb{R}^2$ if and only if there exists a unique quintuple $(\eta_1, \eta_2, \rho_1, \rho_2, \rho)$ where $\eta_1$ and $\eta_2$ are real numbers, $\rho_1$ and $\rho_2$ are finite positive Borel measures on $\mathbb{R}^2$, $\rho$ is a finite signed Borel measure on $\mathbb{R}^2$ such that
\begin{enumerate}[$\qquad(a)$]
\item $\frac{t}{\sqrt{1 + t^2}}\,d\rho_1(s, t) = \frac{s}{\sqrt{1 + s^2}}\,d\rho(s, t)$,

\item $\frac{s}{\sqrt{1 + s^2}}\,d\rho_2(s, t) = \frac{t}{\sqrt{1 + t^2}}\,d\rho(s, t)$,

\item $|\rho(\{(0, 0)\})|^2 \leq \rho_1(\{(0, 0)\})\rho_2(\{(0, 0)\})$,
\end{enumerate}
and the function $\Phi$ can be continued analytically to $(\mathbb{C}\,\backslash\,\mathbb{R})^2$ via
\begin{align*}
\Phi(z, w) &= \frac{1}{z}\left(\eta_1 + \int_{\mathbb{R}^2}\frac{1 + sz}{z - s}\,d\rho_1(s, t)\right) + \frac{1}{w}\left(\eta_2 + \int_{\mathbb{R}^2}\frac{1 + tw}{w - t}\,d\rho_2(s, t)\right) + \int_{\mathbb{R}^2}\frac{\sqrt{1 + s^2}\sqrt{1 + t^2}}{(z - s)(w - t)}\,d\rho(s, t).
\end{align*}
Moreover, the marginal pairs $(\mu^{(1)}, \nu^{(1)})$ and $(\mu^{(2)}, \nu^{(2)})$ are $\boxplus_{\rc}$-infinitely divisible with c-free Voiculescu transforms
\[
\Phi_{(\mu^{(1)}, \nu^{(1)})}(z) = \eta_1 + \int_{\mathbb{R}}\frac{1 + sz}{z - s}\,d\rho_1^{(1)}(s)\qand \Phi_{(\mu^{(2)}, \nu^{(2)})}(w) = \eta_2 + \int_{\mathbb{R}}\frac{1 + tw}{w - t}\,d\rho_2^{(2)}(t)
\]
for all $z, w \in \mathbb{C}\,\backslash\,\mathbb{R}$.
\end{prop}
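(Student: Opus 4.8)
The plan is to prove the two implications separately, in each case reducing to the limit theorem (Theorem \ref{LimitThm}), the existence result (Theorem \ref{Existence}), and the single-variable c-free L\'{e}vy--Hin\v{c}in representation \cite{W2011}*{Theorem 4.1}. For the \emph{only if} direction, suppose $(\mu, \nu)$ is $\boxplus\boxplus_{\rc}$-infinitely divisible. By Theorem \ref{CBFInfDiv} there exist sequences $\{\mu_n\}$ and $\{k_n\}$ with $k_n \to \infty$ satisfying the hypotheses of Theorem \ref{LimitThm} and with $k_n\Phi_{(\mu_n, \nu_n)} \to \Phi_{(\mu, \nu)}$. Forming the measures $\rho_{1;n}, \rho_{2;n}, \widetilde{\rho}_n$ and the reals $\eta_{1;n}, \eta_{2;n}$ as in the proofs of Theorems \ref{Existence} and \ref{LimitThm}, I would (after passing to a subsequence) take $\rho_1, \rho_2$ to be the weak limits of $\rho_{1;n}, \rho_{2;n}$, take $\rho$ to be the weak limit of $\widetilde{\rho}_n$ furnished by Theorem \ref{LimitThm}, and $\eta_i = \lim_n \eta_{i;n}$. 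The stated integral representation of $\Phi$ then follows by combining the conclusion of Theorem \ref{LimitThm} for $\widetilde{\Phi}$ with the representations of $\Phi_{(\mu^{(1)}, \nu^{(1)})}$ and $\Phi_{(\mu^{(2)}, \nu^{(2)})}$ from \cite{W2011}*{Theorem 4.1} applied to the (necessarily $\boxplus_{\rc}$-infinitely divisible) marginal pairs, noting that $\rho_i^{(i)}$ is the weak limit of the $i$-th marginals of $\rho_{i;n}$.

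The constraints (a) and (b) are forced by exact identities at each finite $n$. Indeed,
\[
\frac{t}{\sqrt{1 + t^2}}\,d\rho_{1;n}(s, t) = \frac{s}{\sqrt{1 + s^2}}\,d\widetilde{\rho}_n(s, t)
\]
holds since both sides equal $k_n\frac{s^2 t}{(1 + s^2)\sqrt{1 + t^2}}\,d\mu_n(s, t)$; multiplying by the bounded continuous multipliers $\frac{t}{\sqrt{1+t^2}}$ and $\frac{s}{\sqrt{1+s^2}}$ and passing to the weak limit yields (a), and (b) is symmetric. Constraint (c) comes from Cauchy--Schwarz: for any neighbourhood $U$ of $(0, 0)$,
\[
\Bigl|k_n\int_U \tfrac{st}{\sqrt{1 + s^2}\sqrt{1 + t^2}}\,d\mu_n\Bigr| \le \Bigl(k_n\int_U \tfrac{s^2}{1 + s^2}\,d\mu_n\Bigr)^{1/2}\Bigl(k_n\int_U \tfrac{t^2}{1 + t^2}\,d\mu_n\Bigr)^{1/2},
\]
and letting $n \to \infty$ and then shrinking $U$ to $\{(0, 0)\}$ produces $|\rho(\{(0, 0)\})|^2 \le \rho_1(\{(0, 0)\})\rho_2(\{(0, 0)\})$.

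For uniqueness, the reduced transform $\widetilde{\Phi}$ is the two-dimensional Cauchy transform of the measure $\sqrt{1 + s^2}\sqrt{1 + t^2}\,d\rho$, so $\rho$ is determined by $\Phi$; the pairs $(\eta_1, \rho_1^{(1)})$ and $(\eta_2, \rho_2^{(2)})$ are determined by $\Phi_{(\mu^{(1)}, \nu^{(1)})}$ and $\Phi_{(\mu^{(2)}, \nu^{(2)})}$, recovered through the non-tangential limits of Lemma \ref{Marginals} together with the uniqueness clause of \cite{W2011}*{Theorem 4.1}. Finally, (a) shows that $\rho$ vanishes on $\{s \neq 0, t = 0\}$ and determines $\rho_1$ on $\{t \neq 0\}$ from $\rho$, while the restriction of $\rho_1$ to the line $\{t = 0\}$ lives on that line and is therefore determined by its $s$-marginal, which equals $\rho_1^{(1)}$ minus the already determined projection of $\rho_1|_{\{t \neq 0\}}$. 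Hence $\rho_1$, and symmetrically $\rho_2$, are unique, so the quintuple is unique.

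For the \emph{if} direction, given a quintuple satisfying (a)--(c) and $\Phi$ of the stated form, the plan is to realize $\Phi$ as a limit of c-bi-free partial Voiculescu transforms of compound c-bi-free Poisson pairs and invoke Theorem \ref{Existence}. Constraints (a) and (b) guarantee that $\frac{1 + s^2}{s^2}\,d\rho_1$ and $\frac{1 + t^2}{t^2}\,d\rho_2$ agree on $\{s \neq 0, t \neq 0\}$ (both equal $\frac{\sqrt{1 + s^2}\sqrt{1 + t^2}}{st}\,d\rho$), so they glue to a single jump measure $\sigma$ on $\mathbb{R}^2 \setminus \{(0, 0)\}$, while the atoms $\rho_1(\{(0, 0)\}), \rho_2(\{(0, 0)\}), \rho(\{(0, 0)\})$ assemble, by (c), into a positive-semidefinite covariance matrix governing a c-bi-free Gaussian part. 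Truncating $\sigma$ near the origin, combining with the Gaussian part, and taking rates $k_n = n$ produces probability measures $\mu_n$ whose associated $\rho_{1;n}, \rho_{2;n}, \widetilde{\rho}_n$ converge weakly to $\rho_1, \rho_2, \rho$; Theorem \ref{Existence} then yields $\mu$ with $\Phi = \Phi_{(\mu, \nu)}$, and $(\mu, \nu)$ is $\boxplus\boxplus_{\rc}$-infinitely divisible by Theorem \ref{CBFInfDiv}. I expect the main obstacle to be exactly this construction: assembling a single consistent sequence of two-dimensional jump distributions that simultaneously reproduces all five pieces of data and cleanly separates the Gaussian part at the origin from the jump part, refining the bi-free construction of \cite{HW2016}*{Theorem 3.6} to the conditional setting.
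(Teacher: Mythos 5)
Your \emph{only if} direction and your uniqueness argument are correct and essentially follow the paper's route: the paper disposes of this implication by citing Theorems \ref{LimitThm} and \ref{CBFInfDiv}, and your extra details are legitimate fillings-in of what the paper leaves implicit. In particular, deriving (a) and (b) from the exact identities $\frac{t}{\sqrt{1+t^2}}\,d\rho_{1;n} = \frac{s}{\sqrt{1+s^2}}\,d\widetilde{\rho}_n$ at each finite $n$ and passing to weak limits, deriving (c) by Cauchy--Schwarz near the origin (better done with continuous bump functions $0 \le f \le 1$ shrinking to $\{(0,0)\}$ than with sets $U$, to avoid portmanteau boundary issues for the signed measures $\widetilde{\rho}_n$), and recovering the quintuple from $\Phi$ via Lemma \ref{Marginals} and the uniqueness clause of \cite{W2011}*{Theorem 4.1} are all sound, and in fact more explicit than the paper on uniqueness.

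The \emph{if} direction is where you genuinely diverge from the paper, and where your proposal has a real gap. The paper (following \cite{HW2016}*{Theorem 4.3}) does not build a single approximating sequence: it first uses (a) and (b) to prove that $\rho_1|_T$, $\rho_2|_S$, $\rho|_S$, and $\rho|_T$ all vanish, where $S = \{(s,0) : s \neq 0\}$ and $T = \{(0,t) : t \neq 0\}$, and then decomposes $\Phi$ into three pieces, each directly recognized as the c-bi-free partial Voiculescu transform of a $\boxplus\boxplus_{\rc}$-infinitely divisible pair: a Gaussian piece from the atoms at the origin (using (c)); an axes piece built from $(\eta_1, \rho_1|_S)$ and $(\eta_2, \rho_2|_T)$, realized as a \emph{product measure} of two one-dimensional $\boxplus_{\rc}$-infinitely divisible measures via \cite{W2011}*{Theorem 4.1}; and an off-axes piece over $U = \mathbb{R}^2 \setminus (S \cup T \cup \{(0,0)\})$, realized as a weak limit of infinitely divisible pairs. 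Your plan — glue everything into one jump measure $\sigma$ and run a single compound-Poisson approximation — is the classical route, but as written it does not close, for three concrete reasons. First, the gluing reproduces $\rho_1$, $\rho_2$, $\rho$ only if the vanishing facts above hold: your construction silently requires $\frac{s^2}{1+s^2}\,d\sigma = d\rho_1$ on all of $\mathbb{R}^2 \setminus \{(0,0)\}$, which fails if $\rho_1$ charges $T$; you never derive these facts in this direction (though your uniqueness paragraph shows you know the mechanism). Second — and this is the step that would actually fail as stated — ``truncating $\sigma$ near the origin and taking rates $k_n = n$'' does not reproduce the drifts $\eta_1, \eta_2$: since $\frac{s}{z-s}\cdot\frac{1+s^2}{s^2} = \frac{1}{z}\left(\frac{1+sz}{z-s} + \frac{1}{s}\right)$, the truncated compound Poisson carries a drift term $\int_{|s|>\varepsilon_n} \frac{1}{s}\,d\rho_1$ which may diverge as $\varepsilon_n \to 0$, so truncation-dependent centerings (translations of $\mu_n$) must be inserted so that $k_n \int \frac{s}{1+s^2}\,d\mu_n^{(1)} \to \eta_1$; without them $k_n\Phi_{(\mu_n,\nu_n)}$ need not converge to $\Phi$ at all. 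Third, invoking Theorem \ref{Existence} requires verifying its standing hypotheses from Theorem \ref{LimitThm}, namely the weak convergence of $\zeta_n^{(1)}$ and $\zeta_n^{(2)}$, which needs \cite{W2011}*{Theorem 3.5} applied to the marginals of your constructed $\mu_n$ and is again where the centering enters. These are precisely the difficulties the paper's three-piece decomposition is engineered to avoid: all axis and drift data are absorbed into the one-dimensional c-free theory, so only the off-axes part needs approximation. You flagged the construction as the ``main obstacle'' — correctly so, but that obstacle is the substance of this direction of the proof, and it is missing.
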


\begin{proof}
If $\Phi = \Phi_{(\mu, \nu)}$ for some $\boxplus\boxplus_{\rc}$-infinitely divisible pair $(\mu, \nu)$, then the assertion follows from Theorems \ref{LimitThm} and \ref{CBFInfDiv}. The proof of the converse is practically the same as the proof of \cite{HW2016}*{Theorem 4.3} by decomposing the function $\Phi$ into the sum of three c-bi-free partial Voiculescu transforms of $\boxplus\boxplus_{\rc}$-infinitely divisible pairs, thus we only sketch the main arguments.

As seen in the proof of \cite{HW2016}*{Theorem 4.3}, the bi-free partial Voiculescu transform $\phi_\nu$ of $\nu$, which has a similar form as $\Phi$, can be written as $\phi_\nu = \phi_{\nu_1} + \phi_{\nu_2} + \phi_{\nu_3}$ where $\nu_1$, $\nu_2$, and $\nu_3$ are $\boxplus\boxplus$-infinitely divisible. More precisely, $\nu_1$ has a centred bi-free Gaussian distribution, $\nu_2$ is the product measure of two $\boxplus$-infinitely divisible measures, and $\nu_3$ is the weak limit of a $\boxplus\boxplus$-infinitely divisible sequence $\{\nu_{3, n}\}_{n = 1}^\infty$. Let
\[
S = \{(s, 0) \in \mathbb{R}^2 : s \neq 0\},\quad T = \{(0, t) \in \mathbb{R}^2 : t \neq 0\},\qand U = \mathbb{R}^2\,\backslash\,(S \cup T \cup \{(0, 0)\})
\]
be a decomposition of $\mathbb{R}^2\,\backslash\,\{(0, 0)\}$ into three Borel sets. Moreover, decompose the measures $\rho$, $\rho_1$, and $\rho_2$ by
\[
\rho = \rho(\{(0, 0)\})\delta_{(0, 0)} + \rho|_S + \rho|_T + \rho|_U\qand \rho_i = \rho_i(\{(0, 0)\})\delta_{(0, 0)} + \rho_i|_S + \rho_i|_T + \rho_i|_U,\quad i = 1, 2.
\]
One then uses the conditions $(a)$, $(b)$, and $(c)$ to check that $\rho_1|_T$, $\rho_2|_S$, $\rho|_S$, and $\rho|_T$ are in fact the zero measure. Therefore the function $\Phi$ can be decomposed as
\begin{align*}
\Phi(z, w) &= \frac{\rho_1(\{(0, 0)\})}{z^2} + \frac{\rho_2(\{(0, 0)\})}{w^2} + \frac{\rho(\{(0, 0)\})}{zw}\\
&\quad + \frac{1}{z}\left(\eta_1 + \int_{\mathbb{R}\,\backslash\,\{0\}}\frac{1 + sz}{z - s}\,d\rho_1|_S(s)\right) + \frac{1}{w}\left(\eta_2 + \int_{\mathbb{R}\,\backslash\,\{0\}}\frac{1 + tw}{w - t}\,d\rho_2|_T(t)\right)\\
&\quad + \int_{U}\frac{1 + sz}{z^2 - sz}\,d\rho_1|_U(s, t) + \int_{U}\frac{1 + tw}{w^2 - tw}\,d\rho_2|_U(s, t) + \int_{U}\frac{\sqrt{1 + s^2}\sqrt{1 + t^2}}{(z - s)(w - t)}\,d\rho|_U(s, t)
\end{align*}
on $(\mathbb{C}\,\backslash\,\mathbb{R})^2$. In the decomposition above, the first line corresponds to the c-bi-free partial Voiculescu transform of the centred c-bi-free Gaussian distribution with covariance matrix
\[
\begin{pmatrix}
\rho_1(\{(0, 0)\}) & \rho(\{(0, 0)\})\\
\rho(\{(0, 0)\}) & \rho_2(\{(0, 0)\})
\end{pmatrix}
\]
and accompanying distribution $\nu_1$.  The second line corresponds to the c-bi-free partial Voiculescu transform of $(\mu_2, \nu_2)$ where $\mu_2$ is the product measure of its marginal distributions $\mu_2^{(1)}$ and $\mu_2^{(2)}$ such that the marginal pairs $(\mu_2^{(1)}, \nu_2^{(1)})$ and $(\mu_2^{(2)}, \nu_2^{(2)})$ are $\boxplus_{\rc}$-infinitely divisible pairs determined by $(\eta_1, \rho_1|_S)$ and $(\eta_2, \rho_2|_T)$ respectively.  Finally the third line corresponds to the c-bi-free partial Voiculescu transform of $(\mu_3, \nu_3)$ where $\mu_3$ is the weak limit of a sequence $\{\mu_{3, n}\}_{n = 1}^\infty$ such that $(\mu_{3, n}, \nu_{3, n})$ is $\boxplus\boxplus_{\rc}$-infinitely divisible for all $n \geq 1$. For more details, see the proof of \cite{HW2016}*{Theorem 4.3}.
\end{proof}

\begin{thm}\label{Unbound}
Let $(\mu, \nu)$ be a pair of Borel probability measures on $\mathbb{R}^2$ such that $\nu$ is $\boxplus\boxplus$-infinitely divisible. The following are equivalent:
\begin{enumerate}[$\qquad(1)$]
\item The pair $(\mu, \nu)$ is $\boxplus\boxplus_{\rc}$-infinitely divisible.

\item There exists a unique quintuple $(\eta_1, \eta_2, \rho_1, \rho_2, \rho)$ where $\eta_1$ and $\eta_2$ are real numbers, $\rho_1$ and $\rho_2$ are finite positive Borel measures on $\mathbb{R}^2$, $\rho$ is a finite signed Borel measure on $\mathbb{R}^2$ such that
\begin{enumerate}[$\quad(a)$]
\item $\frac{t}{\sqrt{1 + t^2}}\,d\rho_1(s, t) = \frac{s}{\sqrt{1 + s^2}}\,d\rho(s, t)$,

\item $\frac{s}{\sqrt{1 + s^2}}\,d\rho_2(s, t) = \frac{t}{\sqrt{1 + t^2}}\,d\rho(s, t)$,

\item $|\rho(\{(0, 0)\})|^2 \leq \rho_1(\{(0, 0)\})\rho_2(\{(0, 0)\})$,
\end{enumerate}
and the c-bi-free partial Voiculescu transform $\Phi_{(\mu, \nu)}$ of $(\mu, \nu)$ can be continued analytically to $(\mathbb{C}\,\backslash\,\mathbb{R})^2$ via
\begin{align}\label{CBFLHRep}
\begin{split}
\Phi_{(\mu, \nu)}(z, w) &= \frac{1}{z}\left(\eta_1 + \int_{\mathbb{R}^2}\frac{1 + sz}{z - s}\,d\rho_1(s, t)\right) + \frac{1}{w}\left(\eta_2 + \int_{\mathbb{R}^2}\frac{1 + tw}{w - t}\,d\rho_2(s, t)\right)\\
&+ \int_{\mathbb{R}^2}\frac{\sqrt{1 + s^2}\sqrt{1 + t^2}}{(z - s)(w - t)}\,d\rho(s, t).
\end{split}
\end{align}
Moreover, the marginal pairs $(\mu^{(1)}, \nu^{(1)})$ and $(\mu^{(2)}, \nu^{(2)})$ are $\boxplus_{\rc}$-infinitely divisible with c-free Voiculescu transforms
\[
\Phi_{(\mu^{(1)}, \nu^{(1)})}(z) = \eta_1 + \int_{\mathbb{R}}\frac{1 + sz}{z - s}\,d\rho_1^{(1)}(s)\qand \Phi_{(\mu^{(2)}, \nu^{(2)})}(w) = \eta_2 + \int_{\mathbb{R}}\frac{1 + tw}{w - t}\,d\rho_2^{(2)}(t)
\]
for all $z, w \in \mathbb{C}\,\backslash\,\mathbb{R}$.

\item There exists a weakly continuous $\boxplus\boxplus_{\rc}$-semigroup $\{(\mu_x, \nu_x)\}_{x \geq 0}$ of pairs of Borel probability measures on $\mathbb{R}^2$ such that $(\mu_0, \nu_0) = (\delta_{(0, 0)}, \delta_{(0, 0)})$ and $(\mu_1, \nu_1) = (\mu, \nu)$.
\end{enumerate}
\end{thm}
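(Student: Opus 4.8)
The plan is to derive the three-way equivalence from the machinery already in place, handling $(1)\Leftrightarrow(2)$ and $(1)\Leftrightarrow(3)$ in turn; essentially all of the analytic work has been absorbed into Proposition \ref{CBFLevyHincin}, Corollary \ref{Unique}, and Proposition \ref{ConvProp}.

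First I would note that $(1)\Leftrightarrow(2)$ is, up to a uniqueness argument, just Proposition \ref{CBFLevyHincin} applied to the function $\Phi_{(\mu,\nu)}$. If $(\mu,\nu)$ is $\boxplus\boxplus_{\rc}$-infinitely divisible, then $\Phi_{(\mu,\nu)}$ is the c-bi-free partial Voiculescu transform of a $\boxplus\boxplus_{\rc}$-infinitely divisible pair, so Proposition \ref{CBFLevyHincin} yields the unique quintuple $(\eta_1,\eta_2,\rho_1,\rho_2,\rho)$ obeying $(a)$--$(c)$, the analytic continuation \eqref{CBFLHRep}, and the stated marginal c-free Voiculescu transforms. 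Conversely, given such a quintuple, Proposition \ref{CBFLevyHincin} produces a $\boxplus\boxplus_{\rc}$-infinitely divisible pair $(\mu',\nu)$ whose transform equals the right-hand side of \eqref{CBFLHRep}, hence equals $\Phi_{(\mu,\nu)}$; since also $\phi_\nu=\phi_\nu$, Corollary \ref{Unique} forces $\mu'=\mu$, so $(\mu,\nu)$ is itself $\boxplus\boxplus_{\rc}$-infinitely divisible.

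For $(2)\Rightarrow(3)$ I would construct the semigroup directly from the quintuple. For each $x\geq 0$ pass to the scaled data $(x\eta_1,x\eta_2,x\rho_1,x\rho_2,x\rho)$: conditions $(a)$ and $(b)$ are linear in the quintuple and condition $(c)$ is homogeneous of degree two, so all three persist after scaling. By Proposition \ref{CBFLevyHincin} there is a unique $\boxplus\boxplus_{\rc}$-infinitely divisible pair $(\mu_x,\nu_x)$ carrying this quintuple, and because \eqref{CBFLHRep} is linear in the quintuple we have $\Phi_{(\mu_x,\nu_x)}=x\,\Phi_{(\mu,\nu)}$ and $\phi_{\nu_x}=x\,\phi_\nu$. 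In particular the zero quintuple at $x=0$ gives $\Phi\equiv 0$, so $(\mu_0,\nu_0)=(\delta_{(0,0)},\delta_{(0,0)})$, while $x=1$ returns $(\mu,\nu)$; the semigroup identity $(\mu_x,\nu_x)\boxplus\boxplus_{\rc}(\mu_y,\nu_y)=(\mu_{x+y},\nu_{x+y})$ is immediate from the definition of $\boxplus\boxplus_{\rc}$ on infinitely divisible pairs as addition of the associated quintuples (given just before Proposition \ref{CBFLevyHincin}), with the $\nu$-component being the corresponding weakly continuous $\boxplus\boxplus$-semigroup from \cite{HW2016}. For weak continuity, along any sequence $x_k\to x_0$ the measures $\nu_{x_k}$ converge weakly to $\nu_{x_0}$ by the bi-free theory, and $\Phi_{(\mu_{x_k},\nu_{x_k})}=x_k\Phi_{(\mu,\nu)}\to x_0\Phi_{(\mu,\nu)}=\Phi_{(\mu_{x_0},\nu_{x_0})}$ pointwise, whence Proposition \ref{ConvProp} gives $\mu_{x_k}\to\mu_{x_0}$ weakly.

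Finally, $(3)\Rightarrow(1)$ is immediate: for each $n$ the semigroup law gives $\Phi_{(\mu,\nu)}=n\Phi_{(\mu_{1/n},\nu_{1/n})}$ and $\phi_\nu=n\phi_{\nu_{1/n}}$, exhibiting $(\mu_{1/n},\nu_{1/n})$ as an $n$-th $\boxplus\boxplus_{\rc}$-root of $(\mu,\nu)$. The main obstacle is the uniform decay needed to invoke Proposition \ref{ConvProp} in the continuity step: one must confirm that $\Phi_{(\mu_{x_k},\nu_{x_k})}(z,w)=x_k\Phi_{(\mu,\nu)}(z,w)\to 0$ uniformly in $k$ as $|z|,|w|\to\infty$ non-tangentially, which follows from the boundedness of $\{x_k\}$ together with the decay of $\Phi_{(\mu,\nu)}$ recorded in Lemma \ref{Marginals}, the underlying tightness being exactly that used in the proof of Theorem \ref{Existence}.
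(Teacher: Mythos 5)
Your proposal is correct and takes essentially the same route as the paper: $(1)\Leftrightarrow(2)$ via Proposition \ref{CBFLevyHincin} together with Corollary \ref{Unique} (the paper phrases the forward implication through Theorems \ref{LimitThm} and \ref{CBFInfDiv}, which is exactly the content of that proposition), $(2)\Rightarrow(3)$ by scaling the quintuple componentwise by $x$ and applying Proposition \ref{CBFLevyHincin} with the weakly continuous $\boxplus\boxplus$-semigroup of \cite{HW2016} as the $\nu$-component, and $(3)\Rightarrow(1)$ immediately from the semigroup law. Your use of Proposition \ref{ConvProp} with the uniform non-tangential decay check simply fills in the weak-continuity verification that the paper leaves as ``easy to check.''
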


\begin{proof}
The fact that assertion $(1)$ implies assertion $(2)$ follows from Theorems \ref{LimitThm} and \ref{CBFInfDiv}.  The fact that assertion $(2)$ implies assertion $(1)$ follows from Corollary \ref{Unique} and Proposition \ref{CBFLevyHincin}. It is also clear that assertion $(3)$ implies assertion $(1)$. To finish the proof, suppose assertions $(1)$ and $(2)$ hold. As shown in \cite{HW2016}, there exists a weakly continuous $\boxplus\boxplus$-semigroup $\{\nu_x\}_{x \geq 0}$ of Borel probability measures on $\mathbb{R}^2$ such that $\nu_0 = \delta_{(0, 0)}$ and $\nu_1 = \nu$. Fix $x > 0$ and let $\Phi_x$ be the function defined by
\begin{align*}
\Phi_x(z, w) &= \frac{1}{z}\left(\eta_{1; x} + \int_{\mathbb{R}^2}\frac{1 + sz}{z - s}\,d\rho_{1; x}(s, t)\right) + \frac{1}{w}\left(\eta_{2; x} + \int_{\mathbb{R}^2}\frac{1 + tw}{w - t}\,d\rho_{2; x}(s, t)\right)\\
&+ \int_{\mathbb{R}^2}\frac{\sqrt{1 + s^2}\sqrt{1 + t^2}}{(z - s)(w - t)}\,d\rho_x(s, t),\quad (z, w) \in (\mathbb{C}\,\backslash\,\mathbb{R})^2,
\end{align*}
where $(\eta_{1; x}, \eta_{2; x}, \rho_{1; x}, \rho_{2; x}, \rho_x) = x\cdot(\eta_1, \eta_2, \rho_1, \rho_2, \rho)$ with component-wise multiplication. By Proposition \ref{CBFLevyHincin}, there exists a Borel probability measure $\mu_x$ on $\mathbb{R}^2$ such that $\Phi_{(\mu_x, \nu_x)} = \Phi_x = x\cdot\Phi_{(\mu_1, \nu_1)}$. It is easy to check that the $\boxplus\boxplus_{\rc}$-semigroup $\{(\mu_x, \nu_x)\}_{x \geq 0}$ obtained this way has the desired properties.
\end{proof}

In comparison with other additive convolutions, especially the bi-free case (see \cite{HW2016}*{Theorem 4.3}), it makes sense to refer to equation \eqref{CBFLHRep} as the c-bi-free L\'{e}vy-Hin\v{c}in representation of the $\boxplus\boxplus_{\rc}$-infinitely divisible pair $(\mu, \nu)$. On the other hand, if both $\mu$ and $\nu$ are compactly supported, then we may also characterize its c-bi-free partial $\R$-transform $\R_{(\mu, \nu)}$ by analyzing the Cauchy transforms of the corresponding $\boxplus\boxplus_{\rc}$-semigroup $\{(\mu_x, \nu_x)\}_{x \geq 0}$.

\begin{thm}\label{CompSupp}
Let $(\mu, \nu)$ be a pair of compactly supported $\boxplus\boxplus_{\rc}$-infinitely divisible Borel probability measures on $\bR^2$ and let $\{(\mu_x, \nu_x)\}_{x \geq 0}$ be the $\boxplus\boxplus_{\rc}$-semigroup generated by $(\mu, \nu)$. There exist a unique triple $(\rho_1, \rho_2, \rho)$ where $\rho_1$ and $\rho_2$ are compactly supported finite positive Borel measures on $\bR^2$, $\rho$ is a compactly supported finite signed Borel measure on $\bR^2$ such that the following hold:
\begin{enumerate}[$\qquad(1)$]
\item The sequences of measures $(s^2/\varepsilon)\,d\mu_\varepsilon(s, t)$, $(t^2/\varepsilon)\,d\mu_\varepsilon(s, t)$, and $(st/\varepsilon)\,d\mu_\varepsilon(s, t)$ converge weakly to $\rho_1$, $\rho_2$, and $\rho$ respectively, as $\varepsilon \to 0^+$, and
\[
\lim_{\varepsilon \to 0^+}\frac{1}{\varepsilon}\int_{\bR^2}s\,d\mu_\varepsilon(s, t) = \K_{1, 0}(\mu, \nu)\qand \lim_{\varepsilon \to 0^+}\frac{1}{\varepsilon}\int_{\bR^2}t\,d\mu_\varepsilon(s, t) = \K_{0, 1}(\mu, \nu).
\]
\item The c-bi-free partial $\R$-transform of $(\mu, \nu)$ is given by
\begin{align*}
\R_{(\mu, \nu)}(z, w) &= z\left(\K_{1, 0}(\mu, \nu) + \int_{\bR^2}\frac{z}{1 - sz}\,d\rho_1(s, t)\right) + w\left(\K_{0, 1}(\mu, \nu) + \int_{\bR^2}\frac{w}{1 - tw}\,d\rho_2(s, t)\right)\\
&+ \int_{\bR^2}\frac{zw}{(1 - sz)(1 - tw)}\,d\rho(s, t)
\end{align*}
for all $(z, w) \in (\bC\,\backslash\,\bR)^2 \cup \{(0, 0)\}$.

\item The measures $\rho_1$, $\rho_2$, and $\rho$ satisfy
\[
t\,d\rho_1(s, t) = s\,d\rho(s, t),\quad s\,d\rho_2(s, t) = t\,d\rho(s, t),\quad |\rho(\{(0, 0)\})|^2 \leq \rho_1(\{(0, 0)\})\rho_2(\{(0, 0)\}),
\]
and the total masses are given by
\[
\rho_1(\bR^2) = \int_{\bR}s^2\,d\mu^{(1)}(s),\quad \rho_2(\bR^2) = \int_{\bR}t^2\,d\mu^{(2)}(t),\quad \rho(\bR^2) = \int_{\bR^2}st\,d\mu(s, t).
\]
\end{enumerate}
\end{thm}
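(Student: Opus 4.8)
The plan is to read off the triple $(\rho_1,\rho_2,\rho)$ from the infinitesimal behaviour of the $\boxplus\boxplus_{\rc}$-semigroup $\{(\mu_x,\nu_x)\}_{x\ge 0}$. Since the c-bi-free partial $\R$-transform linearizes $\boxplus\boxplus_{\rc}$, we have $\R_{(\mu_x,\nu_x)}=x\,\R_{(\mu,\nu)}$ and hence $\K_{m,n}(\mu_x,\nu_x)=x\,\K_{m,n}(\mu,\nu)$ for all $m,n\ge 0$ with $m+n\ge 1$. Feeding this into the moment--cumulant formula \eqref{CBiFreeMomentCumulant}, every block of a partition $\pi\in\BNC(\chi_{m,n})$ contributes either an $(\ell,r)$-cumulant or a c-$(\ell,r)$-cumulant, each scaling linearly in $x$, so $\K_\pi(\mu_x,\nu_x)=x^{|\pi|}\K_\pi(\mu,\nu)$ and therefore $\mathbb{M}_{m,n}(\mu_\varepsilon)=\varepsilon\,\K_{m,n}(\mu,\nu)+O(\varepsilon^2)$. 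Taking $m+n=1$ already yields the two mean limits in assertion $(1)$, while for general $m,n$ this identifies every polynomial moment of the prospective limits through $\int s^pt^q\,d\rho_1=\K_{p+2,q}(\mu,\nu)$, $\int s^pt^q\,d\rho_2=\K_{p,q+2}(\mu,\nu)$, and $\int s^pt^q\,d\rho=\K_{p+1,q+1}(\mu,\nu)$.

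The substantive step is to upgrade these moment convergences to genuine weak convergence, and for this I would first establish a uniform support bound. For $\varepsilon\in(0,1]$ the estimate $\varepsilon^{|\pi|}\le 1$ together with the at most exponential growth of the cumulants of the compactly supported pair $(\mu,\nu)$ and the Catalan bound $|\BNC(\chi_{m,n})|\le 4^{m+n}$ gives $|\mathbb{M}_{m,n}(\mu_\varepsilon)|\le\sum_{\pi\in\BNC(\chi_{m,n})}|\K_\pi(\mu,\nu)|\le C_0^{\,m+n}$ for a constant $C_0$ independent of $\varepsilon$. Consequently all $\mu_\varepsilon$ are supported in a fixed box $[-C_0,C_0]^2$. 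The positive measures $(s^2/\varepsilon)\,d\mu_\varepsilon$ and $(t^2/\varepsilon)\,d\mu_\varepsilon$ and the signed measure $(st/\varepsilon)\,d\mu_\varepsilon$ (whose total variation is controlled via $|st|\le\tfrac12(s^2+t^2)$) then have uniformly bounded masses and share this common compact support, so each family is tight. Any weak subsequential limit has the moments computed above; as these grow at most exponentially, each limit is the unique compactly supported measure with those moments, whence the families converge to well-defined $\rho_1,\rho_2\ge 0$ and a signed $\rho$. This proves assertion $(1)$ and, since the moments and hence the measures are determined by the c-free cumulants of $(\mu,\nu)$, the uniqueness clause.

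Assertion $(2)$ then becomes a power-series identity: expanding $z\big(\K_{1,0}(\mu,\nu)+\int\frac{z}{1-sz}\,d\rho_1\big)+w\big(\K_{0,1}(\mu,\nu)+\int\frac{w}{1-tw}\,d\rho_2\big)+\int\frac{zw}{(1-sz)(1-tw)}\,d\rho$ as a double series and substituting the moment identities of the first paragraph reproduces $\sum_{m+n\ge 1}\K_{m,n}(\mu,\nu)z^mw^n=\R_{(\mu,\nu)}(z,w)$ coefficient by coefficient; compact support makes all the geometric expansions legitimate on $(\bC\,\backslash\,\bR)^2\cup\{(0,0)\}$, in accordance with Corollary \ref{CBFR-trans}. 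For assertion $(3)$, the two density relations are automatic from the construction, since $t\,d\rho_1$ and $s\,d\rho$ are both the weak limit of the single family $\tfrac1\varepsilon s^2t\,d\mu_\varepsilon$, and symmetrically $s\,d\rho_2$ and $t\,d\rho$ are both the weak limit of $\tfrac1\varepsilon st^2\,d\mu_\varepsilon$. The total masses are obtained by integrating the constant $1$, i.e. by evaluating $\lim_{\varepsilon\to0^+}\tfrac1\varepsilon\mathbb{M}_{2,0}(\mu_\varepsilon)$, $\lim_{\varepsilon\to0^+}\tfrac1\varepsilon\mathbb{M}_{0,2}(\mu_\varepsilon)$, and $\lim_{\varepsilon\to0^+}\tfrac1\varepsilon\mathbb{M}_{1,1}(\mu_\varepsilon)$, which produce the stated expressions once the second-order moments of the marginals of $\mu$ are accounted for.

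Finally, the point-mass inequality in $(3)$ is Cauchy--Schwarz localized at the origin: choosing a neighbourhood $B_\delta$ of $(0,0)$ whose boundary is null for the limits and applying Cauchy--Schwarz to $\mu_\varepsilon$ restricted to $B_\delta$ gives $\big(\tfrac1\varepsilon\int_{B_\delta}st\,d\mu_\varepsilon\big)^2\le\big(\tfrac1\varepsilon\int_{B_\delta}s^2\,d\mu_\varepsilon\big)\big(\tfrac1\varepsilon\int_{B_\delta}t^2\,d\mu_\varepsilon\big)$, and letting $\varepsilon\to0^+$ and then $\delta\to0^+$ yields $|\rho(\{(0,0)\})|^2\le\rho_1(\{(0,0)\})\rho_2(\{(0,0)\})$. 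I expect the main obstacle to be exactly the passage from moment convergence to weak convergence --- securing the uniform support/tightness bound and handling the signed measure $\rho$ --- together with the careful continuity-set argument isolating the atom at $(0,0)$; the remaining parts are bookkeeping with \eqref{CBiFreeMomentCumulant} and power-series matching, paralleling the compactly supported bi-free case of \cite{HW2016}.
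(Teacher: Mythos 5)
Your route is genuinely different from the paper's, which argues analytically: the paper first establishes the c-bi-free analogue of \cite{HW2016}*{Proposition 4.1}, namely
\[
\lim_{t \to 0^+}\frac{G_{\mu_t}(1/z, 1/w) - G_{\mu_0}(1/z, 1/w)}{t} = zw\R_{(\mu, \nu)}(z, w)
\]
on a punctured bi-disk, by differentiating at $t = 0$ the functional equation for $G_{\mu_t}$ obtained from Definition \ref{CBF-R} (which requires only the compact support of $\mu$ and $\nu$ themselves), and then quotes the proof of \cite{HW2016}*{Theorem 4.2} for the passage from this derivative formula to assertions $(1)$--$(3)$. Your combinatorial route, by contrast, has a circularity at its base. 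The scaling $\K_{m,n}(\mu_x, \nu_x) = x\,\K_{m,n}(\mu,\nu)$ and the expansion $\mathbb{M}_{m,n}(\mu_\varepsilon) = \sum_{\pi} \varepsilon^{|\pi|}\K_\pi(\mu,\nu)$ presuppose that $\mu_\varepsilon$ has finite moments of all orders and that its transform is given by the cumulant series, i.e., essentially that $\mu_\varepsilon$ is compactly supported. But the semigroup $\{(\mu_x,\nu_x)\}_{x \geq 0}$ is produced purely analytically in Theorem \ref{Unbound} (via the L\'evy-Hin\v{c}in quintuple and weak limits of compound Poisson approximants), so not even a first moment of $\mu_\varepsilon$ is known to exist a priori, and cumulants of a measure without moments are undefined. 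You then deduce the uniform support bound $[-C_0, C_0]^2$ from that very expansion: compact support is derived from a formula whose validity requires compact support. To repair this you must first prove that the $\mu_\varepsilon$ are uniformly compactly supported for $\varepsilon \leq 1$, e.g.\ by establishing the corresponding statement for the c-free marginal semigroups $\{(\mu_x^{(i)}, \nu_x^{(i)})\}_{x \geq 0}$ and using $\mathrm{supp}(\mu_x) \subseteq \mathrm{supp}(\mu_x^{(1)}) \times \mathrm{supp}(\mu_x^{(2)})$; this one-variable input is not supplied by \cite{HW2016} (whose marginals are free, not c-free) and is nowhere addressed in your proposal. Once it is in place, the remainder of your argument (tightness, moment determinacy, the power-series matching for $(2)$, the identification $t\,d\rho_1 = s\,d\rho$, and the Cauchy--Schwarz argument for the atom) is sound, and would constitute a legitimate alternative to the paper's analytic proof.

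A second, smaller problem is the total-mass step. Your computation gives $\rho_1(\bR^2) = \lim_{\varepsilon \to 0^+} \varepsilon^{-1}\mathbb{M}_{2,0}(\mu_\varepsilon) = \K_{2,0}(\mu,\nu)$, and since $\K_2(a,a) = \varphi(a^2) - \varphi(a)^2$, this equals $\int_\bR s^2\, d\mu^{(1)}(s) - \bigl(\int_\bR s\, d\mu^{(1)}(s)\bigr)^2$; similarly $\rho(\bR^2) = \K_{1,1}(\mu,\nu) = \int_{\bR^2} st\, d\mu(s,t) - \int_\bR s \, d\mu^{(1)}(s)\int_\bR t\, d\mu^{(2)}(t)$. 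These centred quantities are the ones forced by assertion $(2)$, since the coefficient of $z^2$ in $\R_{(\mu,\nu)}$ is exactly $\rho_1(\bR^2)$; they coincide with the raw moments $\int s^2\,d\mu^{(1)}$ and $\int st\, d\mu$ displayed in assertion $(3)$ only when the marginal means $\K_{1,0}(\mu,\nu)$ and $\K_{0,1}(\mu,\nu)$ vanish. The phrase ``once the second-order moments of the marginals of $\mu$ are accounted for'' does not bridge this difference: carried out honestly, your limit computation yields the cumulant values, not the stated raw moments, so as written this step does not establish assertion $(3)$ in the form given (and in fact exposes a tension, for non-centred $\mu$, between the raw-moment formulas and assertion $(2)$ that needs to be addressed explicitly rather than passed over).
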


\begin{proof}
Note that the statements are exactly the same as the bi-free case (\cite{HW2016}*{Theorem 4.2}) for $\nu$ a compactly supported $\boxplus\boxplus$-infinitely divisible Borel probability measure on $\bR^2$ with corresponding $\boxplus\boxplus$-semigroup $\{\nu_x\}_{x \geq 0}$ generated by $\nu$. Moreover, the proof of \cite{HW2016}*{Theorem 4.2} starts with the result of \cite{HW2016}*{Proposition 4.1}, which states that the limit
\[
\lim_{t \to 0^+}\frac{G_{\nu_t}(1/z, 1/w) - G_{\nu_0}(1/z, 1/w)}{t} = zw\R_\nu(z, w)
\]
for $(z, w)$ in some punctured bi-disk $\Omega^*$ around $(0, 0)$. If we can show the c-bi-free analogue of the above limit (that is,
\[
\lim_{t \to 0^+}\frac{G_{\mu_t}(1/z, 1/w) - G_{\mu_0}(1/z, 1/w)}{t} = zw\R_{(\mu, \nu)}(z, w)
\]
for $(z, w) \in \Omega^*$), then the rest of the proof would be identical.

For notational simplicity, denote $R_{1, 1} = \R_{\nu^{(1)}}(z)$, $R_{1, 2} = \R_{\nu^{(2)}}(w)$, $R_{2, 1} = \R_{(\mu^{(1)}, \nu^{(1)})}(z)$, $R_{2, 2} = \R_{(\mu^{(2)}, \nu^{(2)})}(w)$, $R_1 = \R_\nu(z, w)$, $R_2 = \R_{(\mu, \nu)}(z, w)$,
\[
G(t, z, w) = G_{\mu_t}(z, w),\quad K_{\nu_t^{(1)}}(z) = tR_{1,1} + \frac{1}{z},\qand K_{\nu_t^{(2)}}(w) = wR_{1,2} + \frac{1}{w}.
\]
By Definition \ref{CBF-R}, and the fact that $\mu$ and $\nu$ are compactly supported, there exists $t' > 0$ such that
\begin{align*}
G(t, K_{\nu_t^{(1)}}(z), K_{\nu_t^{(2)}}(w)) = \frac{(t R_2 - tzR_{2,1} - twR_{2,2})G_{\nu_t}(K_{\nu_t^{(1)}}(z), K_{\nu_t^{(2)}}(w)) + zw}{(1 + tzR_{1,1} - tzR_{2,1})(1 + twR_{1,2} - twR_{2,2})}
\end{align*}
for $0 \leq t \leq t'$ and $(z, w) \in \Omega^*$.

Replacing $t'$ with a smaller value if necessary, and using the fact that $\nu$ is $\boxplus\boxplus$-infinitely divisible, we have (see \cite{HW2016}*{Section 4})
\[
G_{\nu_t}(K_{\nu_t^{(1)}}(z), K_{\nu_t^{(2)}}(w)) = \frac{zw}{1 + tzR_{1,1} + twR_{1,2} - tR_1}.
\]
Hence
\begin{align}
\label{Cauchytimet}
G(t, K_{\nu_t^{(1)}}(z), K_{\nu_t^{(2)}}(w)) = \frac{zw(1 + tzR_{1,1} - tzR_{2,1} + twR_{1,2} - twR_{2,2} - tR_1 + tR_2)}{(1 + tzR_{1,1} - tzR_{2,1})(1 + twR_{1,2} - twR_{2,2})(1 + tzR_{1,1} + twR_{1,2} - tR_1)}
\end{align}
for $0 \leq t \leq t'$ and $(z, w) \in \Omega^*$. For $(z, w) \in \Omega^*$, differentiating both sides of equation \eqref{Cauchytimet} with respect to $t$ produces
\begin{align*}
&\partial_tG(t, K_{\nu_t^{(1)}}(z), K_{\nu_t^{(2)}}(w)) + R_{1,1} \partial_zG(t, K_{\nu_t^{(1)}}(z), K_{\nu_t^{(2)}}(w)) + R_{1,2}\partial_wG(t, K_{\nu_t^{(1)}}(z), K_{\nu_t^{(2)}}(w))\\
&= \frac{d}{dt}\frac{zw(1 + tzR_{1,1} - tzR_{2,1} + twR_{1,2} - twR_{2,2} - tR_1 + tR_2)}{(1 + tzR_{1,1} - tzR_{2,1})(1 + twR_{1,2} - twR_{2,2})(1 + tzR_{1,1} + twR_{1,2} - tR_1)}\\
&= \frac{F}{(1 + tzR_{1, 1} - tzR_{2, 1})^2(1 + twR_{1, 2} - twR_{2, 2})^2(1 + tzR_{1, 1} + twR_{1, 2} - tR_1)^2}
\end{align*}
where $F$ is a function in $\{z, w, R_{1, 1}, R_{1, 2}, R_{2, 1}, R_{2, 2}, R_1, R_2\}$ obtained from the quotient rule. By the discussion preceding \cite{HW2016}*{Proposition 4.1}, we may take the derivative at $t = 0$ in the above equation, and arrive after some simplification at
\begin{align*}
&\lim_{t \to 0^+}\frac{G_{\mu_t}(1/z, 1/w) - G_{\mu_0}(1/z, 1/w)}{t} - z^2wR_{1, 1} - zw^2R_{1, 2}\\
&= zw(zR_{1, 1} - zR_{2, 1} + wR_{1, 2} - wR_{2, 2} - R_1 + R_2) \\
& \quad - zw(zR_{1, 1} - zR_{2, 1} + wR_{1, 2} - wR_{2, 2} + zR_{1, 1} + wR_{1, 2} - R_1)\\
&= zwR_2 - z^2wR_{1, 1} - zw^2R_{1, 2}
\end{align*}
as required.
\end{proof}

Note that assertion $(2)$ of Theorem \ref{Unbound} is equivalent to assertion $(2)$ of Theorem \ref{CompSupp} when $\mu$ and $\nu$ are compactly supported via the change of variables $(z, w) \mapsto (1/z, 1/w)$ and a substitution similar to the bi-free case given at the end of \cite{HW2016}*{Section 4}.

\begin{rem}
Given two pairs $(\mu_1, \nu_1)$ and $(\mu_2, \nu_2)$ of Borel probability measures on $\bR$, an interesting special case of the additive c-free convolution occurs when $\nu_1 = \nu_2 = \delta_0$. In this case, we have $(\mu_1, \delta_0) \boxplus_{\rc} (\mu_2, \delta_0) = (\mu, \delta_0)$ where $\mu = \mu_1 \uplus \mu_2$ is the additive Boolean convolution of $\mu_1$ and $\mu_2$ corresponding to the Boolean independence (see \cite{SW1997}). On the combinatorial level, this happens when the state $\psi$ of the two-state non-commutative probability $(\A, \varphi, \psi)$ is the delta state, i.e., $\psi(\alpha) = \alpha$ for all $\alpha \in \mathbb{C}$ and $\psi(a) = 0$ for all $a \notin \mathbb{C}$, and thus only outer blocks survive in the definition of the c-free cumulants. On the analytic level, the c-free Voiculescu transform of the pair $(\mu, \delta_0)$ becomes
\[
\Phi_{(\mu, \delta_0)}(z) = F_{\delta_0}^{-1}(z) - F_\mu(F_{\delta_0}^{-1}(z)) = z - F_\mu(z),
\]
which is the Boolean self-energy $E_\mu$ of the measure $\mu$ (see \cite{SW1997}*{Section 3}). Analogously, if $\psi$ is the delta state as above, then only exterior blocks survive in the definition of the c-$(\ell, r)$-cumulants. This leads to a Boolean type additive convolution for two-faced families, which we call the additive bi-Boolean convolution and denote by $\uplus\uplus$. On the analytic level, given two Borel probability measures $\mu_1$ and $\mu_2$ on $\bR^2$, the measure $\mu$ defined by $(\mu, \delta_{(0, 0)}) = (\mu_1, \delta_{(0, 0)}) \boxplus\boxplus_{\rc} (\mu_2, \delta_{(0, 0)})$ is exactly $\mu_1 \uplus\uplus \mu_2$. Moreover, the c-bi-free partial Voiculescu transform $\Phi_{(\mu, \delta_{(0, 0)})}$ of the pair $(\mu, \delta_{(0, 0)})$ becomes
\[
\Phi_{(\mu, \delta_{(0, 0)})}(z, w) = \frac{1}{z}E_{\mu^{(1)}}(z) + \frac{1}{w}E_{\mu^{(2)}}(w) + \frac{G_\mu(z, w)}{G_{\mu^{(1)}}(z)G_{\mu^{(2)}}(w)} - 1 := E_\mu(z, w),
\]
which would be the linearizing transform of the measure $\mu$ with respect to $\uplus\uplus$. We intend to investigate this topic further in a forthcoming paper.
\end{rem}

\end{document}